\numberwithin{figure}{section}
\newtheorem{theorem}{Theorem}[section]
\newtheorem{lemma}[theorem]{Lemma}
\newtheorem{corollary}[theorem]{Corollary}
\newtheorem{main theorem}[theorem]{Main Theorem}
\newtheorem{proposition}[theorem]{Proposition}
\newtheorem{definition}[theorem]{Definition}
\newtheorem{construction}[theorem]{Construction}
\newtheorem{remark}[theorem]{Remark}
\newtheorem{example}[theorem]{Example}
\numberwithin{equation}{section}
\def\<{\langle} 
\def\>{\rangle} 
\def\NN{\mathbb{N}} 
\def\ZZ{\mathbb{Z}} 
\newcommand{\Pic}{F{\tiny{IGURE}}\ }
\newcommand{\modcat}{\mathsf{mod}}
\newcommand{\ind}{\mathsf{ind}}
\newcommand{\kk}{\mathds{k}} 
\newcommand{\Q}{\mathcal{Q}} 
\newcommand{\I}{\mathcal{I}} 
\newcommand{\per}{\mathsf{per}} 
\newcommand{\Hom}{\mathrm{Hom}} %
\newcommand{\End}{\mathrm{End}} %
\renewcommand{\H}{\mathrm{H}} %
\newcommand{\SURF}{\mathbf{S}} 
\newcommand{\Surf}{\mathcal{S}} 
\newcommand{\bSurf}{\partial\mathcal{S}} 
\newcommand{\M}{\mathcal{M}} 
\newcommand{\MM}{\mathfrak{M}} 
\newcommand{\gbullet}{{\color{ForestGreen}\bullet}} 
\newcommand{\rbullet}{{\color{red}\circ}} 
\newcommand{\E}{\mathcal{E}} 
\newcommand{\Dgreen}{\Delta_{\color{ForestGreen}\bullet}} 
\newcommand{\Dred}{\Delta_{\color{red}\circ}} 
\newcommand{\dualDgreen}{\Delta_{{\color{ForestGreen}\bullet}}^{\star}} 
\newcommand{\tDgreen}{\widetilde{\Delta}_{{\color{ForestGreen}\bullet}}} 
\newcommand{\dualDred}{\Delta_{{\color{red}\circ}}^{\star}} 
\newcommand{\tDred}{\widetilde{\Delta}_{{\color{red}\circ}}} 
\newcommand{\OEP}{\mathrm{OEP}} 
\newcommand{\PP}{\mathcal{P}} 
\newcommand{\PGD}{\mathrm{PGD}} 
\newcommand{\GD}{\mathrm{GD}} 
\newcommand{\SURFextra}{\mathbf{S}^{\mathcal{E}}_{\color{ForestGreen}\bullet}} 
\newcommand{\SURFgreen}{\mathbf{S}_{\color{ForestGreen}\bullet}} 
\newcommand{\innerSurf}{\mathcal{S}\backslash\partial\mathcal{S}} 
\newcommand{\SURFred}[1]{\mathbf{S}^{#1}_{\color{red}\circ}} 
\newcommand{\tc}{\tilde{c}} 
\newcommand{\tgamma}{\tilde{\gamma}} 
\newcommand{\teta}{\tilde{\eta}} 
\newcommand{\Y}{\mathcal{Y}} 
\newcommand{\F}{\mathcal{F}} 
\newcommand{\Int}{\mathrm{Int}} 
\newcommand{\ii}{\mathfrak{i}} 
\newcommand{\silt}{\mathrm{silt}} 
\newcommand{\PC}{\mathrm{PC}} 
\newcommand{\CC}{\mathrm{CC}} 
\newcommand{\AC}{\widetilde{\mathrm{AC}}} 
\newcommand{\stautilt}{\mathrm{s}\tau\text{-}\mathrm{tilt}} 
\newcommand{\ta}{\mathrm{t}} 
\newcommand{\nhta}{\mathrm{nht}} 
\newcommand{\hta}{\mathrm{ht}} 
\newcommand{\sa}{\mathrm{s}} 
\newcommand{\ncsa}{\mathrm{ncs}} 
\newcommand{\Tile}{\mathrm{Tile}} 
\newcommand{\T}{\mathcal{T}} 
\newcommand{\Tri}{\mathrm{Tri}} 
\newcommand{\A}{\overrightarrow{\mathbb{A}}} %
\newcommand{\gD}{D_{\color{ForestGreen}\bullet}}
\newcommand{\gDI}{D^{\mathrm{I}}_{\color{ForestGreen}\bullet}}
\newcommand{\gDII}{D^{\mathrm{II}}_{\color{ForestGreen}\bullet}}
\newcommand{\gtD}{\widetilde{D}_{\color{ForestGreen}\bullet}}
\newcommand{\gtDI}{\widetilde{D}^{\mathrm{I}}_{\color{ForestGreen}\bullet}}
\newcommand{\gtDII}{\widetilde{D}^{\mathrm{II}}_{\color{ForestGreen}\bullet}}
\newcommand{\tarc}{\tilde{a}}
\newcommand{\m}{\mathsf{m}}
\renewcommand{\c}{\mathfrak{c}}
\newcommand{\opname}[1]{\operatorname{\mathsf{#1}}}
\newcommand{\proj}{\opname{proj}\nolimits}
\newcommand{\sfT}{\mathsf{T}}
\newcommand{\nh}{\mathrm{nh}}
\newcommand{\rmc}{\mathrm{ct}}
\begin{document}

\title[silted algebras of hereditary gentle algebras]{Classification silted algebras for a quiver of Dynkin type $\mathbb{A}_{n}$ via geometric models}
\thanks{$^{\ast}$Corresponding author.}
\thanks{MSC2020: 05E10, 16G10, 16E45.}
\thanks{Key words: marked surface, gentle algebra, geometric model, tilted algebra, silted algebra}
\author{Yu-Zhe Liu}
\address{School of Mathematics and Statistics, Guizhou University, Guiyang 550025, P. R. China, ORCID: 0009-0005-1110-386X}
\email{yzliu3@163.com}
\author{Houjun Zhang$^{\ast}$}
\address{School of Science, Nanjing University of Posts and Telecommunications, Nanjing 210023, P. R. China}
\email{zhanghoujun@njupt.edu.cn}


\begin{abstract}
Let $\Q$ be the quiver of Dynkin type $\mathbb{A}_n$ with linear orientation and $A_{n}=k\Q$. 
In this paper, we give a complete classification of the silted algebras of type $A_{n}$ by using the geometric models of gentle algebras. We show that any finite-dimensional algebra is a silted of type $A_{n}$ if and only if it is a tilted of type $A_{n}$ or a tilted algebra of type $A_{m}\times A_{n-m}$ for any positive integer $1\leq m\leq n-1$. Based on the classification, we obtain a formula for computing the number of silted algebras of type $A_{n}$.
\end{abstract}
\maketitle
\section{Introduction}
Silted algebras were introduced by Buan and Zhou \cite{BuanZhou16} as endomorphism
algebras of 2-term silting complexes over finite-dimensional hereditary algebras. Let $\Q$ be the quiver of Dynkin type $\mathbb{A}_n$ with linear orientation and $A_{n}=k\Q$. 
We give a classification of the silted algebras of type $A_{n}$. It is well-known that $A_{n}$ is a gentle algebra. Thus, we do this by using the geometric models of gentle algebras.

As a generalization of iterated tilted algebras of type $\mathbb{A}_{n}$ \cite{AssemHappel81}, gentle algebras have been widespread study and occur in many areas of mathematics such as dimer models \cite{Broomhead12}, Lie algebras \cite{Bocklandt12} and discrete derived categories
\cite{BroomheadPauksztelloPloog17,Vossieck01}.
Recently, gentle algebras have been associated to triangulations or dissections of surfaces, in connection with cluster algebras \cite{AssemBrustleCharbonneauPlamondon10,BrustleZhang11},
or with Fukaya categories of surfaces \cite{HaidenKatzarkovKontsevich17,LekiliPolishchuk20}.
Notice that any gentle algebra can be obtained from a dissection of a surface,
this has led Baur and Coelho Sim\~{o}es to realised gentle algebras as tiling algebras and gave a geometric model for their module categories \cite{BaurCoelho21}.

Let $A$ be a gentle algebra and $S$ be a silting complex in the perfect derived category $\per(A)$. According to the  geometric models for bounded derived categories of gentle algebras which introduced by Haiden, Katzarkov and Kontsevich \cite{HaidenKatzarkovKontsevich17} (also see Opper, Plamondon and Schroll \cite{OpperPlamondonSchroll18}), the collection of arcs corresponding to $S$ forms a full formal arc system of the surface. Moreover, the silting complex $S$ can induce a homologically smooth graded gentle algebra $A^S$ associated to the surface model of $A$. Recall that in \cite{HaidenKatzarkovKontsevich17}, Haiden, Katzarkov and Kontsevich proved that the perfect derived category of a homologically smooth graded gentle algebra is triangle equivalent to the partially wrapped Fukaya category of a graded oriented smooth surface. Thus, $\per(A)$ and $\per(A^S)$ are both triangle equivalent to the homologically Fukaya category of the surface. Then we have a triangle equivalence
from $\per(A)$ to $\per(A^S)$ which sending $S$ to $A^S$. So we obtain that $$\H^0(A^S)=\End_{\per(A^S)}(A^S)=\End_{\per(A)}(S).$$
Now let $S$ be a 2-term silting complex over $A_n$, then $H^0(A_{n}^S)$ is a silted algebra of type $A_n$. Moreover, we have

\begin{theorem} [Theorem \ref{coro-gamma(0,l)}]
Let $A$ be a finite-dimensional algebra. Then the following are equivalent.
\begin{itemize}
  \item [\rm(1)] $A$ is a silted algebra of type $A_{n}$;
  \item [\rm(2)] $A$ is a tilted algebra of type $A_{n}$ or a tilted algebra of type $A_{m}\times A_{n-m}$ for any positive integer $1\leq m\leq n-1$.
\end{itemize}
\end{theorem}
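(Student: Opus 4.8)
The plan is to work inside the geometric model of the gentle algebra $A_n$. Since $A_n$ is hereditary, $\per(A_n)\simeq D^b(\modcat A_n)$, and by the geometric models of \cite{HaidenKatzarkovKontsevich17,OpperPlamondonSchroll18} this category is encoded by graded curves on a disk $\Surf$ whose marked points record the linear quiver $\Q$; a $2$-term silting complex $S$ of $A_n$ corresponds to a full formal arc system $\Delta_S$ of $\Surf$, it induces the homologically smooth graded gentle algebra $A_n^S$, and $A=\H^0(A_n^S)=\End_{\per(A_n)}(S)$ is read off from $\Delta_S$ by keeping only the degree-$0$ arrows and relations. It is convenient to use in parallel the module-theoretic description: over the hereditary algebra $A_n$ every indecomposable $2$-term complex is an indecomposable module or a shifted indecomposable projective, so $S\cong M\oplus P_Z[1]$ for a set $Z\subseteq\{1,\dots,n\}$ and a tilting module $M$ over $B:=A_n/\langle e_Z\rangle\cong\prod_j\kk A_{m_j}$, the $\kk A_{m_j}$ being the linear segments $I_j$ cut out by $Z$; here the presilting conditions amount to $M$ being $B$-tilting and supported away from $Z$.

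\textbf{(1)$\Rightarrow$(2).} Given such an $S$, I would compute $A=\End_{\per(A_n)}(S)$. Geometrically, passing to $\H^0$ cuts $\Surf$ along the locus of positive-degree arrows; since $\Surf$ is a disk, every piece is again a disk, so each component of $A$ is a gentle algebra on a disk, that is an iterated tilted algebra of type $A$ by \cite{AssemHappel81}, the numbers of vertices summing to $n$. The $2$-term hypothesis is then used twice. First, it confines the cutting locus to a single non-self-crossing chain, so $\Surf$ is separated into at most two disks and $A$ has at most two components; equivalently, in the upper-triangular decomposition $A\cong\left(\begin{smallmatrix}\End_B(M)&\Ext^1_{A_n}(M,P_Z)\\0&\End_{A_n}(P_Z)\end{smallmatrix}\right)$ one has $\End_B(M)=\prod_j\End_{\kk A_{m_j}}(M|_{I_j})$ a product of connected tilted algebras of types $A_{m_j}$, $\End_{A_n}(P_Z)\cong\kk A_{|Z|}$ connected, and --- by a direct check --- $\Ext^1_{A_n}(M|_{I_j},P_Z)\ne 0$ for every segment $I_j$ except the one containing the vertex $n$, so the gluing absorbs every tilted factor except that one into the $\kk A_{|Z|}$-block (and if $n\in Z$ everything is absorbed and $A$ is connected). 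Second, the $2$-term hypothesis excludes a ``double zero'' (two consecutive length-two relations) in $A$; since a gentle algebra on a disk is tilted of type $A$ precisely when it has no double zero --- equivalently, has global dimension at most $2$ --- each component of $A$ is genuinely tilted of type $A_{m_i}$. (Such a component may well have a branching quiver; e.g. $\End_{\kk A_4}(M[1,4]\oplus M[1,3]\oplus S_1\oplus S_3)$ is tilted of type $A_4$ with a $D_4$-shaped quiver.) Hence $A$ is tilted of type $A_n$, or tilted of type $A_m\times A_{n-m}$ for some $1\le m\le n-1$.

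\textbf{(2)$\Rightarrow$(1).} Conversely, let $A$ be tilted of type $A_n$, or $B\times B'$ with $B$ tilted of type $A_m$ and $B'$ tilted of type $A_{n-m}$. By the characterisation above, $A$ is a gentle algebra whose surface is a disk with $n$ arcs and no double zero (a disjoint union of two such disks in the disconnected case). I would draw this dissection as a full formal arc system $\Delta$ of $\Surf$ --- in the disconnected case, arranged so that $\H^0$ separates $\Surf$ into the disks of $B$ and $B'$ --- and verify that the induced grading is $2$-term, so that $\Delta$ comes from a $2$-term silting complex $S$ with $\H^0(A_n^S)\cong A$. (If $A=\End_{\kk A_n}(T)$ for a tilting module $T$ over the linear orientation then $S=T$ already works; the geometric construction handles the general case uniformly.) This realisation is naturally set up by induction on $n$, adding one arc at a time (peeling an ear of the polygon), the cases $n\le 2$ being immediate, the point at each step being that the enlarged arc system stays full, formal and $2$-term.

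\textbf{Main obstacle.} The heart of the matter is the precise dictionary ``$2$-term silting complex of $A_n$ $\longleftrightarrow$ full formal arc system of $\Surf$ with a suitably bounded grading'', from which the two structural facts are extracted: that $\H^0$ cuts the disk into at most two pieces, and that it never produces a double zero. The ``at most two components'' phenomenon is genuinely special to the $2$-term setting --- for arbitrary silting complexes of $A_n$ one obtains all iterated tilted algebras of type $A$, with arbitrarily many components --- so the bound must use the length restriction on $S$; and establishing the converse realisation, i.e. that every gentle disk-algebra with no double zero and at most two components does arise from a $2$-term arc system, is the other place where genuine work is needed. Both conclusions are consistent with, and could be bootstrapped from, the general structure theory of silted algebras in \cite{BuanZhou16}. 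Once the dictionary is in place the remaining bookkeeping of quivers and relations is routine, and enumerating the admissible arc systems up to algebra isomorphism yields the counting formula.
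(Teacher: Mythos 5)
Your reduction $S\cong M\oplus P_Z[1]$ with $(M,P_Z)$ a support $\tau$-tilting pair, and the triangular-matrix analysis of $\End_{\per(A_n)}(S)$ with connecting bimodule $\Ext^1_{A_n}(M,P_Z)$, is a genuinely different route from the paper's, which stays inside the graded-arc model: there, $2$-term silting complexes are non-positive graded arc systems with intersection indices in $\{0,1\}$ (Corollary \ref{coro-silting and FFAS}), and the key technical input is Proposition \ref{prop-gamma(0,l)}, an induction on the disk showing that a non-tilting $2$-term silting complex forces two adjacent arcs at $\m_0$ with indices $0$ and $1$, from which Theorem \ref{coro-gamma(0,l)} deduces exactly two components, each given by a \emph{tilting} restricted complex. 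Your connectivity step is sound and verifiable: every tilting module over a segment $I_j$ not containing $n$ is sincere on $I_j$, so it has a summand $M[a,r]$ reaching the right end $r$ of $I_j$, and from $0\to P(r+1)\to P(a)\to M[a,r]\to 0$ one gets $\Ext^1_{A_n}(M[a,r],P(r+1))\neq 0$; together with $\End_{A_n}(P_Z)\cong\kk A_{|Z|}$ being connected this does give at most two components with sizes summing to $n$, replacing the paper's geometric argument for that sub-step.

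The genuine gap is exactly the point you label the ``main obstacle'': the claim that the $2$-term hypothesis excludes a double zero, i.e.\ that the glued component (the $\kk A_{|Z|}$-block together with the $\End(M|_{I_j})$-blocks attached along $\Ext^1(M|_{I_j},P_Z)$) is tilted rather than strictly shod. You give no mechanism for this, and the suggested bootstrap from \cite{BuanZhou16} cannot close it: Buan--Zhou only says a connected silted algebra is tilted \emph{or strictly shod}, and strictly shod gentle tree algebras with a double zero do exist --- $\kk(1\to 2\to 3\to 4)$ with both length-two compositions zero is shod of global dimension $3$, hence strictly shod and itself a silted algebra of some hereditary type --- so no general silting-theoretic fact rules out a double zero; only the type-$A_n$-specific $2$-term index analysis does. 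That analysis is precisely the content of Proposition \ref{prop-gamma(0,l)} and its use in Theorem \ref{coro-gamma(0,l)} (all indices at $\m_0$ agree on each side of the separating arc, so each restricted complex is tilting), and your proposal contains no substitute for it; in your framework one would have to compute the quiver and relations of the triangular matrix algebra and check that no relation inside an $\End(M|_{I_j})$-block concatenates with a relation crossing into the $P_Z$-block. The same applies to (2)$\Rightarrow$(1) for disconnected algebras: realizing an arbitrary product $B'\times B''$ of tilted algebras of types $A_{n'},A_{n''}$ as $\H^0$ of a $2$-term arc system is only gestured at (``peel an ear'', ``verify the grading is $2$-term''), whereas the whole difficulty is to arrange the gradings in $\{0,1\}$ while prescribing both components. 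So the skeleton is reasonable, but the two statements that carry the theorem are asserted rather than proved.
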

Based on the classification above, in order to compute the number of silted algebras of type $A_{n}$, we need to describe and compute the number of tilted algebras of type $A_{n}$.

Let $A$ be a skew-gentle algebra and $S(A)$ be the surface of $A$. He, Zhou and Zhu \cite{HeZhouZhu23} established a bijection between the set of generalized dissections (see Definition \ref{def:GD}) of $S(A)$ and the set of isoclasses of basic support $\tau$-tilting $A$-modules. In particular, for the gentle algebra $A_{n}$, we show that all the generalized dissections are triangulation. This give us a bijection between the set of all triangulations of the surface $S(A_n)$ and the set of isoclasses of basic support $\tau$-tilting modules over $A_n$. 

Let $\Gamma$ be a generalized dissection of $S(A_n)$. Then $\Gamma$ can induce a homologically smooth algebra $A^{\Gamma}$ which is the same as the endomorphism algebra of module corresponding to $\Gamma$. In order to give a characterization of $A^{\Gamma}$, we need the following definition.

\begin{definition}\rm
Let $\Gamma$ be a generalized dissection of $S(A_n)$. If all the edges of a triangle belong to $\Gamma$, then we call this triangle ${\it complete}$.
\end{definition}

Denote by $P(1)$ the projective module of vertex $1$ in $\Q$ and by $c_{P(1)}$ the permissible curve corresponding to $P(1)$. 
As we all know, if $T$ is a $\tau$-tilting module over $A_n$. Then $P(1)$ is a direct summand of $T$. Now let $\Tri_{\Gamma}(S(A_n))$ be the set of all triangles obtained by $\Gamma$ dividing $S(A_n)$. Then we have
\begin{theorem}[Proposition \ref{prop-non-hereditary}]\label{thm:geometriv-characterization-of-tilted-algebras}
Let $\Gamma$ be a generalized dissection and $c_{P(1)}\in\Gamma$.
\begin{itemize}
  \item[\rm(1)] If $\Tri_{\Gamma}(S(A_n))$ contains no complete triangle, then $A^{\Gamma}$ is a hereditary tilted algebra of type $A_{n}$.
  \item[\rm(2)] If $\Tri_{\Gamma}(S(A_n))$ contains at least one complete triangle, then $A^{\Gamma}$ is a non-hereditary tilted algebra of type $A_{n}$.
\end{itemize}
\end{theorem}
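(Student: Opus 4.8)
The plan is to analyze the graded gentle algebra $A^\Gamma$ directly from the combinatorics of the generalized dissection $\Gamma$, using the dictionary between triangles of $\Gamma$ and the quiver-with-relations presentation of $A^\Gamma$. First I would recall (from the geometric model of Baur and Coelho Sim\~{o}es, together with the construction of $A^\Gamma$ as a homologically smooth graded gentle algebra) that the vertices of the quiver of $A^\Gamma$ are the arcs of $\Gamma$, the arrows correspond to the angles inside the triangles of $\Tri_\Gamma(S(A_n))$, and the relations come precisely from those angles lying in a common triangle two of whose sides are in $\Gamma$. In particular, $A^\Gamma$ is hereditary if and only if its quiver has no relations, i.e. if and only if no triangle of $\Tri_\Gamma(S(A_n))$ contributes a relation; a short case check on the shape of triangles on a disc (all marked points on the boundary, since $S(A_n)$ is a disc) shows this happens exactly when no triangle is complete. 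That gives the dichotomy between (1) and (2) on the level of hereditary versus non-hereditary.

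Next I would show that in either case $A^\Gamma$ is a tilted algebra of type $A_n$. Since $c_{P(1)} \in \Gamma$ and $P(1)$ is a direct summand of every $\tau$-tilting module over $A_n$, the module $M_\Gamma$ corresponding to $\Gamma$ is a $\tau$-tilting module containing $P(1)$; by the bijection quoted before the statement (He--Zhou--Zhu, specialised to $A_n$ where all generalized dissections are triangulations) this $M_\Gamma$ is in fact a (classical) tilting module over $A_n$, not merely a support $\tau$-tilting one, because $P(1)$ being a summand forces the support to be all of $A_n$. Hence $A^\Gamma = \End_{A_n}(M_\Gamma)$ is by definition a tilted algebra of type $A_n$. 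For part (1), combining this with the previous paragraph (no relations $\Rightarrow$ $A^\Gamma$ hereditary) yields that $A^\Gamma$ is a hereditary tilted algebra of type $A_n$; for part (2), the presence of a complete triangle forces at least one relation, so $A^\Gamma$ is a non-hereditary tilted algebra of type $A_n$.

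The main obstacle I expect is the careful bookkeeping in the first paragraph: identifying exactly which angles of a triangle in $\Tri_\Gamma(S(A_n))$ produce arrows and which produce relations, and checking that a triangle forces a relation in $A^\Gamma$ \emph{precisely} when it is complete (all three sides in $\Gamma$) rather than, say, when two sides are in $\Gamma$. This requires being precise about the convention for which side of an arc is "seen" by the graded gentle structure and about degenerate configurations of triangles in the disc (triangles with a boundary side, self-folded-type configurations, etc.). Once the rule "complete triangle $\Leftrightarrow$ a relation is created, and these are the only sources of relations" is pinned down, the equivalence with (non-)hereditariness is immediate, and the tilted-algebra conclusion follows formally from the bijection with $\tau$-tilting modules and the presence of $P(1)$.
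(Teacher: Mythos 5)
Your strategy is essentially the paper's: tiltedness is obtained exactly as in Corollary \ref{coro-tilted} (since $c_{P(1)}\in\Gamma$ makes $P(1)=\MM([c_{P(1)}])$ a summand of $\MM(\Gamma)$, sincerity of $P(1)$ forces full support, a $\tau$-tilting module over the hereditary algebra $A_n$ is tilting, and $A^{\Gamma}\cong\End(\MM(\Gamma))$ by Theorem \ref{thm-endo. alg.}), and the hereditary/non-hereditary dichotomy is read off from the relations of $A^{\Gamma}$ created by complete triangles, which is the content of Proposition \ref{prop-hereditary}.

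There is, however, a gap in your first paragraph, and you are worrying about the wrong direction. The implication ``no complete triangle $\Rightarrow$ no relations $\Rightarrow$ hereditary'' is immediate from Definition \ref{def-alg of GD}, because a relation $\alpha\beta$ requires the three arcs $s(\alpha)$, $t(\alpha)=s(\beta)$, $t(\beta)$ to be edges of one and the same tile. The delicate implication is the converse, ``complete triangle $\Rightarrow$ relation $\Rightarrow$ non-hereditary'', and it is \emph{not} automatic for an arbitrary generalized dissection: arrows in Definition \ref{def-alg of GD} are subject to the non-triviality condition on the intermediate pivot moves, and when the distinguished marked point (the common endpoint of all arcs of $\Dgreen(A_n)$) is a vertex of the complete triangle, one of the two would-be arrows can be absent, so no relation arises; this is exactly why Proposition \ref{prop-hereditary} characterizes heredity by the absence of complete tiles of the shapes of Cases 1, 3 and 5 of Figure \ref{fig-triangulation} only, not of all complete tiles. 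Your plan never invokes $c_{P(1)}\in\Gamma$ in this step, yet it is precisely this hypothesis that rescues the claim: $c_{P(1)}$ cuts off the distinguished marked point, so every complete triangle of $\Tri_{\Gamma}(\SURFextra(A_n))$ is of one of the forms of Figure \ref{fig-special tile}, hence of the relation-producing type, and then some simple module has projective dimension at least $2$. Once you insert this use of the hypothesis (and note that your stated concern, that a tile with only two sides in $\Gamma$ might force a relation, is harmless by the definition of $\I^{\Gamma}$), your argument coincides with the paper's proof.
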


By Theorem \ref{thm:geometriv-characterization-of-tilted-algebras} (2),  we have the following result which is very useful to compute the number of tilted algebras of type $A_n$.

\begin{proposition}[Theorem \ref{prop-tri. contains sp. tile}]
There is a bijection between the following sets:
\begin{itemize}
  \item[\rm(1)] the set of all triangulations which contains $c_{P(1)}$ and has at least one complete triangle, 
  \item[\rm(2)] the set of all non-hereditary tilted algebras of type $A_n$.
\end{itemize} 
\end{proposition}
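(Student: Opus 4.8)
The plan is to use Theorem~\ref{thm:geometriv-characterization-of-tilted-algebras} to convert the claimed bijection into a purely combinatorial/geometric statement about triangulations of $S(A_n)$, and then exhibit the map and its inverse explicitly. First I would set up the candidate assignment: to a triangulation $\Gamma$ containing $c_{P(1)}$ with at least one complete triangle, associate the algebra $A^{\Gamma}$; by Theorem~\ref{thm:geometriv-characterization-of-tilted-algebras}(2) this is a non-hereditary tilted algebra of type $A_n$, so the map is well-defined into the set (1)$\to$(2). The content of the proposition is then that this map is a bijection onto \emph{isoclasses} of non-hereditary tilted algebras of type $A_n$.

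\medskip

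Surjectivity: I would argue that every non-hereditary tilted algebra $B$ of type $A_n$ arises as $H^0$ of a silting complex, hence (via the geometric model recalled in the introduction) as $A^{\Gamma}$ for some generalized dissection $\Gamma$ of $S(A_n)$; since $P(1)$ is always a direct summand of a $\tau$-tilting module, one can normalize so that $c_{P(1)}\in\Gamma$, and then, by the contrapositive of Theorem~\ref{thm:geometriv-characterization-of-tilted-algebras}(1), the fact that $B$ is non-hereditary forces $\Tri_{\Gamma}(S(A_n))$ to contain at least one complete triangle. (Here I would also use that for $A_n$ every generalized dissection is a triangulation, as stated in the excerpt, so $\Gamma$ genuinely lies in set (1).) Injectivity: suppose $\Gamma_1,\Gamma_2$ are two such triangulations with $A^{\Gamma_1}\cong A^{\Gamma_2}$. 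I would recover $\Gamma_i$ from $A^{\Gamma_i}$ by reading off the arcs from the indecomposable projectives / the combinatorics of the permissible curves: the isomorphism class of the bound quiver of $A^{\Gamma}$ determines the dual graph of the triangulation together with the position of the complete triangles and the marked arc $c_{P(1)}$, so the triangulation is reconstructed uniquely. Concretely I expect one shows the fan of arcs around each marked point is encoded by the arrows and relations at the corresponding vertex.

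\medskip

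The main obstacle I anticipate is injectivity, i.e.\ showing that non-isomorphic triangulations (both containing $c_{P(1)}$, both with a complete triangle) cannot give isomorphic algebras. A priori two different dissections of the disk could yield isomorphic endomorphism algebras up to relabeling of vertices; one must pin down enough invariants of $A^{\Gamma}$ — the quiver with relations, and in particular which triangles are ``complete'' (these correspond to the relations of a specific shape coming from a $2$-cycle-free commutativity square in the gentle presentation) — to rigidify the correspondence. I would handle this by using the established correspondence between generalized dissections and support $\tau$-tilting modules together with the fact that the tilted algebra determines its module category, hence its AR-quiver, hence the underlying $\tau$-tilting object up to the action of automorphisms; the normalization $c_{P(1)}\in\Gamma$ kills the remaining ambiguity. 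The remaining steps (well-definedness, surjectivity) should be routine given Theorem~\ref{thm:geometriv-characterization-of-tilted-algebras} and the dictionary between silting complexes, dissections, and tilted algebras recalled in the introduction.
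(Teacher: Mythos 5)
Your well-definedness and surjectivity arguments are fine and essentially coincide with what the paper gets for free from Lemma \ref{lemm-invariant of tilt}, Theorem \ref{thm-endo. alg.}, Corollary \ref{coro-tilted} and Proposition \ref{prop-non-hereditary}. The genuine gap is injectivity, which you correctly identify as the crux but do not actually prove. Your proposed resolution rests on the claim that ``the tilted algebra determines its module category, hence its AR-quiver, hence the underlying $\tau$-tilting object up to the action of automorphisms.'' This principle is false in general: the assignment $T\mapsto \End_{A_n}(T)$ from tilting modules to tilted algebras is far from injective, as the hereditary case already shows (there are $2^{n-1}$ tilting $A_n$-modules with hereditary endomorphism algebra but only $a_{\hta}(A_n)=2^{n-2}+(1+(-1)^{n-1})2^{\frac{n-5}{2}}$ hereditary tilted algebras, cf.\ Proposition \ref{prop-nht}), and knowing $\modcat B$ does not canonically reconstruct the pair $(A_n,T)$. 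Likewise your alternative phrasing, that the isomorphism class of the bound quiver of $A^{\Gamma}$ ``determines the dual graph of the triangulation together with the position of the complete triangles and the marked arc $c_{P(1)}$,'' is precisely the statement to be proven, not an argument for it; the normalization $c_{P(1)}\in\Gamma$ and the presence of a complete triangle must do real work, and nothing in your sketch explains how they eliminate the ambiguity that demonstrably exists in the hereditary case.

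The paper closes this gap by a concrete induction on $n$: given $\Gamma_1,\Gamma_2\in\Tri_{\rmc}(\SURFextra(A_n))$ with $A^{\Gamma_1}\cong A^{\Gamma_2}$, the common bound quiver contains the subquiver $\xymatrix@C=0.4cm{c_2\ar[r]&c_1\ar[r]&c_3}$ with zero relation coming from a complete triangle $\T$ shared by both triangulations; cutting the disk along the edges of $\T$ produces three smaller marked ribbon surfaces ${_L\SURFextra}$, ${_R\SURFextra}$, ${_U\SURFextra}$, each homotopic to the surface of a smaller linear $A_m$, and the restricted dissections are compared via the inductive hypothesis (with an auxiliary enlargement $\widehat{{_L\SURFextra}}$ and a separate treatment of the degenerate case where the cut piece has size $k-1$, using that the remaining quiver is then hereditary and the curve $q_1q_3$ is forced). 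If you want a correct proof along your lines, you would need to supply an argument of this kind (or some other mechanism that genuinely uses both hypotheses in the definition of $\Tri_{\rmc}(\SURFextra(A_n))$) rather than appealing to recovery of the $\tau$-tilting module from its endomorphism algebra.
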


Now we can compute the number of silted algebras of type $A_n$. Denote by $a_{\sa}(A_n)$ the number of silted algebras of type $A_n$, by $a_{\ta}(A_n)$ the number of tilted algebras of type $A_n$ and by $a_{\ncsa}(A_n)$ the number of silted algebras of type $A_n$ that are not tilted algebras of type $A_n$. Then

\begin{theorem} \label{thm-1}
\begin{itemize}
{\rm \item[(1)] (Theorem \ref{thm-tilt alg})}
The number of tilted algebras of type $A_n$ is
\[a_{\ta}(A_n) = \frac{1}{n+1}\tbinom{2n}{n}  + (1+(-1)^{n-1}) 2^{\frac{n-5}{2}} - 2^{n-2}. \]
\end{itemize}
\begin{itemize}
{\rm \item[(2)] (Theorem \ref{thm-slit alg})}
The number of silted algebras of type $A_n$ is
\begin{align}
a_{\sa}(A_n) & = a_{\ta}(A_n) + a_{\ncsa}(A_n). \nonumber
\end{align}
where \[ a_{\ncsa}(A_n) = \frac{1}{2}\sum_{n'+n''=n \atop 1\le n'\ne n''\le n-1} a_{\ta}(A_{n'})a_{\ta}(A_{n''})
+ \frac{1+(-1)^{n}}{2} \cdot \left(\begin{smallmatrix}A_{\lfloor \frac{n}{2}\rfloor}+1 \\ 2\end{smallmatrix}\right).\]
\end{itemize}
\end{theorem}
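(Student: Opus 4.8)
The plan is to count the two families in the classification of Theorem~\ref{coro-gamma(0,l)} separately. For part (1) I would first split the tilted algebras of type $A_n$ into hereditary and non-hereditary ones using Theorem~\ref{thm:geometriv-characterization-of-tilted-algebras}. A hereditary algebra is tilted of type $A_n$ exactly when it is derived equivalent to $A_n$ (by Happel's theorem on hereditary algebras), hence exactly when it is the path algebra $k\Q'$ of some orientation $\Q'$ of the Dynkin graph $\mathbb{A}_n$ (all such path algebras are derived equivalent, and the derived category determines the underlying graph). Since $k\Q'\cong k\Q''$ iff $\Q'\cong\Q''$, i.e.\ iff they are related by the unique nontrivial automorphism $\rho$ of $\mathbb{A}_n$ (reversing the order of the vertices and the direction of every edge), the number of hereditary tilted algebras of type $A_n$ is the number of $\rho$-orbits on the $2^{n-1}$ orientations. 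By Burnside's lemma this is $\tfrac12(2^{n-1}+|\mathrm{Fix}(\rho)|)$, and $\mathrm{Fix}(\rho)$ consists of the ``anti-palindromic'' orientations, of which there are $2^{(n-1)/2}$ when $n$ is odd and none when $n$ is even; this gives $2^{n-2}+(1+(-1)^{n-1})2^{(n-5)/2}$.

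Next I would count the non-hereditary tilted algebras of type $A_n$. By Proposition~\ref{prop-tri. contains sp. tile} these are in bijection with the triangulations of $S(A_n)$ that contain $c_{P(1)}$ and have at least one complete triangle. Realising $S(A_n)$ as a disk with $n+3$ boundary marked points in which $c_{P(1)}$ is the ear arc cutting off a single marked point, the triangulations of $S(A_n)$ containing $c_{P(1)}$ are exactly the triangulations of the $(n+2)$-gon obtained by removing the ear, so there are $C_n=\tfrac1{n+1}\binom{2n}{n}$ of them (equivalently, the classical count of tilting $A_n$-modules, all of which contain the projective--injective $P(1)$). It remains to subtract those with no complete triangle. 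After the ear-cut a complete triangle is a triangle all of whose sides lie in (diagonals of the $(n+2)$-gon) $\cup\{e\}$, where $e:=c_{P(1)}$ is now a distinguished boundary edge; equivalently, a triangle using no polygon side except possibly $e$. Writing $\phi(N)$ for the number of triangulations of an $N$-gon with a distinguished side in which every triangle uses some side other than the distinguished one, one analyses the triangle incident to $e$: it must use another polygon side, so its third vertex is one of the two boundary-neighbours of $e$, and cutting it off leaves an $(N-1)$-gon with a new distinguished side in the same situation. This gives $\phi(N)=2\phi(N-1)$ for $N\ge4$ and $\phi(3)=1$, hence $\phi(N)=2^{N-3}$ and $\phi(n+2)=2^{n-1}$. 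Thus the number of non-hereditary tilted algebras of type $A_n$ is $C_n-2^{n-1}$, and adding the two counts yields the formula for $a_{\ta}(A_n)$.

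For part (2), Theorem~\ref{coro-gamma(0,l)} says the silted algebras of type $A_n$ are the tilted algebras of type $A_n$ together with the algebras $B'\times B''$ with $B'$ tilted of type $A_{n'}$, $B''$ tilted of type $A_{n''}$, $n'+n''=n$ and $1\le n',n''\le n-1$. The first family consists of connected algebras (an endomorphism algebra of a tilting module over the connected hereditary algebra $A_n$ is connected) while the algebras of the second family are disconnected, so the two families are disjoint and $a_{\sa}(A_n)=a_{\ta}(A_n)+a_{\ncsa}(A_n)$. Since the multiset of numbers of vertices of the connected components of $B'\times B''$ equals $\{n',n''\}$, algebras arising from distinct unordered partitions are pairwise non-isomorphic, so I would count within each partition. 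For $n'\ne n''$ the two components are distinguished by their sizes, so each ordered pair $(B',B'')$ gives a distinct algebra and summing over unordered partitions halves the sum over ordered ones, contributing $\tfrac12\sum_{n'+n''=n,\,1\le n'\ne n''\le n-1}a_{\ta}(A_{n'})a_{\ta}(A_{n''})$. For $n'=n''=n/2$ (possible only when $n$ is even) the datum is an unordered pair, with repetition allowed, of tilted algebras of type $A_{n/2}$, giving $\binom{a_{\ta}(A_{n/2})+1}{2}$ algebras; with $\tfrac{1+(-1)^{n}}{2}$ indicating the parity of $n$ and $\lfloor n/2\rfloor=n/2$ in that case, this matches the asserted expression for $a_{\ncsa}(A_n)$.

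The crux is the combinatorial identity $\phi(N)=2^{N-3}$: one must check carefully that, after cutting the ear, ``$S(A_n)$ has no complete triangle'' translates \emph{exactly} into ``no triangle of the $(n+2)$-gon triangulation avoids all polygon sides except the distinguished one'', and then run the case analysis of the triangle on $e$ to obtain the clean doubling recursion. The supporting steps --- identifying $S(A_n)$ and $c_{P(1)}$ as a polygon and its ear from the geometric model, the Burnside count of anti-palindromic orientations, and verifying that products of tilted algebras over different partitions are never isomorphic and never connected --- are routine but must be done with some care.
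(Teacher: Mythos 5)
Your proposal is correct, and it follows the same overall skeleton as the paper -- split the tilted algebras into hereditary and non-hereditary ones via Proposition \ref{prop-non-hereditary} and the bijection of Theorem \ref{prop-tri. contains sp. tile}, then handle the silted-but-not-tilted algebras via the classification of Theorem \ref{coro-gamma(0,l)} -- but the two counting lemmas in part (1) are carried out by genuinely different elementary arguments. For the non-hereditary count the paper (Proposition \ref{prop-cls:nhta}) derives the recursion $a_{\nhta}(A_n)=2a_{\nhta}(A_{n-1})+\sum_{j=2}^{n-1}C_{j-1}C_{n-j}$ by analysing the triangle adjacent to $c_{P(1)}$ and then solves it using the Catalan convolution, whereas you count the complement directly: triangulations of the $(n+2)$-gon (with $c_{P(1)}$ as the distinguished side after cutting the ear) having no complete triangle satisfy the doubling recursion $\phi(N)=2\phi(N-1)$, $\phi(3)=1$, giving $2^{n-1}$ and hence $C_n-2^{n-1}$; this is the same combinatorics packaged more cleanly -- your doubling step corresponds to cases (1) and (3) of \Pic \ref{fig-prop-cls:nhta triangulations}, and subtracting $2^{n-1}$ from $C_n$ replaces the case (2) convolution. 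For the hereditary count the paper (Proposition \ref{prop-nht}) counts linear quivers through the shape recursion $a_{\hta}(A_n)=2^{n-3}+2a_{\hta}(A_{n-2})$, while you identify the hereditary tilted algebras of type $A_n$ with path algebras of orientations of $\mathbb{A}_n$ (Happel's derived-equivalence theorem in one direction, slice/APR tilting modules in the other) and apply Burnside's lemma to the reversal action on the $2^{n-1}$ orientations, with $2^{(n-1)/2}$ fixed orientations for odd $n$ and none for even $n$; both routes give $2^{n-2}+(1+(-1)^{n-1})2^{\frac{n-5}{2}}$, and yours makes explicit the inputs (every orientation occurs, and only quiver-isomorphic orientations give isomorphic algebras) that the paper leaves implicit. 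Part (2) of your argument is essentially identical to the proof of Theorem \ref{thm-slit alg}; note that the binomial displayed in the statement should be read as $\tbinom{a_{\ta}(A_{\lfloor n/2\rfloor})+1}{2}$, as in Theorem \ref{thm-slit alg}, which is exactly what your unordered-pair count produces.
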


This paper is organized as follows. In Section \ref{sec:preliminarys}, we give the preliminaries on silting theory and the geometric models of gentle algebras. In Section \ref{sect-tilted}, we give a geometric characterization of the tilted algebras of type $A_n$. In Section \ref{sec:end-silt}, we give a geometric characterization and a classification of the silted algebras of type $A_n$. In Section \ref{sect-number-silted}, we compute the number of silted algebras of type $A_n$.

\medskip

\noindent\emph{Notations and conventions.} Throughout this paper, let $\kk$ be an algebraically closed field.  Let $A$ be a finite-dimensional $\kk$-algebra. Denote by $\modcat A$ the category of right $A$-modules and by $\H^{i}(A)$ the $i$-th cohomology of $A$ for any integer $i$. For arbitrary two arrows $\alpha$ and $\beta$ of the quiver $\Q$,
if the target $t(\alpha)$ of $\alpha$ is the same as the source $s(\beta)$ of $\beta$, then the product of $\alpha$ and $\beta$ is denoted by $\alpha\beta$.
For simplicity, let $\Surf$ be a smooth oriented surface without puncture such that its boundary $\bSurf$ is non-empty. Moreover, let $\sharp X$ be the number of the elements in the set $X$. Let $M$ be an $A$-module. $|M|$ denotes the number of non-isomorphic indecomposable direct summands of $M$. Moreover, let $\Q$ be the quiver of Dynkin type $\mathbb{A}_n$ with linear orientation: $$\begin{xy}
(-10,0)*+{1}="1",
(0,0)*+{2}="2",
(12,0)*+{\cdots}="3",
(28,0)*+{n-1}="4",
(42,0)*+{n}="5",
\ar"1";"2", \ar"2";"3", \ar"3";"4", \ar"4";"5",
\end{xy}$$ and $A_{n}=\kk \Q$.

\medskip
\noindent\emph{Acknowledgements.} The authors would like to thank Dong Yang for his consistent encouragement and support. They are grateful to Anya Nordskova for carefully reading the manuscript and pointing out an error.
Yu-Zhe Liu is supported by the National Natural Science Foundation of China (Grant No. 12171207),
Guizhou Provincial Basic Research Program (Natural Science) (Grant No. ZK[2024]YiBan066)
and Scientific Research Foundation of Guizhou University (Grant Nos. [2022]53, [2022]65, [2023]16).
Houjun Zhang acknowledges support by the National Natural Science Foundation of China (Grant No. 12301051) and by
Natural Science Research Start-up Foundation of Recruiting Talents of Nanjing University of Posts and Telecommunications (Grant No.
NY222092).

\section{The geometric models of gentle algebras: module categories and derived categories}\label{sec:preliminarys}
In this section we recall some basic notions and properties of silting theory and the geometric models of gentle algebras.
We refer the reader to \cite{KellerVossieck88,AiharaIyama12,BuanZhou16,AdachiIyamaReiten14,BaurCoelho21,OpperPlamondonSchroll18,HaidenKatzarkovKontsevich17} for more details.

\subsection{Tilting and silting theory}

\begin{definition}\rm
Let $\Q$ be a finite, connected, and acyclic quiver and $A=\kk \Q$. An algebra $B$ is said to be {\it tilted} of type $A$ if there exists a tilting module $T$ over $A$ such that $B=\End (T_{A})$.
\end{definition}

\begin{definition}\rm
Let $A$ be a finite-dimensional algebra and $M$ be an $A$-module.
\begin{itemize}
\item[\rm(1)] $M$ is called {\it $\tau$-rigid} if $\Hom_{A}(M,\tau M)=0$, and
$M$ is called {\it $\tau$-tilting} if $M$ is $\tau$-rigid and $|M|=|A|$.
\item[\rm(2)] $M$ is called {\it support $\tau$-tilting} if there exists an idempotent
$e$ of $A$ such that $M$ is a $\tau$-tilting $A/ \langle e \rangle$-module.
\end{itemize}
\end{definition}

Denote by $\mathrm{s\tau}$-$\mathrm{tilt}A$ the set of isomorphism classes of basic support $\tau$-tilting $A$-modules. 

\begin{definition}\label{def:silting-objects}\rm
Let $A$ be a finite-dimensional algebra and $P$ be a complex in the bounded homotopy category of finitely generated projective $A$-modules $K^{b}(\proj A)$.
 \begin{itemize}
\item[\rm(1)] $P$ is called {\it presilting} if $\Hom_{K^{b}(\proj A)}(P,P[i])=0$ for $i>0$.
\item[\rm(2)] $P$ is called {\it silting} if it is presilting and generates $K^{b}(\proj A)$ as a triangulated category.
\item[\rm(3)] $P$ is called {\it 2-term } if it only has non-zero terms in degrees $-1$ and $0$.
\end{itemize}
\end{definition}

Note that 2-term silting complex $P$ is tilting if $\Hom_{K^{b}(\proj A)}(P,P[-1])=0$. We denote by $\mathrm{2}$-$\mathrm{silt}A$ the set of isomorphism classes of basic 2-term silting complexes over $A$. 

\begin{definition}\label{def:silted-algebras}\rm
Let $\Q$ be an acyclic quiver and $A=\kk\Q$. We call an algebra $B$ {\it silted} of type $A$ if there exists a 2-term silting complex $P$ over $A$ such that $B=\End_{K^{b}(\proj A)} (P)$.
\end{definition}

A finite-dimensional algebra $A$ is called {\it shod} \cite{CoelhoLanzilotta99} if for each indecomposable $A$-module $X$, either $\mathrm{pd} X \leq1$ or $\mathrm{id} X \leq 1$ (that is: the projective or the injective dimension is at most one). $A$ is called {\it strictly shod} if $A$ is shod and $\mathrm{gl.dim} A = 3$. Notice that any silted algebra is shod. Furthermore, Buan and Zhou  gave a classification of the connected finite dimensional silted algebras (see \cite[Theorem 2.13]{BuanZhou16}) as following:

\begin{theorem} \label{thm:classification-silted}
Let $A$ be a connected finite-dimensional algebra. Then the following are equivalent:
\begin{itemize}
\item[\rm (1)] $A$ is a silted algebra;
\item[\rm (2)] $A$ is a tilted algebra or a strictly shod algebra.
\end{itemize}
\end{theorem}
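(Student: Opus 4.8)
The plan is to treat the two implications separately, using the fact recalled above that every silted algebra is shod, together with three external inputs: (i) the structure theory of shod algebras of Coelho--Lanzilotta \cite{CoelhoLanzilotta99}, whereby a shod algebra has global dimension at most $3$; (ii) the homological characterisation of quasi-tilted algebras of Happel--Reiten--Smal{\o}, namely that $A$ is quasi-tilted precisely when $\mathrm{gl.dim}\,A\le 2$ and every indecomposable $A$-module $X$ satisfies $\pd X\le 1$ or $\id X\le 1$ --- so that, for algebras of global dimension at most $2$, the notions ``shod'' and ``quasi-tilted'' coincide; and (iii) the dichotomy, conjectured by Happel--Reiten--Smal{\o} and proved by Happel and by Lenzing--Skowro\'nski, that a connected quasi-tilted algebra is either tilted or quasi-tilted of canonical type.

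For $(1)\Rightarrow(2)$, write $A=\End_{K^{b}(\proj H)}(P)$ with $H$ a connected hereditary algebra and $P$ a $2$-term silting complex. By the recalled fact $A$ is shod, hence $\mathrm{gl.dim}\,A\le 3$. If $\mathrm{gl.dim}\,A=3$ then $A$ is strictly shod by definition and we are done; so assume $\mathrm{gl.dim}\,A\le 2$, whence $A$ is quasi-tilted, and the remaining task is to rule out that $A$ is quasi-tilted of canonical type. Here I would exploit the explicit shape of $P$: since the kernel of its differential is a submodule of a projective module, hence projective, one may write $P\simeq T'\oplus P'[1]$ in $K^{b}(\proj H)$ with $T'$ a rigid $H$-module, $P'$ a projective $H$-module satisfying $\Hom_{H}(P',T')=0$, and $\thick(T'\oplus P')=K^{b}(\proj H)$; it follows that $T'$ is a tilting module over the hereditary quotient of $H$ supported off $P'$ and that $A$ is the triangular matrix algebra $\left(\begin{smallmatrix}\End_{H}(T')&\Ext^{1}_{H}(T',P')\\0&\End_{H}(P')\end{smallmatrix}\right)$ over the tilted algebra $\End_{H}(T')$ and the hereditary algebra $\End_{H}(P')$. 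From this presentation I would extract a complete slice in $\modcat A$ and then invoke the characterisation of tilted algebras among quasi-tilted ones by the existence of a complete slice --- equivalently, by the absence of the sincere separating exceptional tubular family that characterises the canonical type (Lenzing--Skowro\'nski) --- to conclude that $A$ is tilted. Showing that the module category of the endomorphism algebra of a $2$-term silting complex over a hereditary algebra never exhibits such a tubular structure is the point where I expect the real difficulty to lie.

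For $(2)\Rightarrow(1)$, suppose first that $A$ is tilted, $A=\End_{H}(T)$ with $T$ a tilting module over a hereditary algebra $H$. Since $\pd_{H}T\le 1$, a minimal projective presentation $0\to P^{-1}\to P^{0}\to T\to 0$ yields a complex $P=(P^{-1}\to P^{0})\in K^{b}(\proj H)$ that is a projective resolution of $T$, hence isomorphic to $T$ in the derived category; thus $P$ is a $2$-term tilting --- in particular silting --- complex with $\End_{K^{b}(\proj H)}(P)\cong\End_{H}(T)=A$, so $A$ is silted. Suppose next that $A$ is strictly shod. Here I would use the structural description of strictly shod algebras (Coelho--Lanzilotta, together with Reiten--Skowro\'nski's analysis of algebras of small homological dimension): such an $A$ contains a hereditary full convex subcategory $H$ and carries torsion data exhibiting $\modcat A$ as obtained from $\modcat H$ by a single controlled extension step, namely the one raising the global dimension to $3$. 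Reversing the construction used in the first part, one reassembles from this data a $2$-term silting complex $P\simeq T'\oplus P'[1]$ over $H$ --- the genuine projective summand $P'[1]$ being precisely what keeps $P$ from being tilting, equivalently what forces $\mathrm{gl.dim}\,A=3$ --- and then checks directly that $\End_{K^{b}(\proj H)}(P)\cong A$. Assembling this complex from the homological invariants of an arbitrary strictly shod algebra is the second main obstacle; once $P$ is at hand, verifying that it is $2$-term silting with the correct endomorphism algebra is a routine check.
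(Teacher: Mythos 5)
First, a remark on the comparison: the paper does not prove this statement at all --- it is quoted from Buan and Zhou \cite[Theorem 2.13]{BuanZhou16} --- so the only meaningful benchmark is their published argument, whose overall architecture your plan does reproduce. The steps you carry out are correct: silted $\Rightarrow$ shod $\Rightarrow$ $\mathrm{gl.dim}\,A\le 3$; the reduction, when $\mathrm{gl.dim}\,A\le 2$, to excluding the canonical-type quasi-tilted case; the decomposition $P\simeq T'\oplus P'[1]$ of a $2$-term silting complex over a hereditary algebra (valid because every object of $D^b(H)$ splits as the sum of its shifted cohomologies and $\H^{-1}(P)$ is a submodule of a projective, hence projective); and the direction ``tilted $\Rightarrow$ silted'' via the two-term projective resolution of the tilting module.

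Nevertheless the proposal has two genuine gaps, both of which you flag yourself, and they are precisely the substance of the theorem. (a) For $(1)\Rightarrow(2)$ you must show that a connected silted algebra of global dimension at most $2$ is tilted rather than quasi-tilted of canonical type; your plan to ``extract a complete slice'' from the presentation $\left(\begin{smallmatrix}\End_H(T')&\Ext^1_H(T',P')\\0&\End_H(P')\end{smallmatrix}\right)$ is a restatement of what has to be proved, and you give no mechanism for producing the slice or for ruling out a sincere separating tubular family in $\modcat A$. (b) For ``strictly shod $\Rightarrow$ silted'' the assertion that such an $A$ ``contains a hereditary full convex subcategory $H$'' from which the silting complex can be reassembled is not a quotable theorem in this form: in Buan--Zhou's argument the hereditary algebra over which $A$ is realised is manufactured from the left and right support algebras attached to the trisection of $\modcat A$ coming from the shod structure, not located inside $A$, and the construction of $P=T'\oplus P'[1]$ together with the verification $\End_{K^b(\proj H)}(P)\cong A$ is the entire content of this implication. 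As written, your text is a correct roadmap of the known proof with its two essential steps left open, so it does not yet constitute a proof.
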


\subsection{The geometric model for module categories of gentle algebras}

\begin{definition} \rm
We say a finite-dimensional algebra $A$ is a {\it gentle algebra} if $A\cong \kk\Q/\I$ such that:
\begin{itemize}
  \item[(G1)] Each vertex of $\Q$ is the source of at most two arrows and the target of at most two arrows.

  \item[(G2)] For each arrow $\alpha:x\to y$ in $\Q$, there is at most one arrow $\beta$
  whose source (resp., target) is $y$ (resp., $x$) such that $\alpha\beta\in\I$ (resp., $\beta\alpha\in\I$).

  \item[(G3)] For each arrow $\alpha:x\to y$ in $\Q$, there is at most one arrow $\beta$
  whose source (resp., target) is $y$ (resp., $x$) such that $\alpha\beta\notin\I$ (resp., $\beta\alpha\notin\I$).

  \item[(G4)] $\I$ is generated by paths of length $2$.
\end{itemize}
\end{definition}
Recall that a {\it graded gentle algebra} is a gentle algebra with a grading $|\cdot|: \Q_{1} \to \mathbb{Z}$,
i.e., each arrow is assigned with an integer such that $|\omega_1\omega_2|=|\omega_1|+|\omega_2|$
for arbitrary two paths $\omega_1$ and $\omega_2$ on the bound quiver $(\Q,\I)$.
Thus for each path $\varepsilon_v$ of length zero corresponding to the vertex $v$,
we have $\varepsilon_v^2=\varepsilon_v$ and so $|\varepsilon_v|=0$.
In particular, any $\kk$-algebra $A$ is a graded algebra $(A, |\cdot|)$ with
$|\omega|=0$ for all path $\omega$ on the bound quiver $(\Q,\I)$.
Therefore, when no ambiguity is possible, $(A, |\cdot|)$ is abbreviated as $A$.

\begin{definition} \label{def-MRSgreen} \rm
A {\it marked surface} is a pair $(\Surf, \M)$ defined by a smooth surface $\Surf$ and a finite subset $\M$ of $\bSurf$,
where each element in $\M$, denoted by $\gbullet$, is called a {\it marked point}. The segments obtained by $\M$ divides $\bSurf$ are called {\it boundary segments}.
A {\it marked ribbon surface} is a triple $\SURFgreen = (\Surf, \M, \Dgreen)$ where
$\Dgreen$ is a collection of curves whose endpoints are marked points such that:
\begin{itemize}
  \item for any two curves $c_1, c_2\in \Dgreen$ we have $c_1\cap c_2\cap (\Surf\backslash\bSurf) =\varnothing$;
  \item each {\it elementary $\gbullet$-polygon}, the polygon obtained by $\Dgreen$ cutting $\Surf$,
    has a unique side which is not belong to $\Dgreen$ (thus this side is a subset of $\bSurf$).
    We denote by $\OEP(\SURFgreen)$ the set of all elementary $\gbullet$-polygons.
\end{itemize}
\noindent  $\Dgreen$ is called a {\it full formal $\gbullet$-arc system} (=$\gbullet$-FFAS)
or an {\it admissible $\gbullet$-dissection} of $\Surf$ and its elements are called {\it $\gbullet$-arcs}.

Moreover, for any digon in $\OEP(\SURFgreen)$, we add a point $\rbullet$ on its boundary segment, and called a {\it extra marked point}.
We denote by $\E$ the set of all extra marked points and $\SURFextra$ the marked ribbon surface with extra marked points. A bounded component of $\Surf$ is {\it unmarked} if the bounded component with no marked points in $\M$.
\end{definition}

Next we recall how to construct a gentle algebra from a $\gbullet$-FFAS of a marked ribbon surface.

\begin{construction} \label{const-gentle alg}
Let $\SURFextra = (\Surf, \M, \Dgreen)^{\E}$  be a marked ribbon surface. Then we can construct a finite-dimensional $\kk$-algebra $A(\SURFextra)=\kk\Q/\I$ as following:
\begin{itemize}
  \item[\rm Step 1]
    there is a bijection $\mathfrak{v}: \Dgreen \to \Q_0 $, i.e., each $\gbullet$-arc can be viewed as a vertex in $\Q_0$;
  \item[\rm Step 2]
    for each elementary $\gbullet$-polygon $\PP$, there exists an arrow $\alpha: \mathfrak{v}(a_1) \to \mathfrak{v}(a_2)$
    if $a_1, a_2\in \Dgreen$ are two edges of $\PP$ with common endpoints $p\in \Y$
    and $a_1$ precede $a_2$ in counterclockwise order around $p$;
  \item[\rm Step 3]
    the ideal $\I$ is generated by $\alpha\beta$, where $\mathfrak{v}^{-1}(s(\alpha))$, $\mathfrak{v}^{-1}(t(\alpha))=\mathfrak{v}^{-1}(s(\beta))$, $\mathfrak{v}^{-1}(t(\beta))$ are edges of the same elementary $\gbullet$-polygon.
\end{itemize}
\end{construction}

The corresponding $\SURFextra \mapsto A(\SURFextra)$ given by {\rm Construction} \ref{const-gentle alg} induces a bijection
between the set of all homotopy classes of marked ribbon surfaces and that of isoclasses of gentle algebras.
Therefore, up to homotopy equivalence, for any gentle algebra $A$, there exists a unique marked ribbon surface $\SURFextra$
such that $A\cong A(\SURFextra)$. We say $\SURFextra$ is the marked ribbon surface of $A$ and denote it by $\SURFextra(A)$.

In order to describe the indecomposable modules of gentle algebras, Baur and Coelho Sim\~{o}es \cite{BaurCoelho21} defined permissive curves.

A {\it curve} in $\SURFextra=(\Surf,\M,\Dgreen)^{\E}$  is a continuous map $c:[0,1]\to \Surf$ such that
\begin{itemize}
\item[(1)] for any $0<t<1$, $c(t)\in \innerSurf$;
\item[(2)] $c(0),c(1)\in \M\cup\E$ or $c(0)=c(1)\in \innerSurf$.
\end{itemize}
Moreover, we say $c$ is {\it consecutively crossing} $u, v \in \Dgreen$ if
the segment of $c$ between the points $p_1 = c \cap u$ and $p_2 = c \cap v$ does not cross any other arc in $\Dgreen$.
For any curve $c$, we always assume that $c$ has minimal intersection number with $\Dgreen$.

\begin{definition} \label{def-PC-curve} \rm
Let $\SURFextra = (\Surf,\M,\Dgreen)^{\E}$ be a marked ribbon surface.
\begin{enumerate}
\item[(1)] A curve $c$ is called {\it permissible} if the following conditions are satisfied.
\begin{enumerate}
\item The winding number of $c$ around any unmarked boundary component of $\Surf$ is either 0 or 1;
\item If $c$ consecutively crosses two (possibly not distinct) arcs $u$ and $v$ in $\Dgreen$,
then $u$ and $v$ have a common endpoint $p\in \M$, and locally we have a triangle with vertex $p$.
\end{enumerate}
\item[(2)] A {\it permissible closed curve} is a closed curve $c$ satisfying condition (1)(b).
\end{enumerate}
\end{definition}

\begin{definition} \label{def-PC} \rm
Any two permissible curves $c_1$ and $c_2$ in $\SURFextra$ are called {\it equivalent},
if one of the following conditions holds:
\begin{itemize}
  \item[\rm(1)] There is a sequence of consecutive edges $\delta_1,\ldots,\delta_k$
    of elementary $\gbullet$-polygon $\PP$ (which is not of Case I shown in \Pic \ref{Case I and II}) such that:
    \begin{itemize}
      \item $c_1$ is isotopic to the concatenation of $c_2$ and $\delta_1,\ldots,\delta_k$,
      \item $c_1$ starts at an endpoint of $\delta_1$ (resp., $\delta_k$),
        $c_2$ starts at an endpoint of $\delta_k$ (resp., $\delta_1$),
        and their first crossing with $\Dgreen$ is with the same edge of $\PP$.
    \end{itemize}
  \item[\rm(2)] The starting points of $c_1$ and $c_2$ are marked points of a elementary $\gbullet$-polygon $\PP$
    of Case I or II shown in \Pic \ref{Case I and II};
    their first crossing with $\Dgreen$, say $\delta$, is the same as edge of $\PP$
    and the segments of $c_1$ and $c_2$ between $\delta$ and their ending points are isotopic.
\end{itemize}
\end{definition}

\begin{figure}[htbp]
\definecolor{ffqqqq}{rgb}{1,0,0}
\definecolor{qqwuqq}{rgb}{0,0.5,0}
\begin{tikzpicture}
\filldraw[color=black!20] (0,0) circle (0.4);
\draw[line width=1.2pt] (0,0) circle (0.4);
\filldraw[color=black!20] (-2,-1.5) to (2,-1.5) to (2,-1.7) to (-2,-1.7) to (-2,-1.5);
\draw[line width=1.2pt] (-2,-1.5) to (2,-1.5);
\filldraw[qqwuqq] (0, -1.5) circle (0.1);
\draw[qqwuqq][line width=1.2pt] (0, 1.3) to[out=180, in=135] (0, -1.5);
\draw[qqwuqq][line width=1.2pt] (0, 1.3) to[out=  0, in= 45] (0, -1.5);
\draw ( 0.00,-1.75) node[below]{Case I};
\end{tikzpicture}
\ \ \ \ \ \ \ \
\begin{tikzpicture}
\filldraw[color=black!20] (0,0) circle (0.4);
\draw[line width=1.2pt] (0,0) circle (0.4);
\filldraw[color=black!20] (-2,-1.5) to (2,-1.5) to (2,-1.65) to (-2,-1.65) to (-2,-1.5);
\filldraw[color=black!20] (-2, 1.5) to (2, 1.5) to (2, 1.65) to (-2, 1.65) to (-2, 1.5);
\draw[line width=1.2pt] (-2,-1.5) to (2,-1.5); \draw[line width=1.2pt] (-2, 1.5) to (2, 1.5);
\filldraw[qqwuqq] (0, -1.5) circle (0.1); \filldraw[qqwuqq] (0, 1.5) circle (0.1);
\draw[qqwuqq][line width=1.2pt] (0, 1.5) to[out=-135, in=135] (0, -1.5);
\draw[qqwuqq][line width=1.2pt] (0, 1.5) to[out= -45, in= 45] (0, -1.5);
\draw ( 0.00,-1.75) node[below]{Case II};
\end{tikzpicture}
\caption{\textsf{The elementary $\gbullet$-polygon $\PP$ is of the form Case I and Case II}}
\label{Case I and II}
\end{figure}

We say a permissible curve $c$ is {\it trivial} if $\bigcup\limits_{a\in\Dgreen}c\cap a\cap (\innerSurf)=\varnothing$ up to equivalence.

Let $A$ be a gentle algebra. Butler-Ringel and Wald-Waschb\"{u}sch showed that any indecomposable $A$-module is either a string module or a band module. In \cite{BaurCoelho21}, Baur and Coelho Sim\~{o}es established a bijection between the equivalent classes of permissible curves and the indecomposable $A$-module over gentle algebras. Denote by $\PC(\SURFextra)$ the set of all equivalent classes of permissible curves with endpoints lying in $\M\cup\E$ and by $\CC(\SURFextra)$ the set of all homotopy classes of permissible curves without endpoints.

\begin{theorem} Let $\mathscr{J}$ be the set of all Jordan blocks with non-zero eigenvalue. There exists a bijection
    \[ \MM: \PC(\SURFextra) \cup (\CC(\SURFextra)\times\mathscr{J}) \to \ind(\modcat A) \]
between the set $\PC(\SURFextra) \cup (\CC(\SURFextra)\times\mathscr{J})$ of all permissible curves and the set $\ind(\modcat A)$ of all isoclasses of indecomposable modules in $\modcat A$.
\end{theorem}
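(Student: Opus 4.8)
The statement to prove is the bijection $\MM\colon \PC(\SURFextra) \cup (\CC(\SURFextra)\times\mathscr{J}) \to \ind(\modcat A)$ for a gentle algebra $A$. The plan is to follow the classical combinatorial description of $\modcat A$ by strings and bands due to Butler--Ringel and Wald--Waschb\"usch, and to match it piece by piece with the permissible-curve combinatorics on the marked ribbon surface $\SURFextra(A)$. First I would recall that every indecomposable $A$-module is either a string module $M(w)$ for a reduced string $w$ in $(\Q,\I)$, or a band module $M(b,\lambda,m)$ for a primitive band $b$, a non-zero scalar $\lambda$, and a multiplicity $m\geq 1$; and that $M(w)\cong M(w')$ iff $w'=w$ or $w'=w^{-1}$, while band modules have their own well-known identification relations. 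This reduces the theorem to constructing two compatible bijections: one between reduced strings up to inversion and $\PC(\SURFextra)$, and one between primitive bands (together with $(\lambda,m)$) and $\CC(\SURFextra)\times\mathscr{J}$.

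The core of the argument is the string$\leftrightarrow$permissible curve dictionary, which is exactly the content of Construction~\ref{const-gentle alg} read backwards. Under $\mathfrak v\colon\Dgreen\to\Q_0$, a $\gbullet$-arc is a vertex, hence a simple/indecomposable projective building block; a permissible curve $c$ crosses a sequence of $\gbullet$-arcs $a_{i_0}, a_{i_1},\ldots, a_{i_r}$, and between two consecutive crossings $c$ either turns around a marked point $p\in\M$ inside an elementary $\gbullet$-polygon (condition (1)(b) of Definition~\ref{def-PC-curve}) or passes through the unique boundary side of such a polygon. I would show that the consecutive-crossing data of $c$ spell out precisely a walk in $(\Q,\I)$ alternating between direct and inverse arrows, that the permissibility condition (1)(b) — "$u,v$ share an endpoint $p\in\M$ and locally form a triangle'' — is exactly the condition that forbids the forbidden relations in $\I$ and forbids immediate backtracking, and hence that this walk is a reduced string; conversely every reduced string, drawn on the surface by concatenating the elementary segments dual to its letters, yields a permissible curve, and the two constructions are mutually inverse. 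Reversing the orientation of $c$ corresponds to $w\mapsto w^{-1}$, and the equivalence relation in Definition~\ref{def-PC} is built precisely so that non-isotopic but "string-equivalent'' curves (those differing by sliding an endpoint across the boundary sides of a Case~I/II polygon, i.e.\ across zero-relations near the ends of the string) are identified — matching the fact that $M(w)$ only depends on $w$, not on where one "parks'' the endpoints. The dimension vector of $M(w)$ then reads off as the multiset of $\gbullet$-arcs crossed, giving a clean sanity check. For the band part, a primitive cyclic band in $(\Q,\I)$ corresponds to a primitive closed walk, which under the same dictionary becomes a homotopy class of permissible closed curve in $\CC(\SURFextra)$ (the winding-number condition (1)(a) disappears for closed curves, matching that bands may wrap unmarked components), and the pair $(\lambda,m)$ is recorded by the Jordan block in $\mathscr{J}$, i.e.\ by an element of $\CC(\SURFextra)\times\mathscr{J}$; the cyclic-rotation and inversion symmetries of bands match the (unoriented, basepoint-free) homotopy data on the surface.

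The main obstacle, and where I would spend most of the care, is the equivalence relation of Definition~\ref{def-PC} and the handling of the special polygons of Case~I and Case~II in Figure~\ref{Case I and II}: one must verify that curves that are isotopic-after-sliding-past-boundary-sides, and the special identifications forced when an endpoint sits on a digon (an extra marked point $\rbullet\in\E$ may be involved), correspond \emph{exactly} to equal string modules, with no over- or under-counting — in particular that trivial permissible curves correspond to the zero module and are correctly excluded, and that no reduced string is represented by two inequivalent curves. A secondary technical point is the correspondence for string modules whose strings begin or end with "short'' letters adjacent to a relation, which is precisely what Case~I/II of Figure~\ref{Case I and II} is designed to encode; checking that the local picture there reproduces the top/socle structure of $M(w)$ is the crucial local computation. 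Once the local dictionary at each elementary $\gbullet$-polygon is nailed down, the global bijection follows by gluing, and injectivity/surjectivity are immediate from the mutual-inverse constructions. (This is essentially the theorem of Baur--Coelho Sim\~oes \cite{BaurCoelho21}, so in the paper I would likely either cite it outright or sketch only the dictionary and refer to \cite{BaurCoelho21,OpperPlamondonSchroll18} for the verification of the equivalence relations.)
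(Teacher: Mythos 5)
Your proposal is correct in substance: the paper itself gives no proof of this theorem, recalling it directly from Baur--Coelho Sim\~oes \cite{BaurCoelho21}, and your sketch of the string/band dictionary (permissible curves $\leftrightarrow$ reduced strings up to inversion, permissible closed curves with Jordan-block data $\leftrightarrow$ band modules) is exactly the argument of that reference. Your closing remark that one would simply cite \cite{BaurCoelho21} is precisely what the paper does, so there is nothing to compare beyond that.
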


\begin{remark} \rm
Let $[c]\in \PC(\SURFextra)$ and $([b],\pmb{J})\in \CC(\SURFextra)\times\mathscr{J}$. Then $\MM([c])$ is a string module  and $\MM([b],\pmb{J})$ is a band module.
\end{remark}

Baur and Coelho Sim\~{o}es showed that every irreducible morphism between string modules can be described by pivot elementary move. Next we recall the definition of pivot elementary move (see \cite[Definition 3.14]{BaurCoelho21}).

\begin{definition}\rm \label{def-pivot}
Let $A$ be a gentle algebra and $\SURFextra(A)=(\Surf(A)$, $\M(A)$, $\Dgreen(A))^{\E}$ be the marked ribbon surface of $A$.
Let $c=c_1c_2\cdots c_r: [0,1] \to \Surf$ be a permissible curve and consecutively crossing $a_1,a_2,\ldots, a_{r-1}\in \Dgreen(A)$.
Assume that $\PP \in \OEP(\SURFextra(A))$ such that the segment $c_1$ lies in the inner of $\PP$.
The {\it pivot elementary move} $f_t(c)$ (resp., $f_s(c)$) of $c$ is such a permissible curve obtained from $c$ by fixing the ending point $t=c(1)$ and moving $s=c(0)$ in the following way:
\begin{itemize}
  \item If $\PP$ is neither Case I nor II (See \Pic \ref{Case I and II}), then
  move $s$ as much as possible in the negative direction around $\PP$ to the endpoint $s'$ up to equivalent.

  \item If $\PP$ is of Case I:
      let $c'$ be the permissible curve equivalent to $c$ with the same starting point $s=s'$
      (which is the unique vertex of $\PP$)
      and such that it wraps around the unmarked boundary component in the inner of $\PP$ in the clockwise direction.

  \item If $\PP$ is of Case II:
      let $c'$ be the permissible curve equivalent to $c$ with the starting point $s'$ given by one of the two marked points of $\PP$ such that:
      \begin{itemize}
        \item $s'\ne s$;
        \item the winding number around the unmarked boundary component $b$ in the inner of $\PP$ is zero; and
        \item if $b$ is to the right (resp., left) of $c$, then $b$ is to the left (resp., right) of $c'$.
      \end{itemize}
\end{itemize}
$f_t(c)$ is a permissible curve by moving $s'$ to the next marked point or extra marked point in the negative direction of boundary of $\Surf(A)$.
\end{definition}
\subsection{The geometric models for derived categories of gentle algebras} \label{subsec-geo-der}
Let $T_p \Surf$ be the tangent space of surface $\Surf$ at the point $p\in \Surf$. Then its tangent bundle $T\Surf$ is the union $\bigcup_{p\in \Surf} T_p \Surf$. A grading, $\F$, on $\Surf$ is a foliation, that is a section of the projectivized tangent bundle $\mathbb{P}(T\Surf)$.  We say that a surface $\Surf$ is a {\it graded surface} $\Surf^{\F}$ if it is a surface with a foliation $\F$.

\begin{definition}\rm
A {\it graded curve} in $\Surf^{\F}$ is a triple $(\mathbb{I}, c, \tc)$ where:
\begin{itemize}
  \item $\mathbb{I}$ is a 1-manifold;
  \item $c: \mathbb{I} \to \Surf$ is an immersion, i.e., it can be viewed as a curve in $\Surf$; and
  \item $\widetilde{c}$ is a homotopy class of paths in $\mathbb{P}(T_{c(t)}\Surf)$
    from the subspace given by $\F$ to the tangent space of the curve, varying continuously with $t\in \mathbb{I}$.
\end{itemize}
\end{definition}

For simplicity, we denote by $c$ and $\tc$ the curve $c: \mathbb{I}\to \Surf$ and the graded curve $(\mathbb{I}, c, \tc)$, respectively.  Let $\tc_1$ and $\tc_2$ be two graded curves in $\Surf^{\F}$. Denote by $\Int(c_1, c_2)$ the set of all intersections of curves $c_1$ and $c_2$. If $c_1=c_2$,  $\Int(c_1,c_1)$ is the set of all self-intersections of $c_1$. Note that we always assume that $c_1$ and $c_2$ are representatives in their homotopy classes such that their intersections are minimal.

For any $t_i\in \mathbb{I}$ and $i\in \{1,2\}$, let $p = c_1(t_1) = c_2(t_2)\in \Int(c_1, c_2)$ be an intersection of $c_1$ and $c_2$ and $\dot{c}_1(t_1) \neq \dot{c}_2(t_2) \in \mathbb{P}(T_p\Surf)$, then the {\it intersection index} $\ii_p(\tc_1, \tc_2)$ of $c_1$ and $c_2$ at $p$ is induced by the homotopy classes of paths in $\mathbb{P}(T_p\Surf)$:

\begin{align} \label{formula-index}
\ii_p(\tc_1, \tc_2) := \tc_1(t_1) \cdot \kappa_{12} \cdot \tc_2(t_2)^{-1} \in \pi_1(\mathbb{P}(T_p\Surf)) \cong \mathbb{Z}
\end{align}
\noindent where
\begin{itemize}
  \item $\tc_i(t_i)$ is the homotopy classes of paths from $\F(p)$ to $\dot{c}_i(t_i)$;
  \item $\kappa_{12}$ is the paths from $\dot{c}_1(t_1)$ to $\dot{c}_2(t_2)$ given by clockwise rotation in $T_p\Surf$ by an angle $<\pi$;
  \item $\pi_1(\mathbb{P}(T_p\Surf))$ is the fundamental group defined on the projectivized of the tangent space $T_p\Surf$.
\end{itemize}
Note that this notation is correct only if $c_1,c_2$ pass through $p$ exactly once and a point of transverse intersection of graded curves $\tc_1$ and $\tc_2$ determines an integer. By the definition above, we have
\begin{align}\label{formula-int. ind. in HKK 2.1}
\ii_p(\tc_1, \tc_2) + \ii_p(\tc_2, \tc_1) = 1\ (\text{see \cite[2.1]{HaidenKatzarkovKontsevich17}}).
\end{align}
Moreover, we denote by  $\Int^d(\tc_1,\tc_2):= \sharp\{p\in \Int(c_1, c_2) \mid \ii_p(\tc_1, \tc_2) = d \}$ the set of the number of the intersections of graded curves $\tc_1$ and $\tc_2$ such that the intersection index equals to $d$.

\begin{definition} \rm \label{def-MRSred}
A {\it graded marked surface} is a triple $(\Surf^{\F}, \M, \Y)$ (or equivalently, a quadruple $(\Surf, \M, \Y, \F)$) where:
\begin{itemize}
  \item $\Surf^{\F}$ is a graded surface;
  \item $\M$ is a finite subset of marked points on $\bSurf$ whose elements are represented by symbols $\gbullet$
    and called {\it $\gbullet$-marked points};
  \item $\Y$ is a finite subset of marked points on $\bSurf$ whose elements are represented by symbols $\rbullet$
    and called {\it $\rbullet$-marked points};
  \item Marked points in $\M$ and $\Y$ are alternative in every boundary component.
\end{itemize}
A {\it graded marked ribbon surface}, say $\SURFred{\F}$, is a quadruple $(\Surf^{\F}, \M,$ $\Y, \tDred)$
(or equivalently, a quintuple $(\Surf, \M, \Y, \F, \tDred)$) where:
\begin{itemize}
  \item $\Dred$ is a collection of curves whose endpoints are $\rbullet$-marked points such that:
    \begin{itemize}
      \item for any two curves $c_1, c_2\in \Dred$, we have $c_1\cap c_2\cap (\Surf\backslash\bSurf) =\varnothing$;
      \item each {\it elementary $\rbullet$-polygon}, the polygon obtained by $\Dred$ cutting $\Surf$, has a unique edge which is not belong to $\Dred$.
    \end{itemize}
  \item Any curve $c$ in $\Dred$ is a graded curve, that is, $\tDred:=\{\tc\mid c\in \Dred\}$.
\end{itemize}
$\Dred$ is called a {\it full formal closed arc system} (=$\rbullet$-FFAS) of $\Surf$
and its elements are called  $\rbullet$-arc.
Similarly, we can also define the graded full formal closed arc system (=$\rbullet$-grFFAS) $\tDred$ and the graded $\rbullet$-arc.
\end{definition}

\begin{remark} \rm
\begin{itemize}
  \item[(1)] Recall that $\Dgreen$ is a full formal ${\gbullet}$-arc system. In \cite{AmiotPlamondonSchroll23}, $\Dred$ is called the {\it dual} of $\Dgreen$ and written as $\dualDgreen$. Thus we have $\dualDgreen= \Dred$ and $\dualDred = \Dgreen$.
    Moreover, $\SURFextra$ and $\SURFred{\F}$ can decide for each other. Similar to $\tDred$, we call $\tDgreen$ a graded full formal open arc system
    (=$\gbullet$-grFFAS). The elements of $\tDgreen$ are called graded ${\gbullet}$-arcs.
  \item[(2)]
    In \cite{OpperPlamondonSchroll18}, $\Dgreen$ is called a {\it ribbon graph} and there is a unique $\rbullet$-FFAS $\Dred$ such that:
    \begin{itemize}
      \item for any arc $\eta$ in $\Dred$, there exists a unique arc $\eta^{\star}$ such that $\Int(\eta, \eta^{\star})=1$;
      \item for any arc $\gamma$ in $\Dgreen$, there exists a unique arc $\gamma^{\star}$ such that $\Int(\gamma, \gamma^{\star})=1$.
    \end{itemize}
\end{itemize}
\end{remark}


\begin{construction} \label{const-graded gentle alg}
Let $\SURFred{\F} = (\Surf^{\F}, \M, \Y, \tDred)$  be a graded marked ribbon surface, then we can construct a finite-dimensional graded algebra $A = (\kk\Q/\I, |\cdot|)$ as following:
\begin{itemize}
  \item[\rm Step 1]
    $A=A(\SURFgreen)$, where $\SURFgreen = (\Surf, \M, \dualDred)$;
  \item[\rm Step 2]
    For each angle whose sides $a_1$ and $a_2$ are graded $\rbullet$-arcs lying in $\tDred$ such that
    $a_1$ precede $a_2$ in clockwise order around the vertex $p$ of this angle,
    $|\alpha| = \ii_p(\tilde{a}_1, \tilde{a}_2)$ where $\alpha$ is the arrow $\alpha: \mathfrak{v} (a_1^{\star}) \to \mathfrak{v} (a_2^{\star})$;
  \item[\rm Step 3]
    $|\omega_1\omega_1|=|\omega_1|+|\omega_2|$ for arbitrary paths $\omega_1$ and $\omega_2$.
\end{itemize}
\end{construction}

We denote by $A(\SURFred{\F})$ the graded gentle algebra of graded marked ribbon surface $\SURFred{\F}$. Note that the graded gentle algebra $A(\SURFred{\F})$ arising from surface $\SURFred{\F}$ is homologically smooth. Recall that a or a graded $\gbullet$-curve on $\SURFred{\F}$ is a graded curve $([0,1], c, \tc)$, where $c$ is a curve having minimal intersection number with $\Dred$ and $\tc$ is the grading of $c$.

\begin{definition}\rm \label{def-AC}
We say two graded $\gbullet$-curves $([0,1], c_1, \tc_1)$ and $([0,1], c_2, \tc_2)$
consecutively crossing $\tarc_1$, $\ldots$, $\tarc_t$ $\in\tDred$ are {\it equivalent} if:
\begin{itemize}
  \item for any $1\le i<t$, the segments $(c_1)_{i,i+1}$ and $(c_2)_{i,i+1}$ are homotopic
    where $(c_j)_{i,i+1}$ is the segment of $c_j$ given by $\tarc_{i}$ and $\tarc_{i+1}$ cutting $c_j$ for any $j\in\{1,2\}$; 
  \item $\ii_{p_{1i}}(\tc_1,\tarc_i)=\ii_{p_{2i}}(\tc_2,\tarc_i)$ holds for all $i$, where ${p_{ji}}\in c_j\cap a_i$ for any $j\in\{1,2\}$.
\end{itemize}
\end{definition}


Denote by $\AC_{\m}(\SURFred{\F})$ the set of all equivalent classes of graded $\gbullet$-curves with endpoints lying in $\M$
and $\AC_{\oslash}(\SURFred{\F})$  the set of all homotopy classes of graded $\gbullet$-curves satisfying $c(0)=c(1)\in\innerSurf$.

\begin{theorem} \label{thm. perA=AC} 
Let $A$ be a gentle algebra and $\SURFred{\F_A}(A)$ be its graded marked ribbon surface. 
\begin{itemize}
\item[{\rm(1)}] {\rm \cite[Theorem 3.10]{QiuZhangZhou22}}
There exists a bijection
\[X: \big(\AC_{\m}(\SURFred{\F_A}(A))\times\ind(\modcat \kk)\big) \cup \big(\AC_{\oslash}(\SURFred{\F_A}(A)) \times \ind(\modcat\kk[x,x^{-1}])\big) \mathop{\longrightarrow}\limits^{\text{1-1}} \ind(\per A),\]
between the set of graded $\gbullet$-curves with local system $(\tgamma, L)$ and the set of all isoclasses of indecomposable objects in $\per A$,
where $L$ isomorphic to an indecomposable $A_{\tgamma}$-module and
\begin{align}\label{ind. obj.}
A_{\tgamma}=\begin{cases}
\kk
& \text{if the endpoints of $\gamma$ are $\gbullet$-marked points}, \\
\kk[x, x^{-1}]
& \text{if $\gamma(0)=\gamma(1) \in \Surf\backslash\bSurf$}.
\end{cases}
\end{align}

\item[{\rm(2)}] {\rm \cite[Theorem 6.1]{QiuZhangZhou22}}
For arbitrary two indecomposable complexes $X(\tgamma_1)$, $X(\tgamma_2)\in \per A$, where $\tgamma_1$ and $\tgamma_2$ are two graded $\gbullet$-curves, we have the following dimension formula
\begin{align}\label{dim. formula}
{\dim_{\kk}\Hom_{\per A} (X(\tgamma_1), X(\tgamma_2)[d]) = \Int^d (\tgamma_1, \tgamma_2).}
\end{align}
\end{itemize}
\end{theorem}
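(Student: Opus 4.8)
The plan is to reduce both statements to the derived combinatorics of gentle algebras and then match that combinatorics with the intersection theory of graded $\gbullet$-curves on $\SURFred{\F_A}(A)$. For part (1), I would start from the classification of indecomposable objects of $\per A$ for a gentle algebra $A$ due to Bekkert--Merklen (refined by Bobi\'nski): every indecomposable object of $\per A$ is a \emph{string complex} attached to a homotopy string, or a \emph{band complex} attached to a homotopy band together with an indecomposable $\kk[x,x^{-1}]$-module (a Jordan block with non-zero eigenvalue). Then I would define $X$ directly: to a graded $\gbullet$-curve $\tgamma$ consecutively crossing $\tarc_1,\dots,\tarc_t\in\tDred$, associate the sequence of dual $\gbullet$-arcs $\tarc_1^{\star},\dots,\tarc_t^{\star}$ (equivalently, vertices of $\Q$), form the complex of projectives $\bigoplus_i P(\mathfrak{v}(\tarc_i^{\star}))$ with differential components the arrows or relations read off from the elementary polygons that $\gamma$ traverses, and use the grading $\tc$ to fix the cohomological position of each summand; a band curve additionally carries the local system $L$ of \eqref{ind. obj.}. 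One then checks that (a) $X$ is constant on equivalence classes, the two moves of Definition \ref{def-AC} corresponding precisely to homotopies of complexes, and (b) the resulting complexes are exactly the string/band complexes above, so $X$ is a bijection. A more conceptual route is to invoke the equivalence $\per A\simeq\mathcal{W}(\SURFred{\F_A}(A))$ between $\per A$ and the partially wrapped Fukaya category of the graded surface from Haiden--Katzarkov--Kontsevich \cite{HaidenKatzarkovKontsevich17} (reformulated for gentle algebras by Opper--Plamondon--Schroll \cite{OpperPlamondonSchroll18}), under which the generator $A$ corresponds to $\tDred$ and indecomposables of $\mathcal{W}$ are exactly isotopy classes of graded immersed curves equipped with indecomposable local systems.

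For part (2), the plan is to produce an explicit basis of $\Hom_{\per A}(X(\tgamma_1),X(\tgamma_2)[d])$ indexed by the intersection points of $\gamma_1,\gamma_2$ of index $d$. From the Fukaya-categorical viewpoint this is the defining property of (wrapped) Floer cohomology: each transverse intersection point of two graded curves contributes one generator in degree equal to its Maslov index, and the clockwise-rotation convention for $\kappa_{12}$ together with $\ii_p(\tc_1,\tc_2)+\ii_p(\tc_2,\tc_1)=1$ in \eqref{formula-int. ind. in HKK 2.1} fixes the grading normalisation. Purely algebraically, I would use the combinatorial description of morphisms between string and band complexes over a gentle algebra --- graph maps together with single and double maps --- and set up a bijection sending each such basic morphism to the intersection point at which $\gamma_1$ and $\gamma_2$ share the relevant common $\rbullet$-arc or $\gbullet$-marked endpoint, checking in each local configuration that the homological degree of the morphism equals $\ii_p(\tc_1,\tc_2)$. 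The local configurations --- interior crossings, crossings at a common endpoint in $\M$, and self-crossings in the band/band case --- are finitely many and must each be verified; at endpoints one must be careful that the two local branches contribute a single generator, consistently with the minimal-position assumption on the curves.

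I expect the main obstacle to be twofold. In (1), surjectivity --- that \emph{every} indecomposable of $\per A$ is realised by a graded curve, and that non-isotopic graded curves never yield quasi-isomorphic complexes --- requires the full Bekkert--Merklen/Bobi\'nski classification together with a careful account of how the foliation $\F$ and the winding numbers of the curves encode the cohomological shifts. In (2), the bookkeeping of degrees is the delicate part: matching the combinatorial degree of a graph/single/double map with the geometric index $\ii_p$ defined via rotation in $\mathbb{P}(T_p\Surf)$ is where all the orientation and sign conventions concentrate, and the identity $\ii_p(\tc_1,\tc_2)+\ii_p(\tc_2,\tc_1)=1$ should be matched with the pairing between $\Hom(X,Y[d])$ and $\Hom(Y,X[1-d])$ coming from the (relative) Serre structure on the algebraic side; getting these normalisations to agree uniformly across all of the local models is the crux.
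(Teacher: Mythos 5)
There is nothing in the paper to compare your argument against: Theorem \ref{thm. perA=AC} is not proved here at all, but quoted wholesale from \cite[Theorems 3.10 and 6.1]{QiuZhangZhou22}, and the surrounding text only restates it. That said, your outline is essentially the route by which the cited results are established in the literature: part (1) rests on the homotopy string/band classification of indecomposables of $\per A$ (Bekkert--Merklen/Bobi\'nski, recast geometrically in \cite{OpperPlamondonSchroll18} and \cite{QiuZhangZhou22}, or obtained via the partially wrapped Fukaya equivalence of \cite{HaidenKatzarkovKontsevich17}), and part (2) rests on the graph/single/double-map basis for morphisms between string and band complexes from \cite{ArnesenLakingPauksztello16}, matched pointwise with intersections and with the index $\ii_p$ of (\ref{formula-index}); your identification of the two crux points (surjectivity and isotopy-invariance of $X$, and the degree bookkeeping at marked endpoints) is accurate. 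Two remarks of substance. First, as written your text is a programme rather than a proof: the finitely many local-configuration checks you defer are precisely where the content of the cited proofs lives, so nothing is gained over simply citing \cite{QiuZhangZhou22} as the paper does. Second, be careful with the exact generality of (\ref{dim. formula}): as stated it concerns objects $X(\tgamma_i)$ with the local system suppressed (in effect one-dimensional local systems), and for band objects with higher-dimensional local systems, or for curves with self-intersections, the dimension count acquires correction terms involving the local systems; your Floer-theoretic phrasing (``each transverse intersection point contributes one generator'') is only literally true in that restricted setting, so your plan should state explicitly which case it proves, and the Serre-duality normalisation you invoke for $\ii_p(\tc_1,\tc_2)+\ii_p(\tc_2,\tc_1)=1$ should be checked against the boundary (rather than interior) intersections, where the pairing is with $\Hom(Y,X[1-d])$ only up to the one-sided conventions fixed by the clockwise rotation $\kappa_{12}$.
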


\begin{remark} \rm
\begin{itemize}
\item[(1)] 
For each graded $\gbullet$-curve $\tc$ crossing graded $\rbullet$-arcs $\tarc_1, \tarc_2, \ldots \in \tDred$,
the complex $C=X(\tc) = (C^d, \partial^d)$ is defined as following:
\begin{itemize}
  \item $C^d = \bigoplus\limits_{\ii_{c\cap a_i}(\tc, \tarc_i)=-d} P(\mathfrak{v}(a_i))$;
  \item the differential $\partial^d$ is induced by graded arrow in $\Q_1$.
\end{itemize}

\item[(2)] The indecomposable objects in $\per A$ corresponding to the first case and the second case shown in (\ref{ind. obj.}) are called {\it string objects} and {\it band objects}, respectively.
\end{itemize}
\end{remark}

\section{A geometric characterization of tilted algebras of type $A_n$} \label{sect-tilted}

In this section, we give a geometric characterization of tilted algebras of type $A_n$.

\subsection{Generalized dissection}
In order to study the tilted algebras of type $A_n$, we need the following definition (see \cite[Definition 5.5]{HeZhouZhu23}).

\begin{definition} \rm \label{def:GD}
Let $\SURFextra = (\Surf, \M, \Dgreen)^{\E}$ be a marked ribbon surface and $\Gamma$ be a set of some permissible curves.
\begin{itemize}
  \item We say $\Gamma$ is a {\it partial generalized dissection } (=PGD), if:
    \begin{itemize}
      \item every curve in $\Gamma$ can not be homotopic to any boundary segment; 
      \item $(c_1\cap c_2)\cap(\innerSurf) = \varnothing$ for arbitrary two curves $c_1,c_2\in\Gamma$; and
      \item $\Gamma$ contains no closed curve.
    \end{itemize}
  \item We say $\Gamma$ is a {\it generalized dissection} (=GD), if $\Gamma$ is a maximal PGD,
    i.e., for any permissible curve $c'\notin\Gamma$, $(c\cap c') \cap(\innerSurf)\neq\varnothing$ for some $c\in\Gamma$.
\end{itemize}
We denote by $\PGD(\SURFextra)$ the set of all partial generalized dissections and $\GD(\SURFextra)$ the set of all generalized dissections of marked ribbon surface $\SURFextra$, respectively.
\end{definition}

Let $A$ be a skew-gentle algebra. He, Zhou and Zhu established a bijection between the set of generalized dissections
and the set of isoclasses of basic support $\tau$-tilting $A$-modules \cite{HeZhouZhu23} .
In particular, for a gentle algebra, we have the following result.

\begin{theorem} \label{thm-HZZ}
Let $A$ be a gentle algebra and $\SURFextra(A)$ the marked ribbon surface of $A$.
Then there exists a bijection
\[
\begin{matrix}
\MM: & \GD(\SURFextra(A)) &\to& \stautilt(A) \\
& \Gamma &\mapsto& \bigoplus\limits_{c\in\Gamma} \MM([c])
\end{matrix}
 \]
from the set $\GD(\SURFextra(A))$ of all generalized dissections of $\SURFextra(A)$ to the set $\stautilt(A)$ of isoclasses of support $\tau$-tilting $A$-modules.
\end{theorem}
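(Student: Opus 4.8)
The plan is to obtain Theorem~\ref{thm-HZZ} by specializing the theorem of He--Zhou--Zhu \cite{HeZhouZhu23} for skew-gentle algebras to the subclass of gentle algebras, so that the actual work is almost entirely a matter of matching up the two sets of conventions. First I would record the elementary observation that a gentle algebra $A$ is a skew-gentle algebra whose set of special (orbifold) points is empty. Consequently the surface that He--Zhou--Zhu attach to $A$ carries no puncture or orbifold data, and after discarding this (empty) extra structure it is homeomorphic, as a marked ribbon surface, to the surface $\SURFextra(A)$ produced by Construction~\ref{const-gentle alg}. Under this identification I would verify that a finite set $\Gamma$ of permissible curves on $\SURFextra(A)$ is a generalized dissection in the sense of \cite{HeZhouZhu23} if and only if it is one in the sense of Definition~\ref{def:GD}: since there are no special points there are no tagged arcs, so the ``no non-boundary curve'', ``non-crossing'', ``no closed curve'' and maximality conditions of the two definitions read off against each other verbatim. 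In particular the domain $\GD(\SURFextra(A))$ of the asserted bijection is exactly the object appearing in their statement.

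With this dictionary in hand, the theorem of \cite{HeZhouZhu23} already furnishes a bijection $\GD(\SURFextra(A))\to\stautilt(A)$, and it remains to check that it is the map $\Gamma\mapsto\bigoplus_{c\in\Gamma}\MM([c])$ written in the statement. Their correspondence sends a generalized dissection to the basic support $\tau$-tilting module whose indecomposable direct summands are the modules attached to the individual curves of $\Gamma$ under the curve--module dictionary; and for a gentle algebra that dictionary is precisely the Baur--Coelho Sim\~{o}es bijection, namely the restriction to $\PC(\SURFextra(A))$ of $\MM\colon\PC(\SURFextra(A))\cup(\CC(\SURFextra(A))\times\mathscr{J})\to\ind(\modcat A)$ recalled above. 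Since a generalized dissection contains no closed curve and is finite (its cardinality is bounded by the number of $\gbullet$-arcs, i.e.\ by $|A|$), the sum $\bigoplus_{c\in\Gamma}\MM([c])$ is a well-defined basic module built from string modules; it is support $\tau$-tilting because the non-crossing condition of Definition~\ref{def:GD} translates, via the geometric reading of $\Ext^1$ and of the Auslander--Reiten translate on the surface, into $\Hom_A(M,\tau M)=0$, while the maximality of $\Gamma$ forces $|M|=|A/\langle e\rangle|$ for the idempotent $e$ supported on those vertices ($=\gbullet$-arcs) not met by $\Gamma$. Injectivity and surjectivity of $\MM$ are then inherited directly from \cite{HeZhouZhu23}.

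The main obstacle is not any single estimate but exactly this bookkeeping in the first paragraph: one must be careful that passing from the skew-gentle to the gentle situation collapses precisely the right data (special points, tagged arcs, and the auxiliary extra marked points $\E$), and that the module attached to a curve by \cite{HeZhouZhu23} coincides on the nose with $\MM([c])$ rather than merely up to an automorphism or a shift. Once this is settled the statement is immediate. For later use I would also note at this point that when $A=A_n$ the algebra is representation-finite, so $\CC(\SURFextra(A_n))=\varnothing$; hence every permissible curve that occurs is a string curve and every generalized dissection of $\SURFextra(A_n)$ is a finite collection of non-closed $\gbullet$-arcs, a fact invoked repeatedly in Sections~\ref{sect-tilted}--\ref{sect-number-silted}.
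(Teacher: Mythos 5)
Your proposal is correct and follows the same route as the paper, whose entire proof is the citation of \cite[Theorem B]{HeZhouZhu23}; you simply spell out the specialization from skew-gentle (empty set of special points) to gentle and the identification of the curve--module dictionary with $\MM$, which is exactly the implicit content of that citation.
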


\begin{proof}
It is given by \cite[Theorem B]{HeZhouZhu23}. 
\end{proof}

\subsection{Tiles}

\begin{definition} \label{def-tiles} \rm
Let $\SURFextra = (\Surf, \M, \Dgreen)^{\E}$ be a marked ribbon surface and $\Gamma \in \PGD(\SURFextra)$.
We call every polygon, which obtained by $\Gamma$ dividing $\SURFextra$, a {\it tile}.
In particular, we say a tile is {\it complete} if its edges are all lying in $\Gamma$. Denote by $\Tile_{\Gamma}(\SURFextra)$ the set of all tiles with respect to $\Gamma$.
\end{definition}

Let $\SURFextra(A_n)=(\Surf(A_n), \M(A_n), \Dgreen(A_n))^{\E}$ be the marked ribbon surface of $A_n$ and $\Gamma \in \GD(\SURFextra(A_n))$. The following result shows that each $\GD$ of $\SURFextra(A_n)$ is a triangulation.

\begin{lemma} \label{lemm-triangulation}
Let $\Gamma \in \GD(\SURFextra(A_n))$. Then each tile $\T\in\Tile_{\Gamma}(\SURFextra(A_n))$ is a triangle.
\end{lemma}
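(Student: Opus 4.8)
The goal is to show that every tile of a generalized dissection $\Gamma$ of $\SURFextra(A_n)$ is a triangle. The plan is to use two facts: first, a structural description of the surface $\SURFextra(A_n)$ itself, and second, the definition of a tile together with the permissibility condition (Definition \ref{def-PC-curve}(1)(b)). Recall that $A_n = \kk\Q$ is a hereditary gentle algebra with linear orientation; applying Construction \ref{const-gentle alg} in reverse, the marked ribbon surface $\SURFextra(A_n)$ is a disc with $n+1$ $\gbullet$-marked points on its boundary (the $\gbullet$-arcs $\Dgreen(A_n)$ being $n-1$ of the non-crossing diagonals, and there are no unmarked boundary components since $\I$ is generated by $n-2$ relations each producing an elementary $\gbullet$-polygon). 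First I would record this explicitly: $\Surf(A_n)$ is a polygon (topologically a disc) with exactly $n+2$ $\gbullet$-marked points on the boundary when we also count the single $\rbullet$-type data, or more precisely with a fixed finite set $\M(A_n)\cup\E$ of boundary marked points; the key point is that $\Surf(A_n)$ is a disc, so every tile is itself a disc whose boundary is a concatenation of arcs from $\Gamma$ and boundary segments.

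Next I would argue that a tile of a $\GD$ of a disc can have at most one side that is a boundary segment. Indeed, by Construction's convention and the structure of $\SURFextra(A_n)$, boundary segments between consecutive marked points of $\M(A_n)$ are "short" (they correspond to the distinguished side of each elementary $\gbullet$-polygon), and since $\Gamma$ is maximal (a GD), if a tile had two boundary-segment sides one could insert an additional permissible curve separating them, contradicting maximality — or, more directly, in a disc every internal arc cuts it into two pieces, so a careful induction on $\sharp\Gamma$ shows each tile is bounded by arcs of $\Gamma$ together with at most one boundary segment. This reduces the problem to showing the tile has exactly three sides.

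Then comes the main step, which I expect to be the principal obstacle: ruling out tiles with four or more sides. Here I would invoke permissibility. Suppose a tile $\T$ has $k\ge 4$ sides, at most one of which is a boundary segment, so at least three consecutive sides $a_1,a_2,a_3\in\Gamma$. A permissible curve $c'$ crossing through $\T$ and entering/leaving across $a_1$ and $a_3$ would consecutively cross $a_1$ and $a_3$ (they are consecutive along $c'$ since the segment between them lies in the interior of $\T$ and crosses no other arc); by Definition \ref{def-PC-curve}(1)(b), $a_1$ and $a_3$ would then need a common endpoint $p\in\M$ with a local triangle at $p$ — but $a_1$ and $a_3$ are non-adjacent sides of $\T$, so they share no endpoint on $\partial\T$; a common endpoint would have to be "outside" $\T$, and one checks via the disc structure of $\Surf(A_n)$ that this forces $c'$ to have larger-than-minimal intersection with $\Dgreen$ or to violate condition (1)(a) (winding number), a contradiction. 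Hence no permissible curve can be added through $\T$ in this way, yet maximality of $\Gamma$ combined with the fact that $\T$ has $\ge 4$ sides does allow such a curve (the diagonal of $\T$ connecting the endpoints of $a_1$ and $a_2$, say, is permissible and not in $\Gamma$), giving the contradiction. Tiles with fewer than three non-boundary sides are excluded because $\Gamma$ contains no closed curve and no arc homotopic to a boundary segment, so a "monogon" or "bigon" tile cannot occur. Therefore $k=3$ and $\T$ is a triangle. I would be careful in this last argument to treat separately the tile that contains the boundary segment as one of its three sides versus a tile all of whose sides lie in $\Gamma$ (a complete triangle), since both are genuinely possible and the lemma asserts triangularity in either case.
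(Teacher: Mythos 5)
Your core step coincides with the paper's proof: if a tile had more than three sides, a diagonal of the tile would be a permissible curve, not in $\Gamma$ and meeting no curve of $\Gamma$ in the interior, so $\Gamma$ would not be maximal. However, the way you reach this contradiction contains a genuine error. You apply Definition \ref{def-PC-curve}(1)(b) to the sides $a_1,a_3\in\Gamma$ of the tile, but permissibility is a condition on consecutive crossings with the fixed arc system $\Dgreen(A_n)$, not with $\Gamma$; a permissible curve may perfectly well cross two curves of $\Gamma$ that share no endpoint. That whole sub-argument is also beside the point: maximality of a GD only concerns permissible curves that do \emph{not} meet $\Gamma$ in $\innerSurf$, so curves crossing $a_1$ and $a_3$ are irrelevant. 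What actually needs checking is that the diagonal $c'$ itself is permissible, and here the relevant (unused) fact is that for the linear $A_n$ all $\gbullet$-arcs of $\Dgreen(A_n)$ are a fan through one marked point $p$, so condition (1)(b) holds automatically for any curve with minimal intersections; this is why the paper can simply take the curve joining $m_n$ and $m_2$ in \Pic\ref{fig-edges} and assert $c'\in\PC(\SURFextra(A_n))$.

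There is a second gap in your treatment of the small cases. Your claims that a tile of a GD "can have at most one boundary-segment side" and that "a bigon tile cannot occur" are both false: a GD of $\SURFextra(A_n)$ typically has a tile bounded by one curve of $\Gamma$ and a boundary path passing through a marked point, i.e.\ two boundary segments (Type 1 in \Pic\ref{fig-tile in triangulation}). Such a digon cannot be subdivided, because any curve separating its two boundary segments would be homotopic to a boundary segment and is excluded from a PGD; it satisfies the lemma only under the convention, stated right after the lemma in the paper, that the marked point $p$ on its boundary side is counted as a vertex, making it a triangle. Your argument, as written, would wrongly exclude this configuration instead of accounting for it. (Minor slips in the set-up: $A_n=\kk\Q$ is hereditary, so $\I=0$ and there are no relations generating elementary polygons in the way you describe, and $\Dgreen(A_n)$ consists of $n$ arcs, not $n-1$.)
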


\begin{proof}
Let $\T\in\Tile_{\Gamma}(\SURFextra(A_n))$ be a tile and the number of edges $>3$ (see \Pic \ref{fig-edges}).
Now we take a curve $c'$ with endpoints $m_n$ and $m_2$, it is easy to see that $c'\in \PC(\SURFextra(A_n))$.
Thus, $\Gamma$ is not maximal.
\end{proof}

\begin{figure}[H]
\begin{center}
\definecolor{ffqqqq}{rgb}{1,0,0}
\definecolor{qqwuqq}{rgb}{0,0.5,0}
\begin{tikzpicture}[scale=0.9]
\draw[line width=1pt,dotted]  (2,0) arc(0:360:2);
\draw[line width=1.2pt]  (2,0) arc(0:70:2);
\draw[line width=1.2pt]  (2,0) arc(0:-70:2);
\draw[line width=1.2pt]  (-2,0) arc(180:110:2);
\draw[line width=1.2pt]  (-2,0) arc(180:250:2);
\filldraw[qqwuqq] ( 2.00, 0.00) circle (0.1);
\filldraw[qqwuqq] ( 1.41, 1.41) circle (0.1); \draw[qqwuqq] ( 1.41, 1.41) node[right]{$m_1$};
\filldraw[qqwuqq] (-1.41, 1.41) circle (0.1); \draw[qqwuqq] ( 1.41,-1.41) node[right]{$m_2$};
\filldraw[qqwuqq] (-2.00, 0.00) circle (0.1);
\filldraw[qqwuqq] (-1.41,-1.41) circle (0.1); \draw[qqwuqq] (-1.41,-1.41) node[left]{$m_{n-1}$};
\filldraw[qqwuqq] ( 1.41,-1.41) circle (0.1); \draw[qqwuqq] (-1.41, 1.41) node[left]{$m_n$};
\draw[blue][line width=1.2pt]  (-1.41,-1.41)--(-1.41, 1.41)--( 1.41, 1.41)--( 1.41,-1.41);
\draw[blue][line width=1.2pt,dotted]  ( 1.41,-1.41)--(-1.41,-1.41);
\draw[blue](0,0) node{$\mathcal{T}$};
\end{tikzpicture}
\caption{\textsf{The tile with $n(>3)$ edges and some edges can be bounded segments.}}
\label{fig-edges}
\end{center}
\end{figure}

By Lemma \ref{lemm-triangulation}, we obtain that each tile $\T\in\Tile_{\Gamma}(\SURFextra(A_n))$
is one of the following three Types shown in \Pic \ref{fig-tile in triangulation}.
Note that in Type 1, $\T$ is a digon, which can be viewed as a triangle with the point $p$ a vertex of $\T$.
\begin{figure}[htbp]
\begin{center}
\definecolor{ffqqqq}{rgb}{1,0,0}
\definecolor{qqwuqq}{rgb}{0,0.5,0}
\begin{tikzpicture}[scale=0.8]
\draw[blue][line width=1.2pt] (-1.73, 1.00)--( 1.73, 1.00);
\draw[blue] (0,1.5) node{$\T$};
\draw[line width=1.2pt]  (2,0) arc (0:180:2);
\draw[line width=1.2pt,dotted]  (2,0) arc (0:-180:2);
\filldraw ( 0.00, 2.00) circle (0.1); \draw ( 0.00, 2.00) node[above]{$p$};
\filldraw ( 1.73, 1.00) circle (0.1);
\filldraw ( 1.73,-1.00) circle (0.1);
\filldraw ( 0.00,-2.00) circle (0.1);
\filldraw (-1.73,-1.00) circle (0.1);
\filldraw (-1.73, 1.00) circle (0.1);
\draw (0,-2.5) node{Type 1: $\T$ is a digon};
\draw[white] (0,-2.9) node{x};
\draw[white] (0,-3.3) node{x};
\draw[white] (0,-2.9) node{p};
\end{tikzpicture}
\ \ \ \
\begin{tikzpicture}[scale=0.8]
\draw[blue][line width=1.2pt] ( 0.00, 2.00)--(-1.73,-1.00)to[out=-60,in=180]( 0.00,-2.00)--( 0.00, 2.00);
\draw[blue] (-0.5, 0) node{$\T$};
\draw[line width=1.2pt] (2,0) arc (0:360:2);
\draw[line width=2pt][white] (-1.28,1.53) arc (130:170:2); \draw[line width=1.2pt][dotted]  (-1.28,1.53) arc (130:170:2);
\draw[line width=2pt][white] (1.97,0.34) arc (10:50:2); \draw[line width=1.2pt][dotted]  (1.97,0.34) arc (10:50:2);
\draw[line width=2pt][white] (1.28,-1.53) arc (-50:50:2); \draw[line width=1.2pt][dotted]  (1.28,-1.53) arc (-50:50:2);
\filldraw ( 0.00, 2.00) circle (0.1); \draw ( 0.00, 2.00) node[above]{$p$};
\filldraw ( 1.73, 1.00) circle (0.1);
\filldraw ( 1.73,-1.00) circle (0.1);
\filldraw ( 0.00,-2.00) circle (0.1);
\filldraw (-1.73,-1.00) circle (0.1);
\filldraw (-1.73, 1.00) circle (0.1);
\draw (0,-2.5) node{Type 2: $\T$ is a triangle };
\draw (0,-2.9) node{with 2 edges lying in $\Gamma$};
\draw (0,-3.3) node{ };
\end{tikzpicture}
\ \ \ \
\begin{tikzpicture}[scale=0.8]
\draw[blue][line width=1.2pt] ( 0.00, 2.00)--(-1.73,-1.00)--( 1.73,-1.00)--( 0.00, 2.00);
\draw[blue] (0,0) node{$\T$};
\draw[line width=1.2pt] (2,0) arc (0:360:2);
\draw[line width=2pt][white] (1.97,0.34) arc (10:50:2); \draw[line width=1.2pt][dotted]  (1.97,0.34) arc (10:50:2);
\draw[line width=2pt][white] (-1.28,1.53) arc (130:170:2); \draw[line width=1.2pt][dotted]  (-1.28,1.53) arc (130:170:2);
\draw[line width=2pt][white] (-0.68,-1.88) arc (250:290:2); \draw[line width=1.2pt][dotted] (-0.68,-1.88) arc (250:290:2);
\filldraw ( 0.00, 2.00) circle (0.1); \draw ( 0.00, 2.00) node[above]{$p$};
\filldraw ( 1.73, 1.00) circle (0.1);
\filldraw ( 1.73,-1.00) circle (0.1);
\filldraw ( 0.00,-2.00) circle (0.1);
\filldraw (-1.73,-1.00) circle (0.1);
\filldraw (-1.73, 1.00) circle (0.1);
\draw (0,-2.5) node{Type 3: $\T$ is a triangle };
\draw (0,-2.9) node{with all edges lying in $\Gamma$};
\draw (0,-3.3) node{ };
\end{tikzpicture}
\caption{\textsf{All black points $\bullet$ in this figure are either marked points $\gbullet$ or extra marked points $\rbullet$.}}
\label{fig-tile in triangulation}
\end{center}
\end{figure}

By Theorem \ref{thm-HZZ}, we have the following result.

\begin{corollary} \label{coro-triangulation}
There is a bijection
\[
\begin{matrix}
\MM: & \Tri(\SURFextra(A_n)) &\to& \stautilt(A_n) \\
& \Gamma &\mapsto& \bigoplus\limits_{c\in\Gamma} \MM([c])
\end{matrix}
 \]
from the set $\Tri(\SURFextra(A_n))$ of all triangulations of $\SURFextra(A_n)$ to the set $\stautilt(A_n)$ of isoclasses of support $\tau$-tilting modules in $\modcat(A_n)$.
\end{corollary}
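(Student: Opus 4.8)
The plan is to deduce the corollary directly from Theorem \ref{thm-HZZ} applied to the gentle algebra $A_n$. By Theorem \ref{thm-HZZ} there is a bijection $\MM: \GD(\SURFextra(A_n)) \to \stautilt(A_n)$ sending $\Gamma \mapsto \bigoplus_{c\in\Gamma}\MM([c])$. So the only thing that needs to be checked is that, for the specific marked ribbon surface $\SURFextra(A_n)$, the set $\GD(\SURFextra(A_n))$ of generalized dissections coincides with the set $\Tri(\SURFextra(A_n))$ of triangulations. Once this identification of index sets is in place, the displayed bijection $\MM$ restricts to exactly the asserted map, and the corollary follows.

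First I would recall from Lemma \ref{lemm-triangulation} that every tile $\T \in \Tile_\Gamma(\SURFextra(A_n))$ arising from a generalized dissection $\Gamma$ is a triangle (where, as noted after the lemma, a digon of Type 1 is counted as a degenerate triangle with the interior marked point $p$ playing the role of a vertex). This says precisely that every $\Gamma \in \GD(\SURFextra(A_n))$ is a triangulation, i.e.\ $\GD(\SURFextra(A_n)) \subseteq \Tri(\SURFextra(A_n))$. Conversely, any triangulation is by construction a collection of permissible curves that pairwise do not cross in the interior, contains no closed curve, and has no curve homotopic to a boundary segment; maximality of a triangulation (no further non-crossing permissible curve can be added without creating an interior crossing) is exactly the defining maximality condition for a generalized dissection. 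Hence $\Tri(\SURFextra(A_n)) \subseteq \GD(\SURFextra(A_n))$, and the two sets agree.

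Combining the two inclusions gives $\GD(\SURFextra(A_n)) = \Tri(\SURFextra(A_n))$. Substituting this equality into Theorem \ref{thm-HZZ} yields the bijection
\[
\MM: \Tri(\SURFextra(A_n)) \to \stautilt(A_n), \qquad \Gamma \mapsto \bigoplus_{c\in\Gamma}\MM([c]),
\]
which is the assertion of the corollary. The main point — and really the only nontrivial input — is Lemma \ref{lemm-triangulation}, which forces every tile of a $\GD$ to be a triangle; this is the geometric feature special to the linearly oriented $\mathbb{A}_n$ quiver that makes generalized dissections and triangulations coincide. The rest is a bookkeeping identification of definitions, so I do not expect any genuine obstacle beyond making the digon/triangle convention explicit.
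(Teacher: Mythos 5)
Your proposal is correct and follows essentially the same route as the paper: the paper also obtains the corollary by combining Lemma \ref{lemm-triangulation} (every tile of a generalized dissection of $\SURFextra(A_n)$ is a triangle, with digons counted as degenerate triangles) with the bijection of Theorem \ref{thm-HZZ}, so that $\GD(\SURFextra(A_n))$ is identified with $\Tri(\SURFextra(A_n))$. Your extra remark verifying the reverse inclusion (triangulations are maximal, hence generalized dissections) is a reasonable bit of bookkeeping that the paper leaves implicit.
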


For each $\Gamma\in\Tri(\SURFextra(A_n))$, we denote by $\Tri_{\Gamma}(\SURFextra(A_n))$ the set of all triangles obtained by $\Gamma$ dividing $\SURFextra(A_n)$.

\subsection{The algebras induced by generalized dissections}
Note that each $\Gamma\in\GD(\SURFextra(A_n))$ can induce an algebra as following:

\begin{definition} \label{def-alg of GD} \rm
Let $\Gamma\in\GD(\SURFextra(A_n))$. Then $\Gamma$ induce an algebra $A^{\Gamma}=\kk\Q/\I$ as following:
\begin{itemize}
  \item $\Q_0=\Gamma$;
  \item for any triangle $\T\in\Tri_{\Gamma}(\SURFextra(A_n))$,
    each angle of $\T$ provides an arrow $\alpha\in\Q_1$
    such that $s(\alpha)=c_1$ and $t(\alpha)=c_2$ where:
    \begin{itemize}
      \item $c_1$ and $c_2$ are two sides of this angle (without loss of generality, we suppose that $c_1(0)=c_2(0)=p$);
      \item $c_2$ is left to $c_1$ at the $p$; and
      \item $[f_p^{i}(c_2)]$ is non-trivial for any $1\le i\le \theta$,
        where $\theta$ is the positive integer such that $[c_1]=[f_p^{\theta}(c_2)]$,
    \end{itemize}
  \item if there are arrows $\alpha$ and $\beta$ such that
  $s(\alpha)$, $t(\alpha)=s(\beta)$, $t(\beta)$ are edges of the same triangle,
  then $\alpha\beta\in \I$.
\end{itemize}
\end{definition}

The following result shows that the induced algebra $A^{\Gamma}$ by $\Gamma$ is the endomorphism algebra of $\MM(\Gamma)$.

\begin{theorem} \label{thm-endo. alg.}
Let $\Gamma\in \GD(\SURFextra(A_n))$. Then we have $\End(\MM(\Gamma)) \cong A^{\Gamma}$.
\end{theorem}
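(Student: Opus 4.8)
The plan is to compare the two algebras term by term using the bijection $\MM$ between generalized dissections and support $\tau$-tilting modules established in Theorem \ref{thm-HZZ}, together with the fact that irreducible morphisms between the corresponding string modules are encoded by pivot elementary moves (Definition \ref{def-pivot}). Write $T = \MM(\Gamma) = \bigoplus_{c\in\Gamma}\MM([c])$; since $\Gamma$ is a generalized dissection, $T$ is a basic support $\tau$-tilting module, so $\End(T)$ is a basic algebra $\kk\hQ/\hI$ for some bound quiver. The vertices of $\hQ$ are in bijection with the indecomposable summands of $T$, hence with $\Gamma$, which matches $\Q_0 = \Gamma$ in Definition \ref{def-alg of GD}. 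So the first step is just to fix this identification of vertices.

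The second and main step is to identify the arrows. By general tilting/$\tau$-tilting theory, the arrows of the Gabriel quiver of $\End(T)$ from $c_1$ to $c_2$ correspond to a basis of $\mathrm{rad}/\mathrm{rad}^2$ of $\Hom$-spaces, i.e. to irreducible morphisms $\MM([c_1])\to\MM([c_2])$ in $\mathrm{add}(T)$. One should first recall Baur--Coelho Sim\~oes's description: an irreducible morphism between string modules is given by a pivot elementary move $f_t$ or $f_s$. The point is then to check that, within the sub-additive-category $\mathrm{add}(T)$ determined by $\Gamma$, the irreducible morphisms $\MM([c_1])\to \MM([c_2])$ are exactly those pairs of arcs $c_1,c_2\in\Gamma$ sharing an endpoint $p$, with $c_2$ to the left of $c_1$ at $p$, and with no arc of $\Gamma$ strictly between them in the fan at $p$ — which is precisely the condition ``$[f_p^{i}(c_2)]$ non-trivial for $1\le i\le\theta$ and $[c_1]=[f_p^{\theta}(c_2)]$'' in Definition \ref{def-alg of GD}. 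Concretely: iterating the pivot move $f_p$ starting from $c_2$ sweeps through the arcs incident to $p$ in the negative direction; the first such arc belonging to $\Gamma$ gives the irreducible map in $\mathrm{add}(T)$, and the hypothesis that all intermediate pivots are non-trivial says no other arc of $\Gamma$ intervenes. Since every tile of $\SURFextra(A_n)$ is a triangle (Lemma \ref{lemm-triangulation}), each such angle lies in a unique triangle $\T\in\Tri_{\Gamma}(\SURFextra(A_n))$, so the arrows of $\End(T)$ are in bijection with the angles of the triangles of $\Gamma$, matching the arrow set in Definition \ref{def-alg of GD}.

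The third step is to match the relations. For a triangle $\T$ with consecutive angles giving arrows $\alpha\colon c_1\to c_2$ and $\beta\colon c_2\to c_3$ (all three sides in $\Gamma$, so $\T$ of Type 3), one checks that the composite irreducible morphism $\MM([c_1])\to\MM([c_2])\to\MM([c_3])$ factors through the map to $\MM$ of the third side in a way that forces $\alpha\beta = 0$ in $\End(T)$; equivalently, in string combinatorics the two pivot moves around $\T$ cannot be concatenated into a string homomorphism. This gives $\alpha\beta\in\hI$, reproducing the relation in Definition \ref{def-alg of GD}. Conversely one must argue these are \emph{all} the relations — i.e. $\End(T)$ has no further relations — which follows because $\End(T)$ is a tilted, hence in particular a gentle-type (indeed type $A$) algebra whose quiver with relations is determined once we know all arrows and all length-two zero relations; alternatively one counts dimensions, using that $\dim_\kk\End(T)$ equals the number of ``permissible segments'' cut out by $\Gamma$, which equals $\dim_\kk A^{\Gamma}$ read off from its bound quiver.

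The step I expect to be the genuine obstacle is the arrow identification, specifically proving that the pivot-move recipe restricted to $\mathrm{add}(T)$ yields exactly the angles of the triangulation and nothing more — this requires a careful local analysis at each marked point $p$ of how the fan of arcs of $\Gamma$ at $p$ interacts with the pivot operator $f_p$, and handling the degenerate Types~1 and~2 tiles (digons and triangles with a side on the boundary) where the ``third side'' is a boundary segment rather than an arc of $\Gamma$. The relations and the ``no extra relations'' claim are comparatively routine once the arrows are pinned down, and can be dispatched by a dimension count against the explicitly described bound quiver $\kk\Q/\I$ of $A^{\Gamma}$.
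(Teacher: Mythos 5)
Your overall architecture (vertices $\leftrightarrow$ arcs of $\Gamma$, arrows $\leftrightarrow$ irreducible maps in $\operatorname{add}(\MM(\Gamma))$ realized by pivot elementary moves, then matching of relations) is the same as the paper's, but there is a genuine gap at exactly the step you single out as the crux, and your stated outcome for that step is wrong. You claim that an angle of a tile at a marked point $p$ gives an arrow precisely when no arc of $\Gamma$ lies strictly between its two sides in the fan at $p$, and that this is ``precisely'' the non-triviality condition on $[f_p^{i}(c_2)]$ in Definition \ref{def-alg of GD}. It is not: for the two sides of an angle of a tile the adjacency condition is automatic and carries no content, whereas the non-triviality condition genuinely fails for one angle of a complete triangle, depending on where the distinguished marked point (the common endpoint of all arcs of $\Dgreen(A_n)$) sits relative to that triangle. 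Under your criterion every complete triangle would contribute an oriented $3$-cycle to the quiver of $\End(\MM(\Gamma))$; in fact the third Hom-space vanishes, $\Hom_{A_n}(\MM([c_3]),\MM([c_1]))=0$ (the paper shows this by comparing the socle of $\MM([c_1])$ with the top of $\MM([c_3])$), so the third arrow simply does not exist --- already for $n=3$ the non-hereditary tilted algebra is $\bullet\to\bullet\to\bullet$ with one zero relation, not a $3$-cycle. Deciding which angles give arrows, via the case analysis on the position of that special marked point, is precisely the content of the paper's proof; your proposal defers it (``careful local analysis'') while asserting an incorrect answer for what the analysis yields, and your relation step, which presupposes the arrow set, inherits the problem.

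Two secondary points. Since $\MM(\Gamma)$ is only a support $\tau$-tilting module, $\End(\MM(\Gamma))$ need not be a tilted algebra of type $A_n$ (it can be a product of smaller tilted algebras), so ``tilted, hence gentle, hence determined by its arrows and quadratic zero relations'' needs at least that caveat; and the dimension count you invoke ($\dim_\kk\End(\MM(\Gamma))$ equals the number of segments cut out by $\Gamma$, and equals $\dim_\kk A^{\Gamma}$) is asserted rather than proved, and it is also what would have to certify that compositions not lying in a single complete triangle are nonzero. Both could be repaired, but only after the arrow identification is carried out correctly, which is where the real work of the theorem lies.
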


\begin{proof}
Let $\End(\MM(\Gamma)) \cong \kk\widehat{\Q}/\widehat{\I}$. It is easy to see that $|\MM(\Gamma)|$ equals to $\sharp\widehat{\Q}_0$.
Then there is a bijection
\[\varphi_0: \widehat{\Q}_0 \to \Q_0=\Gamma, \MM[c]\mapsto [c].\]
Assume $c_1\not\simeq c_2\in \Gamma$, we have a bijection between the set of all paths from $\MM([c_1])$ to $\MM([c_2])$ and the basis of $\Hom_{A_n}(\MM([c_2]), \MM([c_1]))$.
Then for each $\varphi$ lying in the basis of $\Hom_{A_n}(\MM([c_2]), \MM([c_1]))$,
we have
\[\varphi = \phi_{m}\phi_{m-1}\cdots\phi_1 \  \text{($\phi_i$ is irreducible for all $1\le i\le m$)}.\]
Thus, there is a sequence of pivot elementary moves $\{f_i \mid 1\le i\le m\}$ such that
\begin{center}
$[f_m\cdots f_2f_1(c_1)]=[c_2]$.
\end{center}
Furthermore, this sequence provides a path from $[c_1]=\varphi_0(\MM([c_1]))$ to $[c_2]=\varphi_0(\MM([c_2]))$.
Thus there exists a bijection $\varphi_1: \widehat{\Q}_1\to \Q_1$ such that
$s(\varphi_1(\alpha))=\varphi_0(s(\alpha))$ and $t(\varphi_1(\alpha))=\varphi_0(t(\alpha))$ hold for all $\alpha\in \widehat{\Q}_1$.

Let $\alpha\neq\beta\in\widehat{\Q}_1$ and $t(\alpha)=s(\beta)$. Then there is a triangle $\T$ whose edges are $s(\alpha)$, $t(\alpha)=s(\beta)$ and $t(\beta)$. According to the position of the marked point $M$, where $M$ is the same endpoint of all $\gbullet$-arcs,
there are six cases in \Pic \ref{fig-triangulation}.
\begin{figure}[htbp]
\begin{center}
\definecolor{ffqqqq}{rgb}{1,0,0}
\definecolor{qqwuqq}{rgb}{0,0.5,0}
\begin{tikzpicture}[scale=0.9]
\draw[line width=1.2pt]  (0,0) circle (2);
\draw[line width=2pt][white] ( 1.97, 0.35) arc (10:50:2);
\draw[line width=1.2pt][dotted] ( 1.97, 0.35) arc (10:50:2);
\draw[line width=2pt][white] (-1.28, 1.53) arc (130:170:2);
\draw[line width=1.2pt][dotted] (-1.28, 1.53) arc (130:170:2);
\draw[line width=2pt][white] (-0.68,-1.88) arc (250:290:2);
\draw[line width=1.2pt][dotted] (-0.68,-1.88) arc (250:290:2);
\draw[qqwuqq!50][line width=1.2pt] ( 1.73, 1.00) -- ( 1.28, 1.53);
\draw[qqwuqq!50][line width=1.2pt] ( 1.73, 1.00) -- ( 1.73, 1.00);
\draw[qqwuqq!50][line width=1.2pt] ( 1.73, 1.00) -- ( 0.00, 2.00);
\draw[qqwuqq!50][line width=1.2pt] ( 1.73, 1.00) -- (-1.28, 1.53);
\draw[qqwuqq!50][line width=1.2pt] ( 1.73, 1.00) -- (-1.97, 0.35);
\draw[qqwuqq!50][line width=1.2pt] ( 1.73, 1.00) -- (-1.73,-1.00);
\draw[qqwuqq!50][line width=1.2pt] ( 1.73, 1.00) -- (-0.68,-1.88);
\draw[qqwuqq!50][line width=1.2pt] ( 1.73, 1.00) -- ( 0.68,-1.88);
\draw[qqwuqq!50][line width=1.2pt] ( 1.73, 1.00) -- ( 1.73,-1.00);
\draw[qqwuqq!50][line width=1.2pt] ( 1.73, 1.00) -- ( 1.97, 0.35);
\filldraw[qqwuqq] ( 1.97, 0.35) circle (0.1);
\filldraw ( 1.97, 0.35) circle (0.1);
\filldraw ( 1.28, 1.53) circle (0.1);
\filldraw[qqwuqq] ( 0.00, 2.00) circle (0.1);
\filldraw (-1.28, 1.53) circle (0.1);
\filldraw (-1.97, 0.35) circle (0.1);
\filldraw[qqwuqq] ( 1.73,-1.00) circle (0.1);
\filldraw (-0.68,-1.88) circle (0.1);
\filldraw ( 0.68,-1.88) circle (0.1);
\filldraw[qqwuqq] (-1.73,-1.00) circle (0.1);
\draw[blue][line width=1.2pt] ( 0.00, 2.00) -- (-1.73,-1.00); \draw[blue] (-0.86, 0.50) node[left]{$c_1$};
\draw[blue][line width=1.2pt] ( 0.00, 2.00) -- ( 1.73,-1.00); \draw[blue] ( 0.86, 0.50) node[right]{$c_2$};
\draw[blue][line width=1.2pt] (-1.73,-1.00) -- ( 1.73,-1.00); \draw[blue] ( 0.00,-1.00) node[below]{$c_3$};
\draw[blue][->] (-0.40, 0.69)--( 0.40, 0.69); \draw[blue] (0, 0.69) node[above]{$\alpha$};
\draw[blue][->] ( 0.80, 0.00)--( 0.40,-0.69); \draw[blue] (0.60,-0.34) node[right]{$\beta$};
\draw[blue][->] (-0.40,-0.69)--(-0.80, 0.00); \draw[blue] (-0.60,-0.34) node[left]{$\gamma$};
\draw (0,2) node[above]{$p$};
\draw (1.73,-1.00) node[right]{$q$};
\draw (-1.73,-1.00) node[left]{$r$};
\draw (0,-2.5) node{Case 1};
\end{tikzpicture}
\ \ \ \ \ \
\begin{tikzpicture}[scale=0.9]
\draw[line width=1.2pt]  (0,0) circle (2);
\draw[line width=2pt][white] ( 1.97, 0.35) arc (10:50:2);
\draw[line width=1.2pt][dotted] ( 1.97, 0.35) arc (10:50:2);
\draw[line width=2pt][white] (-1.28, 1.53) arc (130:170:2);
\draw[line width=1.2pt][dotted] (-1.28, 1.53) arc (130:170:2);
\draw[line width=2pt][white] (-0.68,-1.88) arc (250:290:2);
\draw[line width=1.2pt][dotted] (-0.68,-1.88) arc (250:290:2);
\draw[qqwuqq!50][line width=1.2pt] ( 0.00, 2.00) -- ( 1.28, 1.53);
\draw[qqwuqq!50][line width=1.2pt] ( 0.00, 2.00) -- ( 0.00, 2.00);
\draw[qqwuqq!50][line width=1.2pt] ( 0.00, 2.00) -- (-1.28, 1.53);
\draw[qqwuqq!50][line width=1.2pt] ( 0.00, 2.00) -- (-1.97, 0.35);
\draw[qqwuqq!50][line width=1.2pt] ( 0.00, 2.00) -- (-1.73,-1.00);
\draw[qqwuqq!50][line width=1.2pt] ( 0.00, 2.00) -- (-0.68,-1.88);
\draw[qqwuqq!50][line width=1.2pt] ( 0.00, 2.00) -- ( 0.68,-1.88);
\draw[qqwuqq!50][line width=1.2pt] ( 0.00, 2.00) -- ( 1.73,-1.00);
\draw[qqwuqq!50][line width=1.2pt] ( 0.00, 2.00) -- ( 1.97, 0.35);
\filldraw[qqwuqq] ( 1.97, 0.35) circle (0.1);
\filldraw ( 1.97, 0.35) circle (0.1);
\filldraw ( 1.28, 1.53) circle (0.1);
\filldraw[qqwuqq] ( 0.00, 2.00) circle (0.1);
\filldraw (-1.28, 1.53) circle (0.1);
\filldraw (-1.97, 0.35) circle (0.1);
\filldraw[qqwuqq] ( 1.73,-1.00) circle (0.1);
\filldraw (-0.68,-1.88) circle (0.1);
\filldraw ( 0.68,-1.88) circle (0.1);
\filldraw[qqwuqq] (-1.73,-1.00) circle (0.1);
\draw[blue][line width=1.2pt] ( 0.00, 2.00) -- (-1.73,-1.00); \draw[blue] (-0.86, 0.50) node[left]{$c_1$};
\draw[blue][line width=1.2pt] ( 0.00, 2.00) -- ( 1.73,-1.00); \draw[blue] ( 0.86, 0.50) node[right]{$c_2$};
\draw[blue][line width=1.2pt] (-1.73,-1.00) -- ( 1.73,-1.00); \draw[blue] ( 0.00,-1.00) node[below]{$c_3$};
\draw[blue][->] (-0.40, 0.69)--( 0.40, 0.69); \draw[blue] (0, 0.69) node[above]{$\alpha$};
\draw[blue][->] ( 0.80, 0.00)--( 0.40,-0.69); \draw[blue] (0.60,-0.34) node[right]{$\beta$};
\draw[blue][->] (-0.40,-0.69)--(-0.80, 0.00); \draw[blue] (-0.60,-0.34) node[left]{$\gamma$};
\draw (0,2) node[above]{$p$};
\draw (1.73,-1.00) node[right]{$q$};
\draw (-1.73,-1.00) node[left]{$r$};
\draw (0,-2.5) node{Case 2};
\end{tikzpicture}
\ \ \ \ \ \
\begin{tikzpicture}[scale=0.9]
\draw[line width=1.2pt]  (0,0) circle (2);
\draw[line width=2pt][white] ( 1.97, 0.35) arc (10:50:2);
\draw[line width=1.2pt][dotted] ( 1.97, 0.35) arc (10:50:2);
\draw[line width=2pt][white] (-1.28, 1.53) arc (130:170:2);
\draw[line width=1.2pt][dotted] (-1.28, 1.53) arc (130:170:2);
\draw[line width=2pt][white] (-0.68,-1.88) arc (250:290:2);
\draw[line width=1.2pt][dotted] (-0.68,-1.88) arc (250:290:2);
\draw[qqwuqq!50][line width=1.2pt] (-1.73, 1.00) -- ( 1.28, 1.53);
\draw[qqwuqq!50][line width=1.2pt] (-1.73, 1.00) -- ( 0.00, 2.00);
\draw[qqwuqq!50][line width=1.2pt] (-1.73, 1.00) -- (-1.28, 1.53);
\draw[qqwuqq!50][line width=1.2pt] (-1.73, 1.00) -- (-1.97, 0.35);
\draw[qqwuqq!50][line width=1.2pt] (-1.73, 1.00) -- (-1.73,-1.00);
\draw[qqwuqq!50][line width=1.2pt] (-1.73, 1.00) -- (-0.68,-1.88);
\draw[qqwuqq!50][line width=1.2pt] (-1.73, 1.00) -- ( 0.68,-1.88);
\draw[qqwuqq!50][line width=1.2pt] (-1.73, 1.00) -- ( 1.73,-1.00);
\draw[qqwuqq!50][line width=1.2pt] (-1.73, 1.00) -- ( 1.97, 0.35);
\filldraw[qqwuqq] ( 1.97, 0.35) circle (0.1);
\filldraw ( 1.97, 0.35) circle (0.1);
\filldraw ( 1.28, 1.53) circle (0.1);
\filldraw[qqwuqq] ( 0.00, 2.00) circle (0.1);
\filldraw (-1.28, 1.53) circle (0.1);
\filldraw (-1.97, 0.35) circle (0.1);
\filldraw[qqwuqq] ( 1.73,-1.00) circle (0.1);
\filldraw (-0.68,-1.88) circle (0.1);
\filldraw ( 0.68,-1.88) circle (0.1);
\filldraw[qqwuqq] (-1.73,-1.00) circle (0.1);
\draw[blue][line width=1.2pt] ( 0.00, 2.00) -- (-1.73,-1.00); \draw[blue] (-0.86, 0.50) node[left]{$c_1$};
\draw[blue][line width=1.2pt] ( 0.00, 2.00) -- ( 1.73,-1.00); \draw[blue] ( 0.86, 0.50) node[right]{$c_2$};
\draw[blue][line width=1.2pt] (-1.73,-1.00) -- ( 1.73,-1.00); \draw[blue] ( 0.00,-1.00) node[below]{$c_3$};
\draw[blue][->] (-0.40, 0.69)--( 0.40, 0.69); \draw[blue] (0, 0.69) node[above]{$\alpha$};
\draw[blue][->] ( 0.80, 0.00)--( 0.40,-0.69); \draw[blue] (0.60,-0.34) node[right]{$\beta$};
\draw[blue][->] (-0.40,-0.69)--(-0.80, 0.00); \draw[blue] (-0.60,-0.34) node[left]{$\gamma$};
\draw (0,2) node[above]{$p$};
\draw (1.73,-1.00) node[right]{$q$};
\draw (-1.73,-1.00) node[left]{$r$};
\draw (0,-2.5) node{Case 3};
\end{tikzpicture}
\\
\begin{tikzpicture}[scale=0.9]
\draw[line width=1.2pt]  (0,0) circle (2);
\draw[line width=2pt][white] ( 1.97, 0.35) arc (10:50:2);
\draw[line width=1.2pt][dotted] ( 1.97, 0.35) arc (10:50:2);
\draw[line width=2pt][white] (-1.28, 1.53) arc (130:170:2);
\draw[line width=1.2pt][dotted] (-1.28, 1.53) arc (130:170:2);
\draw[line width=2pt][white] (-0.68,-1.88) arc (250:290:2);
\draw[line width=1.2pt][dotted] (-0.68,-1.88) arc (250:290:2);
\draw[qqwuqq!50][line width=1.2pt] (-1.73,-1.00) -- ( 1.28, 1.53);
\draw[qqwuqq!50][line width=1.2pt] (-1.73,-1.00) -- ( 0.00, 2.00);
\draw[qqwuqq!50][line width=1.2pt] (-1.73,-1.00) -- (-1.28, 1.53);
\draw[qqwuqq!50][line width=1.2pt] (-1.73,-1.00) -- (-1.97, 0.35);
\draw[qqwuqq!50][line width=1.2pt] (-1.73,-1.00) -- (-1.73,-1.00);
\draw[qqwuqq!50][line width=1.2pt] (-1.73,-1.00) -- (-0.68,-1.88);
\draw[qqwuqq!50][line width=1.2pt] (-1.73,-1.00) -- ( 0.68,-1.88);
\draw[qqwuqq!50][line width=1.2pt] (-1.73,-1.00) -- ( 1.73,-1.00);
\draw[qqwuqq!50][line width=1.2pt] (-1.73,-1.00) -- ( 1.97, 0.35);
\filldraw[qqwuqq] ( 1.97, 0.35) circle (0.1);
\filldraw ( 1.97, 0.35) circle (0.1);
\filldraw ( 1.28, 1.53) circle (0.1);
\filldraw[qqwuqq] ( 0.00, 2.00) circle (0.1);
\filldraw (-1.28, 1.53) circle (0.1);
\filldraw (-1.97, 0.35) circle (0.1);
\filldraw[qqwuqq] ( 1.73,-1.00) circle (0.1);
\filldraw (-0.68,-1.88) circle (0.1);
\filldraw ( 0.68,-1.88) circle (0.1);
\filldraw[qqwuqq] (-1.73,-1.00) circle (0.1);
\draw[blue][line width=1.2pt] ( 0.00, 2.00) -- (-1.73,-1.00); \draw[blue] (-0.86, 0.50) node[left]{$c_1$};
\draw[blue][line width=1.2pt] ( 0.00, 2.00) -- ( 1.73,-1.00); \draw[blue] ( 0.86, 0.50) node[right]{$c_2$};
\draw[blue][line width=1.2pt] (-1.73,-1.00) -- ( 1.73,-1.00); \draw[blue] ( 0.00,-1.00) node[below]{$c_3$};
\draw[blue][->] (-0.40, 0.69)--( 0.40, 0.69); \draw[blue] (0, 0.69) node[above]{$\alpha$};
\draw[blue][->] ( 0.80, 0.00)--( 0.40,-0.69); \draw[blue] (0.60,-0.34) node[right]{$\beta$};
\draw[blue][->] (-0.40,-0.69)--(-0.80, 0.00); \draw[blue] (-0.60,-0.34) node[left]{$\gamma$};
\draw (0,2) node[above]{$p$};
\draw (1.73,-1.00) node[right]{$q$};
\draw (-1.73,-1.00) node[left]{$r$};
\draw (0,-2.5) node{Case 4};
\end{tikzpicture}
\ \ \ \ \ \
\begin{tikzpicture}[scale=0.9]
\draw[line width=1.2pt]  (0,0) circle (2);
\draw[line width=2pt][white] ( 1.97, 0.35) arc (10:50:2);
\draw[line width=1.2pt][dotted] ( 1.97, 0.35) arc (10:50:2);
\draw[line width=2pt][white] (-1.28, 1.53) arc (130:170:2);
\draw[line width=1.2pt][dotted] (-1.28, 1.53) arc (130:170:2);
\draw[line width=2pt][white] (-0.68,-1.88) arc (250:290:2);
\draw[line width=1.2pt][dotted] (-0.68,-1.88) arc (250:290:2);
\draw[qqwuqq!50][line width=1.2pt] ( 0.00,-2.00) -- ( 1.28, 1.53);
\draw[qqwuqq!50][line width=1.2pt] ( 0.00,-2.00) -- ( 0.00, 2.00);
\draw[qqwuqq!50][line width=1.2pt] ( 0.00,-2.00) -- (-1.28, 1.53);
\draw[qqwuqq!50][line width=1.2pt] ( 0.00,-2.00) -- (-1.97, 0.35);
\draw[qqwuqq!50][line width=1.2pt] ( 0.00,-2.00) -- (-1.73,-1.00);
\draw[qqwuqq!50][line width=1.2pt] ( 0.00,-2.00) -- (-0.68,-1.88);
\draw[qqwuqq!50][line width=1.2pt] ( 0.00,-2.00) -- ( 0.68,-1.88);
\draw[qqwuqq!50][line width=1.2pt] ( 0.00,-2.00) -- ( 1.73,-1.00);
\draw[qqwuqq!50][line width=1.2pt] ( 0.00,-2.00) -- ( 1.97, 0.35);
\filldraw[qqwuqq] ( 1.97, 0.35) circle (0.1);
\filldraw ( 1.97, 0.35) circle (0.1);
\filldraw ( 1.28, 1.53) circle (0.1);
\filldraw[qqwuqq] ( 0.00, 2.00) circle (0.1);
\filldraw (-1.28, 1.53) circle (0.1);
\filldraw (-1.97, 0.35) circle (0.1);
\filldraw[qqwuqq] ( 1.73,-1.00) circle (0.1);
\filldraw (-0.68,-1.88) circle (0.1);
\filldraw ( 0.68,-1.88) circle (0.1);
\filldraw[qqwuqq] (-1.73,-1.00) circle (0.1);
\draw[blue][line width=1.2pt] ( 0.00, 2.00) -- (-1.73,-1.00); \draw[blue] (-0.86, 0.50) node[left]{$c_1$};
\draw[blue][line width=1.2pt] ( 0.00, 2.00) -- ( 1.73,-1.00); \draw[blue] ( 0.86, 0.50) node[right]{$c_2$};
\draw[blue][line width=1.2pt] (-1.73,-1.00) -- ( 1.73,-1.00); \draw[blue] ( 0.00,-1.00) node[below]{$c_3$};
\draw[blue][->] (-0.40, 0.69)--( 0.40, 0.69); \draw[blue] (0, 0.69) node[above]{$\alpha$};
\draw[blue][->] ( 0.80, 0.00)--( 0.40,-0.69); \draw[blue] (0.60,-0.34) node[right]{$\beta$};
\draw[blue][->] (-0.40,-0.69)--(-0.80, 0.00); \draw[blue] (-0.60,-0.34) node[left]{$\gamma$};
\draw (0,2) node[above]{$p$};
\draw (1.73,-1.00) node[right]{$q$};
\draw (-1.73,-1.00) node[left]{$r$};
\draw (0,-2.5) node{Case 5};
\end{tikzpicture}
\ \ \ \ \ \
\begin{tikzpicture}[scale=0.9]
\draw[line width=1.2pt]  (0,0) circle (2);
\draw[line width=2pt][white] ( 1.97, 0.35) arc (10:50:2);
\draw[line width=1.2pt][dotted] ( 1.97, 0.35) arc (10:50:2);
\draw[line width=2pt][white] (-1.28, 1.53) arc (130:170:2);
\draw[line width=1.2pt][dotted] (-1.28, 1.53) arc (130:170:2);
\draw[line width=2pt][white] (-0.68,-1.88) arc (250:290:2);
\draw[line width=1.2pt][dotted] (-0.68,-1.88) arc (250:290:2);
\draw[qqwuqq!50][line width=1.2pt] ( 1.73,-1.00) -- ( 1.28, 1.53);
\draw[qqwuqq!50][line width=1.2pt] ( 1.73,-1.00) -- ( 0.00, 2.00);
\draw[qqwuqq!50][line width=1.2pt] ( 1.73,-1.00) -- (-1.28, 1.53);
\draw[qqwuqq!50][line width=1.2pt] ( 1.73,-1.00) -- (-1.97, 0.35);
\draw[qqwuqq!50][line width=1.2pt] ( 1.73,-1.00) -- (-1.73,-1.00);
\draw[qqwuqq!50][line width=1.2pt] ( 1.73,-1.00) -- (-0.68,-1.88);
\draw[qqwuqq!50][line width=1.2pt] ( 1.73,-1.00) -- ( 0.68,-1.88);
\draw[qqwuqq!50][line width=1.2pt] ( 1.73,-1.00) -- ( 1.73,-1.00);
\draw[qqwuqq!50][line width=1.2pt] ( 1.73,-1.00) -- ( 1.97, 0.35);
\filldraw[qqwuqq] ( 1.97, 0.35) circle (0.1);
\filldraw ( 1.97, 0.35) circle (0.1);
\filldraw ( 1.28, 1.53) circle (0.1);
\filldraw[qqwuqq] ( 0.00, 2.00) circle (0.1);
\filldraw (-1.28, 1.53) circle (0.1);
\filldraw (-1.97, 0.35) circle (0.1);
\filldraw[qqwuqq] ( 1.73,-1.00) circle (0.1);
\filldraw (-0.68,-1.88) circle (0.1);
\filldraw ( 0.68,-1.88) circle (0.1);
\filldraw[qqwuqq] (-1.73,-1.00) circle (0.1);
\draw[blue][line width=1.2pt] ( 0.00, 2.00) -- (-1.73,-1.00); \draw[blue] (-0.86, 0.50) node[left]{$c_1$};
\draw[blue][line width=1.2pt] ( 0.00, 2.00) -- ( 1.73,-1.00); \draw[blue] ( 0.86, 0.50) node[right]{$c_2$};
\draw[blue][line width=1.2pt] (-1.73,-1.00) -- ( 1.73,-1.00); \draw[blue] ( 0.00,-1.00) node[below]{$c_3$};
\draw[blue][->] (-0.40, 0.69)--( 0.40, 0.69); \draw[blue] (0, 0.69) node[above]{$\alpha$};
\draw[blue][->] ( 0.80, 0.00)--( 0.40,-0.69); \draw[blue] (0.60,-0.34) node[right]{$\beta$};
\draw[blue][->] (-0.40,-0.69)--(-0.80, 0.00); \draw[blue] (-0.60,-0.34) node[left]{$\gamma$};
\draw (0,2) node[above]{$p$};
\draw (1.73,-1.00) node[right]{$q$};
\draw (-1.73,-1.00) node[left]{$r$};
\draw (0,-2.5) node{Case 6};
\end{tikzpicture}
\caption{\textsf{The black points in figures are elements lying in $\M(A_n)\cup\E(A_n)$.}}
\label{fig-triangulation}
\end{center}
\end{figure}

Assume $s(\alpha)=\MM([c_1])$, $t(\alpha)=s(\beta)=\MM([c_2])$ and $t(\beta)=\MM([c_3])$. Let $p$ (resp., $q$, $r$) be the intersection of $c_1$ and $c_2$ (resp., $c_2$ and $c_3$, $c_3$ and $c_1$). We claim that $\alpha\beta\in\I$ and there is no arrow from $\MM([c_3])$ to $\MM([c_1])$.
\begin{itemize}
  \item[Case] 1: In this case, $[c_3] = [f_r^{t}(c_1)]$ for some $t\in\NN^+$.
    Notice that there exists a unique arc $a\in\Dgreen$ with endpoint $r$ such that $a$ crosses $c_2$,
    thus there exists a positive integer $2\le i\le t-1$ such that $f_r^{i}(c_1)$ is a trivial permissible curve. It follows that
    there is no arrow from $\MM([c_3])$ to $\MM([c_1])$ by Definition \ref{def-alg of GD}.
    On the other hand, the simple module $S(a-1)$ corresponding to the vertex $a-1$ is the socle of $\MM([c_1])$
    and the simple module $S(a+1)$ corresponding to the vertex $a+1$ is the top of $\MM([c_3])$.
    Thus $\Hom_{A_n}(\MM(c_3), \MM(c_1))=0$, and so $\alpha\beta\in\I$.
\end{itemize}
We can prove the Cases 2-6 by the same way. Indeed, Cases 2-6 are all not satisfy our hypothesis.
\end{proof}

\begin{proposition} \label{prop-hereditary}
Let $\Gamma \in \GD(\SURFextra(A_n))$. Then $A^{\Gamma}$ is hereditary if and only if $\Tri_{\Gamma}(\SURFextra(A_n))$ contains no complete tiles which are of the form Cases 1, 3 and 5 shown in \Pic \ref{fig-triangulation}.
\end{proposition}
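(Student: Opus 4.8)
The plan is to convert ``$A^\Gamma$ is hereditary'' into a statement about relations in $A^\Gamma$ and then decide which triangles of $\Tri_\Gamma(\SURFextra(A_n))$ force a relation, using the local analysis already carried out in the proof of Theorem~\ref{thm-endo. alg.}.

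\textbf{Step 1: reduction to complete tiles.} Write $A^\Gamma=\kk\Q/\I$ as in Definition~\ref{def-alg of GD}, so $\Q_0=\Gamma$ and $\I$ is generated by the paths $\alpha\beta$ for which $s(\alpha)$, $t(\alpha)=s(\beta)$ and $t(\beta)$ are the three sides of a single triangle of $\Tri_\Gamma(\SURFextra(A_n))$. Hence $A^\Gamma$ is hereditary if and only if $\I=0$, i.e.\ no triangle of $\Tri_\Gamma(\SURFextra(A_n))$ carries two composable arrows $\alpha,\beta$ whose sources and targets exhaust its three sides. Since $s(\alpha),t(\alpha),t(\beta)$ are vertices of $\Q$, hence curves of $\Gamma$, such a triangle must have all of its sides in $\Gamma$; by Lemma~\ref{lemm-triangulation} and Definition~\ref{def-tiles} this is exactly a complete tile. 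So I would first record that a tile which is not complete cannot contribute to $\I$, and that it remains only to decide, for a complete triangle $\T$, when $\T$ produces a relation.

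\textbf{Step 2: which complete triangles relate.} Fix a complete triangle $\T$ with sides $c_1,c_2,c_3\in\Gamma$, and let $M$ be the common endpoint of all $\gbullet$-arcs of $\Dgreen(A_n)$. As in the proof of Theorem~\ref{thm-endo. alg.}, the position of $M$ relative to $\T$ gives exactly the six configurations of Figure~\ref{fig-triangulation}: either $M$ is a vertex of $\T$ (Cases 2, 4, 6) or $M$ lies beyond one of the three sides of $\T$ (Cases 1, 3, 5). For each of the three angles of $\T$ I would then read off, from Definition~\ref{def-alg of GD} and the pivot elementary moves of Definition~\ref{def-pivot}, whether the non-triviality requirement ``$[f_p^{i}(c)]$ non-trivial for $1\le i\le\theta$'' holds, hence whether that angle produces an arrow; this is precisely the computation performed for Case~1 in the proof of Theorem~\ref{thm-endo. alg.}, where the arc of $\Dgreen(A_n)$ crossing the side beyond which $M$ lies was shown to make one pivoted curve trivial. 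The expected outcome is: when $M$ is not a vertex of $\T$, exactly one angle fails the condition, so $\T$ produces exactly two arrows, which run around the triangle and are hence composable, forcing a relation in $\I$; when $M$ is a vertex of $\T$, the presence of all the $\gbullet$-arcs at that vertex makes enough pivoted curves trivial that $\T$ produces at most one arrow, hence no relation. By the threefold rotational symmetry of a triangle, the configurations Cases 1, 3, 5 (resp.\ Cases 2, 4, 6) are permuted by relabelling $c_1,c_2,c_3$, so it suffices to treat one representative of each family, and the representative of the first family is exactly Case~1 of Theorem~\ref{thm-endo. alg.}.

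\textbf{Conclusion and main difficulty.} Combining the two steps, $\I=0$ if and only if $\Tri_\Gamma(\SURFextra(A_n))$ contains no complete triangle of the form Cases 1, 3, 5, which is the assertion. The delicate point is Step~2: making fully precise, in each configuration, which of the three angles of $\T$ satisfy the non-triviality condition of Definition~\ref{def-alg of GD} --- equivalently, tracking where the pivoted curves $f_p^{i}(c)$ first become trivial. This is where the special shape of $\SURFextra(A_n)$ is used (all $\gbullet$-arcs share the endpoint $M$, and by Lemma~\ref{lemm-triangulation} every tile is a triangle); the remainder of the argument is formal.
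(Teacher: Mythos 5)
Your proposal follows essentially the same route as the paper: reduce hereditariness of $A^{\Gamma}$ to the presence or absence of the length-two relations of Definition~\ref{def-alg of GD}, note that such a relation can only come from a complete tile, and invoke the six-case analysis of Figure~\ref{fig-triangulation} from the proof of Theorem~\ref{thm-endo. alg.} to see that exactly the configurations of Cases 1, 3 and 5 produce a composable pair $\alpha,\beta$ with $\alpha\beta\in\I$. If anything you are a bit more explicit than the paper, whose proof only records the forward implication (a tile of type 1, 3 or 5 gives $\pdim S(c_1)\ge 2$, hence non-hereditary) and leaves the converse --- that a complete tile having the common endpoint $M$ of the $\gbullet$-arcs as a vertex (Cases 2, 4, 6) yields no composable pair and hence no relation --- implicit in its remark that Cases 2--6 do not satisfy the hypothesis of Theorem~\ref{thm-endo. alg.}.
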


\begin{proof}
Since $\Tri_{\Gamma}(\SURFextra(A))$ contains a complete tile which is of the form Cases 1 (3 or 5) shown in \Pic \ref{fig-triangulation},
there are $\alpha: c_1 \to c_2$ and $\beta: c_2 \to c_3$ such that $\alpha\beta\in\I$.
Thus the projective dimension of simple module $S(c_1)$ is greater than or equal to $2$. This shows that $A^{\Gamma}$ is non-hereditary.
\end{proof}
\subsection{The classification of tilted algebras of type $A_n$} \label{subsec-classification}
In this subsection, we give a classification of the tilted algebras of type $A_n$. First, we have a simple observation.

\begin{lemma} \label{lemm-invariant of tilt}
Let $T$ be a $\tau$-tilting module over $A_n$. Then $P(1)$ is a direct summand of $T$.
\end{lemma}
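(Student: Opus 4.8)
The plan is to add $P(1)$ to $T$ and show that the resulting module is still $\tau$-rigid; since $T$ is $\tau$-tilting it is already a maximal $\tau$-rigid module (indeed $|T|=|A_n|=n$, and any $\tau$-rigid $A_n$-module has at most $n$ non-isomorphic indecomposable summands by \cite{AdachiIyamaReiten14}), so $T\oplus P(1)$ being $\tau$-rigid forces $P(1)\in\mathsf{add}\,T$, which is exactly the claim.

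First I would record two standard facts about $A_n=\kk\Q$ with the linear orientation $1\to 2\to\cdots\to n$: it is hereditary, and $P(1)$ is the indecomposable \emph{projective--injective} module, namely the uniserial interval module with dimension vector $(1,1,\dots,1)$, so that $P(1)\cong I(n)$. Because $P(1)$ is projective we have $\tau P(1)=0$, hence $\tau(T\oplus P(1))=\tau T$ and
\[ \Hom_{A_n}\!\big(T\oplus P(1),\ \tau(T\oplus P(1))\big)=\Hom_{A_n}(T,\tau T)\oplus\Hom_{A_n}(P(1),\tau T). \]
The first summand vanishes by hypothesis, so the whole problem reduces to checking $\Hom_{A_n}(P(1),\tau T)=0$.

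For this, the key observation is that $\tau$ never produces a module supported at the vertex $1$. Concretely, the indecomposable $A_n$-modules are the interval modules $M(a,b)$ with $1\le a\le b\le n$; the indecomposable projectives are precisely the $M(a,n)$, and for any non-projective indecomposable one has $\tau M(a,b)=M(a+1,b+1)$. Thus for every module $N$ the module $\tau N$ is a direct sum of interval modules $M(a',b')$ with $a'\ge 2$, so the component $(\tau N)_1$ of $\tau N$ at the vertex $1$ is zero. Since $\Hom_{A_n}(P(1),N)\cong N_1$ for every $N$, this yields $\Hom_{A_n}(P(1),\tau T)=0$, completing the proof that $T\oplus P(1)$ is $\tau$-rigid, and hence the lemma.

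I do not expect a genuine obstacle here: this is just the familiar fact that an indecomposable projective--injective module is a direct summand of every (support) $\tau$-tilting module, specialised to $A_n$. If one prefers a more conceptual argument, one can instead use that over a hereditary algebra every $\tau$-tilting module is a classical tilting module and that a projective--injective module is both $\mathrm{Ext}$-projective and $\mathrm{Ext}$-injective, hence a summand of any tilting module; alternatively, on the geometric side, one can invoke Corollary \ref{coro-triangulation} to reduce to showing that the arc $c_{P(1)}$ is compatible with every permissible curve and therefore lies in every triangulation of $\SURFextra(A_n)$. The only point requiring a little care is the explicit form of the Auslander--Reiten translate on interval modules used above, which is entirely standard for linearly oriented type $\mathbb{A}_n$.
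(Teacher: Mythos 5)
Your proof is correct, but it follows a genuinely different route from the paper. You argue entirely inside $\tau$-tilting theory: since $P(1)$ is the projective--injective interval module $M(1,n)$ with $\tau P(1)=0$, and since for linearly oriented $\mathbb{A}_n$ the translate of any non-projective interval is $\tau M(a,b)=M(a+1,b+1)$, no Auslander--Reiten translate has support at vertex $1$, so $\Hom_{A_n}(P(1),\tau T)=0$ and $T\oplus P(1)$ is $\tau$-rigid; maximality of $\tau$-tilting modules (or the bound $|T|\le n$ for $\tau$-rigid modules from \cite{AdachiIyamaReiten14}) then forces $P(1)\in\operatorname{add}T$. All the ingredients check out under the paper's conventions (right modules, composition $\alpha\beta$), where indeed $P(1)=e_1A_n$ has dimension vector $(1,\dots,1)$ and equals $I(n)$. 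The paper instead proves the lemma geometrically: via Corollary \ref{coro-triangulation} it takes a triangulation $\Gamma$ with $\MM(\Gamma)\cong T$, assumes $c_{P(1)}\notin\Gamma$, analyses the possible triangles at the distinguished marked point $p$, and concludes that $\Gamma$ must then contain an arc of $\Dgreen(A_n)$ (a trivial permissible curve), which forces $|T|\le\sharp\Gamma-1=n-1$, contradicting $|T|=n$. Your argument is shorter and self-contained on the algebraic side, and generalises to the standard fact that projective--injectives are summands of every $\tau$-tilting module; the paper's argument stays within the surface model, which is the language the rest of Section 3 (in particular Corollary \ref{coro-tilted} and Proposition \ref{prop-non-hereditary}) continues to use. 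The only point to be careful about, which you flag yourself, is the explicit form of $\tau$ on interval modules and the orientation/side convention making $P(1)$ the long projective--injective rather than a simple module; with the paper's conventions this is fine.
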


\begin{proof}
Since $T$ is a $\tau$-tilting module, there is a triangulation $\Gamma$ such that $\MM(\Gamma)\cong T$.
Assume that $c_{P(1)}$, the permissible curve corresponding $P(1)$, is not belong to $\Gamma$.
Then, any triangle $\T\in\Tri_{\Gamma}(\SURFextra(A_n))$ with vertex $p$ is one of the forms (1) -- (3) shown in
\Pic \ref{fig-tile with vertex p}.
\begin{figure}[htbp]
\begin{center}
\definecolor{ffqqqq}{rgb}{1,0,0}
\definecolor{qqwuqq}{rgb}{0,0.5,0}
\begin{tikzpicture}[scale=0.6]
\draw[line width=1.2pt]  (2,0) arc (0:360:2);
\draw[line width=2pt][white]  (1.414,-1.414) arc (-45:-80:2);
\draw[line width=1.2pt][dotted]  (1.414,-1.414) arc (-45:-80:2);
\filldraw[qqwuqq] ( 0.00, 2.00) circle (0.1);
\draw[ffqqqq][line width=1.2pt] (-1.56, 1.25) circle (0.1);
\filldraw[qqwuqq] (-1.95, 0.45) circle (0.1);
\filldraw[qqwuqq] (-1.95,-0.45) circle (0.1);
\filldraw[qqwuqq] (-1.61,-1.19) circle (0.1);
\filldraw[qqwuqq] (-0.87,-1.80) circle (0.1);
\filldraw[qqwuqq] (-0.00,-2.00) circle (0.1);
\filldraw[qqwuqq] ( 0.87,-1.80) circle (0.1);
\filldraw[qqwuqq] ( 1.61,-1.19) circle (0.1);
\filldraw[qqwuqq] ( 1.94,-0.45) circle (0.1);
\filldraw[qqwuqq] ( 1.95, 0.45) circle (0.1);
\draw[ffqqqq][line width=1.2pt] ( 1.56, 1.25) circle (0.1);
\draw[qqwuqq!25][line width=1.2pt] ( 0.00, 2.00) -- (-1.95, 0.45);
\draw[qqwuqq!25][line width=1.2pt] ( 0.00, 2.00) -- (-1.95,-0.45);
\draw[qqwuqq!25][line width=1.2pt] ( 0.00, 2.00) -- (-1.61,-1.19);
\draw[qqwuqq!25][line width=1.2pt] ( 0.00, 2.00) -- (-0.87,-1.80);
\draw[qqwuqq!25][line width=1.2pt] ( 0.00, 2.00) -- (-0.00,-2.00);
\draw[qqwuqq!25][line width=1.2pt][dotted] ( 0.00, 2.00) -- ( 0.87,-1.80);
\draw[qqwuqq!25][line width=1.2pt] ( 0.00, 2.00) -- ( 1.61,-1.19);
\draw[qqwuqq!25][line width=1.2pt] ( 0.00, 2.00) -- ( 1.94,-0.45);
\draw[qqwuqq!25][line width=1.2pt] ( 0.00, 2.00) -- ( 1.95, 0.45);
\draw[qqwuqq] (0,2) node[above]{$p$};
\draw[blue][line width=1.2pt] (0,2) -- (-1.61,-1.19) to[out=-51.4285,in=150] (-0.87,-1.80) -- (0,2);
\draw[blue](-0.7,0) node{$\T$};
\draw (0,-2.5) node{(1)};
\end{tikzpicture}
\ \ \ \
\begin{tikzpicture}[scale=0.6]
\draw[line width=1.2pt]  (2,0) arc (0:360:2);
\draw[line width=2pt][white]  (1.414,-1.414) arc (-45:-135:2);
\draw[line width=1.2pt][dotted]  (1.414,-1.414) arc (-45:-135:2);
\filldraw[qqwuqq] ( 0.00, 2.00) circle (0.1);
\draw[ffqqqq][line width=1.2pt] (-1.56, 1.25) circle (0.1);
\filldraw[qqwuqq] (-1.95, 0.45) circle (0.1);
\filldraw[qqwuqq] (-1.95,-0.45) circle (0.1);
\filldraw[qqwuqq] (-1.61,-1.19) circle (0.1);
\filldraw[qqwuqq] (-0.87,-1.80) circle (0.1);
\filldraw[qqwuqq] (-0.00,-2.00) circle (0.1);
\filldraw[qqwuqq] ( 0.87,-1.80) circle (0.1);
\filldraw[qqwuqq] ( 1.61,-1.19) circle (0.1);
\filldraw[qqwuqq] ( 1.94,-0.45) circle (0.1);
\filldraw[qqwuqq] ( 1.95, 0.45) circle (0.1);
\draw[ffqqqq][line width=1.2pt] ( 1.56, 1.25) circle (0.1);
\draw[qqwuqq!25][line width=1.2pt] ( 0.00, 2.00) -- (-1.95, 0.45);
\draw[qqwuqq!25][line width=1.2pt] ( 0.00, 2.00) -- (-1.95,-0.45);
\draw[qqwuqq!25][line width=1.2pt] ( 0.00, 2.00) -- (-1.61,-1.19);
\draw[qqwuqq!25][line width=1.2pt][dotted] ( 0.00, 2.00) -- (-0.87,-1.80);
\draw[qqwuqq!25][line width=1.2pt][dotted] ( 0.00, 2.00) -- (-0.00,-2.00);
\draw[qqwuqq!25][line width=1.2pt][dotted] ( 0.00, 2.00) -- ( 0.87,-1.80);
\draw[qqwuqq!25][line width=1.2pt] ( 0.00, 2.00) -- ( 1.61,-1.19);
\draw[qqwuqq!25][line width=1.2pt] ( 0.00, 2.00) -- ( 1.94,-0.45);
\draw[qqwuqq!25][line width=1.2pt] ( 0.00, 2.00) -- ( 1.95, 0.45);
\draw[qqwuqq] (0,2) node[above]{$p$};
\draw[blue][line width=1.2pt] (0,2) -- (-1.61,-1.19) -- (1.61,-1.19) -- (0,2);
\draw[blue](0,0) node{$\T$};
\draw (0,-2.5) node{(2)};
\end{tikzpicture}
\ \ \ \
\begin{tikzpicture}[scale=0.6]
\draw[line width=1.2pt]  (2,0) arc (0:360:2);
\draw[line width=2pt][white]  (1.414,-1.414) arc (-45:-135:2);
\draw[line width=1.2pt][dotted]  (1.414,-1.414) arc (-45:-135:2);
\filldraw[qqwuqq] ( 0.00, 2.00) circle (0.1);
\draw[ffqqqq][line width=1.2pt] (-1.56, 1.25) circle (0.1);
\filldraw[qqwuqq] (-1.95, 0.45) circle (0.1);
\filldraw[qqwuqq] (-1.95,-0.45) circle (0.1);
\filldraw[qqwuqq] (-1.61,-1.19) circle (0.1);
\filldraw[qqwuqq] (-0.87,-1.80) circle (0.1);
\filldraw[qqwuqq] (-0.00,-2.00) circle (0.1);
\filldraw[qqwuqq] ( 0.87,-1.80) circle (0.1);
\filldraw[qqwuqq] ( 1.61,-1.19) circle (0.1);
\filldraw[qqwuqq] ( 1.94,-0.45) circle (0.1);
\filldraw[qqwuqq] ( 1.95,-0.45) circle (0.1);
\filldraw[qqwuqq] ( 1.95, 0.45) circle (0.1);
\draw[ffqqqq][line width=1.2pt] ( 1.56, 1.25) circle (0.1);
\draw[qqwuqq!50][line width=1.2pt] ( 0.00, 2.00) -- (-1.95, 0.45);
\draw[qqwuqq!50][line width=1.2pt] ( 0.00, 2.00) -- (-1.95,-0.45);
\draw[qqwuqq!50][line width=1.2pt] ( 0.00, 2.00) -- (-1.61,-1.19);
\draw[qqwuqq!50][line width=1.2pt][dotted] ( 0.00, 2.00) -- (-0.87,-1.80);
\draw[qqwuqq!50][line width=1.2pt][dotted] ( 0.00, 2.00) -- (-0.00,-2.00);
\draw[qqwuqq!50][line width=1.2pt][dotted] ( 0.00, 2.00) -- ( 0.87,-1.80);
\draw[qqwuqq!50][line width=1.2pt] ( 0.00, 2.00) -- ( 1.61,-1.19);
\draw[qqwuqq!50][line width=1.2pt] ( 0.00, 2.00) -- ( 1.94,-0.45);
\draw[qqwuqq!50][line width=1.2pt] ( 0.00, 2.00) -- ( 1.95, 0.45);
\draw[qqwuqq] (0,2) node[above]{$p$};
\draw[blue][line width=1.2pt] ( 0.00, 2.00) -- (-1.95, 0.45) to[out=78,in=180] ( 0.00, 2.00);
\draw[blue] (-1.16, 1.35) node{$\T$};
\draw (0,-2.5) node{(3)};
\end{tikzpicture}
\caption{\textsf{The tile $\T$ with vertex $p$.}}
\label{fig-tile with vertex p}
\end{center}
\end{figure}
\noindent Thus, there exists at least one element in $\Gamma$ which belongs to $\gbullet$-FFAS $\Dgreen(A_n)$.
In this case, $|T|\le \sharp\Gamma-1 = |A|-1$, this is a contradiction.
\end{proof}

\begin{corollary} \label{coro-tilted}
Let $\Gamma\in \Tri(\SURFextra(A_n))$.
Then $A^{\Gamma}$ is a tilted algebra of type $A_{n}$ if and only if $\Gamma$ is a {\rm GD} containing $c_{P(1)}$.
\end{corollary}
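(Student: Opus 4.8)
The plan is to run everything through the dictionary already set up between generalized dissections, support $\tau$-tilting modules, and their endomorphism algebras. First I would observe that the clause ``$\Gamma$ is a GD'' in the statement is automatic: by Lemma \ref{lemm-triangulation} together with the (immediate) converse that a triangulation of $\SURFextra(A_n)$ is a maximal PGD, one has $\Tri(\SURFextra(A_n)) = \GD(\SURFextra(A_n))$. So the real content is the equivalence ``$A^{\Gamma}$ is tilted of type $A_n$'' $\iff$ ``$c_{P(1)}\in\Gamma$''.

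For the forward implication, suppose $A^{\Gamma}$ is a tilted algebra of type $A_n$. Then $A^{\Gamma}$ has exactly $n$ isoclasses of simple modules, since a tilted algebra $\End(T_{A_n})$ has as many simples as $T$ has indecomposable summands, namely $|A_n|=n$. On the other hand, by Definition \ref{def-alg of GD} the quiver defining $A^{\Gamma}$ has vertex set $\Gamma$, and $A^{\Gamma}=\kk\Q/\I$ is basic, so $\sharp\Gamma=n$. Hence $T:=\MM(\Gamma)$, which is support $\tau$-tilting by Corollary \ref{coro-triangulation}, has $n$ summands and is therefore a $\tau$-tilting $A_n$-module; Lemma \ref{lemm-invariant of tilt} then yields $P(1)\mid T$, and injectivity of the correspondence $\MM$ (Corollary \ref{coro-triangulation}) forces $c_{P(1)}\in\Gamma$.

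For the reverse implication, suppose $c_{P(1)}\in\Gamma$. The key step I would carry out is to show $\sharp\Gamma=n$, i.e. that $\MM(\Gamma)$ is a genuine $\tau$-tilting module and not merely a properly support $\tau$-tilting one. Geometrically: the permissible curve $c_{P(1)}$ of $P(1)=e_1A_n$ crosses the $\gbullet$-arc attached to a vertex $v$ once for each occurrence of $S(v)$ among the composition factors of $P(1)$, and since the orientation $1\to 2\to\cdots\to n$ makes every $S(v)$ a composition factor of $P(1)$, the curve $c_{P(1)}$ crosses every arc of $\Dgreen(A_n)$ in $\innerSurf$; as curves of $\Gamma$ are pairwise non-crossing in $\innerSurf$, no arc of $\Dgreen(A_n)$ can lie in $\Gamma$, so $T:=\MM(\Gamma)=\bigoplus_{c\in\Gamma}\MM([c])$ has $\sharp\Gamma=n$ indecomposable summands. (Equivalently, one can argue algebraically: were $T$ a $\tau$-tilting $A_n/\langle e\rangle$-module with $e\neq 0$, then $P(1)$ would be an $A_n/\langle e\rangle$-module, which is impossible because $e_1A_ne\neq 0$ for every nonzero idempotent $e$, again by the linear orientation.) Thus $T$ is $\tau$-tilting over $A_n$; since $A_n$ is hereditary, $\tau$-rigidity coincides with rigidity, so $T$ is a classical tilting module. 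By Theorem \ref{thm-endo. alg.}, $A^{\Gamma}\cong\End(\MM(\Gamma))=\End(T_{A_n})$, which is by definition a tilted algebra of type $A_n$.

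I expect the main obstacle to be exactly that ``full support'' step in the reverse implication, that is, ruling out the possibility that $c_{P(1)}\in\Gamma$ is compatible with $\Gamma$ corresponding to a properly support $\tau$-tilting module. The geometric version is clean once the intersection numbers of $c_{P(1)}$ with $\Dgreen(A_n)$ are pinned down (this is essentially dual to the computation underlying the proof of Lemma \ref{lemm-invariant of tilt}); everything else is bookkeeping with the bijections already established, together with the classical fact that over the hereditary algebra $A_n$ the $\tau$-tilting modules are precisely the tilting modules.
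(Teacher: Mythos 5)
Your proposal is correct and follows essentially the same route as the paper's proof: the forward direction rests on Theorem \ref{thm-endo. alg.} together with Lemma \ref{lemm-invariant of tilt}, and the reverse direction reduces to showing that $c_{P(1)}\in\Gamma$ excludes all trivial curves ($\gbullet$-arcs) from $\Gamma$, so that $\MM(\Gamma)$ is a genuine $\tau$-tilting, hence (by heredity of $A_n$) tilting, module whose endomorphism algebra is $A^{\Gamma}$. The only difference is presentational: you spell out, via the crossing of $c_{P(1)}$ with every arc of $\Dgreen(A_n)$ (equivalently the sincerity of $P(1)$), the full-support step that the paper compresses into its citation of Lemma \ref{lemm-invariant of tilt}.
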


\begin{proof}
By Theorem \ref{thm-endo. alg.}, we have $\End(\MM(\Gamma))$ is a tilted algbera.
By Lemma \ref{lemm-invariant of tilt}, $P(1)=\MM(c_{P(1)})$ is a direct summand of $\MM(\Gamma)$.
Thus $c_{P(1)}\in\Gamma$.

If $c_{P(1)}\in \Gamma$, then, by Lemma \ref{lemm-invariant of tilt}, any permissible curve in $\Gamma$ is non-trivial.
By Corollary \ref{coro-triangulation}, $\MM(\Gamma)$ is a $\tau$-tilting module, and then $A^{\Gamma}$ is tilted.
\end{proof}

\begin{proposition} \label{prop-non-hereditary}
Let $\Gamma$ be a $\mathrm{GD}$ and $c_{P(1)}\in\Gamma$.

{\rm(1)} If $\Tri_{\Gamma}(\SURFextra(A_n))$ contains no complete triangle, then $A^{\Gamma}$ is a hereditary tilted algebra of type $A_{n}$.

{\rm(2)} If $\Tri_{\Gamma}(\SURFextra(A_n))$ contains at least one complete triangle, then $A^{\Gamma}$ is a non-hereditary tilted algebra of type $A_{n}$.
\end{proposition}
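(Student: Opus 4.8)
The plan is to deduce both statements from Corollary~\ref{coro-tilted} and Proposition~\ref{prop-hereditary}. Since $\Gamma$ is a $\mathrm{GD}$ containing $c_{P(1)}$, Corollary~\ref{coro-tilted} already gives that $A^{\Gamma}$ is a tilted algebra of type $A_n$ in both cases, and by Lemma~\ref{lemm-triangulation} every tile of $\Gamma$ is a triangle, so a complete tile and a complete triangle are the same thing. Thus only the hereditary/non-hereditary dichotomy remains, and for this Proposition~\ref{prop-hereditary} is decisive: $A^{\Gamma}$ is hereditary if and only if $\Tri_{\Gamma}(\SURFextra(A_n))$ contains no complete triangle of the form Case~1, 3, or 5 of \Pic\ref{fig-triangulation}. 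Part~(1) then follows at once --- no complete triangle at all means, a fortiori, none of Cases~1, 3, 5, so $A^{\Gamma}$ is hereditary --- and together with Corollary~\ref{coro-tilted} this is exactly the assertion of~(1).

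For part~(2), by Proposition~\ref{prop-hereditary} it suffices to exhibit one complete triangle of Case~1, 3, or 5, and I would obtain this by proving the stronger claim that, when $c_{P(1)}\in\Gamma$, \emph{every} complete triangle of $\Gamma$ is of one of these three forms. I would use that $\SURFextra(A_n)$ carries a distinguished marked point $M$, the common endpoint of all $\gbullet$-arcs of $\Dgreen(A_n)$ (as in the proof of Theorem~\ref{thm-endo. alg.}), and that the six configurations of \Pic\ref{fig-triangulation} are sorted by the position of $M$ relative to the triangle: Cases~2, 4, 6 are exactly those in which $M$ is one of the three corners, whereas in Cases~1, 3, 5 the point $M$ lies on the boundary $\bSurf$ behind one of the three sides. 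Hence the claim reduces to: no complete triangle of $\Gamma$ has $M$ as a corner. This is where $c_{P(1)}\in\Gamma$ is essential, in the spirit of Lemma~\ref{lemm-invariant of tilt}. By Corollary~\ref{coro-tilted} and its proof (via Corollary~\ref{coro-triangulation}), $\MM(\Gamma)$ is a genuine $\tau$-tilting module, so $|\MM(\Gamma)|=\sharp\Gamma=|A_n|=n$ and $\Gamma$ contains no $\gbullet$-arc. Since the curve $c_{P(1)}$ crosses every one of the $n$ $\gbullet$-arcs, it encircles the fan of $\gbullet$-arcs at $M$; analysing the tiles of $\Gamma$ lying on the $M$-side of $c_{P(1)}$ (compare the tiles with vertex $M$ enumerated in \Pic\ref{fig-tile with vertex p}), one sees that such a tile would otherwise force a $\gbullet$-arc into $\Gamma$ or have a side on $\bSurf$; in either case it is not a complete triangle avoiding $M$. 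Consequently every complete triangle of $\Gamma$ is of Case~1, 3, or 5, and such a triangle contributes, by Definition~\ref{def-alg of GD}, a composable pair $\alpha\colon c_1\to c_2$, $\beta\colon c_2\to c_3$ with $\alpha\beta\in\I$; hence $\pd S(c_1)\ge 2$ and $A^{\Gamma}$ is non-hereditary. With Corollary~\ref{coro-tilted} this is~(2).

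The first paragraph is bookkeeping; the real content --- and the step I expect to be the main obstacle --- is the geometric claim in part~(2) that no complete triangle of $\Gamma$ has $M$ as a corner when $c_{P(1)}\in\Gamma$. Making this rigorous needs a precise description of how $c_{P(1)}$ and the tiles of $\Gamma$ sit with respect to the fan of $\gbullet$-arcs at $M$ --- equivalently, the classification of the tiles of a triangulation of $\SURFextra(A_n)$ that have $M$ as a vertex, of which \Pic\ref{fig-tile with vertex p} is the relevant picture --- together with the combinatorial fact that every triangulation of $\SURFextra(A_n)$ has exactly $n=|A_n|$ arcs, which turns the appearance of any $\gbullet$-arc in $\Gamma$ into a contradiction with the $\tau$-tiltingness of $\MM(\Gamma)$. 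It would also be worth recording explicitly, since it is only implicit in the proof of Theorem~\ref{thm-endo. alg.}, the dictionary between the six cases of \Pic\ref{fig-triangulation} and the six positions of $M$ relative to a triangle.
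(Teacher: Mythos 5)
Your proposal is correct and follows essentially the same route as the paper: tiltedness comes from Corollary \ref{coro-tilted}, the dichotomy from Proposition \ref{prop-hereditary}, and the only real content is your claim that when $c_{P(1)}\in\Gamma$ every complete triangle avoids the common marked point $M$ and is therefore of Case 1, 3 or 5 of \Pic\ref{fig-triangulation}. That claim is precisely what the paper records (equally tersely) via the four configurations of \Pic\ref{fig-special tile}, so your sketch, once the separation argument for $c_{P(1)}$ is written out, reproduces the paper's proof.
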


\begin{proof}
By Corollary \ref{coro-tilted}, we have $A^{\Gamma}$ is tilted because $c_{P(1)}\in\Gamma$.
Therefore, if $\T\in\Tri_{\Gamma}(\SURFextra(A_n))$ is a complete triangle,
then it is one of the forms (1), (2), (3) and (4) shown in \Pic \ref{fig-special tile}.
\begin{figure}[H]
\begin{center}
\definecolor{ffqqqq}{rgb}{1,0,0}
\definecolor{qqwuqq}{rgb}{0,0.5,0}
\begin{tikzpicture}[scale=0.6]
\draw[line width=1.2pt] (2,0) arc (0:360:2);
\draw[line width=2pt][white] (1.414,-1.414) arc (-45:-80:2);
\draw[line width=1.2pt][dotted] (1.414,-1.414) arc (-45:-80:2);
\draw[line width=2pt][white] (-1.414,-1.414) arc (-135:-110:2);
\draw[line width=1.2pt][dotted] (-1.414,-1.414) arc (-135:-110:2);
\filldraw[qqwuqq] ( 0.00, 2.00) circle (0.1);
\draw[ffqqqq][line width=1.2pt] (-1.56, 1.25) circle (0.1);
\filldraw[qqwuqq] (-1.95, 0.45) circle (0.1);
\filldraw[qqwuqq] (-1.95,-0.45) circle (0.1);
\filldraw[qqwuqq] (-1.61,-1.19) circle (0.1);
\filldraw[qqwuqq] (-0.87,-1.80) circle (0.1);
\filldraw[qqwuqq] (-0.00,-2.00) circle (0.1);
\filldraw[qqwuqq] ( 0.87,-1.80) circle (0.1);
\filldraw[qqwuqq] ( 1.61,-1.19) circle (0.1);
\filldraw[qqwuqq] ( 1.94,-0.45) circle (0.1);
\filldraw[qqwuqq] ( 1.95, 0.45) circle (0.1);
\draw[ffqqqq][line width=1.2pt] ( 1.56, 1.25) circle (0.1);
\draw[qqwuqq!25][line width=1.2pt] ( 0.00, 2.00) -- (-1.95, 0.45);
\draw[qqwuqq!25][line width=1.2pt] ( 0.00, 2.00) -- (-1.95,-0.45);
\draw[qqwuqq!25][line width=1.2pt][dotted] ( 0.00, 2.00) -- (-1.61,-1.19);
\draw[qqwuqq!25][line width=1.2pt][dotted] ( 0.00, 2.00) -- (-0.87,-1.80);
\draw[qqwuqq!25][line width=1.2pt] ( 0.00, 2.00) -- (-0.00,-2.00);
\draw[qqwuqq!25][line width=1.2pt][dotted] ( 0.00, 2.00) -- ( 0.87,-1.80);
\draw[qqwuqq!25][line width=1.2pt][dotted] ( 0.00, 2.00) -- ( 1.61,-1.19);
\draw[qqwuqq!25][line width=1.2pt] ( 0.00, 2.00) -- ( 1.94,-0.45);
\draw[qqwuqq!25][line width=1.2pt] ( 0.00, 2.00) -- ( 1.95, 0.45);
\draw[qqwuqq] (0,2) node[above]{$p$};
\draw[blue][line width=1.2pt] (-1.56, 1.25) -- (0,-2) -- (1.56, 1.25) -- (-1.56, 1.25);
\draw[blue](0,0) node{$\T$};
\draw[orange][line width=1.2pt] (-1.56, 1.25)--(1.56, 1.25);
\draw (0,-2.5) node{(1) $\T$ has two};
\draw (0,-3) node{vertices lying in $\E$};
\end{tikzpicture}
\begin{tikzpicture}[scale=0.6]
\draw[line width=1.2pt] (2,0) arc (0:360:2);
\draw[line width=2pt][white] (1.414,-1.414) arc (-45:-80:2);
\draw[line width=1.2pt][dotted] (1.414,-1.414) arc (-45:-80:2);
\draw[line width=2pt][white] (-1.414,-1.414) arc (-135:-100:2);
\draw[line width=1.2pt][dotted] (-1.414,-1.414) arc (-135:-100:2);
\draw[line width=2pt][white] (-2,0) arc (-180:-155:2);
\draw[line width=1.2pt][dotted] (-2,0) arc (-180:-155:2);
\filldraw[qqwuqq] ( 0.00, 2.00) circle (0.1);
\draw[ffqqqq][line width=1.2pt] (-1.56, 1.25) circle (0.1);
\filldraw[qqwuqq] (-1.95, 0.45) circle (0.1);
\filldraw[qqwuqq] (-1.95,-0.45) circle (0.1);
\filldraw[qqwuqq] (-1.61,-1.19) circle (0.1);
\filldraw[qqwuqq] (-0.87,-1.80) circle (0.1);
\filldraw[qqwuqq] (-0.00,-2.00) circle (0.1);
\filldraw[qqwuqq] ( 0.87,-1.80) circle (0.1);
\filldraw[qqwuqq] ( 1.61,-1.19) circle (0.1);
\filldraw[qqwuqq] ( 1.94,-0.45) circle (0.1);
\filldraw[qqwuqq] ( 1.95, 0.45) circle (0.1);
\draw[ffqqqq][line width=1.2pt] ( 1.56, 1.25) circle (0.1);
\draw[qqwuqq!25][line width=1.2pt] ( 0.00, 2.00) -- (-1.95, 0.45);
\draw[qqwuqq!25][line width=1.2pt][dotted] ( 0.00, 2.00) -- (-1.95,-0.45);
\draw[qqwuqq!25][line width=1.2pt] ( 0.00, 2.00) -- (-1.61,-1.19);
\draw[qqwuqq!25][line width=1.2pt][dotted] ( 0.00, 2.00) -- (-0.87,-1.80);
\draw[qqwuqq!25][line width=1.2pt] ( 0.00, 2.00) -- (-0.00,-2.00);
\draw[qqwuqq!25][line width=1.2pt][dotted] ( 0.00, 2.00) -- ( 0.87,-1.80);
\draw[qqwuqq!25][line width=1.2pt][dotted] ( 0.00, 2.00) -- ( 1.61,-1.19);
\draw[qqwuqq!25][line width=1.2pt] ( 0.00, 2.00) -- ( 1.94,-0.45);
\draw[qqwuqq!25][line width=1.2pt] ( 0.00, 2.00) -- ( 1.95, 0.45);
\draw[qqwuqq] (0,2) node[above]{$p$};
\draw[blue][line width=1.2pt] (-1.56,-1.25) -- (0,-2) -- (1.56, 1.25) -- (-1.56,-1.25);
\draw[blue](0,-0.8) node{$\T$};
\draw[orange][line width=1.2pt] (-1.56, 1.25)--(1.56, 1.25);
\draw (0,-2.5) node{(2) $\T$ has only one};
\draw (0,-3) node{vertex lying in $\E$};
\end{tikzpicture}
\begin{tikzpicture}[scale=0.6]
\draw[line width=1.2pt] (2,0) arc (0:360:2);
\draw[line width=2pt][white] (1.414,-1.414) arc (-45:-80:2);
\draw[line width=1.2pt][dotted] (1.414,-1.414) arc (-45:-80:2);
\draw[line width=2pt][white] (-1.414,-1.414) arc (-135:-100:2);
\draw[line width=1.2pt][dotted] (-1.414,-1.414) arc (-135:-100:2);
\draw[line width=2pt][white] (2,0) arc (0:-25:2);
\draw[line width=1.2pt][dotted] (2,0) arc (0:-25:2);
\filldraw[qqwuqq] ( 0.00, 2.00) circle (0.1);
\draw[ffqqqq][line width=1.2pt] (-1.56, 1.25) circle (0.1);
\filldraw[qqwuqq] (-1.95, 0.45) circle (0.1);
\filldraw[qqwuqq] (-1.95,-0.45) circle (0.1);
\filldraw[qqwuqq] (-1.61,-1.19) circle (0.1);
\filldraw[qqwuqq] (-0.87,-1.80) circle (0.1);
\filldraw[qqwuqq] (-0.00,-2.00) circle (0.1);
\filldraw[qqwuqq] ( 0.87,-1.80) circle (0.1);
\filldraw[qqwuqq] ( 1.61,-1.19) circle (0.1);
\filldraw[qqwuqq] ( 1.94,-0.45) circle (0.1);
\filldraw[qqwuqq] ( 1.95, 0.45) circle (0.1);
\draw[ffqqqq][line width=1.2pt] ( 1.56, 1.25) circle (0.1);
\draw[qqwuqq!25][line width=1.2pt] ( 0.00, 2.00) -- (-1.95, 0.45);
\draw[qqwuqq!25][line width=1.2pt] ( 0.00, 2.00) -- (-1.95,-0.45);
\draw[qqwuqq!25][line width=1.2pt] ( 0.00, 2.00) -- (-1.61,-1.19);
\draw[qqwuqq!25][line width=1.2pt][dotted] ( 0.00, 2.00) -- (-0.87,-1.80);
\draw[qqwuqq!25][line width=1.2pt] ( 0.00, 2.00) -- (-0.00,-2.00);
\draw[qqwuqq!25][line width=1.2pt][dotted] ( 0.00, 2.00) -- ( 0.87,-1.80);
\draw[qqwuqq!25][line width=1.2pt] ( 0.00, 2.00) -- ( 1.61,-1.19);
\draw[qqwuqq!25][line width=1.2pt][dotted] ( 0.00, 2.00) -- ( 1.94,-0.45);
\draw[qqwuqq!25][line width=1.2pt] ( 0.00, 2.00) -- ( 1.95, 0.45);
\draw[qqwuqq] (0,2) node[above]{$p$};
\draw[blue][line width=1.2pt] (1.56,-1.25) -- (0,-2) -- (-1.56, 1.25) -- (1.56,-1.25);
\draw[blue](0,-0.8) node{$\T$};
\draw[orange][line width=1.2pt] (-1.56, 1.25)--(1.56, 1.25);
\draw (0,-2.5) node{(3) $\T$ has only one};
\draw (0,-3) node{vertex lying in $\E$};
\end{tikzpicture}
\begin{tikzpicture}[scale=0.6]
\draw[line width=1.2pt] (2,0) arc (0:360:2);
\draw[line width=2pt][white] (1.414,-1.414) arc (-45:-80:2);
\draw[line width=1.2pt][dotted] (1.414,-1.414) arc (-45:-80:2);
\draw[line width=2pt][white] (-1.414,-1.414) arc (-135:-100:2);
\draw[line width=1.2pt][dotted] (-1.414,-1.414) arc (-135:-100:2);
\draw[line width=2pt][white] (-2,0) arc (-180:-155:2);
\draw[line width=1.2pt][dotted] (-2,0) arc (-180:-155:2);
\draw[line width=2pt][white] (2,0) arc (0:-25:2);
\draw[line width=1.2pt][dotted] (2,0) arc (0:-25:2);
\filldraw[qqwuqq] ( 0.00, 2.00) circle (0.1);
\draw[ffqqqq][line width=1.2pt] (-1.56, 1.25) circle (0.1);
\filldraw[qqwuqq] (-1.95, 0.45) circle (0.1);
\filldraw[qqwuqq] (-1.95,-0.45) circle (0.1);
\filldraw[qqwuqq] (-1.61,-1.19) circle (0.1);
\filldraw[qqwuqq] (-0.87,-1.80) circle (0.1);
\filldraw[qqwuqq] (-0.00,-2.00) circle (0.1);
\filldraw[qqwuqq] ( 0.87,-1.80) circle (0.1);
\filldraw[qqwuqq] ( 1.61,-1.19) circle (0.1);
\filldraw[qqwuqq] ( 1.94,-0.45) circle (0.1);
\filldraw[qqwuqq] ( 1.95, 0.45) circle (0.1);
\draw[ffqqqq][line width=1.2pt] ( 1.56, 1.25) circle (0.1);
\draw[qqwuqq!25][line width=1.2pt] ( 0.00, 2.00) -- (-1.95, 0.45);
\draw[qqwuqq!25][line width=1.2pt][dotted] ( 0.00, 2.00) -- (-1.95,-0.45);
\draw[qqwuqq!25][line width=1.2pt] ( 0.00, 2.00) -- (-1.61,-1.19);
\draw[qqwuqq!25][line width=1.2pt][dotted] ( 0.00, 2.00) -- (-0.87,-1.80);
\draw[qqwuqq!25][line width=1.2pt] ( 0.00, 2.00) -- (-0.00,-2.00);
\draw[qqwuqq!25][line width=1.2pt][dotted] ( 0.00, 2.00) -- ( 0.87,-1.80);
\draw[qqwuqq!25][line width=1.2pt] ( 0.00, 2.00) -- ( 1.61,-1.19);
\draw[qqwuqq!25][line width=1.2pt][dotted] ( 0.00, 2.00) -- ( 1.94,-0.45);
\draw[qqwuqq!25][line width=1.2pt] ( 0.00, 2.00) -- ( 1.95, 0.45);
\draw[qqwuqq] (0,2) node[above]{$p$};
\draw[blue][line width=1.2pt] (-1.56,-1.25) -- (0,-2) -- (1.56,-1.25) -- (-1.56,-1.25);
\draw[blue](0,-1.7) node{$\T$};
\draw[orange][line width=1.2pt] (-1.56, 1.25)--(1.56, 1.25);
\draw (0,-2.5) node{(4) All vertices of $\T$};
\draw (0,-3) node{lying in $\M$};
\end{tikzpicture}
\caption{\textsf{The complete triangle $\T\in\Tri_{\Gamma}(\SURFextra(A_n))$,
where $\Gamma$ is a GD containing $c_{P(1)}$.
In (1), the permissible curve $c_{P(1)}$ coincides with one edge of $\T$}. }
\label{fig-special tile}
\end{center}
\end{figure}
\noindent By Proposition \ref{prop-hereditary},
$A^{\Gamma}$ is non-hereditary if and only if $\Tri_{\Gamma}(\SURFextra(A))$ contains complete triangle.
\end{proof}

Now we can give a classification of the non-hereditary tilted algebras of type $A_n$.

\begin{theorem} \label{prop-tri. contains sp. tile}
There is a bijection
\[ \Tri_{\rmc}(\SURFextra(A_n)) \to \sfT_{\nh}(A_n), \\
\Gamma\mapsto A^{\Gamma}\]
between the set $\Tri_{\rmc}(\SURFextra(A_n))$ of all triangulations in $\Tri(\SURFextra(A_n))$ which contains $c_{P(1)}$ and has at least one complete triangle and the set $\sfT_{\nh}(A_n)$ of all non-hereditary tilted algebras of type $A_n$.

\end{theorem}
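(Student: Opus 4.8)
The plan is to check that $\Gamma\mapsto A^{\Gamma}$ is well defined, surjective, and injective. Well-definedness is immediate from Proposition~\ref{prop-non-hereditary}(2): if $\Gamma\in\Tri_{\rmc}(\SURFextra(A_n))$ then $c_{P(1)}\in\Gamma$ and $\Tri_{\Gamma}(\SURFextra(A_n))$ contains a complete triangle, so $A^{\Gamma}$ is a non-hereditary tilted algebra of type $A_n$, that is $A^{\Gamma}\in\sfT_{\nh}(A_n)$.

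For surjectivity I would start from $B\in\sfT_{\nh}(A_n)$ and write $B=\End_{A_n}(T)$ for a tilting $A_n$-module $T$. Since $A_n$ is hereditary, $T$ is ($\tau$-)tilting with $|T|=|A_n|$, so Corollary~\ref{coro-triangulation} yields a triangulation $\Gamma$ of $\SURFextra(A_n)$ with $\MM(\Gamma)\cong T$; by Lemma~\ref{lemm-invariant of tilt} the summand $P(1)$ of $T$ forces $c_{P(1)}\in\Gamma$, and Theorem~\ref{thm-endo. alg.} gives $A^{\Gamma}\cong\End(\MM(\Gamma))\cong B$. As $B$ is non-hereditary, the contrapositive of Proposition~\ref{prop-non-hereditary}(1) shows $\Tri_{\Gamma}(\SURFextra(A_n))$ has a complete triangle, so $\Gamma\in\Tri_{\rmc}(\SURFextra(A_n))$ and $\Gamma\mapsto B$.

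Injectivity is the heart of the statement. By Theorem~\ref{thm-endo. alg.} and Corollary~\ref{coro-triangulation} it suffices to prove that a non-hereditary tilted algebra $B$ of type $A_n$ is the endomorphism algebra of a \emph{unique} tilting $A_n$-module up to isomorphism, for then $A^{\Gamma_1}\cong A^{\Gamma_2}$ forces $\MM(\Gamma_1)\cong\MM(\Gamma_2)$, hence $\Gamma_1=\Gamma_2$. My approach would be to reconstruct $\Gamma$ from $B$: write $B$ in its canonical form $\kk\Q_B/\I_B$; by Proposition~\ref{prop-hereditary} and the proof of Theorem~\ref{thm-endo. alg.}, every relation of $\I_B$ arises from a complete triangle of $\Gamma$, which is one of the four shapes of \Pic~\ref{fig-special tile} and is attached to the distinguished marked point, while every arrow of $\Q_B$ lying in no relation records a non-complete tile as in \Pic~\ref{fig-triangulation}; one then assembles this local data around the fixed arc $c_{P(1)}$ (the curve with $\MM([c_{P(1)}])=P(1)$, singled out by Lemma~\ref{lemm-invariant of tilt}) into a triangulation of $\SURFextra(A_n)$ and checks it is the only one with $A^{\Gamma}\cong B$. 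In purely representation-theoretic language the same statement reads: because $B$ is non-hereditary (so $\mathrm{gl.dim}\,B=2$), the Auslander--Reiten quiver of $\modcat B$ is a proper truncation of $\mathbb{Z}\mathbb{A}_n$ carrying a unique complete slice of linear $\mathbb{A}_n$-type, namely the connecting slice $\Hom_{A_n}(T,\mathrm{D}A_n)$, and this recovers $T$.

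The step I expect to be the main obstacle is exactly this uniqueness. On the set of all tilting $A_n$-modules the assignment $T\mapsto\End_{A_n}(T)$ is badly non-injective --- already over $A_n$ itself successive APR-tilts produce many tilting modules with isomorphic hereditary endomorphism algebra --- so the real content is that this collapsing never occurs once the endomorphism algebra is non-hereditary. The point to make precise is that a complete triangle in $\Gamma$ always produces a genuine relation in $A^{\Gamma}$ which, combined with the position of $c_{P(1)}$, rigidifies the triangulation: concretely, I would set up a bijection between the complete triangles of $\Gamma$ and the relations of $A^{\Gamma}$ via \Pic~\ref{fig-special tile} and then show that the remaining non-complete tiles are forced. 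This is where the detailed geometric case analysis would be carried out.
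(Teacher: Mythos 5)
Your well-definedness and surjectivity arguments are fine and agree with what the paper implicitly uses (Proposition \ref{prop-non-hereditary}, Corollary \ref{coro-triangulation}, Lemma \ref{lemm-invariant of tilt}, Theorem \ref{thm-endo. alg.}), and your reduction of injectivity to the statement that a non-hereditary tilted algebra of type $A_n$ arises from a unique tilting $A_n$-module is correct. But that is where the proof stops: the uniqueness itself is never established. Your first route (``assemble the local data around $c_{P(1)}$ \dots and check it is the only one'') is only a program, as you yourself concede when you say the detailed geometric case analysis ``would be carried out''; your second route asserts that a non-hereditary tilted algebra has a unique complete slice, namely the connecting slice. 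That assertion is not proved, no reference is given, and for type $A_n$ it is essentially equivalent to (indeed stronger than) the injectivity you are trying to prove, so invoking it is circular unless it is a citable theorem --- which the paper evidently does not regard it as, since it proves injectivity from scratch.

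The missing content is precisely the paper's inductive argument: assuming $A^{\Gamma_1}\cong A^{\Gamma_2}$ with $\Gamma_1,\Gamma_2\in\Tri_{\rmc}(\SURFextra(A_n))$, one identifies a common complete triangle $\T$ (giving the subquiver $c_2\xrightarrow{\alpha}c_1\xrightarrow{\beta}c_3$ with $\alpha\beta$ a relation), cuts $\SURFextra(A_n)$ along the edges of $\T$ into three pieces ${_L\SURFextra}$, ${_R\SURFextra}$, ${_U\SURFextra}$, observes that each piece is (up to homotopy, and after adjoining an extra arc as in \Pic\ \ref{fig-complete tile 4}) the marked ribbon surface of a smaller linearly oriented $A_m$, and applies the induction hypothesis to the restricted triangulations, with separate treatment of the base cases $n=3,4$ and of the degenerate case where the cut piece has full size. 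Nothing in your proposal replaces this cutting-and-induction step (or any equivalent argument), so as it stands the proof has a genuine gap at its central claim.
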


\begin{proof}
Assume that $\Gamma_1, \Gamma_2 \in \Tri_{\rmc}(\SURFextra(A_n))$ and $A^{\Gamma_1}\cong A^{\Gamma_2}$.
By Proposition \ref{prop-non-hereditary}, $A^{\Gamma_1}$ and $A^{\Gamma_2}$ are non-hereditary tilted algebras of type $A_n$.
Now we show that $\Gamma_1=\Gamma_2$.

For $n=3$, it is easy to check that the number of non-hereditary tilted algebra is $1$.
For $n=4$, by Example \ref{exm-A4}, this statement also holds.
\begin{figure}[H]
\centering
\definecolor{ffqqqq}{rgb}{1,0,0}
\definecolor{qqwuqq}{rgb}{0,0.5,0}
\begin{tikzpicture}[scale=0.9]
\draw[line width=1.2pt] (0,0) circle (2);
\draw[line width=2pt][white] ( 0.35,-1.97) arc(-80:-40:2);
\draw[line width=1.2pt][dotted] ( 0.35,-1.97) arc(-80:-40:2);
\draw[line width=2pt][white] (-0.35,-1.97) arc(-100:-140:2);
\draw[line width=1.2pt][dotted]  (-0.35,-1.97) arc(-100:-140:2);
\draw[line width=2pt][white] ( 1.88, 0.68) arc(20:40:2);
\draw[line width=1.2pt][dotted] ( 1.88, 0.68) arc(20:40:2);
\draw[line width=2pt][white] (-1.88, 0.68) arc(160:140:2);
\draw[line width=1.2pt][dotted] (-1.88, 0.68) arc(160:140:2);
%
\draw[qqwuqq!50][line width=1.2pt][dotted] ( 0.00, 2.00) -- ( 1.73, 1.00);
\draw[qqwuqq!50][line width=1.2pt] ( 0.00, 2.00) -- ( 1.28, 1.53);
\draw[qqwuqq!50][line width=1.2pt] ( 0.00, 2.00) -- ( 0.00, 2.00);
\draw[qqwuqq!50][line width=1.2pt] ( 0.00, 2.00) -- (-1.28, 1.53);
\draw[qqwuqq!50][line width=1.2pt][dotted] ( 0.00, 2.00) -- (-1.73, 1.00);
\draw[qqwuqq!50][line width=1.2pt] ( 0.00, 2.00) -- (-1.97, 0.35);
\draw[qqwuqq!50][line width=1.2pt] ( 0.00, 2.00) -- (-1.73,-1.00);
\draw[qqwuqq!50][line width=1.2pt][dotted] ( 0.00, 2.00) -- (-1.00,-1.73);
\draw[qqwuqq!50][line width=1.2pt] ( 0.00, 2.00) -- ( 0.00,-2.00);
\draw[qqwuqq!50][line width=1.2pt][dotted] ( 0.00, 2.00) -- ( 1.00,-1.73);
\draw[qqwuqq!50][line width=1.2pt] ( 0.00, 2.00) -- ( 1.73,-1.00);
\draw[qqwuqq!50][line width=1.2pt] ( 0.00, 2.00) -- ( 1.97, 0.35);
\filldraw[qqwuqq] ( 1.97, 0.35) circle (0.1);
\filldraw[qqwuqq] ( 1.73, 1.00) circle (0.1);
\filldraw[qqwuqq] ( 1.28, 1.53) circle (0.1);
\filldraw[qqwuqq] ( 0.00, 2.00) circle (0.1);
\filldraw[qqwuqq] (-1.28, 1.53) circle (0.1);
\filldraw[qqwuqq] (-1.73, 1.00) circle (0.1);
\filldraw[qqwuqq] (-1.97, 0.35) circle (0.1);
\filldraw[qqwuqq] ( 1.73,-1.00) circle (0.1);
\filldraw[qqwuqq] (-1.00,-1.73) circle (0.1);
\filldraw[qqwuqq] ( 0.00,-2.00) circle (0.1);
\filldraw[qqwuqq] ( 1.00,-1.73) circle (0.1);
\filldraw[qqwuqq] (-1.73,-1.00) circle (0.1);
\draw[red][line width=1.2pt] (-0.68, 1.88) circle (0.1);
\draw[red][line width=1.2pt] ( 0.68, 1.88) circle (0.1);
\draw[blue] (0,-0.4) node{$\T$};
\draw[blue][line width=1.2pt] (-1.97, 0.35) -- ( 1.97, 0.35); \draw[blue] ( 0.00, 0.35) node[above]{$c_1$};
\draw[blue][line width=1.2pt] ( 0.00,-2.00) -- (-1.97, 0.35); \draw[blue] (-0.86,-1.00) node[left]{$c_2$};
\draw[blue][line width=1.2pt] ( 0.00,-2.00) -- ( 1.97, 0.35); \draw[blue] ( 0.86,-1.00) node[right]{$c_3$};
\draw[blue][<-] (-0.40, 0.19)--(-0.80,-0.50); \draw[blue] (-0.60,0.34-0.5) node[left]{$\alpha$};
\draw[blue][<-] ( 0.80,-0.50)--( 0.40, 0.19); \draw[blue] ( 0.60,0.34-0.5) node[right]{$\beta$};
\draw[orange][line width=1.2pt] (-0.68, 1.88)--( 0.68, 1.88);
\end{tikzpicture}
\caption{\textsf{The complete triangle $\T$ belongs to $\Tri_{\Gamma}(\SURFextra(A_n))$,
where $c_{P(1)}\in\Gamma$.}}
\label{fig-complete tile 2}
\end{figure}
Assume that this statement holds for all $n\le k-1$ ($k\ge 4$), we need to prove that it holds for $n=k$.
Let $A^{\Gamma_1}=\kk\Q^1/\I^1$ and $A^{\Gamma_2}=\kk\Q^2/\I^2$.
Since $A^{\Gamma_1}\cong A^{\Gamma_2}$, we can suppose that $\Q^1=\Q^2$ and $\I^1=\I^2$.
Thus $\Q^1=\Q^2$ contains a subquiver
\begin{center}
$\Q^{\T}:=\xymatrix@C=0.5cm{c_2 \ar[r]_{\alpha} \ar@{.}@/^0.6pc/[rr] & c_1 \ar[r]_{\beta} & c_3}$ with $\alpha\beta\in\I^1=\I^2$,
\end{center}
which corresponding to the complete triangle $\T\in \Tri_{\Gamma_1}(\SURFextra(A_n))\cap \Tri_{\Gamma_2}(\SURFextra(A_n))$
(see the blue triangle shown in \Pic \ref{fig-complete tile 2}).
Then $\Q^1$ and $\Q^2$ are of the form shown in \Pic \ref{fig-quiver}.
\begin{figure}[H]
\centering
\definecolor{ffqqqq}{rgb}{1,0,0}
\definecolor{qqwuqq}{rgb}{0,0.5,0}
\begin{tikzpicture}
\draw (0,0) node{$
\xymatrix@C=0.25cm@R=0.3cm{ \cdots \ar@{-}[rd] & &\cdots\ar@{-}[rd]& &\cdots\ar@{-}[ld]& &  \ar@{-}[ld]\cdots \\
\cdots &  c_2 \ar[rr]_{\alpha} \ar@{-}[l]\ar@{.}@/_0.6pc/[rrrr] && c_1 \ar[rr]_{\beta} && c_3 \ar@{-}[r] & \cdots \\
\cdots \ar@{-}[ru] & && && & \cdots\ar@{-}[lu]}
$};
\shade[top color=blue, bottom color=cyan][opacity=0.5]
    (-2,0) to[out=90,in=180] (0,0.3) to[out=0,in=90] (2,0)  to[out=-90,in=0] (0,-0.3) to[out=180,in=-90] (-2,0);
\draw (-0.2,-0.2) node[below]{$\Q^{\T}$};
\draw[yellow][line width=1.5pt][opacity=0.5]
    (-3,0) to[out=90,in=180] (-2.5,1) to[out=0,in=90] (-1.1,0) to[out=-90,in=0] (-2.5,-1) to[out=180,in=-90] (-3,0);
\draw (-3,0) node[left]{$\Q^{i,L}$};
\draw[cyan][line width=1.5pt][opacity=0.5]
    (3,0) to[out=90,in=0] (2.5,1) to[out=180,in=90] (1.1,0) to[out=-90,in=180] (2.5,-1) to[out=0,in=-90] (3,0);
\draw (3,0) node[right]{$\Q^{i,R}$};
\draw[purple][line width=1.5pt][rounded corners = 0.3cm][opacity=0.5]
    (0,-0.2) -- (-1.3, 0.8) -- (1.3, 0.8) -- (0,-0.2);
\draw (0,0.8) node[above]{$\Q^{i,U}$};
\end{tikzpicture}
\caption{\textsf{The quiver $(\Q^i,\I^i)$ for $i\in\{1,2\}$.}}
\label{fig-quiver}
\end{figure}

Let $\SURFextra(A_n)$ be the marked ribbon surface of $A_n$ as shown in \Pic \ref{fig-complete tile 3}.
\begin{figure}[htbp]
\centering
\definecolor{ffqqqq}{rgb}{1,0,0}
\definecolor{qqwuqq}{rgb}{0,0.5,0}
\begin{tikzpicture}[scale=0.9]
\draw[line width=1.2pt] ( 1.97, 0.35) arc(10:170:2);
\draw[line width=2pt][white] ( 1.88, 0.68) arc(20:40:2);
\draw[line width=1.2pt][dotted] ( 1.88, 0.68) arc(20:40:2);
\draw[line width=2pt][white] (-1.88, 0.68) arc(160:140:2);
\draw[line width=1.2pt][dotted] (-1.88, 0.68) arc(160:140:2);
\draw[line width=1.2pt] (-1.97, 0.35) -- ( 1.97, 0.35);
\filldraw[qqwuqq] ( 1.97, 0.35) circle (0.1); \draw ( 1.97, 0.35) node[right]{$q_2$};
\filldraw[qqwuqq] ( 1.73, 1.00) circle (0.1);
\filldraw[qqwuqq] ( 1.28, 1.53) circle (0.1);
\filldraw[qqwuqq] ( 0.00, 2.00) circle (0.1); \draw ( 0.00, 2.00) node[above]{$p$};
\filldraw[qqwuqq] (-1.28, 1.53) circle (0.1);
\filldraw[qqwuqq] (-1.73, 1.00) circle (0.1);
\filldraw[qqwuqq] (-1.97, 0.35) circle (0.1); \draw (-1.97, 0.35) node[left]{$q_3$};
\draw[red][line width=1.2pt] (-0.68, 1.88) circle (0.1);
\draw[red][line width=1.2pt] ( 0.68, 1.88) circle (0.1);
\draw[qqwuqq][line width=1.2pt] ( 0.00, 2.00) -- ( 1.97, 0.35);
\draw[qqwuqq][line width=1.2pt] ( 0.00, 2.00) -- ( 1.73, 1.00);
\draw[qqwuqq][line width=1.2pt] ( 0.00, 2.00) -- ( 1.28, 1.53);
\draw[qqwuqq][line width=1.2pt] ( 0.00, 2.00) -- (-1.97, 0.35);
\draw[qqwuqq][line width=1.2pt] ( 0.00, 2.00) -- (-1.73, 1.00);
\draw[qqwuqq][line width=1.2pt] ( 0.00, 2.00) -- (-1.28, 1.53);
(-1.97, 0.35) to[out=77,in=180] (0,2) to[out=0,in=103] ( 1.97, 0.35) -- (-1.97, 0.35);
\draw (0,0) node{${_U\SURFextra}$};
\draw[orange][line width=1.2pt] (-0.68, 1.88)--( 0.68, 1.88);
\end{tikzpicture}
\\
\begin{tikzpicture}[scale=0.9]
\draw[line width=1.2pt]  ( 0.00,-2.00) -- ( 0.00, 2.00) -- (-1.97, 0.35);
\draw[line width=1.2pt]  ( 0.00,-2.00) arc(-90:-190:2);
\draw[line width=2pt][white] (-0.35,-1.97) arc(-100:-140:2);
\draw[line width=1.2pt][dotted]  (-0.35,-1.97) arc(-100:-140:2);
\filldraw[qqwuqq] ( 0.00, 2.00) circle (0.1); \draw ( 0.00, 2.00) node[above]{$p$};
\draw[red][line width=1pt] (-1.97, 0.35) circle (0.1); \draw (-1.97, 0.35) node[left]{$q_3$};
\filldraw[qqwuqq] (-1.73,-1.00) circle (0.1);
\filldraw[qqwuqq] (-1.00,-1.73) circle (0.1);
\draw[red][line width=1pt] ( 0.00,-2.00) circle (0.1); \draw ( 0.00,-2.00) node[below]{$q_1$};
\draw[qqwuqq][line width=1pt] ( 0.00, 2.00) -- (-1.73,-1.00);
\draw[qqwuqq][line width=1pt][dotted] ( 0.00, 2.00) -- (-1.00,-1.73);
\draw[blue][line width=1.2pt] (-1.97, 0.35)--(0,-2);
\draw (0,-2.8) node{${_L\SURFextra}$};
\end{tikzpicture}
\ \ \ \
\begin{tikzpicture}[scale=0.9]
\draw[line width=1.2pt]  (0,0) circle (2);
\draw[line width=2pt][white] ( 1.88, 0.68) arc(20:40:2);
\draw[line width=1.2pt][dotted] ( 1.88, 0.68) arc(20:40:2);
\draw[line width=2pt][white] (-1.88, 0.68) arc(160:140:2);
\draw[line width=1.2pt][dotted] (-1.88, 0.68) arc(160:140:2);
\draw[line width=2pt][white] ( 0.35,-1.97) arc(-80:-40:2);
\draw[line width=1.2pt][dotted] ( 0.35,-1.97) arc(-80:-40:2);
\draw[line width=2pt][white] (-0.35,-1.97) arc(-100:-140:2);
\draw[line width=1.2pt][dotted]  (-0.35,-1.97) arc(-100:-140:2);
\draw[qqwuqq!50][line width=1.2pt][dotted] ( 0.00, 2.00) -- ( 1.73, 1.00);
\draw[qqwuqq!50][line width=1.2pt] ( 0.00, 2.00) -- ( 1.28, 1.53);
\draw[qqwuqq!50][line width=1.2pt] ( 0.00, 2.00) -- ( 0.00, 2.00);
\draw[qqwuqq!50][line width=1.2pt] ( 0.00, 2.00) -- (-1.28, 1.53);
\draw[qqwuqq!50][line width=1.2pt][dotted] ( 0.00, 2.00) -- (-1.73, 1.00);
\draw[qqwuqq!50][line width=1.2pt] ( 0.00, 2.00) -- (-1.97, 0.35);
\draw[qqwuqq!50][line width=1.2pt] ( 0.00, 2.00) -- (-1.73,-1.00);
\draw[qqwuqq!50][line width=1.2pt][dotted] ( 0.00, 2.00) -- (-1.00,-1.73);
\draw[qqwuqq!50][line width=1.2pt] ( 0.00, 2.00) -- ( 0.00,-2.00);
\draw[qqwuqq!50][line width=1.2pt][dotted] ( 0.00, 2.00) -- ( 1.00,-1.73);
\draw[qqwuqq!50][line width=1.2pt] ( 0.00, 2.00) -- ( 1.73,-1.00);
\draw[qqwuqq!50][line width=1.2pt] ( 0.00, 2.00) -- ( 1.97, 0.35);
\filldraw[qqwuqq] ( 1.97, 0.35) circle (0.1); \draw ( 1.97, 0.35) node[right]{$q_2$};
\filldraw[qqwuqq] ( 1.73, 1.00) circle (0.1);
\filldraw[qqwuqq] ( 1.28, 1.53) circle (0.1);
\filldraw[qqwuqq] ( 0.00, 2.00) circle (0.1); \draw ( 0.00, 2.00) node[above]{$p$};
\filldraw[qqwuqq] (-1.28, 1.53) circle (0.1);
\filldraw[qqwuqq] (-1.73, 1.00) circle (0.1);
\filldraw[qqwuqq] (-1.97, 0.35) circle (0.1); \draw (-1.97, 0.35) node[left]{$q_3$};
\filldraw[qqwuqq] ( 1.73,-1.00) circle (0.1);
\filldraw[qqwuqq] (-1.00,-1.73) circle (0.1);
\filldraw[qqwuqq] ( 0.00,-2.00) circle (0.1); \draw ( 0.00,-2.00) node[below]{$q_1$};
\filldraw[qqwuqq] ( 1.00,-1.73) circle (0.1);
\filldraw[qqwuqq] (-1.73,-1.00) circle (0.1);
\draw[red][line width=1.2pt] (-0.68, 1.88) circle (0.1);
\draw[red][line width=1.2pt] ( 0.68, 1.88) circle (0.1);
\draw[blue] (0,-0.4) node{$\T$};
(-1.97, 0.35) to[out=77,in=180] (0,2) to[out=0,in=103] ( 1.97, 0.35) -- (-1.97, 0.35);
\draw[blue][line width=1.2pt] (-1.97, 0.35) -- ( 1.97, 0.35); \draw[blue] ( 0.00, 0.35) node[above]{$c_1$};
\draw[blue][line width=1.2pt] ( 0.00,-2.00) -- (-1.97, 0.35); \draw[blue] (-0.86,-1.00) node[left]{$c_2$};
\draw[blue][line width=1.2pt] ( 0.00,-2.00) -- ( 1.97, 0.35); \draw[blue] ( 0.86,-1.00) node[right]{$c_3$};
\draw[blue][<-] (-0.40, 0.19)--(-0.80,-0.50); \draw[blue] (-0.60,0.34-0.5) node[left]{$\alpha$};
\draw[blue][<-] ( 0.80,-0.50)--( 0.40, 0.19); \draw[blue] ( 0.60,0.34-0.5) node[right]{$\beta$};
\draw[orange][line width=1.2pt] (-0.68, 1.88)--( 0.68, 1.88);
\draw (0,-2.8) node{$\SURFextra(A_n)$ contains three parts};
\draw (0,-3.2) node{${_L\SURFextra}$, ${_R\SURFextra}$ and ${_U\SURFextra}$.};
\end{tikzpicture}
\ \ \ \
\begin{tikzpicture}[scale=0.9]
\draw[line width=1.2pt]  ( 0.00,-2.00) -- ( 0.00, 2.00) -- ( 1.97, 0.35);
\draw[line width=1.2pt]  ( 0.00,-2.00) arc(-90:10:2);
\draw[line width=2pt][white] ( 0.35,-1.97) arc(-80:-40:2);
\draw[line width=1.2pt][dotted] ( 0.35,-1.97) arc(-80:-40:2);
\filldraw[qqwuqq] ( 0.00, 2.00) circle (0.1); \draw ( 0.00, 2.00) node[above]{$p$};
\draw[red][line width=1pt] (1.97, 0.35) circle (0.1); \draw (1.97, 0.35) node[right]{$q_2$};
\filldraw[qqwuqq] (1.73,-1.00) circle (0.1);
\filldraw[qqwuqq] (1.00,-1.73) circle (0.1);
\draw[red][line width=1pt] ( 0.00,-2.00) circle (0.1); \draw ( 0.00,-2.00) node[below]{$q_1$};
\draw[qqwuqq][line width=1pt] ( 0.00, 2.00) -- (1.73,-1.00);
\draw[qqwuqq][line width=1pt][dotted] ( 0.00, 2.00) -- (1.00,-1.73);
\draw[blue][line width=1.2pt] (1.97, 0.35)--(0,-2);
\draw (0,-2.8) node{${_R\SURFextra}$};
\end{tikzpicture}
\caption{\textsf{The complete tile $\T$ belongs to $\Tri_{\Gamma}(\SURFextra(A_n))$,
where $\Gamma$ is a GD containing $c_{P(1)}$.}}
\label{fig-complete tile 3}
\end{figure}
By Definition \ref{def-alg of GD}, we have:
\begin{itemize}

  \item $\Q^{1,L}$ and $\Q^{2,L}$ are decided by the triangulation of the marked ribbon surface ${_L\SURFextra} = ({_L\Surf(A_n)}, {_L\M(A_n)}, {_L\Dgreen(A_n)})^{{_L\E(A_n)}}$ where:
    \begin{itemize}
      \item  ${_L\SURFextra}$ obtained by arcs $pq_1$ and $pq_3$ cutting $\SURFextra(A_n)$
        (see ${_L\SURFextra}$ in \Pic \ref{fig-complete tile 3});
      \item ${_L\M(A_n)}=\M(A_n)|_{{_L\Surf(A_n)}}$ and ${_L\Dgreen(A_n)}=\Dgreen(A_n)|_{{_L\Surf(A_n)}}$
        are restrictions of $\M(A_n)$ and $\Dgreen(A_n)$ over ${_L\Surf(A_n)}$ of $\Surf(A_n)$, respectively;
    \end{itemize}
  \item $\Q^{1,R}$ and $\Q^{2,R}$ are decided by ${_R\SURFextra} = ({_R\Surf(A_n)}, {_R\M(A_n)}, {_R\Dgreen(A_n)})^{{_R\E(A_n)}}$ where:
    \begin{itemize}
      \item  ${_R\SURFextra}$ obtained by arcs $pq_1$ and $pq_2$ cutting $\SURFextra(A_n)$
        (see ${_R\SURFextra}$ in \Pic \ref{fig-complete tile 3});
      \item ${_R\M(A_n)}=\M(A_n)|_{{_R\Surf(A_n)}}$ and ${_R\Dgreen(A_n)}=\Dgreen(A_n)|_{{_R\Surf(A_n)}}$;
    \end{itemize}
  \item $\Q^{1,U}$ and $\Q^{2,U}$ are decided by ${_U\SURFextra} = ({_U\Surf(A_n)}, {_U\M(A_n)}, {_U\Dgreen(A_n)})^{{_U\E(A_n)}}$ where:
    \begin{itemize}
      \item  ${_U\SURFextra}$ obtained by arcs $q_2q_3$ cutting $\SURFextra(A_n)$ such that $p$ is a marked point on its boundary
        (see ${_U\SURFextra}$ in \Pic \ref{fig-complete tile 3});
      \item ${_U\M(A_n)}=\M(A_n)|_{{_U\Surf(A_n)}}$ and ${_U\Dgreen(A_n)} = \Dgreen(A_n)|_{{_U\Surf(A_n)}}$.
    \end{itemize}
\end{itemize}
\noindent The proof of $\Gamma_1 = \Gamma_2$ is divided to three parts:
\begin{center}
  (a)\ $\Gamma_1|_{{_L\Surf(A_n)}} = \Gamma_2|_{{_L\Surf(A_n)}}$;
  (b)\ $\Gamma_1|_{{_R\Surf(A_n)}} = \Gamma_2|_{{_R\Surf(A_n)}}$;
  (c)\ $\Gamma_1|_{{_U\Surf(A_n)}} = \Gamma_2|_{{_U\Surf(A_n)}}$.
\end{center}
We only prove (a), the proofs of (b) and (c) are similar to (a).

(a) Note that ${_L\SURFextra}$ can be seen as a marked ribbon surface of $A_L=\kk\A_{k_L}$ for some $1\le k_L\le k-1$,
i.e., ${_L\SURFextra}\simeq \SURF_{\gbullet}^{\E(A_L)}(A_L)$. The subquiver $\Q^{1,L}$ and $\Q^{2,L}$ can be viewed as the quivers induced by some triangulations
$\Gamma_1|_{{_L\Surf(A_n)}}$ and $\Gamma_2|_{{_L\Surf(A_n)}}$ of ${_L\SURFextra}$, respectively.
It is easy to see that the curve $c_2$ (as a permissible curve in ${_L\SURFextra}$)
corresponds to the indecomposable projective-injective $A_L$-module.

On the other hand, let
\[\widehat{{_L\SURFextra}} = ({_{L\cup R}\Surf(A_n)}, {_L\M(A_n)}\cup\{q_2'\}, {_L\Dgreen(A_n)}\cup\{a\})^{{_L\E}},\]
where $_L\E$ is decided by the definition of marked ribbon surface (see \Pic \ref{fig-complete tile 4}).
Then we have a homotopy equivalence $\widehat{{_L\SURFextra}}\simeq\SURF_{\gbullet}^{\E(\widehat{A_L})}(\widehat{A_L})$,
where $\widehat{A_L} \cong \kk\A_{k_L+1}$.
It induces a bijection
\[\varphi: \Tri(\widehat{{_L\SURFextra}}) \to \Tri(\SURF_{\gbullet}^{\E(\widehat{A_L})}(\widehat{A_L})).\]
Assume $k_L+1\le k-1$, then $A^{\Gamma_1|_{{_{L\cup R}\Surf(A_n)}}} \cong A^{\Gamma_2|_{{_{L\cup R}\Surf(A_n)}}}$,
where $\Gamma_i|_{{_{L\cup R}\Surf(A_n)}}$ $=$ $\{c\in\Gamma_i\mid c \in \PC(\widehat{{_L\SURFextra}})\}$ ($i\in\{1,2\}$).
By inductive hypotheses, we have
\begin{align}
 A^{\Gamma_1|_{{_{L\cup R}\Surf(A_n)}}} & \cong A^{\varphi(\Gamma_1|_{{_{L\cup R}\Surf(A_n)}})}
 \cong A^{\varphi(\Gamma_2|_{{_{L\cup R}\Surf(A_n)}})} \cong A^{\Gamma_2|_{{_{L\cup R}\Surf(A_n)}}} \nonumber \\
& \Leftrightarrow \ \varphi(\Gamma_1|_{{_{L\cup R}\Surf(A_n)}}) =  \varphi(\Gamma_2|_{{_{L\cup R}\Surf(A_n)}}) \nonumber \\
&  \Leftrightarrow \ \Gamma_1|_{{_{L\cup R}\Surf(A_n)}} = \Gamma_2|_{{_{L\cup R}\Surf(A_n)}}. \nonumber
\end{align}
Thus, $\Gamma_1|_{{_L\Surf(A_n)}} = (\Gamma_1|_{{_{L\cup R}\Surf(A_n)}})\backslash\{c_1, c_3\}  = (\Gamma_2|_{{_{L\cup R}\Surf(A_n)}})\backslash\{c_1, c_3\} = \Gamma_2|_{{_L\Surf(A_n)}}$.

\begin{multicols}{2} 

\begin{figure}[H]
\centering
\definecolor{ffqqqq}{rgb}{1,0,0}
\definecolor{qqwuqq}{rgb}{0,0.5,0}
\begin{tikzpicture}[scale=0.9]
\draw[line width=1.2pt]  ( 0.00, 2.00) -- (-1.97, 0.35);
\draw[line width=1.2pt]  ( 0.00,-2.00) arc(-90:-190:2);
\draw[line width=2pt][white] (-0.35,-1.97) arc(-100:-140:2);
\draw[line width=1.2pt][dotted]  (-0.35,-1.97) arc(-100:-140:2);
\filldraw[qqwuqq] ( 0.00, 2.00) circle (0.1); \draw ( 0.00, 2.00) node[above]{$p$};
\draw[red][line width=1pt] (-1.97, 0.35) circle (0.1); \draw (-1.97, 0.35) node[left]{$q_3$};
\filldraw[qqwuqq] (-1.73,-1.00) circle (0.1);
\filldraw[qqwuqq] (-1.00,-1.73) circle (0.1);
\draw[red][line width=1pt] ( 0.00,-2.00) circle (0.1); \draw ( 0.00,-2.00) node[below]{$q_1$};
\draw[qqwuqq][line width=1pt] ( 0.00, 2.00) -- (-1.73,-1.00);
\draw[qqwuqq][line width=1pt][dotted] ( 0.00, 2.00) -- (-1.00,-1.73);
\draw[blue][line width=1.2pt] (-1.97, 0.35)--(0,-2);
\filldraw[yellow][opacity=0.25] ( 0.00, 2.00) -- (-1.97, 0.35) to[out=-97, in=180] ( 0.00,-2.00) -- ( 0.00, 2.00);
\draw[line width=1.2pt]  ( 0.00, 2.00) -- ( 1.97, 0.35);
\draw[line width=1.2pt]  ( 0.00,-2.00) arc(-90:10:2);
\filldraw[qqwuqq] ( 0.00, 2.00) circle (0.1); \draw ( 0.00, 2.00) node[above]{$p$};
\draw[red][line width=1pt] (1.97, 0.35) circle (0.1); \draw (1.97, 0.35) node[right]{$q_2$};
\filldraw[qqwuqq] (1.73,-1.00) circle (0.1); \draw (1.73,-1.00) node[right]{$q_2'$};
\filldraw[qqwuqq][line width=1pt] ( 0.00,-2.00) circle (0.1); \draw ( 0.00,-2.00) node[below]{$q_1$};
\draw[qqwuqq][line width=1pt] ( 0.00, 2.00) -- (1.73,-1.00); \draw [qqwuqq] (0.56,0.8) node[right]{$a$};
\draw[blue][line width=1.2pt] (1.97, 0.35)--(0,-2);
\filldraw[cyan][opacity=0.25] ( 0.00, 2.00) -- ( 1.97, 0.35) to[out=-80, in=0] ( 0.00,-2.00) -- ( 0.00, 2.00);
\draw[qqwuqq][line width=1.2pt]  ( 0.00, 2.00) -- ( 0.00,-2.00);
\draw[blue][line width=1.2pt] (-1.97, 0.35) -- ( 1.97, 0.35); \draw[blue] ( 0.00, 0.35) node[above]{$c_1$};
\draw[blue][line width=1.2pt] ( 0.00,-2.00) -- (-1.97, 0.35); \draw[blue] (-0.86,-1.00) node[left]{$c_2$};
\draw[blue][line width=1.2pt] ( 0.00,-2.00) -- ( 1.97, 0.35); \draw[blue] ( 0.86,-1.00) node[right]{$c_3$};
\draw[blue][<-] (-0.40, 0.19)--(-0.80,-0.50); \draw[blue] (-0.60,0.34-0.5) node[left]{$\alpha$};
\draw[blue][<-] ( 0.80,-0.50)--( 0.40, 0.19); \draw[blue] ( 0.60,0.34-0.5) node[right]{$\beta$};
\end{tikzpicture}
\caption{\textsf{ }}
\label{fig-complete tile 4}
\end{figure}

\begin{figure}[H]
\centering
\definecolor{ffqqqq}{rgb}{1,0,0}
\definecolor{qqwuqq}{rgb}{0,0.5,0}
\begin{tikzpicture}[scale=0.9]
\draw[line width=1.2pt] (2,0) arc (0:360:2);
\draw[line width=2pt][white] (1.414,-1.414) arc (-45:-80:2);
\draw[line width=1.2pt][dotted] (1.414,-1.414) arc (-45:-80:2);
\draw[line width=2pt][white] (-1.414,-1.414) arc (-135:-110:2);
\draw[line width=1.2pt][dotted] (-1.414,-1.414) arc (-135:-110:2);
\filldraw[qqwuqq] ( 0.00, 2.00) circle (0.1);
\draw[ffqqqq][line width=1.2pt] (-1.56, 1.25) circle (0.1); \draw (-1.56, 1.25) node[left]{$q_3$};
\filldraw[qqwuqq] (-1.95, 0.45) circle (0.1);
\filldraw[qqwuqq] (-1.95,-0.45) circle (0.1);
\filldraw[qqwuqq] (-1.61,-1.19) circle (0.1);
\filldraw[qqwuqq] (-0.87,-1.80) circle (0.1);
\filldraw[qqwuqq] (-0.00,-2.00) circle (0.1);
\filldraw[qqwuqq] ( 0.87,-1.80) circle (0.1);
\filldraw[qqwuqq] ( 1.61,-1.19) circle (0.1);
\filldraw[qqwuqq] ( 1.94,-0.45) circle (0.1); \draw ( 1.94,-0.45) node[right]{$q_1$};
\filldraw[qqwuqq] ( 1.95, 0.45) circle (0.1); \draw ( 1.94, 0.45) node[right]{$q_2'$};
\draw[ffqqqq][line width=1.2pt] ( 1.56, 1.25) circle (0.1); \draw ( 1.56, 1.25) node[right]{$q_2$};
\draw[qqwuqq][line width=1.2pt] ( 0.00, 2.00) -- (-1.95, 0.45);
\draw[qqwuqq][line width=1.2pt] ( 0.00, 2.00) -- (-1.95,-0.45);
\draw[qqwuqq][line width=1.2pt][dotted] ( 0.00, 2.00) -- (-1.61,-1.19);
\draw[qqwuqq][line width=1.2pt][dotted] ( 0.00, 2.00) -- (-0.87,-1.80);
\draw[qqwuqq][line width=1.2pt] ( 0.00, 2.00) -- (-0.00,-2.00);
\draw[qqwuqq][line width=1.2pt][dotted] ( 0.00, 2.00) -- ( 0.87,-1.80);
\draw[qqwuqq][line width=1.2pt][dotted] ( 0.00, 2.00) -- ( 1.61,-1.19);
\draw[qqwuqq][line width=1.2pt] ( 0.00, 2.00) -- ( 1.94,-0.45);
\draw[qqwuqq][line width=1.2pt] ( 0.00, 2.00) -- ( 1.95, 0.45);
\draw[qqwuqq] (0,2) node[above]{$p$};
\draw[blue][line width=1.2pt] (-1.56, 1.25) -- ( 1.94,-0.45) -- (1.56, 1.25) -- (-1.56, 1.25);
\draw[blue](0.8,0.8) node{$\T$};
\filldraw[yellow][opacity=0.25][line width=1.2pt]
(0, 2) to[out=180, in=90] (-2, 0) to[out=-90,in=180] (0,-2) to[out=0,in=-103.0593] ( 1.94,-0.45) -- (0,2);
\draw[orange][line width=1.2pt] (-1.56, 1.25)--(1.56, 1.25);
\draw[white] ( 0.00,-2.00) node[below]{$q_1$};
\end{tikzpicture}
\caption{\textsf{ }}
\label{fig-complete tile 5}
\end{figure}
\end{multicols}

Assume $k_L+1= k$ (see \Pic \ref{fig-complete tile 5}). If $\Gamma_1|_{{_L\Surf(A_n)}}$ provides a complete triangle $\T' \ne \T$, we can do it as above. Otherwise, $\Q^{1,L}=\Q^{2,L}$ is the quiver of some hereditary algebra of type $A_{k-2}$.
In this case, we have $\Gamma_1|_{{_L\Surf(A_n)}} = \Gamma_2|_{{_L\Surf(A_n)}}$ because they have a same curve $q_1q_3$.
\end{proof}

\section{A geometric characterization of the silted algebra of type $A_n$} \label{sec:end-silt}
In this section, we give a geometric characterization of the endomorphism algebras of silting complexes for gentle algebras. Then by using the geometric characterization, we give a classification of silted algebras of type $A_n$.

\subsection{A geometric characterization of the endomorphism algebras of silting complexes} Recall that in \cite[Theorem 3.2]{AmiotPlamondonSchroll23}, Amiot, Plamondon and Schroll gave a characterization of the silting complexes for gentle algebras. Let $A$ be a graded gentle algebra and $\SURFred{\F_A}(A)=(\Surf_A, \M_A, \Y_A, \F_A, \tDred(A))$ the graded marked ribbon surface of $A$. Let $S\cong \bigoplus_{1\le i\le n} X(\tgamma_i, L_i)$. Then by (\ref{formula-int. ind. in HKK 2.1}), \cite[Proposition 5.16]{ArnesenLakingPauksztello16},
 \cite[Proposition 1.11]{AmiotPlamondonSchroll23} and dimension formula (\ref{dim. formula}), we have the following lemma.

\begin{lemma}\label{lem:silting-complexes}
$S$ is silting if and only if it satisfies the following conditions.
\begin{itemize}
  \item[\rm (S1)] $\tgamma_i\cap\tgamma_j\cap(\Surf_A\backslash\bSurf_A)=\varnothing$ for any $ 1\le i,j\le n$;
  \item[\rm (S2)] $\tgamma_i\in\AC_{\m}(\SURFred{\F_A}(A))$, because each band complex has non-trivial positive self-extension;
  \item[\rm (S3)] 
    $n  = \sharp\M_A + b + 2g - 2\ (= \sharp \Q_0)$,
    where $b$ is the number of boundary components of $\Surf_A$ and $g$ is the genus of $\Surf_A$;
  \item[\rm (S4)] If $\tgamma_i$ is to the left $($resp., right$)$ of $\tgamma_j$ at the point $p\in\M_A$,
    then $\ii_p(\tgamma_i, \tgamma_j)\le 0$ $($resp., $\ii_p(\tgamma_j, \tgamma_i)\le 0)$.
\end{itemize}
\end{lemma}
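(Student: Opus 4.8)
The plan is to convert the homological definition of a silting complex into conditions on graded intersection numbers through the dimension formula (\ref{dim. formula}), turning this lemma into a reformulation of \cite[Theorem 3.2]{AmiotPlamondonSchroll23} in the language of $\AC_{\m}(\SURFred{\F_A}(A))$. Recall that $S$ is silting precisely when it is \emph{presilting}, i.e.\ $\Hom_{\per A}(S,S[d])=0$ for all $d>0$, and \emph{generates} $\per A=K^b(\proj A)$ as a triangulated category. I would handle these two requirements in turn.

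For the presilting part, (\ref{dim. formula}) gives $\dim_{\kk}\Hom_{\per A}(S,S[d])=\sum_{i,j}\Int^d(\tgamma_i,\tgamma_j)$, so $S$ is presilting exactly when, for all $i,j$ and all $d>0$, every point of $\tgamma_i\cap\tgamma_j$ (self-intersections included when $i=j$) has intersection index $\le 0$ in the relevant ordering. I would then split the intersection points into three types. An interior transverse intersection $p\in\Surf_A\setminus\bSurf_A$ satisfies $\ii_p(\tgamma_i,\tgamma_j)+\ii_p(\tgamma_j,\tgamma_i)=1$ by (\ref{formula-int. ind. in HKK 2.1}), so one of the two orderings yields a non-zero morphism in positive degree; hence no such point may occur, which is exactly (S1). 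If some $\tgamma_i$ is a closed curve with $\tgamma_i(0)=\tgamma_i(1)\in\innerSurf$, then the band complex $X(\tgamma_i,L_i)$ carries a non-trivial self-extension in positive degree (by \cite[Proposition 5.16]{ArnesenLakingPauksztello16}), so each $\tgamma_i$ must lie in $\AC_{\m}(\SURFred{\F_A}(A))$, i.e.\ have both endpoints in $\M_A$; this is (S2). After (S1) and (S2) the remaining intersections are the corners at points of $\M_A$, and the local description of morphisms in $\per A$ near a boundary component shows that the associated positive-degree morphisms are governed precisely by the left/right index condition (S4). Thus presilting is equivalent to (S1) $\wedge$ (S2) $\wedge$ (S4).

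For generation, I would invoke the standard cardinality criterion for silting objects over a finite-dimensional algebra (Aihara--Iyama \cite{AiharaIyama12}): every presilting complex in $K^b(\proj A)$ extends to a silting one, and all silting complexes have the same number $|A|$ of indecomposable direct summands, so a presilting complex is silting iff it has exactly $|A|=\sharp\Q_0$ indecomposable summands. It then remains to identify $\sharp\Q_0$, the number of $\gbullet$-arcs of the dissection, with $\sharp\M_A+b+2g-2$; this is the Euler-characteristic count for $\Surf_A$ (cf.\ \cite[Proposition 1.11]{AmiotPlamondonSchroll23}). Under this identification the generation condition becomes exactly (S3), and combining with the previous paragraph gives $S$ silting $\iff$ (S1) $\wedge$ (S2) $\wedge$ (S3) $\wedge$ (S4).

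The step requiring the most care — the rest being either a citation or an immediate consequence of the dimension formula — is the boundary-corner analysis behind (S4): one must check that the left/right convention exactly accounts for all positive-degree morphisms arising at marked points, including when $\tgamma_i=\tgamma_j$ is a loop based at a single point of $\M_A$, and that (\ref{dim. formula}) is applied to representatives realizing the minimal intersection number. I expect this to reduce to a finite case analysis using the local models of $\per A$ at the boundary together with the morphism descriptions of \cite{ArnesenLakingPauksztello16} and \cite{AmiotPlamondonSchroll23}, introducing no genuinely new difficulty.
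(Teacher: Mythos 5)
Your treatment of (S1), (S2) and (S4) follows the same route the paper takes: the lemma is obtained there by combining the dimension formula (\ref{dim. formula}) with the relation (\ref{formula-int. ind. in HKK 2.1}) at interior intersections, \cite[Proposition 5.16]{ArnesenLakingPauksztello16} to exclude band objects, and the index conventions at marked points, so that ``presilting'' becomes exactly (S1) $\wedge$ (S2) $\wedge$ (S4). Up to that point your proposal is a faithful reconstruction.

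The genuine gap is in your generation step. You reduce ``silting'' to ``presilting with exactly $|A|=\sharp\Q_0$ indecomposable summands'' by invoking, as a standard fact from \cite{AiharaIyama12}, that every presilting complex in $K^b(\proj A)$ extends to a silting complex. Aihara--Iyama prove the invariance of the number of summands of silting objects (it equals the rank of $K_0$), but they do \emph{not} prove a Bongartz-type completion for arbitrary presilting complexes; in this generality that statement is not available (it is a well-known problem, settled only in special situations such as the $2$-term case), so the implication ``presilting with $\sharp\Q_0$ summands $\Rightarrow$ generates $\per A$'' is unjustified as you state it. The paper avoids this entirely by staying inside the surface model: condition (S3) together with (S1) and (S2) says precisely that $\{\gamma_i\}$ is an admissible $\gbullet$-dissection in the sense of \cite{AmiotPlamondonSchroll23} (their Proposition 1.11 gives the count $\sharp\M_A+b+2g-2$ for such dissections, and any pairwise non-crossing collection of arcs completes \emph{geometrically} to a dissection), and admissible dissections generate $\per A$ by \cite[Definition 3.9 and Corollary 5.8]{AmiotPlamondonSchroll23} --- this is exactly how the paper argues in the proof of Theorem \ref{thm. silt}. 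Replacing your Aihara--Iyama citation by this geometric completion/generation argument repairs the proof; conversely, the forward direction (silting $\Rightarrow$ (S3)) is fine, since a silting object has $\sharp\Q_0$ summands and its arc system is then forced to be a full dissection.
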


A {\it non-positive {\rm $\gbullet$-grFFAS}} is a $\gbullet$-grFFAS satisfying condition ($\mathrm{S}$4). 
The following theorem provides a corresponding between non-positive {\rm $\gbullet$-grFFAS} and silting complex. 

\begin{theorem}  \label{thm. silt}
Let $A$ be a gentle algebra. Then there is a bijection
\[X: \gbullet\textrm{-}\mathrm{grFFAS}^{\leq 0}(A) \to \silt(A) \]
between the set $\gbullet\textrm{-}\mathrm{grFFAS}^{\leq 0}(A)$ of all non-positive {\rm $\gbullet$-grFFASs} of $\SURFred{\F_A}(A)$
and the set $\silt(A)$ of all isoclasses of silting complexes in $\per(A)$.
\end{theorem}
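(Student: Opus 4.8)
The plan is to realise the map in the statement as the direct-sum extension of the bijection $X$ of Theorem~\ref{thm. perA=AC}(1): to a non-positive $\gbullet$-grFFAS $\tDgreen=\{\tgamma_1,\dots,\tgamma_n\}$ one associates the complex $X(\tDgreen):=\bigoplus_{i=1}^{n}X(\tgamma_i)$, where $X(\tgamma_i)$ is the string object attached to $\tgamma_i$ — necessarily unique, since the endpoints of a $\gbullet$-arc are $\gbullet$-marked points and then the local system is forced to be $\kk$, the only object of $\ind(\modcat\kk)$. Every verification will be routed through Lemma~\ref{lem:silting-complexes}, which translates the property ``$S$ is silting'' into the conditions (S1)--(S4) on the graded $\gbullet$-curves occurring in an indecomposable decomposition of $S$, together with the dimension formula~(\ref{dim. formula}).

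First I would check that $X(\tDgreen)\in\silt(A)$. For $S=\bigoplus_i X(\tgamma_i)$ the four conditions of Lemma~\ref{lem:silting-complexes} follow at once from the axioms of a non-positive $\gbullet$-grFFAS: (S1) is the statement that the $\gbullet$-arcs of a full formal arc system meet only at the boundary, i.e.\ $\gamma_i\cap\gamma_j\cap(\Surf_A\backslash\bSurf_A)=\varnothing$; (S2) holds because each $\gbullet$-arc has its endpoints in $\M_A$, so $\tgamma_i\in\AC_{\m}(\SURFred{\F_A}(A))$ and $X(\tgamma_i)$ is a string object (no band objects occur); (S3) reduces to $n=\sharp\Q_0$, which is precisely the cardinality of a $\gbullet$-FFAS via the bijection $\mathfrak{v}\colon\Dgreen\to\Q_0$ of Construction~\ref{const-gentle alg}; and (S4) is verbatim the non-positivity hypothesis. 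Injectivity is then formal: the $X(\tgamma_i)$ are indecomposable and pairwise non-isomorphic (distinct $\gbullet$-arcs of a dissection are non-homotopic, hence inequivalent as graded $\gbullet$-curves, and $X$ is injective), so by Krull--Schmidt in $\per(A)$ the object $X(\tDgreen)$ determines the set $\{\tgamma_1,\dots,\tgamma_n\}$, i.e.\ recovers $\tDgreen$.

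The substantive part is surjectivity. Given $S\in\silt(A)$, Theorem~\ref{thm. perA=AC}(1) gives $S\cong\bigoplus_{i=1}^{n}X(\tgamma_i,L_i)$, and Lemma~\ref{lem:silting-complexes} furnishes (S1)--(S4). By (S2) each $\tgamma_i$ lies in $\AC_{\m}(\SURFred{\F_A}(A))$, so its endpoints are $\gbullet$-marked points, forcing $L_i=\kk$ and exhibiting $X(\tgamma_i,L_i)=X(\tgamma_i)$ as a string object supported on a $\gbullet$-arc $\gamma_i$. By (S1) the curves $\gamma_1,\dots,\gamma_n$ are pairwise non-crossing in $\Surf_A\backslash\bSurf_A$, and they are pairwise non-homotopic: two summands on homotopic arcs would give $X(\tgamma_j)\cong X(\tgamma_i)[k]$ with $k\neq0$, and then~(\ref{dim. formula}) yields $\Hom_{\per A}(S,S[|k|])\neq0$ with $|k|>0$, contradicting that $S$ is presilting. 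Finally (S3) gives $n=\sharp\Q_0$. It then remains to upgrade this data to an actual dissection: I would argue that a family of $\sharp\Q_0$ pairwise non-crossing, pairwise non-homotopic $\gbullet$-arcs must be maximal, since any strictly larger non-crossing family extends to a maximal one, and a maximal non-crossing family of $\gbullet$-arcs is an admissible $\gbullet$-dissection (a complementary region with two distinct boundary segments would admit a further $\gbullet$-arc joining marked points of those segments), which has exactly $\sharp\Q_0$ elements — a contradiction. Hence $\{\gamma_1,\dots,\gamma_n\}$ is a $\gbullet$-FFAS, $\tDgreen:=\{\tgamma_1,\dots,\tgamma_n\}$ is a $\gbullet$-grFFAS satisfying (S4), and $X(\tDgreen)\cong S$.

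The step I expect to be the main obstacle is exactly this last maximality argument: turning the purely numerical input $n=\sharp\Q_0$ from (S3), together with the non-crossing condition (S1), into the combinatorial fact that the $\gamma_i$ cut $\Surf_A$ into elementary $\gbullet$-polygons. This requires the Euler-characteristic bookkeeping for admissible dissections (all $\gbullet$-FFASs have the common cardinality $\sharp\M_A+b+2g-2=\sharp\Q_0$) and a case analysis showing that no complementary region other than an elementary polygon can occur in a maximal non-crossing family — including the degenerate monogon and digon configurations — while also confirming, as in the presilting/basic argument above, that the grading enhancements introduce no further phenomena beyond those already controlled.
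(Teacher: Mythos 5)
Your proposal is correct and takes essentially the same route as the paper: both directions are channelled through Lemma~\ref{lem:silting-complexes} ((S1)--(S4)) together with the bijection $X$ on indecomposables from Theorem~\ref{thm. perA=AC}, and the step you single out as the main obstacle --- upgrading the count $n=\sharp\Q_0$ plus non-crossing to a full formal arc system, resp.\ deducing generation from being an admissible dissection --- is exactly the content of the results of Amiot--Plamondon--Schroll cited in the paper (Proposition~1.11, Definition~3.9 and Corollary~5.8 of \cite{AmiotPlamondonSchroll23}), which the paper invokes rather than reproves. Your additional details (Krull--Schmidt for injectivity, exclusion of band summands and of shifted duplicate arcs) merely make explicit what the paper leaves implicit.
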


\begin{proof} 
Let $S=\bigoplus_{i\in I}S_i$ be a silting complex. Then $X^{-1}(S)=\{X^{-1}(S_i)\mid i\in I\}$ is a non-positive $\gbullet$-grFFAS by Lemma \ref{lem:silting-complexes}. 

Conversely, let $\tilde{\Gamma}=\{\tgamma_i\mid 1\le i\le n\}$ be a non-positive $\gbullet$-grFFAS. 
Then $\tilde{\Gamma}$ satisfies (S1), (S2) and (S4) given in Lemma \ref{lem:silting-complexes}. 
Thus, $\Hom_{\per(A)}(X(\tgamma_i), X(\tgamma_j)[>0])=0$ because $\ii_p(\tgamma_i,\tgamma_j)\le 0$, where 
$p\in\M_A$ is the intersection of $\gamma_i$ and $\gamma_j$. 
In particular, (S3) yields that $X(\tilde{\Gamma}) = \bigoplus_{i=1}^n X(\tgamma_i)$ generates $\per(A)$, 
because $\Gamma$ is an admissible dissection (see \cite[Definition 3.9 and Corollary 5.8]{AmiotPlamondonSchroll23}). 
Therefore, $X(\tilde{\Gamma})$ is a silting complex. 
\end{proof}

Recall that any graded $\gbullet$-curve $\tc$ is divided to some segments by $\rbullet$-FFAS cutting $\SURFred{\F}$.
Therefore, assume $\tc$ consecutively crosses graded $\rbullet$-arcs $\tarc_1,\ldots, \tarc_{n}$,
the segment $\tc_{i,i+1}$ obtained by $\tarc_i$ and $\tarc_{i+1}$ cutting $\tc$ can be viewed as a subset
of the elementary $\rbullet$-polygon $\PP$ such that $\tarc_i$ and $\tarc_{i+1}$ are edges of $\PP$.
Notice that $\PP$ has a unique edge $b_{\PP}$ which is a subset of $\bSurf$
and there exists a unique $\gbullet$-marked point $\m_{\PP}\in\M$ on the edge $b_{\PP}$,
thus, for simplicity, we say {\it $\m_{\PP}$ is to the left $($resp., right$)$ of $\tc$}
if $\m_{\PP}$ is on the left side of the forward direction of the segment $\tc_{i, i+1}$
obtained by $\tarc_{i}$ and $\tarc_{i+1}$ cutting the curve $c$.

\begin{lemma} \label{lemm-int:+1 and -1}
Let $A$ be a gentle algebra. For any graded $\gbullet$-curve $\tc \in \AC_{\m}(\SURFred{\F_A}(A))$
consecutively crossing $\tarc_1, \tarc_2 \in \tDred(A)$ {\rm(}the intersections are denoted by $u$ and $v$, respectively{\rm)},
let $\m_{\PP}$ be the $\gbullet$-marked point on the elementary $\rbullet$-polygon $\PP$
such that $\tarc_1$ and $\tarc_2$ are two edges of $\PP$.
\begin{itemize}
  \item[\rm(1)] If $\m_{\PP}$ is to the left of $\tc$, then $\ii_v(\tc, \tarc_2) = \ii_u(\tc, \tarc_1)+1$;
  \item[\rm(2)] If $\m_{\PP}$ is to the right of $\tc$, then $\ii_v(\tc, \tarc_2) = \ii_u(\tc, \tarc_1)-1$.
\end{itemize}
\end{lemma}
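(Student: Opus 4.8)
The plan is to localise the whole statement inside the elementary $\rbullet$-polygon $\PP$ containing the segment $(\tc)_{1,2}$ of $\tc$ cut out by $\tarc_1$ and $\tarc_2$. Since $\PP$ is a disk, the projectivised tangent bundle is trivial over it, and after replacing $\F_A$ by a homotopic foliation — which leaves every intersection index unchanged — I may assume $\F_A|_\PP$ is a constant direction. As intersection indices depend only on the homotopy classes of graded curves realising minimal intersections, I may further isotope, carrying gradings along, so that $(\tc)_{1,2}$ is a straight chord of $\PP$ and $\tarc_1,\tarc_2$ are straight near the crossing points $u$ and $v$. In particular the grading lift of $\tc$ is then constant along $(\tc)_{1,2}$, so its value at $u$ equals its value at $v$.

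Next I would rewrite the index in terms of grading lifts. Taking the lift of $\F_A$ as origin on the universal cover $\mathbb{R}$ of $\mathbb{P}(T_p\Surf_A)$ (deck transformation $x\mapsto x+1$) and writing $\tilde\phi_c$, $\tilde\phi_1$, $\tilde\phi_2$ for the lifted tangent directions of $\tc$ (at $u$, equivalently at $v$), of $\tarc_1$ at $u$, and of $\tarc_2$ at $v$, the convention that $\kappa$ is a clockwise rotation by an angle $<\pi$, together with the identity (\ref{formula-int. ind. in HKK 2.1}), forces the description
\[
\ii_u(\tc,\tarc_1)=\big\lceil \tilde\phi_c-\tilde\phi_1\big\rceil ,\qquad \ii_v(\tc,\tarc_2)=\big\lceil \tilde\phi_c-\tilde\phi_2\big\rceil .
\]
So the lemma becomes a purely local claim about how $\tilde\phi_1$, $\tilde\phi_2$ and the direction $\tilde\phi_c$ of the chord sit relative to one another: the two ceilings differ by $+1$ precisely when $\m_\PP$ lies to the left of $\tc$, and by $-1$ when it lies to the right.

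For this I would cut $\PP$ along the chord $(\tc)_{1,2}$ into two sub-disks $D_1, D_2$; the unique $\gbullet$-marked point $\m_\PP$ of $\PP$ — which lies on the unique boundary edge $b_\PP$ of $\PP$ — belongs to exactly one of them, say $D_1$. The boundary of $D_1$ is made up of the chord $(\tc)_{1,2}$, arcs of $\tarc_1$ and $\tarc_2$, possibly further $\rbullet$-arcs of $\PP$, and the portion of $b_\PP$ through $\m_\PP$; applying the turning-number (Gauss--Bonnet) identity to $D_1$ expresses $\tilde\phi_2-\tilde\phi_1$ as a sum of exterior-angle contributions, each of which at an $\rbullet$-vertex of $\PP$ is an integer governed by the grading (Step 2 of Construction \ref{const-graded gentle alg}) and is tracked consistently, plus the contribution of the arc through $\m_\PP$. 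It is this last contribution that moves $\tilde\phi_c-\tilde\phi_2$ across an integer relative to $\tilde\phi_c-\tilde\phi_1$, producing the jump $\ii_v(\tc,\tarc_2)-\ii_u(\tc,\tarc_1)=+1$; running the same computation with $\m_\PP\in D_2$, i.e.\ $\m_\PP$ to the right of $\tc$, reverses the orientation and gives $-1$.

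The main obstacle I anticipate is the sign bookkeeping in this last step: reconciling all the orientation conventions — clockwise versus counterclockwise rotation, ``left'' versus ``right'' of $\tc$, the orientation of the bounding sub-disk, and ceiling versus floor in the index formula — so that the turning-number accounting returns $+1$ in the left case and $-1$ in the right case rather than $0$ or the opposite sign. If the Gauss--Bonnet accounting proves unwieldy, the fallback is to reduce to the finitely many combinatorial local models of $\PP$ (determined by which edges of $\PP$ are $\tarc_1$ and $\tarc_2$ and on which side $\m_\PP$ lies) and compute $\tilde\phi_1$ and $\tilde\phi_2$ directly from straight representatives in each; the remainder of the argument is a routine unwinding of the definition (\ref{formula-index}) of the intersection index.
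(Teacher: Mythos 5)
Your opening moves (localising in the elementary $\rbullet$-polygon $\PP$, trivialising the foliation on this disk, and splitting $\PP$ along the segment of $\tc$) are the same first steps as the paper's proof, and the ceiling-type description of the index is compatible with (\ref{formula-index}) and (\ref{formula-int. ind. in HKK 2.1}). But the proposal stops short of the lemma's actual content in two concrete places. First, the clean jump by exactly $\pm 1$ hinges on the fact that \emph{every} corner contribution at the $\rbullet$-vertices of $\PP$ vanishes: by Step 2 of Construction \ref{const-graded gentle alg} these corner indices are the degrees of the arrows of $A$, and they are zero here only because $A$ carries the trivial grading — the paper's proof invokes this explicitly (``the grading of each arrow equals to zero, so $\ii_{\sigma_j\cap\sigma_{j+1}}(\tilde{\sigma}_j,\tilde{\sigma}_{j+1})=0$''). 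Your phrase that the corner terms are ``integers governed by the grading and tracked consistently'' leaves them in the accounting; without setting them to zero the turning-number computation produces a formula of the shape $\ii_v(\tc,\tarc_2)=\ii_u(\tc,\tarc_1)+1-\sum_j|\alpha_j|$, and indeed the statement is false for a genuinely graded gentle algebra, so this hypothesis must be used, not merely carried along.

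Second, and more seriously, you run the computation on the sub-disk $D_1$ containing $\m_{\PP}$, whose boundary includes the boundary segment $b_{\PP}$; the turning along $b_{\PP}$ and at its two corners is exactly the quantity that is supposed to ``produce the jump,'' and it is precisely what you admit you cannot yet pin down — so the decisive $\pm 1$ is asserted, not proved. The paper sidesteps this by summing around the \emph{other} side of the chord, which contains no $\gbullet$-marked point and no boundary edge: there every corner is an $\rbullet$-corner of index $0$, and the concatenation of the clockwise rotations by angles $<\pi$ from $\tc$ to $\tarc_1=\tilde{\sigma}_1$, then through $\tilde{\sigma}_2,\ldots$, to $\tarc_2=\tilde{\sigma}_n$ differs from the single direct rotation from $\tc$ to $\tarc_2$ by one full loop in $\mathbb{P}(T\Surf)$, which is exactly the $+1$ (the mirror configuration gives $-1$). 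To complete your argument, either switch the Gauss--Bonnet/rotation count to the marked-point-free side as above, or actually carry out your fallback case analysis on straightened local models; in both cases the vanishing of all arrow degrees must enter explicitly.
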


\begin{proof} We only prove $(1)$, the proof of $(2)$ is similar.
If $\m_{\PP}$ is to the left of $\tc$, then $\tc$ divides $\PP$ into two parts:
the left parts $\PP_L$ containing $\m_{\PP}$ and the right parts $\PP_R$ containing no $\gbullet$-marked point.
In counterclockwise order, we denote by $\tilde{\sigma}_2,\ldots,\tilde{\sigma}_{n-1}$
the graded $\rbullet$-arcs on the right of the segment $\tc_{1,2}$ obtained by
$\tarc_1$ and $\tarc_2$ cutting $\tc$, and set $\tarc_1=\tilde{\sigma}_1$ and $\tarc_2=\tilde{\sigma}_n$
(see \Pic \ref{curve divides poly}).
By hypothesis, we have the grading of each arrow equals to zero and so $\ii_{\sigma_j \cap \sigma_{j+1}}(\tilde{\sigma}_j, \tilde{\sigma}_{j+1}) = 0 \in \ZZ$ for all $1\le j< n$. Therefore,
\begin{align}
    \ii_v(\tc, \tarc_2)
& = \ii_v(\tc, \tilde{\sigma}_n) = \tc\cdot\kappa_{\tc\to\tilde{\sigma}_n} \cdot \tilde{\sigma}_n^{-1}
  = (\tc\cdot\kappa_{\tc\to\tilde{\sigma}_{n-1}} \cdot \tilde{\sigma}_{n-1}^{-1} )
    \cdot (\tilde{\sigma}_{n-1} \cdot\kappa_{\tilde{\sigma}_{n-1}\to\tilde{\sigma}_n} \cdot \tilde{\sigma}_n^{-1}) \nonumber \\
& = (\tc\cdot\kappa_{\tc\to\tilde{\sigma}_1} \cdot \tilde{\sigma}_{1}^{-1} )
    \cdot \prod_{j=1}^{n-1} \ii_{\sigma_{j}\cap\sigma_{j+1}}(\tilde{\sigma}_{j}, \tilde{\sigma}_{j+1})
  = \ii_u(\tc, \tilde{\sigma}_{1}) \cdot \prod_{j=1}^{n-1} \ii_{\sigma_{j}\cap\sigma_{j+1}}(\tilde{\sigma}_{j}, \tilde{\sigma}_{j+1}) \nonumber \\
& \mathop{\mapsto}\limits^{\text{1-1}}_{\text{(\ref{formula-index})}} \ii_u(\tc, \tilde{\sigma}_{1})+1\  \in \ZZ, \nonumber
\end{align}
where $\kappa_{\tc_1\to\tc_2}$ is the paths from $\dot{c}_1(t_1)$ to $\dot{c}_2(t_2)$
given by clockwise rotation in $T_{p}\Surf$ by an angle $<\pi$ ($p=c_1(t_1)=c_2(t_2)\in\tc_1\cap \tc_2$).
\begin{figure}[H]
\definecolor{ffqqqq}{rgb}{1,0,0}
\definecolor{qqwuqq}{rgb}{0,0.5,0}
\centering
\begin{tikzpicture}
\draw[red][line width=1.2pt] (2,0) -- (1,1.73);
\draw[black][line width=1.2pt] (1,1.73) -- (-1, 1.73);
\draw[red][line width=1.2pt, dotted] (-1, 1.73) -- (-2,0);
\draw[red][line width=1.2pt] (-2,0) -- (-1,-1.73);
\draw[red][line width=1.2pt] (-1,-1.73) -- (1,-1.73);
\draw[red][line width=1.2pt, dotted] (1,-1.73) -- (2,0);
\fill [qqwuqq] (0,1.73) circle (2.8pt);
\fill [red] (2,0) circle (2.8pt);       \fill [white] (2,0) circle (2.2pt);
\fill [red] (1,1.73) circle (2.8pt);    \fill [white] (1,1.73) circle (2.2pt);
\fill [red] (-1, 1.73) circle (2.8pt);  \fill [white] (-1, 1.73) circle (2.2pt);
\fill [red] (-2,0) circle (2.8pt);      \fill [white] (-2,0) circle (2.2pt);
\fill [red] (-1,-1.73) circle (2.8pt);  \fill [white] (-1,-1.73) circle (2.2pt);
\fill [red] (1,-1.73) circle (2.8pt);   \fill [white] (1,-1.73) circle (2.2pt);
\draw[qqwuqq][line width=1.2pt][->] (-1.73,-1) -- (1.73, 1);
\draw[qqwuqq] (-1.73, -1) node[below]{$\tc$};
\draw[red] (-1.8, -0.5) node[left]{$\tarc_1 = \tilde{\sigma}_1$};
\draw[red] (0, -1.73) node[below]{$\tilde{\sigma}_2$};
\draw[red] ( 1.8,  0.5) node[right]{$\tarc_2 = \tilde{\sigma}_n$};
\end{tikzpicture}
\caption{\textsf{$\tc$ divides $\PP$ into two parts}}
\label{curve divides poly}
\end{figure}
\end{proof}

\begin{corollary} \label{coro-silting and FFAS}
Let $A$ be a gentle algebra. Then there exists a bijection
\[X: \gbullet\textrm{-}\mathrm{grFFAS}^{\leq 0}_{\{0,1\}}(A) \to 2\textrm{-}\silt(A) \]
between the set $\gbullet\textrm{-}\mathrm{grFFAS}^{\leq 0}_{\{0,1\}}(A)$ of all non-positive $\gbullet$-$\mathrm{grFFAS}$ of $\SURFred{\F_A}(A)$ such that for all graded $\rbullet$-arc $\tarc\in\tDred(A)$ crossed by $\tgamma\in\gtD$ the intersection index $\ii_p(\tgamma, \tarc)\in\{0,1\}$
and the set $2\textrm{-}\silt(A)$ of all isoclasses of 2-term silting complexes in $\per(A)$.
\end{corollary}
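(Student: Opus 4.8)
The plan is to deduce Corollary \ref{coro-silting and FFAS} from Theorem \ref{thm. silt} by identifying, inside the set of all silting complexes, exactly those that are $2$-term. Recall from Definition \ref{def:silting-objects}(3) that a silting complex $S = \bigoplus_{i=1}^n X(\tgamma_i)$ is $2$-term precisely when each summand $X(\tgamma_i)$ is concentrated in homological degrees $-1$ and $0$. By the explicit description of $X(\tc) = (C^d, \partial^d)$ recalled just after Theorem \ref{thm. perA=AC}, we have $C^d = \bigoplus_{\ii_{c\cap a_i}(\tc,\tarc_i) = -d} P(\mathfrak v(a_i))$, so $X(\tgamma_i)$ has nonzero terms only in degrees $-1$ and $0$ if and only if for every graded $\rbullet$-arc $\tarc$ crossed by $\tgamma_i$ the intersection index $\ii_p(\tgamma_i, \tarc)$ lies in $\{0,1\}$. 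Thus the plan is: first establish this local characterization of the $2$-term property for a single string object; then observe that it is exactly the extra condition cutting $\gbullet\textrm{-}\mathrm{grFFAS}^{\leq 0}(A)$ down to $\gbullet\textrm{-}\mathrm{grFFAS}^{\leq 0}_{\{0,1\}}(A)$; and finally conclude that the restriction of the bijection $X$ from Theorem \ref{thm. silt} to this subset is a bijection onto $2\textrm{-}\silt(A)$.

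In more detail, the steps I would carry out are as follows. First I would fix a non-positive $\gbullet$-grFFAS $\tilde\Gamma = \{\tgamma_i \mid 1 \le i \le n\}$ and the corresponding silting complex $X(\tilde\Gamma) = \bigoplus_{i=1}^n X(\tgamma_i)$ given by Theorem \ref{thm. silt}. For each $i$, I would pin down the possible values of the indices $\ii_p(\tgamma_i, \tarc)$ as $\tarc$ ranges over the graded $\rbullet$-arcs crossed by $\tgamma_i$: by Lemma \ref{lemm-int:+1 and -1}, consecutive crossings change the index by exactly $\pm1$, so these values form a finite interval of consecutive integers; combined with the non-positivity condition (S4) at the two endpoints of $\tgamma_i$ in $\M_A$, one sees that the values lie in $\{d, d+1\}$ for some integer $d$ depending on the grading. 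The complex $X(\tgamma_i)$ is then supported in degrees $-(d+1)$ and $-d$ (possibly only one of them when $\tgamma_i$ crosses few arcs), hence it is, up to shift, a $2$-term complex, and it is genuinely $2$-term with terms in degrees $-1,0$ iff $d = 0$, i.e.\ iff all the indices belong to $\{0,1\}$. This gives the claimed equivalence ``$X(\tgamma_i)$ is $2$-term $\iff \ii_p(\tgamma_i,\tarc)\in\{0,1\}$ for all crossed $\tarc$''.

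Next I would assemble the global statement: $X(\tilde\Gamma)$ lies in $2\textrm{-}\silt(A)$ if and only if each summand $X(\tgamma_i)$ is $2$-term, which by the previous paragraph holds if and only if $\tilde\Gamma \in \gbullet\textrm{-}\mathrm{grFFAS}^{\leq 0}_{\{0,1\}}(A)$. Since $X$ is already a bijection from $\gbullet\textrm{-}\mathrm{grFFAS}^{\leq 0}(A)$ to $\silt(A)$ by Theorem \ref{thm. silt}, and $\gbullet\textrm{-}\mathrm{grFFAS}^{\leq 0}_{\{0,1\}}(A)$ is precisely the preimage of $2\textrm{-}\silt(A)$, the restriction is a bijection onto its image $2\textrm{-}\silt(A)$. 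One subtlety to address is that a ``$2$-term silting complex'' should be required to have terms in degrees exactly $-1$ and $0$ rather than in some other pair of adjacent degrees; I would note that shifting $X(\tgamma_i)$ by $[-d]$ corresponds to a regrading of $\tgamma_i$, and that the normalization built into the map $X$ (together with condition (S4)) already forces $d=0$ on the relevant subset, so no ambiguity arises.

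The main obstacle I anticipate is the bookkeeping in the second paragraph: making rigorous the claim that the intersection indices along a single graded $\gbullet$-curve $\tgamma_i$ with $\gbullet$-endpoints take at most two consecutive values, and that non-positivity at both endpoints pins the relevant value of $d$ to $0$ in the $\{0,1\}$-case. This requires carefully tracking how Lemma \ref{lemm-int:+1 and -1} interacts with the position of the $\gbullet$-marked points $\m_\PP$ along $\tgamma_i$ and with the endpoint condition (S4); the direction changes ($+1$ when $\m_\PP$ is to the left, $-1$ when to the right) must be controlled so that the sequence of indices stays within a length-$2$ window. Once this monotonicity/interval statement is in place, the rest is a formal consequence of Theorem \ref{thm. silt} and the definition of $2$-term complexes.
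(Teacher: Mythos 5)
Your proposal is correct and follows essentially the same route as the paper: the paper's (one-line) proof of Corollary \ref{coro-silting and FFAS} simply restricts the bijection of Theorem \ref{thm. silt}, using the degree--index dictionary for $X(\tgamma)$ (the term in degree $d$ is built from the arcs crossed with intersection index $-d$) together with Lemma \ref{lemm-int:+1 and -1}, which is exactly your first and last steps. One caveat: your intermediate claim that along every curve of an arbitrary non-positive $\gbullet$-grFFAS the indices occupy a two-element window $\{d,d+1\}$ is not needed and is in general false for non-hereditary gentle algebras, where silting complexes can have summands of amplitude greater than two; the equivalence ``$X(\tilde\Gamma)$ is $2$-term if and only if all indices lie in $\{0,1\}$'' already follows directly from the dictionary, so that step should simply be dropped rather than made rigorous.
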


\begin{proof}
It is obvious by Theorem \ref{thm. silt} and Lemma \ref{lemm-int:+1 and -1}.
\end{proof}

Let $A$ be a graded gentle algebra.
Next we consider the endomorphism algebras of complexes in $\per(A)$. Assume that $C = \bigoplus_{i\in I} X(\tgamma_i)$ is a complex in $\per(A)$. We say that $C$ is a {\it complex without intersection} if $\gtD(C):=\{\tgamma_i\mid i\in I\}$ is a dissection of $\SURFred{\F_A}(A)$ such that $\tgamma_i\cap\tgamma_j\cap(\innerSurf)=\varnothing$. In this case, the complex $C$ can induce a graded algebra $A^C=\kk\Q^C/\I^C$ by graded marked ribbon surface as following:

Step 1: The quiver $\Q^C$ of $A^C$ is obtained by:
  \begin{itemize}
    \item The vertex set of $\Q^C$, say $\Q^C_0$, is the set $\gtD(C):=\{\tgamma_i \mid i\in I\}$.

    \item  If $\gamma_i$ and $\gamma_j$ have an intersection $p\in \M_A$ such that $\gamma_i$ is to the left of $\gamma_j$ at $p$
      and for all $\tgamma_l\in \gtD$ with endpoint $p$, $\tgamma_l$ is not between $\tgamma_i$ and $\tgamma_j$, then there exists an arrow $\alpha$ from $\tgamma_j$ to $\tgamma_i$ in $\Q^C_1$ of $\Q^C$. In this case,
      we define $|\alpha| = \ii_p(\tgamma_i, \tgamma_j)$.
  \end{itemize}

Step 2: For two arrows $\alpha: \gamma_i\to \gamma_j$ and $\beta: \gamma_j \to \gamma_k$,
    $\alpha\beta\in \I^C$ if and only if $\gamma_i, \gamma_j, \gamma_k$ are edges of the same polygon obtained by $\gtD(C)$ cutting $\SURFred{\F_A}$.

In particular, let $S$ be a silting complex in $\per(A)$. Then the collection of arcs
corresponding to $S$ forms a full formal arc system of the surface by Lemma \ref{lem:silting-complexes} and the silting complex $S$ can induce a homologically smooth graded gentle algebra $A^S$ associated to the surface model of $A$. Recall that in \cite{HaidenKatzarkovKontsevich17}, Haiden, Katzarkov and Kontsevich proved that the perfect derived category of a homologically smooth graded gentle algebra is triangle equivalent to the partially wrapped Fukaya category of a graded oriented smooth surface. Thus, $\per(A)$ and $\per(A^S)$
are both triangle equivalent to the homologically Fukaya category of the surface. Then we obtain a triangle equivalence
from $\per(A)$ to $\per(A^S)$ which sending $S$ to $A^S$. So we have the following corollary.

\begin{corollary}\label{coro:endo-algebras}
Let $A$ be a graded gentle algebra and $S\in \per(A)$ a silting complex. Then $\H^0(A^S)=\End_{\per(A^S)}(A^S)=\End_{\per(A)}(S)$.
\end{corollary}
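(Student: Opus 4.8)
The plan is to read the statement off from the derived-category/Fukaya-category dictionary recalled in the paragraph preceding the corollary, combined with the combinatorial model of $\per(A)$ in Theorem \ref{thm. perA=AC} and the description of $A^{S}$ as the algebra $A^{C}$ attached to the complex-without-intersection $C=S$. \emph{First} I would dispose of the rightmost equality, which is purely formal: since $A^{S}$ is a graded gentle algebra, it is in particular a dg algebra with zero differential, and the free rank-one object $A^{S}\in\per(A^{S})$ satisfies $\Hom_{\per(A^{S})}(A^{S},A^{S}[i])\cong\H^{i}(A^{S})$ for every $i$, whence $\End_{\per(A^{S})}(A^{S})=\H^{0}(A^{S})$. (Note that $\H^{0}(A^{S})$ is the degree-$0$ part $(A^{S})_{0}$, which is where the silted-type algebra lives; for $S$ a $2$-term silting complex the grading only involves degrees $0$ and $1$ by Corollary \ref{coro-silting and FFAS}.)

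\emph{For the remaining equality}, I would exhibit a triangle equivalence $\Phi\colon\per(A)\xrightarrow{\ \sim\ }\per(A^{S})$ with $\Phi(S)\cong A^{S}$, after which the claim follows because a triangle equivalence induces an isomorphism on endomorphism algebras, giving $\End_{\per(A)}(S)\cong\End_{\per(A^{S})}(\Phi(S))=\End_{\per(A^{S})}(A^{S})$. To build $\Phi$: write $S\cong\bigoplus_{i=1}^{n}X(\tgamma_{i},L_{i})$ and set $\tilde{\Gamma}=\{\tgamma_{i}\mid 1\le i\le n\}$, a non-positive $\gbullet$-grFFAS of $\SURFred{\F_{A}}(A)$ by Theorem \ref{thm. silt}; since $S$ is silting it has no band summands (condition (S2) of Lemma \ref{lem:silting-complexes}), so each $L_{i}\cong\kk$ and $S=\bigoplus_{i}X(\tgamma_{i})$. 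By the construction preceding the corollary, $A^{S}=A^{C}$ with $C=S$ is a homologically smooth graded gentle algebra whose quiver has vertex set $\tilde{\Gamma}$ and whose graded marked ribbon surface is homotopy equivalent to $\SURFred{\F_{A}}(A)$ with $\tilde{\Gamma}$ playing the role of the $\gbullet$-arc system; under the model of Theorem \ref{thm. perA=AC} for $\per(A^{S})$, the indecomposable projective $A^{S}$-module $P(\tgamma_{i})$ corresponds to the graded $\gbullet$-curve $\tgamma_{i}$ itself. By \cite{HaidenKatzarkovKontsevich17}, both $\per(A)$ and $\per(A^{S})$ are triangle equivalent to the partially wrapped Fukaya category of the common graded surface, and under both equivalences the object attached to a graded $\gbullet$-arc is the corresponding Lagrangian; composing these identifications sends $S=\bigoplus_{i}X(\tgamma_{i})$ to $\bigoplus_{i}P(\tgamma_{i})=A^{S}$, which is the desired $\Phi$.

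\emph{The main obstacle} is the compatibility used in the middle step: that the equivalence out of $\per(A)$ and the equivalence out of $\per(A^{S})$ identify a given graded $\gbullet$-arc $\tgamma_{i}$ with the \emph{same} object of the Fukaya category, i.e.\ that the change of $\gbullet$-arc system from $\tDred(A)$ to $\tilde{\Gamma}$ is realized by a graded symplectomorphism under which the two arc-to-object dictionaries agree — this is precisely the content behind the sentence ``we obtain a triangle equivalence from $\per(A)$ to $\per(A^{S})$ sending $S$ to $A^{S}$'' in the preceding discussion. Once this is granted, one can in fact bypass the Fukaya category and compare $\End_{\per(A)}(S)$ with $\H^{0}(A^{S})$ directly: by the dimension formula (\ref{dim. formula}) one has $\dim_{\kk}\Hom_{\per(A)}(X(\tgamma_{i}),X(\tgamma_{j}))=\Int^{0}(\tgamma_{i},\tgamma_{j})$, and by Lemma \ref{lemm-int:+1 and -1} together with Steps~1--2 of the construction of $A^{C}$ this equals the number of degree-$0$ paths $\tgamma_{j}\to\tgamma_{i}$ in $(\Q^{S},\I^{S})$; that composition of morphisms in $\per(A)$ matches concatenation of paths follows from the pivot-move description of irreducible maps used in the proof of Theorem \ref{thm-endo. alg.}, yielding $\End_{\per(A)}(S)\cong(A^{S})_{0}=\H^{0}(A^{S})$ as algebras.
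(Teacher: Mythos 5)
Your proposal is correct and follows essentially the same route as the paper: both $\per(A)$ and $\per(A^{S})$ are identified with the partially wrapped Fukaya category of the common graded surface via \cite{HaidenKatzarkovKontsevich17}, using Lemma \ref{lem:silting-complexes} to see that the arcs of $S$ form a full formal arc system, and the resulting equivalence sends $S$ to $A^{S}$, after which $\End_{\per(A^{S})}(A^{S})=\H^{0}(A^{S})$ is the standard formal fact you record. Your closing sketch of a direct comparison via the dimension formula (\ref{dim. formula}) and Lemma \ref{lemm-int:+1 and -1} is a reasonable supplement but is not needed once the equivalence sending $S$ to $A^{S}$ is granted, which is exactly the step the paper also takes on faith from the Fukaya-category identification.
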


By Corollaries \ref{coro-silting and FFAS} and \ref{coro:endo-algebras}, we can obtain a geometric characterization of the silted algebras for gentle algebras.

\begin{corollary} \label{coro-silted}
Let $A$ be a hereditary gentle algebra and $S$ be a 2-term silting complex over $A$. Then $\H^0(A^{S})$ is a silted algebra of type $A$. Moreover, for any silted algebra $B$ of type $A$, there exists a $\gtD\in\gbullet\textrm{-}\mathrm{grFFAS}^{\leq 0}_{\{0,1\}}(A)$ such that $B\cong A^{X(\gtD)}$.
\end{corollary}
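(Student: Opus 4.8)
The plan is to assemble this corollary directly from the machinery already established, in particular Corollary \ref{coro-silting and FFAS}, Corollary \ref{coro:endo-algebras}, and the construction $C\mapsto A^C$ for complexes without intersection. First I would observe that a hereditary gentle algebra $A$ is in particular a graded gentle algebra (with the trivial grading $|\cdot|=0$), so all of the geometric apparatus of Section \ref{sec:preliminarys} applies to $\SURFred{\F_A}(A)$. Let $S$ be a $2$-term silting complex over $A$. By Corollary \ref{coro-silting and FFAS} the $2$-term silting complexes over $A$ are exactly the complexes $X(\gtD)$ for $\gtD\in\gbullet\textrm{-}\mathrm{grFFAS}^{\leq 0}_{\{0,1\}}(A)$; so there is $\gtD$ with $S\cong X(\gtD)$, and since $\gtD$ is in particular a $\gbullet$-grFFAS, the arcs $\{\tgamma_i\}$ form a full formal arc system of the surface and are pairwise non-crossing in the interior (conditions (S1)--(S2) of Lemma \ref{lem:silting-complexes}). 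Hence $S$ is a silting complex "without intersection" in the sense defined just before Corollary \ref{coro:endo-algebras}, so the graded algebra $A^S=\kk\Q^S/\I^S$ is defined.

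Next I would invoke Corollary \ref{coro:endo-algebras}: for the silting complex $S\in\per(A)$ we have
\[
\H^0(A^S)=\End_{\per(A^S)}(A^S)=\End_{\per(A)}(S).
\]
Since $S$ is a $2$-term silting complex over the hereditary algebra $A$, its endomorphism algebra $\End_{K^b(\proj A)}(S)=\End_{\per(A)}(S)$ is by Definition \ref{def:silted-algebras} a silted algebra of type $A$. Combining the two displayed identifications, $\H^0(A^S)$ is exactly this endomorphism algebra, hence a silted algebra of type $A$. This proves the first assertion.

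For the converse, let $B$ be an arbitrary silted algebra of type $A$. By Definition \ref{def:silted-algebras} there is a $2$-term silting complex $P$ over $A$ with $B=\End_{K^b(\proj A)}(P)$. Apply Corollary \ref{coro-silting and FFAS} to $P$: there exists $\gtD\in\gbullet\textrm{-}\mathrm{grFFAS}^{\leq 0}_{\{0,1\}}(A)$ with $X(\gtD)\cong P$ in $\per(A)$. As above, $\gtD$ being a $\gbullet$-grFFAS makes $X(\gtD)$ a silting complex without intersection, so $A^{X(\gtD)}$ is defined, and Corollary \ref{coro:endo-algebras} gives $\H^0(A^{X(\gtD)})=\End_{\per(A)}(X(\gtD))=\End_{\per(A)}(P)=B$. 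Since $A^{X(\gtD)}$ is a graded gentle algebra concentrated in non-negative degrees with $\H^0$ computed by taking the degree-zero part modulo the relations coming from positive-degree arrows, one checks that the construction identifies $B\cong \H^0(A^{X(\gtD)})$ with the algebra denoted $A^{X(\gtD)}$ in the statement (i.e.\ the underlying ungraded endomorphism algebra), which yields the desired $\gtD$.

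The only genuine subtlety — the step I expect to require the most care — is the bookkeeping between the graded algebra $A^{X(\gtD)}$ produced by the construction and its $\H^0$: one must check that conditions (S1)--(S4)/the $\{0,1\}$-condition guarantee that $A^{X(\gtD)}$ is nonnegatively graded and that $\H^0$ really recovers $\End_{\per(A)}(P)$ rather than some other truncation, so that writing "$B\cong A^{X(\gtD)}$" in the statement is legitimate. This is precisely what Corollary \ref{coro:endo-algebras} is designed to deliver, together with the homological smoothness of $A^{X(\gtD)}$ and the Haiden--Katzarkov--Kontsevich equivalence of $\per(A)$ and $\per(A^{X(\gtD)})$ with the Fukaya category of the surface; so once that corollary is in hand the remaining argument is the short two-line combination above, applied in both directions.
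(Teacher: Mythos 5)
Your proposal is correct and follows essentially the same route as the paper: the paper states this corollary as an immediate consequence of Corollary \ref{coro-silting and FFAS} (the bijection between $\gbullet\textrm{-}\mathrm{grFFAS}^{\leq 0}_{\{0,1\}}(A)$ and 2-term silting complexes) together with Corollary \ref{coro:endo-algebras} ($\H^0(A^S)=\End_{\per(A)}(S)$), which is exactly the two-step combination you spell out in both directions. Your extra remark about reading $A^{X(\gtD)}$ in the statement as its degree-zero part $\H^0$ is a reasonable clarification of the paper's notation rather than a deviation in method.
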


\subsection{The classification of silted algebras of type $A_n$} \label{sec:silt-cls}
In this subsection, we give a classification of silted algebras of type $A_n$ in terms of Corollaries \ref{coro-silting and FFAS} and \ref{coro-silted}. According to subsection \ref{subsec-geo-der}, we have the marked ribbon surface $\SURFred{\F_{A_n}}(A_n)$ of $\per(A_n)$ (see \Pic \ref{grmarked ribbon surface of An 2}).
Let $\tarc_t$ be the graded $\rbullet$-arc in $\tDred(A_n)$ for any $1\le t\le n$ and $\gamma_{s,t}$ be the $\gbullet$-curve with endpoints $\m_s$ and $\m_t$, where $0\le s\ne t\le n$. It is easy to see that any $\gbullet$-FFAS $\gD$ of $\SURFred{\F_{A_n}}(A_n)$ contains at least one $\gbullet$-curve which is of the form $\gamma_{0,l}$ for any $1\le l\le n$.
For simplicity, we need the following notations:
$\gDI=\{\gamma_{0,l}\mid 1\le l\le n\}\cap \gD$, $\gDII=\gD\backslash \gDI$,
$\gtDI = \{\tgamma_{0,l}\mid \gamma_{0,l}\in\gDI\}$ and $\gtDII = \{\tgamma_{s,t}\mid \gamma_{s,t}\in\gDII\}$.
\begin{figure}[H]
\definecolor{ffqqqq}{rgb}{1,0,0}
\definecolor{qqwuqq}{rgb}{0,0.5,0}
\centering
\begin{tikzpicture}
\draw[black][line width=1.2pt] (2,0) arc (0:360:2);
\draw[red][line width=1.2pt] (2,0) -- (1,1.73);  \draw[red] (1.5,0.86) node[left]{$\tarc_n$};
\draw[red][line width=1.2pt] (-1, 1.73) -- (-2,0); \draw[red] (-1.5,0.86) node[right]{$\tarc_1$};
\draw[red][line width=1.2pt] (-2,0) -- (-1,-1.73); \draw[red] (-1.5,-0.86) node[right]{$\tarc_2$};
\draw[red][line width=1.2pt, dotted] (-1,-1.73) -- (1,-1.73);
\draw[red][line width=1.2pt] (1,-1.73) -- (2,0); \draw[red] (1.5,-0.86) node[left]{$\tarc_{n-1}$};
\fill [qqwuqq] (0,2) circle (2.8pt); \draw[qqwuqq] (0,2) node[above]{$\m_0$};
\fill [qqwuqq] (-1.73,1) circle (2.8pt); \draw[qqwuqq] (-1.73,1) node[left]{$\m_1$};
\fill [qqwuqq] (-1.73,-1) circle (2.8pt); \draw[qqwuqq] (-1.73,-1) node[left]{$\m_2$};
\fill [qqwuqq][opacity=0.25] (0,-2) circle (2.8pt); \draw[qqwuqq] (0,-2) node[below]{$\cdots$};
\fill [qqwuqq] (1.73,-1) circle (2.8pt); \draw[qqwuqq] (1.73,-1) node[right]{$\m_{n-1}$};
\fill [qqwuqq] (1.73,1) circle (2.8pt); \draw[qqwuqq] (1.73,1) node[right]{$\m_n$};
\fill [red] (2,0) circle (2.8pt); \fill [white] (2,0) circle (2.2pt); \draw[red] (2,0) node[right]{$\c_{n-1,n}$};
\fill [red] (1,1.73) circle (2.8pt); \fill [white] (1,1.73) circle (2.2pt); \draw[red] (1,1.73) node[right]{$\c_{n,0}$};
\fill [red] (-1, 1.73) circle (2.8pt); \fill [white] (-1, 1.73) circle (2.2pt); \draw[red] (-1,1.73) node[left]{$\c_{0,1}$};
\fill [red] (-2,0) circle (2.8pt); \fill [white] (-2,0) circle (2.2pt); \draw[red] (-2,0) node[left]{$\c_{1,2}$};
\fill [red] (-1,-1.73) circle (2.8pt); \fill [white] (-1,-1.73) circle (2.2pt);
\fill [red] (1,-1.73) circle (2.8pt); \fill [white] (1,-1.73) circle (2.2pt);
\draw [blue][line width=1.2pt] (-1.73,-1)--(1.73,-1);
\draw [blue] (0,-1) node[above]{$\gamma_{2,n-1}$};
\end{tikzpicture}
\caption{\textsf{The marked ribbon surface $\SURFred{\F_{A_n}}(A_n)$ of $A_n$, where the foliation $\F_{A_n}$ satisfy that $|\alpha|=0$ for any arrow $\alpha$ of $A_n$.}}
\label{grmarked ribbon surface of An 2}
\end{figure}

Note that tilted algebras are silted. In subsection \ref{subsec-classification}, we give a description of the tilted algebras of type $A_n$. Next we give a description of the silted algebras of type $A_n$ which are not tilted algebras of type $A_n$. First, we need the following lemma.

\begin{lemma} \label{lemm:2-silt-ind}

Let $\tgamma\in \AC_{\m}(\SURFred{\F_{A_n}}(A_n))$. Then $\tgamma$ passes through at most two graded $\rbullet$-arcs.
Moreover, if $\tgamma$ passes through two graded $\rbullet$-arcs $\tarc_i$ and $\tarc_j$ for $i\neq j$, then we have:
\begin{center}
$\ii_{\gamma\cap a_i}(\tgamma, \tarc_i) = 0$ and $\ii_{\gamma\cap a_j}(\tgamma, \tarc_j) = 1$;
\end{center}
or
\begin{center}
$\ii_{\gamma\cap a_i}(\tgamma, \tarc_i) = 1$ and $\ii_{\gamma\cap a_j}(\tgamma, \tarc_j) = 0$.
\end{center}
\end{lemma}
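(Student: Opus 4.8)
The plan is to read the polygon decomposition of the disk $\Surf_{A_n}$ determined by $\tDred(A_n)=\{\tarc_1,\dots,\tarc_n\}$ off Figure \ref{grmarked ribbon surface of An 2}, use minimality of intersection to bound how many $\rbullet$-arcs a $\gbullet$-curve can cross, and feed the two-arc case into Lemma \ref{lemm-int:+1 and -1}. Concretely, cutting $\Surf_{A_n}$ along $\tDred(A_n)$ produces $n+1$ elementary $\rbullet$-polygons: for each $1\le t\le n$ a digon ``ear'' $\PP_t$ whose only edge in $\tDred(A_n)$ is $\tarc_t$ and whose boundary edge carries $\m_t$, together with one ``central'' polygon $\PP_0$ whose $\tDred(A_n)$-edges are exactly $\tarc_1,\dots,\tarc_n$ and whose boundary edge carries $\m_0$. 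Equivalently the dual graph of this decomposition (one node per polygon, one edge per $\rbullet$-arc) is a star with centre $\PP_0$; this is just the ``path'' shape of $\tDred(A_n)$ in Figure \ref{grmarked ribbon surface of An 2}.

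For the first assertion, let $\tgamma\in\AC_{\m}(\SURFred{\F_{A_n}}(A_n))$ have endpoints $\m_s,\m_t$, taken with minimal intersection number with $\Dred(A_n)$. Viewing $\tgamma$ as a walk through the polygons, it runs from the polygon carrying $\m_s$ to the one carrying $\m_t$, and minimality forbids crossing any one $\rbullet$-arc twice (such a backtrack bounds a bigon along which one could homotope $\tgamma$ to remove two intersections); hence the sequence of polygons visited is a reduced walk, i.e.\ a geodesic, in the star dual graph. A geodesic in a star has length at most $2$, with length $2$ exactly when $s,t\ge 1$ and $s\ne t$ (then $\tgamma$ crosses precisely $\tarc_s,\tarc_t$, so $\tgamma=\gamma_{s,t}$) and length $\le 1$ as soon as $0\in\{s,t\}$. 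So $\tgamma$ meets at most two $\rbullet$-arcs. (Alternatively: $A_n$ is hereditary of Dynkin type $\mathbb{A}_n$, so the indecomposable $X(\tgamma)$ of Theorem \ref{thm. perA=AC} is a shift of an interval $A_n$-module, whose minimal projective presentation has at most two nonzero terms, each a single indecomposable projective; since $C^{d}=\bigoplus P(\mathfrak{v}(a))$ over the arcs crossed with index $-d$, this already forces at most two crossings.)

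For the second assertion, assume $\tgamma$ crosses exactly two arcs $\tarc_s,\tarc_t$ with $1\le s<t\le n$, met consecutively, say $\tarc_s$ at $u$ and then $\tarc_t$ at $v$, with the connecting segment of $\tgamma$ lying in $\PP_0$. Lemma \ref{lemm-int:+1 and -1} applied to $\PP=\PP_0$, whose distinguished $\gbullet$-marked point is $\m_0$, gives $\ii_v(\tgamma,\tarc_t)=\ii_u(\tgamma,\tarc_s)\pm 1$ according as $\m_0$ is to the left or right of $\tgamma$, so $\{\ii_u(\tgamma,\tarc_s),\ii_v(\tgamma,\tarc_t)\}=\{m,m+1\}$ for some integer $m$, and it only remains to see $m=0$. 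Here I would invoke the chosen foliation: since every arrow of $A_n$ has degree $0$ (caption of Figure \ref{grmarked ribbon surface of An 2}, via Construction \ref{const-graded gentle alg}), the underlying curve $\gamma_{s,t}$ corresponds under Theorem \ref{thm. perA=AC} to the interval module $M$ over $\kk(1\to\cdots\to n)$ supported on $\{s,\dots,t-1\}$, whose minimal projective resolution $0\to P(t)\to P(s)\to M\to 0$ places $X(\tgamma)$ in cohomological degrees $-1$ and $0$ with $C^{0}=P(s)$ and $C^{-1}=P(t)$; hence $\ii_u(\tgamma,\tarc_s)=0$ and $\ii_v(\tgamma,\tarc_t)=1$.

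The polygon bookkeeping and the minimality argument are routine; the genuine obstacle is the normalisation $m=0$. Since shifting a graded $\gbullet$-curve shifts all of its intersection indices simultaneously, the pair $\{m,m+1\}$ is a priori unconstrained, so one really does need to use the specific foliation $\F_{A_n}$ — or, in the intended application through Corollary \ref{coro-silting and FFAS}, the standing fact that $\tgamma$ is a direct summand of a $2$-term silting complex and therefore $X(\tgamma)$ is concentrated in degrees $-1$ and $0$ — to conclude $m=0$.
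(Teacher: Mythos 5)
Your argument is essentially the paper's own, with the gaps filled in rather than waved at: the paper declares the first assertion ``obvious'' (your star-shaped dual graph of the decomposition into the central polygon containing $\m_0$ and the $n$ digon ears, plus minimal position, is a correct justification), and for the second assertion the paper's proof consists precisely of the two ingredients you isolate, namely Lemma~\ref{lemm-int:+1 and -1} (so the two indices form a pair $\{m,m+1\}$) together with Corollary~\ref{coro-silting and FFAS}, i.e.\ the 2-term (silting) hypothesis that pins $m=0$. So your closing diagnosis --- that the statement as printed does not by itself constrain the shift, and that in the intended use $\tgamma$ is a summand of a 2-term silting complex, which is what forces the indices into $\{0,1\}$ --- is exactly how the paper's proof reads.

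One step of your middle paragraph should be discarded, though: you cannot get $m=0$ by saying that ``the underlying curve $\gamma_{s,t}$ corresponds under Theorem~\ref{thm. perA=AC} to the interval module''. That bijection is at the level of \emph{graded} curves; the underlying curve determines $X(\tgamma)$ only up to shift, so the projective resolution $0\to P(t)\to P(s)\to M\to 0$ computes the indices of one particular grading of $\gamma_{s,t}$, not of an arbitrary $\tgamma$ with that underlying curve. For the same reason the foliation $\F_{A_n}$ alone cannot fix $m$: regrading the curve shifts all its intersection indices while the foliation is unchanged. Since your final paragraph already concedes this and falls back on the 2-term assumption (the route the paper takes via Corollary~\ref{coro-silting and FFAS}), the proposal stands once that module-theoretic shortcut is removed or restated as a normalisation of the grading.
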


\begin{proof} The first part of the statement is obvious. If $\tgamma$ passes through $\tarc_i$ and $\tarc_j$, then we have
$\{\ii_{\gamma\cap a_i}(\tgamma, \tarc_i), \ii_{\gamma\cap a_j}(\tgamma, \tarc_j)\} = \{0,1\}$
by Corollary \ref{coro-silting and FFAS} and Lemma \ref{lemm-int:+1 and -1}.
\end{proof}

\begin{proposition}\label{prop-gamma(0,l)}
Let $X(\gtD)=\bigoplus_{\tgamma\in\gtD}X(\tgamma)$ be a 2-term silting complex but not tilting.
Then there are at least two graded $\gbullet$-arcs $\tgamma_{0,l}$ and $\tgamma_{0,k}$ in $\gtDI$ such that
\begin{itemize}
  \item[\rm(1)]
    $\ii_{p_l}(\tgamma_{0,l}, \tarc_l)\ne \ii_{p_k}(\tgamma_{0,k}, \tarc_k) \in \{0,1\}$ for any $1\le l<k\le n$,
    where $p_l=\gamma_{0,l}\cap a_l$ and $p_{k}=\gamma_{0,{k}}\cap a_{k}$;

  \item[\rm(2)] $\gamma_{0,t}\notin\gDI$ for all $l<t<k$.
\end{itemize}
\end{proposition}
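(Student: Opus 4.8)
The plan is to argue that if $X(\gtD)$ is a $2$-term silting complex which is not tilting, then the failure of the tilting condition $\Hom_{\per(A_n)}(X(\gtD),X(\gtD)[-1])=0$ must be witnessed entirely inside $\gtDI$. First I would recall from Corollary~\ref{coro-silting and FFAS} (together with Lemma~\ref{lemm-int:+1 and -1} and Lemma~\ref{lemm:2-silt-ind}) that every $\tgamma\in\gtD$ crosses at most two graded $\rbullet$-arcs, and that the intersection indices of $\tgamma$ with the crossed $\rbullet$-arcs lie in $\{0,1\}$; for a curve $\gamma_{0,l}$ of type $\gtDI$ this means $\gamma_{0,l}$ crosses precisely $\tarc_l$ and $\ii_{p_l}(\tgamma_{0,l},\tarc_l)\in\{0,1\}$, while every $\tgamma\in\gtDII$ crosses two consecutive $\rbullet$-arcs $\tarc_i,\tarc_{i+1}$ (it cannot use $\m_0$) with indices $\{0,1\}$ in some order. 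The key observation is that, by the dimension formula~(\ref{dim. formula}), $X(\gtD)$ is tilting iff $\Int^{-1}(\tgamma,\tgamma')=0$ for all pairs, i.e. no intersection point $p$ of two curves has intersection index $-1$; and by~(\ref{formula-int. ind. in HKK 2.1}), $\ii_p(\tgamma,\tgamma')=-1$ iff $\ii_p(\tgamma',\tgamma)=2$.

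Next I would analyze where such a ``bad'' intersection can occur. At a point $p=\m_t$ with $1\le t\le n-1$, only finitely many local configurations of curves in $\gtD$ can appear, and one computes the intersection index directly from Construction~\ref{const-graded gentle alg} and Lemma~\ref{lemm-int:+1 and -1}: an intersection at such an interior marked point always has index in $\{0,1\}$, because locally the grading of the relevant arrows is $0$ and the two curves wind around at most one $\rbullet$-arc. Hence a bad intersection of index $-1$ can only occur at the marked point $\m_0$. At $\m_0$ the only curves of $\gtD$ ending there are exactly those in $\gtDI$, i.e. the $\gamma_{0,l}$. So the obstruction to being tilting is an intersection at $\m_0$ between two curves $\gamma_{0,l}$ and $\gamma_{0,k}$ with, say, $l<k$; computing $\ii_{\m_0}(\tgamma_{0,l},\tgamma_{0,k})$ using~(\ref{formula-index}) shows it equals $\ii_{p_k}(\tgamma_{0,k},\tarc_k)-\ii_{p_l}(\tgamma_{0,l},\tarc_l)$ up to the clockwise-rotation correction, so it is $-1$ exactly when $\ii_{p_l}(\tgamma_{0,l},\tarc_l)=1$ and $\ii_{p_k}(\tgamma_{0,k},\tarc_k)=0$ (this uses that $\gamma_{0,l}$ is to the right of $\gamma_{0,k}$ at $\m_0$ because $l<k$). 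Since $X(\gtD)$ is assumed not tilting, at least one such pair must exist, giving part~(1); the indices differ and both lie in $\{0,1\}$.

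Finally, for part~(2), among all such bad pairs choose $l<k$ with $k-l$ minimal. Suppose $\gamma_{0,t}\in\gDI$ for some $l<t<k$. Then $\gamma_{0,t}$ crosses $\tarc_t$ with index $\ii_{p_t}(\tgamma_{0,t},\tarc_t)\in\{0,1\}$. If this index is $0$, then $(l,t)$ is a bad pair with $\ii_{p_l}=1,\ii_{p_t}=0$, contradicting minimality; if it is $1$, then $(t,k)$ is a bad pair with $\ii_{p_t}=1,\ii_{p_k}=0$, again contradicting minimality. Hence no $\gamma_{0,t}$ with $l<t<k$ lies in $\gDI$, proving~(2). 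The main obstacle I anticipate is the first bookkeeping step: verifying carefully, from Construction~\ref{const-graded gentle alg} and Lemma~\ref{lemm-int:+1 and -1}, that an intersection at an interior marked point $\m_t$ ($t\ne 0$) can never have index $-1$, since this requires enumerating the admissible local pictures of a non-positive $\gbullet$-grFFAS at such a point and checking each; once this is in place, the rest is a short extremal-pair argument.
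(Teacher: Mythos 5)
Your strategy is genuinely different from the paper's: you want to use the dimension formula (\ref{dim. formula}) to say that ``not tilting'' means some intersection of arcs of $\gtD$ has index $-1$, argue that such an intersection can only occur at $\m_0$ (where precisely the arcs of $\gtDI$ meet), and finish with a minimal-pair argument; the paper instead proves the contrapositive (if all arcs of $\gtDI$ carry the same index then $X(\gtD)$ is tilting) by induction on $n$, cutting the surface along arcs of the dissection. If your middle step were established, your route would be shorter. But as written that step is a genuine gap, and you flag it yourself: the assertion that at a marked point $\m_t$ with $t\ne 0$ no pair of arcs of a non-positive $2$-term $\gbullet$-grFFAS can meet with index $-1$ is the technical heart of the statement, and the justification you give (``locally the grading of the relevant arrows is $0$ and the two curves wind around at most one $\rbullet$-arc'') falls well short of a proof. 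What actually makes the claim true is a concrete computation: (i) every $\gamma_{s,t}\in\gDII$ crosses exactly $\tarc_s$ and $\tarc_t$, and since the $\gbullet$-point $\m_0$ of the central elementary $\rbullet$-polygon lies to its left, Lemmas \ref{lemm-int:+1 and -1} and \ref{lemm:2-silt-ind} force $\ii(\tgamma_{s,t},\tarc_s)=0$ and $\ii(\tgamma_{s,t},\tarc_t)=1$; (ii) for two arcs ending at $\m_t$ the index at $\m_t$ in the direction constrained by (S4) equals the difference of their indices with the common crossed arc $\tarc_t$; (iii) one must then determine the angular (left/right) order of all arcs ending at $\m_t$ and apply (S4) pairwise to see that the indices with $\tarc_t$ increase weakly from left to right, which excludes the pattern producing a $-1$. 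This is essentially the same local analysis the paper carries out around \Pic \ref{fig-prop-gamma(0,l)-1} and \Pic \ref{fig-prop-gamma(0,l)-2}, so your proposal does not avoid that work; it postpones it.

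Two further points. First, the equivalence ``tilting $\Leftrightarrow$ no intersection of index $-1$'' needs the endpoint conventions of the dimension formula made precise: by (\ref{formula-int. ind. in HKK 2.1}) the two directed indices at a shared endpoint sum to $1$, so one of them is always $\ge 1$, and since $X(\gtD)$ is silting only one of the two directions can contribute to $\Hom_{\per(A_n)}$; you must say which, both here and in your computation at $\m_0$, where your formula for $\ii_{\m_0}(\tgamma_{0,l},\tgamma_{0,k})$ is stated only ``up to the clockwise-rotation correction''. Second, once one knows that not all arcs of $\gtDI$ carry the same index, part (2) is immediate by choosing two members of $\gDI$ with different indices that are consecutive with respect to the ordering of their endpoints; your minimal-difference argument is fine, but it is only as strong as the unproved localization claim it rests on.
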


\begin{proof}
Denote by $p_t$ the intersection of $\gamma_{0,t}$ and $a_{t}$ such that $\gamma_{0,t}\in \gtDI$, where $1\leq t\leq n$. Assume that $\ii_{p_t}(\tgamma_{0,t},\tarc_{t}) = 0$ (resp., $=1$) for all $t$. Next we only prove that $X(\gtD)$ is tilting for $\ii_{p_t}(\tgamma_{0,t},\tarc_{t}) = 0$.

Assume that there is only one graded $\gbullet$-arc in $\gtDI$, we claim that $\gtDI=\{\tgamma_{0,1}\}$. Otherwise, we have $\gtDI=\{\tgamma_{0,t}\}$ for some $1< t\ (=i_1)\ \le n$. Since $\gtD$ is a $\gbullet$-grFFAS of $\SURFred{\F_{A_n}}(A_n)$, there is a grade $\gbullet$-arc $\tgamma_{\hat{t},t}\in\gtDII$ for some $1\le \hat{t}<t$ and $\gamma_{0,x}\notin\gDI$ for all $1\le x<t$, see \Pic \ref{fig-prop-gamma(0,l)-1}. Thus by (S4) and (\ref{formula-index}) (see \cite[(2.15)]{HaidenKatzarkovKontsevich17}), we obtain that
\begin{center}
$\ii_{\gamma_{\hat{t},t}\cap a_t}(\tgamma_{\hat{t},t}, \tarc_t)- \ii_{p_t}(\tgamma_{0,t},\tarc_t)=\ii_{m_t}(\gamma_{\hat{t},t},\tgamma_{0,t}) \leq 0$.
\end{center}
  Since $X(\gtD)$ is 2-term, we have $\ii_{\gamma_{\hat{t},t}\cap a_t}(\tgamma_{\hat{t},t}, \tarc_t) = 0$
  by Lemma \ref{lemm-int:+1 and -1}. Then
  \[\ii_{\gamma_{\hat{t},t}\cap a_{\hat{t}}}(\tgamma_{\hat{t},t}, \tarc_{\hat{t}})
  = \ii_{\gamma_{\hat{t},t}\cap a_t}(\tgamma_{\hat{t},t}, \tarc_t) - 1 = -1. \]
  By Lemma \ref{lemm:2-silt-ind}, this is a contradiction.

\begin{figure}[H]
\definecolor{ffqqqq}{rgb}{1,0,0}
\definecolor{qqwuqq}{rgb}{0,0.5,0}
\centering
\begin{tikzpicture}
\draw[black][line width=1.2pt] (2,0) arc (0:360:2);
\draw[red][line width=1.2pt] (-0.52, 1.93) -- (-1.41, 1.41);
\draw[red][line width=1.2pt][dotted] (-1.41,1.41) -- (-1.93, 0.52);
\draw[red][line width=1.2pt] (-1.93, 0.52) -- (-1.93,-0.52);
\draw[red][line width=1.2pt] (-1.93,-0.52) -- (-1.41,-1.41);
\draw[red][line width=1.2pt] (-1.41,-1.41) -- (-0.52,-1.93); \draw[red] (-0.97,-1.7) node[above]{$\tarc_{\hat{t}}$};
\draw[red][line width=1.2pt] (-0.52,-1.93) -- ( 0.52,-1.93);
\draw[red][line width=1.2pt][dotted] ( 0.52,-1.93) -- ( 1.41,-1.41);
\draw[red][line width=1.2pt] ( 1.41,-1.41) -- ( 1.93,-0.52);
\draw[red][line width=1.2pt] ( 1.93,-0.52) -- ( 1.93, 0.52); \draw[red] ( 1.93,0) node[left]{$\tarc_{t}$};
\draw[red][line width=1.2pt] ( 1.93, 0.52) -- ( 1.41, 1.41);
\draw[red][line width=1.2pt] ( 1.41, 1.41) -- ( 0.52, 1.93);
\fill [qqwuqq] (0,2) circle (2.8pt); \draw[qqwuqq] (0,2) node[above]{$\m_0$};
\fill [qqwuqq] (-1,1.732) circle (2.8pt); \draw[qqwuqq] (-1,1.732) node[above]{$\m_1$};
\fill [qqwuqq][opacity=0.25] (-1.732,1) circle (2.8pt);
\fill [qqwuqq] (-2,0) circle (2.8pt);
\fill [qqwuqq] (-1.732,-1) circle (2.8pt);
\fill [qqwuqq] (-1,-1.732) circle (2.8pt); \draw[qqwuqq] (-1,-1.732) node[left]{$\m_{\hat{t}}$};
\fill [qqwuqq] (0,-2) circle (2.8pt);
\fill [qqwuqq][opacity=0.25] (1,-1.732) circle (2.8pt);
\fill [qqwuqq] (1.732,-1) circle (2.8pt);
\fill [qqwuqq] (2,0) circle (2.8pt); \draw[qqwuqq] (2,0) node[right]{$\m_t$};
\fill [qqwuqq] (1.732,1) circle (2.8pt);
\fill [qqwuqq] (1,1.732) circle (2.8pt);
\fill [red] (-0.52, 1.93) circle (2.8pt); \fill[white] (-0.52, 1.93) circle (2.2pt);
                                          \draw[red]   (-0.52, 1.93) node[above]{$\c_{0,1}$};
\fill [red] (-1.41, 1.41) circle (2.8pt); \fill[white] (-1.41, 1.41) circle (2.2pt);
                                          \draw[red]   (-1.41, 1.41) node[above]{$\c_{1,2}$};
\fill [red] (-1.93, 0.52) circle (2.8pt); \fill[white] (-1.93, 0.52) circle (2.2pt);
\fill [red] (-1.93,-0.52) circle (2.8pt); \fill[white] (-1.93,-0.52) circle (2.2pt);
\fill [red] (-1.41,-1.41) circle (2.8pt); \fill[white] (-1.41,-1.41) circle (2.2pt);
\fill [red] (-0.52,-1.93) circle (2.8pt); \fill[white] (-0.52,-1.93) circle (2.2pt);
\fill [red] ( 0.52,-1.93) circle (2.8pt); \fill[white] ( 0.52,-1.93) circle (2.2pt);
\fill [red] ( 1.41,-1.41) circle (2.8pt); \fill[white] ( 1.41,-1.41) circle (2.2pt);
\fill [red] ( 1.93,-0.52) circle (2.8pt); \fill[white] ( 1.93,-0.52) circle (2.2pt);
\fill [red] ( 1.93, 0.52) circle (2.8pt); \fill[white] ( 1.93, 0.52) circle (2.2pt);
\fill [red] ( 1.41, 1.41) circle (2.8pt); \fill[white] ( 1.41, 1.41) circle (2.2pt);
\fill [red] ( 0.52, 1.93) circle (2.8pt); \fill[white] ( 0.52, 1.93) circle (2.2pt);
\draw[blue][opacity=0.7][line width=1.2pt] (0,2)--(2,0); \draw[blue] (1,1) node[right]{$\tgamma_{0,t}$};
\draw[blue][opacity=0.7][line width=1.2pt] (2,0)--(-1,-1.732); \draw[blue] (0.5,-0.86) node[above]{$\tgamma_{\hat{t},t}$};
\draw[cyan][opacity=0.7][line width=0.8pt][->] (-0.967, -1.6) to[out=90,in=0] (-2.2, -0.5);
\draw[cyan] (-2.2, -0.5) node[left]{$\ii_{\gamma_{\hat{t},t}\cap a_{\hat{t}}}(\tgamma_{\hat{t},t}, \tarc_{\hat{t}})=-1$};
\draw[cyan][opacity=0.7][line width=0.8pt][->] (1.93,-0.15)  to[out=-90,in=180] (2.5,-1);
\draw[cyan] (2.5,-1) node[right]{$\ii_{\gamma_{\hat{t},t}\cap a_t}(\tgamma_{\hat{t},t}, \tarc_t)=0$};
\draw[cyan][opacity=0.7][line width=0.8pt][->] (1.93, 0.15)  to[out= 90,in=180] (2.5, 1);
\draw[cyan] (2.5, 1) node[right]{$\ii_{p_t}(\tgamma_{0,t}, \tarc_t)=0$};
\end{tikzpicture}
\caption{ }
\label{fig-prop-gamma(0,l)-1}
\end{figure}

  Next we claim that $\gamma_{1,n} \in \gDII$. Otherwise, since $\gtD$ is a $\gbullet$-grFFAS and $\gDI=\{\gamma_{0,1}\}$, there exists $\gamma_{1,t}\in \gDII$ for some $1<t<n$, see \Pic \ref{fig-prop-gamma(0,l)-2}. Since $X(\gtD)$ is 2-term, by (S4), we obtain that
  \begin{center}
  $\ii_{\gamma_{1,t}\cap a_1}(\tgamma_{1,t},\tarc_1)=0$ and $\ii_{\gamma_{1,t}\cap a_t}(\tgamma_{1,t},\tarc_t)=1$.
  \end{center}

\begin{figure}[htbp]
\definecolor{ffqqqq}{rgb}{1,0,0}
\definecolor{qqwuqq}{rgb}{0,0.5,0}
\centering
\begin{tikzpicture}
\draw[black][line width=1.2pt] (2,0) arc (0:360:2);
\draw[red][line width=1.2pt] (-0.52, 1.93) -- (-1.41, 1.41);
\draw[red][line width=1.2pt][dotted] (-1.41,1.41) -- (-1.93, 0.52);
\draw[red][line width=1.2pt] (-1.93, 0.52) -- (-1.93,-0.52);
\draw[red][line width=1.2pt] (-1.93,-0.52) -- (-1.41,-1.41);
\draw[red][line width=1.2pt] (-1.41,-1.41) -- (-0.52,-1.93); \draw[red] (-0.97,-1.7) node[above]{$\tarc_{t}$};
\draw[red][line width=1.2pt] (-0.52,-1.93) -- ( 0.52,-1.93);
\draw[red][line width=1.2pt][dotted] ( 0.52,-1.93) -- ( 1.41,-1.41);
\draw[red][line width=1.2pt] ( 1.41,-1.41) -- ( 1.93,-0.52);
\draw[red][line width=1.2pt] ( 1.93,-0.52) -- ( 1.93, 0.52); \draw[red] ( 1.93,0) node[left]{$\tarc_{\hat{t}}$};
\draw[red][line width=1.2pt][dotted] ( 1.93, 0.52) -- ( 1.41, 1.41);
\draw[red][line width=1.2pt] ( 1.41, 1.41) -- ( 0.52, 1.93);
\fill [qqwuqq] (0,2) circle (2.8pt); \draw[qqwuqq] (0,2) node[above]{$\m_0$};
\fill [qqwuqq] (-1,1.732) circle (2.8pt); \draw[qqwuqq] (-1,1.732) node[above]{$\m_1$};
\fill [qqwuqq][opacity=0.25] (-1.732,1) circle (2.8pt);
\fill [qqwuqq] (-2,0) circle (2.8pt);
\fill [qqwuqq] (-1.732,-1) circle (2.8pt);
\fill [qqwuqq] (-1,-1.732) circle (2.8pt); \draw[qqwuqq] (-1,-1.732) node[left]{$\m_{t}$};
\fill [qqwuqq] (0,-2) circle (2.8pt);
\fill [qqwuqq][opacity=0.25] (1,-1.732) circle (2.8pt);
\fill [qqwuqq] (1.732,-1) circle (2.8pt);
\fill [qqwuqq] (2,0) circle (2.8pt); \draw[qqwuqq] (2,0) node[right]{$\m_{\hat{t}}$};
\fill [qqwuqq][opacity=0.25] (1.732,1) circle (2.8pt);
\fill [qqwuqq] (1,1.732) circle (2.8pt);
\fill [red] (-0.52, 1.93) circle (2.8pt); \fill[white] (-0.52, 1.93) circle (2.2pt);
                                          \draw[red]   (-0.52, 1.93) node[above]{$\c_{0,1}$};
\fill [red] (-1.41, 1.41) circle (2.8pt); \fill[white] (-1.41, 1.41) circle (2.2pt);
                                          \draw[red]   (-1.41, 1.41) node[above]{$\c_{1,2}$};
\fill [red] (-1.93, 0.52) circle (2.8pt); \fill[white] (-1.93, 0.52) circle (2.2pt);
\fill [red] (-1.93,-0.52) circle (2.8pt); \fill[white] (-1.93,-0.52) circle (2.2pt);
\fill [red] (-1.41,-1.41) circle (2.8pt); \fill[white] (-1.41,-1.41) circle (2.2pt);
\fill [red] (-0.52,-1.93) circle (2.8pt); \fill[white] (-0.52,-1.93) circle (2.2pt);
\fill [red] ( 0.52,-1.93) circle (2.8pt); \fill[white] ( 0.52,-1.93) circle (2.2pt);
\fill [red] ( 1.41,-1.41) circle (2.8pt); \fill[white] ( 1.41,-1.41) circle (2.2pt);
\fill [red] ( 1.93,-0.52) circle (2.8pt); \fill[white] ( 1.93,-0.52) circle (2.2pt);
\fill [red] ( 1.93, 0.52) circle (2.8pt); \fill[white] ( 1.93, 0.52) circle (2.2pt);
\fill [red] ( 1.41, 1.41) circle (2.8pt); \fill[white] ( 1.41, 1.41) circle (2.2pt);
\fill [red] ( 0.52, 1.93) circle (2.8pt); \fill[white] ( 0.52, 1.93) circle (2.2pt);
\draw[blue][opacity=0.7][line width=1.2pt] (0,2)--(-1,1.732); \draw[blue] (-0.5,1.86) node[below]{$\tgamma_{0,1}$};
\draw[blue][opacity=0.7][line width=1.2pt] (-1,1.732)--(-1,-1.732); \draw[blue] (-1,0) node[right]{$\tgamma_{1,t}$};
\draw[blue][opacity=0.7][line width=1.2pt] (2,0)--(-1,-1.732); \draw[blue] (0.5,-0.86) node[above]{$\tgamma_{t,\hat{t}}$};
\draw[cyan][opacity=0.7][line width=0.8pt][->] (-0.7,1.8) to[out=-90,in=180] (2.2, 1.4);
\draw[cyan] (2.2, 1.4) node[right]{$\ii_{p_1}(\tgamma_{0,1},\tarc_1)=0$};
\draw[cyan][opacity=0.7][line width=0.8pt][->] (-1,1.6) to[out=-90,in=0] (-2.2, 1.2);
\draw[cyan] (-2.2, 1.2) node[left]{$\ii_{\gamma_{1,t}\cap a_1}(\tgamma_{1,t},\tarc_1)=0$};
\draw[cyan][opacity=0.7][line width=0.8pt][->] (-1,-1.6) to[out=-90,in=0] (-2.2,-2);
\draw[cyan] (-2.2,-2) node[left]{$\ii_{\gamma_{1,t}\cap a_t}(\tgamma_{1,t},\tarc_t)=1$};
\draw[cyan][opacity=0.7][line width=0.8pt][->] (-0.9,-1.7) to[out=90,in=0] (-2.2,-0.7);
\draw[cyan] (-2.2,-0.7) node[left]{$\ii_{\gamma_{t,\hat{t}}\cap a_t}(\tgamma_{t,\hat{t}},\tarc_t)= 1$};
\draw[cyan][opacity=0.7][line width=0.8pt][->] (1.93,-0.15)  to[out=-90,in=180] (2.5,-1);
\draw[cyan] (2.5,-1) node[right]{$\ii_{\gamma_{t,\hat{t}}\cap a_t}(\tgamma_{t,\hat{t}}, \tarc_t)= 2$};
\draw (0,-2.5);
\end{tikzpicture}
\caption{ }
\label{fig-prop-gamma(0,l)-2}
\end{figure}
  
Furthermore, there exists $\gamma_{t,\hat{t}}\in\gDII$ for some $t<\hat{t}\le n$. Similarly, by Lemma \ref{lemm-int:+1 and -1}, we can compute that
  \begin{center}
  $\ii_{\gamma_{t,\hat{t}}\cap a_t}(\tgamma_{t,\hat{t}},\tarc_t)= 1$ and
  $\ii_{\gamma_{t,\hat{t}}\cap a_{\hat{t}}}(\tgamma_{t,\hat{t}},\tarc_{\hat{t}})= 2$.
 \end{center}
  By Lemma \ref{lemm:2-silt-ind}, this is a contradiction.

Now we delete two marked points $\m_0$ and $\c_{n,1}$ and a $\rbullet$-arc $a_n$ in $\SURFred{\F_{A_n}}(A_n)$. Then the new marked ribbon surface is homotopic to $\SURFred{\F_{A_{n-1}}}(A_{n-1})$, see \Pic \ref{fig-prop-gamma(0,l)-3}. Since $\gamma_{1,n} \in \gDII$, by Lemmas \ref{lemm-int:+1 and -1} and \ref{lemm:2-silt-ind}, it is easy to see that
\begin{center}
$\ii_{\gamma_{1,n}\cap a_1}(\tgamma_{1,n},\tarc_1)=0$ and $\ii_{\gamma_{1,n}\cap a_n}(\tgamma_{1,n},\tarc_n)= 1$
\end{center}
Then $X(\gtD)$ is tilting by induction.

\begin{figure}[htbp]
\definecolor{ffqqqq}{rgb}{1,0,0}
\definecolor{qqwuqq}{rgb}{0,0.5,0}
\centering
\begin{tikzpicture}[scale=0.8]
\draw[black][line width=1.2pt] (2,0) arc (0:360:2);
\draw[red][line width=1.2pt] (-0.52, 1.93) -- (-1.41, 1.41);
\draw[red][line width=1.2pt][dotted] (-1.41,1.41) -- (-1.93, 0.52);
\draw[red][line width=1.2pt] (-1.93, 0.52) -- (-1.93,-0.52);
\draw[red][line width=1.2pt] (-1.93,-0.52) -- (-1.41,-1.41);
\draw[red][line width=1.2pt] (-1.41,-1.41) -- (-0.52,-1.93);
\draw[red][line width=1.2pt][dotted] (-0.52,-1.93) -- ( 0.52,-1.93);
\draw[red][line width=1.2pt] ( 0.52,-1.93) -- ( 1.41,-1.41);
\draw[red][line width=1.2pt] ( 1.41,-1.41) -- ( 1.93,-0.52);
\draw[red][line width=1.2pt] ( 1.93,-0.52) -- ( 1.93, 0.52);
\draw[red][line width=1.2pt][dotted] ( 1.93, 0.52) -- ( 1.41, 1.41);
\draw[red][line width=1.2pt] ( 1.41, 1.41) -- ( 0.52, 1.93);
\fill [qqwuqq] (0,2) circle (2.8pt); \draw[qqwuqq] (0,2) node[above]{$\m_0$};
\fill [qqwuqq] (-1,1.732) circle (2.8pt); \draw[qqwuqq] (-1,1.732) node[above]{$\m_1$};
\fill [qqwuqq][opacity=0.25] (-1.732,1) circle (2.8pt);
\fill [qqwuqq] (-2,0) circle (2.8pt);
\fill [qqwuqq] (-1.732,-1) circle (2.8pt);
\fill [qqwuqq] (-1,-1.732) circle (2.8pt);
\fill [qqwuqq][opacity=0.25] (0,-2) circle (2.8pt);
\fill [qqwuqq] (1,-1.732) circle (2.8pt);
\fill [qqwuqq] (1.732,-1) circle (2.8pt);
\fill [qqwuqq] (2,0) circle (2.8pt);
\fill [qqwuqq][opacity=0.25] (1.732,1) circle (2.8pt);
\fill [qqwuqq] (1,1.732) circle (2.8pt);  \draw[qqwuqq] (1,1.732) node[above]{$\m_n$};
\fill [red] (-0.52, 1.93) circle (2.8pt); \fill[white] (-0.52, 1.93) circle (2.2pt);
                                          \draw[red]   (-0.52, 1.93) node[above]{$\c_{0,1}$};
\fill [red] (-1.41, 1.41) circle (2.8pt); \fill[white] (-1.41, 1.41) circle (2.2pt);
                                          \draw[red]   (-1.41, 1.41) node[left]{$\c_{1,2}$};
\fill [red] (-1.93, 0.52) circle (2.8pt); \fill[white] (-1.93, 0.52) circle (2.2pt);
\fill [red] (-1.93,-0.52) circle (2.8pt); \fill[white] (-1.93,-0.52) circle (2.2pt);
\fill [red] (-1.41,-1.41) circle (2.8pt); \fill[white] (-1.41,-1.41) circle (2.2pt);
\fill [red] (-0.52,-1.93) circle (2.8pt); \fill[white] (-0.52,-1.93) circle (2.2pt);
\fill [red] ( 0.52,-1.93) circle (2.8pt); \fill[white] ( 0.52,-1.93) circle (2.2pt);
\fill [red] ( 1.41,-1.41) circle (2.8pt); \fill[white] ( 1.41,-1.41) circle (2.2pt);
\fill [red] ( 1.93,-0.52) circle (2.8pt); \fill[white] ( 1.93,-0.52) circle (2.2pt);
\fill [red] ( 1.93, 0.52) circle (2.8pt); \fill[white] ( 1.93, 0.52) circle (2.2pt);
\fill [red] ( 1.41, 1.41) circle (2.8pt); \fill[white] ( 1.41, 1.41) circle (2.2pt);
                                          \draw[red]   ( 1.41, 1.41) node[right]{$\c_{n-1,n}$};
\fill [red] ( 0.52, 1.93) circle (2.8pt); \fill[white] ( 0.52, 1.93) circle (2.2pt);
                                          \draw[red]   ( 0.52, 1.93) node[above]{$\c_{n,1}$};
\draw[blue][opacity=0.7][line width=1.2pt] (0,2)--(-1,1.732);
\draw[blue][->] (-0.5,1.86) -- (-1.5,1.86);
\draw[blue] (-1.5,1.86) node[left]{$\tgamma_{0,1}$};
\draw[blue][opacity=0.7][line width=1.2pt] (-1,1.732)to[out=-30,in=210](1,1.732); \draw[blue] (0,1.5) node[below]{$\tgamma_{1,n}$};
\draw (0,-2.5) node{(1)};
\end{tikzpicture}
\ \ \ \ \ \
\begin{tikzpicture}[scale=0.8]
\draw[black][line width=1.2pt] (2,0) arc (0:360:2);
\draw[red][line width=1.2pt] (-0.52, 1.93) -- (-1.41, 1.41);
\draw[red][line width=1.2pt][dotted] (-1.41,1.41) -- (-1.93, 0.52);
\draw[red][line width=1.2pt] (-1.93, 0.52) -- (-1.93,-0.52);
\draw[red][line width=1.2pt] (-1.93,-0.52) -- (-1.41,-1.41);
\draw[red][line width=1.2pt] (-1.41,-1.41) -- (-0.52,-1.93);
\draw[red][line width=1.2pt][dotted] (-0.52,-1.93) -- ( 0.52,-1.93);
\draw[red][line width=1.2pt] ( 0.52,-1.93) -- ( 1.41,-1.41);
\draw[red][line width=1.2pt] ( 1.41,-1.41) -- ( 1.93,-0.52);
\draw[red][line width=1.2pt] ( 1.93,-0.52) -- ( 1.93, 0.52);
\draw[red][line width=1.2pt][dotted] ( 1.93, 0.52) -- ( 1.41, 1.41);
\fill [qqwuqq] (-1,1.732) circle (2.8pt); \draw[qqwuqq] (-1,1.732) node[left]{$\m'_1:=\m_1$};
\fill [qqwuqq][opacity=0.25] (-1.732,1) circle (2.8pt);
\fill [qqwuqq] (-2,0) circle (2.8pt);
\fill [qqwuqq] (-1.732,-1) circle (2.8pt);
\fill [qqwuqq] (-1,-1.732) circle (2.8pt);
\fill [qqwuqq][opacity=0.25] (0,-2) circle (2.8pt);
\fill [qqwuqq] (1,-1.732) circle (2.8pt);
\fill [qqwuqq] (1.732,-1) circle (2.8pt);
\fill [qqwuqq] (2,0) circle (2.8pt);
\fill [qqwuqq][opacity=0.25] (1.732,1) circle (2.8pt); \draw[qqwuqq] (1,1.732) node[right]{$\m'_0:=\m_n$};
\fill [qqwuqq] (1,1.732) circle (2.8pt);
\fill [red] (-0.52, 1.93) circle (2.8pt); \fill[white] (-0.52, 1.93) circle (2.2pt);
                                          \draw[red]   (-0.52, 1.93) node[above]{$\c_{0,1}$};
\fill [red] (-1.41, 1.41) circle (2.8pt); \fill[white] (-1.41, 1.41) circle (2.2pt);
                                          \draw[red]   (-1.41, 1.41) node[left]{$\c_{1,2}$};
\fill [red] (-1.93, 0.52) circle (2.8pt); \fill[white] (-1.93, 0.52) circle (2.2pt);
\fill [red] (-1.93,-0.52) circle (2.8pt); \fill[white] (-1.93,-0.52) circle (2.2pt);
\fill [red] (-1.41,-1.41) circle (2.8pt); \fill[white] (-1.41,-1.41) circle (2.2pt);
\fill [red] (-0.52,-1.93) circle (2.8pt); \fill[white] (-0.52,-1.93) circle (2.2pt);
\fill [red] ( 0.52,-1.93) circle (2.8pt); \fill[white] ( 0.52,-1.93) circle (2.2pt);
\fill [red] ( 1.41,-1.41) circle (2.8pt); \fill[white] ( 1.41,-1.41) circle (2.2pt);
\fill [red] ( 1.93,-0.52) circle (2.8pt); \fill[white] ( 1.93,-0.52) circle (2.2pt);
\fill [red] ( 1.93, 0.52) circle (2.8pt); \fill[white] ( 1.93, 0.52) circle (2.2pt);
\fill [red] ( 1.41, 1.41) circle (2.8pt); \fill[white] ( 1.41, 1.41) circle (2.2pt);
                                          \draw[red]   ( 1.41, 1.41) node[right]{$\c_{n-1,n}$};
\draw[blue][opacity=0.7][line width=1.2pt] (-1,1.732)to[out=-30,in=210](1,1.732);
\draw[blue] (0,1.5) node[below]{$\tgamma_{0,n}':=\tgamma_{1,n}$};
\draw[cyan] (0,0) node{$\ii_{p_1'}(\tgamma_{0,1}',\tarc_1)=0$};
\draw (0,-2.5) node{(2)};
\end{tikzpicture}
\caption{\textsf{$\m'_s=\m_s$ for all $1\le s\le n-1$ and $\m'_0=\m_n$. }}
\label{fig-prop-gamma(0,l)-3}
\end{figure}
Secondly, we assume that there are at least two graded $\gbullet$-arcs in $\gtDI$. Let
\begin{center}
 $\gtDI=\{\tgamma_{0,i_1},\tgamma_{0,i_2},\ldots,\tgamma_{0,i_r}\}$, where $1\leq i_1<\ldots<i_r\leq n$.
\end{center}
If $i_{t-1}+1=i_t$ for all $t$, then $\gtDI$ is of the form shown in \Pic \ref{fig-prop-gamma(0,l)-(4)} (1). Similar to the first case, we obtain that
$\gamma_{i_t,n}\in\gDII$ satisfies that
    \begin{center}
    $\ii_{\gamma_{i_t,n}\cap a_{i_t}}(\tgamma_{i_t,n}, \tarc_{i_t})=1$ and
      $\ii_{\gamma_{i_t,n}\cap a_{n}}(\tgamma_{i_t,n}, \tarc_{n})=1$,
   \end{center} for any $x<i_t$, $\gamma_{0,x}\in\gDI$.
  It follows that $X(\gtD)$ is tilting by induction.

\begin{figure}[htbp]
\definecolor{ffqqqq}{rgb}{1,0,0}
\definecolor{qqwuqq}{rgb}{0,0.5,0}
\centering
\ \ \ \
\begin{tikzpicture}[scale=0.8]
\draw[black][line width=1.2pt] (2,0) arc (0:360:2);
\draw[red][line width=1.2pt] (-0.52, 1.93) -- (-1.41, 1.41);
\draw[red][line width=1.2pt][dotted] (-1.41,1.41) -- (-1.93, 0.52);
\draw[red][line width=1.2pt] (-1.93, 0.52) -- (-1.93,-0.52);
\draw[red][line width=1.2pt] (-1.93,-0.52) -- (-1.41,-1.41);
\draw[red][line width=1.2pt] (-1.41,-1.41) -- (-0.52,-1.93);
\draw[red][line width=1.2pt][dotted] (-0.52,-1.93) -- ( 0.52,-1.93);
\draw[red][line width=1.2pt] ( 0.52,-1.93) -- ( 1.41,-1.41);
\draw[red][line width=1.2pt] ( 1.41,-1.41) -- ( 1.93,-0.52);
\draw[red][line width=1.2pt] ( 1.93,-0.52) -- ( 1.93, 0.52);
\draw[red][line width=1.2pt][dotted] ( 1.93, 0.52) -- ( 1.41, 1.41);
\draw[red][line width=1.2pt] ( 1.41, 1.41) -- ( 0.52, 1.93);
\fill [qqwuqq] (0,2) circle (2.8pt); \draw[qqwuqq] (0,2) node[above]{$\m_0$};
\fill [qqwuqq] (-1,1.732) circle (2.8pt); \draw[qqwuqq] (-1,1.732) node[above]{$\m_1$};
\fill [qqwuqq][opacity=0.25] (-1.732,1) circle (2.8pt);
\fill [qqwuqq] (-2,0) circle (2.8pt);
\fill [qqwuqq] (-1.732,-1) circle (2.8pt);
\fill [qqwuqq] (-1,-1.732) circle (2.8pt);
\fill [qqwuqq][opacity=0.25] (0,-2) circle (2.8pt);
\fill [qqwuqq] (1,-1.732) circle (2.8pt); \draw[qqwuqq] (1,-1.732) node[right]{$\m_{i_{t-1}}$};
\fill [qqwuqq] (1.732,-1) circle (2.8pt); \draw[qqwuqq] (1.732,-1) node[right]{$\m_{i_{t}}$};
\fill [qqwuqq] (2,0) circle (2.8pt);
\fill [qqwuqq][opacity=0.25] (1.732,1) circle (2.8pt);
\fill [qqwuqq] (1,1.732) circle (2.8pt); \draw[qqwuqq] (1,1.732) node[above]{$\m_n$};
\fill [red] (-0.52, 1.93) circle (2.8pt); \fill[white] (-0.52, 1.93) circle (2.2pt);
\fill [red] (-1.41, 1.41) circle (2.8pt); \fill[white] (-1.41, 1.41) circle (2.2pt);
\fill [red] (-1.93, 0.52) circle (2.8pt); \fill[white] (-1.93, 0.52) circle (2.2pt);
\fill [red] (-1.93,-0.52) circle (2.8pt); \fill[white] (-1.93,-0.52) circle (2.2pt);
\fill [red] (-1.41,-1.41) circle (2.8pt); \fill[white] (-1.41,-1.41) circle (2.2pt);
\fill [red] (-0.52,-1.93) circle (2.8pt); \fill[white] (-0.52,-1.93) circle (2.2pt);
\fill [red] ( 0.52,-1.93) circle (2.8pt); \fill[white] ( 0.52,-1.93) circle (2.2pt);
\fill [red] ( 1.41,-1.41) circle (2.8pt); \fill[white] ( 1.41,-1.41) circle (2.2pt);
                                          \draw[red]   ( 1.41,-1.41) node[right]{$\c_{i_{t-1},i_t}$};
\fill [red] ( 1.93,-0.52) circle (2.8pt); \fill[white] ( 1.93,-0.52) circle (2.2pt);
\fill [red] ( 1.93, 0.52) circle (2.8pt); \fill[white] ( 1.93, 0.52) circle (2.2pt);
\fill [red] ( 1.41, 1.41) circle (2.8pt); \fill[white] ( 1.41, 1.41) circle (2.2pt);
\fill [red] ( 0.52, 1.93) circle (2.8pt); \fill[white] ( 0.52, 1.93) circle (2.2pt);
\draw[blue][opacity=0.75][line width=1.2pt] (0,2) -- (-1,1.732);
\draw[blue][opacity=0.75][line width=1.2pt][dotted] (0,2) -- (-1.732,1);
\draw[blue][opacity=0.75][line width=1.2pt] (0,2) -- (-2,0);
\draw[blue][opacity=0.75][line width=1.2pt] (0,2) -- (-1.732,-1);
\draw[blue][opacity=0.75][line width=1.2pt] (0,2) -- (-1,-1.732);
\draw[blue][opacity=0.75][line width=1.2pt][dotted] (0,2) -- (0,-2);
\draw[blue][opacity=0.75][line width=1.2pt] (0,2) -- (1,-1.732);
\draw[blue][opacity=0.75][line width=1.2pt] (0,2) -- (1.732,-1);
\draw[blue][->] (0.86,0.5) -- (0.86+0.5,0.5+0.5) -- (2,0.5+0.5);
\draw[blue] (2,0.5+0.5) node[right]{$\tgamma_{0,i_{t}}$};
\draw[blue][opacity=0.75][line width=1.2pt] (1.732,-1) -- (1,1.732);
\draw[blue][->] (1.3, 0.5) -- (2.4, 0.5);
\draw[blue] (2.4, 0.5) node[right]{$\tgamma_{i_t,n}$};
\draw[pattern color=blue, pattern=north west lines][opacity=0.5] (0,2) -- (1,-1.732) to[out=30,in=225] (1.732,-1) -- (0,2);
\draw (0,-2.5) node{(1) shadow part is a triangle};
\end{tikzpicture}
\ \
\begin{tikzpicture}[scale=0.8]
\draw[black][line width=1.2pt] (2,0) arc (0:360:2);
\draw[red][line width=1.2pt] (-0.52, 1.93) -- (-1.41, 1.41);
\draw[red][line width=1.2pt][dotted] (-1.41,1.41) -- (-1.93, 0.52);
\draw[red][line width=1.2pt] (-1.93, 0.52) -- (-1.93,-0.52);
\draw[red][line width=1.2pt] (-1.93,-0.52) -- (-1.41,-1.41);
\draw[red][line width=1.2pt] (-1.41,-1.41) -- (-0.52,-1.93);
\draw[red][line width=1.2pt][dotted] (-0.52,-1.93) -- ( 0.52,-1.93);
\draw[red][line width=1.2pt] ( 0.52,-1.93) -- ( 1.41,-1.41);
\draw[red][line width=1.2pt] ( 1.41,-1.41) -- ( 1.93,-0.52);
\draw[red][line width=1.2pt] ( 1.93,-0.52) -- ( 1.93, 0.52);
\draw[red][line width=1.2pt][dotted] ( 1.93, 0.52) -- ( 1.41, 1.41);
\draw[red][line width=1.2pt] ( 1.41, 1.41) -- ( 0.52, 1.93);
\fill [qqwuqq] (0,2) circle (2.8pt); \draw[qqwuqq] (0,2.8) node{$(\m'_0=)\m_0$}; \draw[qqwuqq][->] (0,2.2)--(0,2.45);
\fill [qqwuqq] (-1,1.732) circle (2.8pt); \draw[qqwuqq] (-1,1.732) node[above]{$\m_1$};
\fill [qqwuqq][opacity=0.25] (-1.732,1) circle (2.8pt);
\fill [qqwuqq] (-2,0) circle (2.8pt);  \draw[qqwuqq] (-2,0) node[left]{$(\m'_1=)\m_{i_{t-1}}$};
\fill [qqwuqq] (-1.732,-1) circle (2.8pt);
\fill [qqwuqq] (-1,-1.732) circle (2.8pt);
\fill [qqwuqq][opacity=0.25] (0,-2) circle (2.8pt);
\fill [qqwuqq] (1,-1.732) circle (2.8pt);
\fill [qqwuqq] (1.732,-1) circle (2.8pt);
\fill [qqwuqq] (2,0) circle (2.8pt); \draw[qqwuqq] (2,0) node[right]{$\m_{i_t}(=\m'_{n'})$};
\fill [qqwuqq][opacity=0.25] (1.732,1) circle (2.8pt);
\fill [qqwuqq] (1,1.732) circle (2.8pt); \draw[qqwuqq] (1,1.732) node[above]{$\m_n$};
\fill [red] (-0.52, 1.93) circle (2.8pt); \fill[white] (-0.52, 1.93) circle (2.2pt);
                                          \draw[red]   (-0.52, 1.93) node[above]{$\c_{0,1}$};
\fill [red] (-1.41, 1.41) circle (2.8pt); \fill[white] (-1.41, 1.41) circle (2.2pt);
                                          \draw[red]   (-1.41, 1.41) node[above]{$\c_{1,2}$};
\fill [red] (-1.93, 0.52) circle (2.8pt); \fill[white] (-1.93, 0.52) circle (2.2pt);
                                          \draw[red]   (-1.93, 0.52) node[left]{$\c_{i_{t-1}-1,i_{t-1}}$};
\fill [red] (-1.93,-0.52) circle (2.8pt); \fill[white] (-1.93,-0.52) circle (2.2pt);
\fill [red] (-1.41,-1.41) circle (2.8pt); \fill[white] (-1.41,-1.41) circle (2.2pt);
\fill [red] (-0.52,-1.93) circle (2.8pt); \fill[white] (-0.52,-1.93) circle (2.2pt);
\fill [red] ( 0.52,-1.93) circle (2.8pt); \fill[white] ( 0.52,-1.93) circle (2.2pt);
\fill [red] ( 1.41,-1.41) circle (2.8pt); \fill[white] ( 1.41,-1.41) circle (2.2pt);
\fill [red] ( 1.93,-0.52) circle (2.8pt); \fill[white] ( 1.93,-0.52) circle (2.2pt);
\fill [red] ( 1.93, 0.52) circle (2.8pt); \fill[white] ( 1.93, 0.52) circle (2.2pt);
                                          \draw[red]   ( 1.93, 0.52) node[right]{$\c_{i_t,i_t+1}$};
\fill [red] ( 1.41, 1.41) circle (2.8pt); \fill[white] ( 1.41, 1.41) circle (2.2pt);
                                          \draw[red]   ( 1.41-0.2, 1.41+0.4) node[right]{$\c_{n-1,n}$};
\fill [red] ( 0.52, 1.93) circle (2.8pt); \fill[white] ( 0.52, 1.93) circle (2.2pt);
                                          \draw[red]   ( 0.52, 1.93) node[above]{$\c_{n,0}$};
\draw[blue][line width=1.2pt] (0,2) -- (-2,0);
\draw[blue][->] (-1,1) -- (-1.5,1.5) -- (-2,1.5);
\draw[blue] (-2,1.4) node[left]{$\tgamma_{0,i_{t-1}}$};
\draw[blue][line width=1.2pt] (0,2) -- ( 2,0);
\draw[blue][->] (1,1) -- (1.5,1.5) -- (2,1.5);
\draw[blue] (2,1.4) node[right]{$\tgamma_{0,i_{t}}$};
%
\draw[blue] (0,0) node{$\Surf_A^{0,i_{t-1},i_t}$};
\draw (0,-2.5) node{(2) $n'=i_t-i_{t-1}+1$};
\end{tikzpicture}
\caption{ }
\label{fig-prop-gamma(0,l)-(4)}
\end{figure}

If $i_{t-1}+1<i_t$ for some $t$, then we set $\m_0=\m'_0$, $\m_{i_{t-1}}=\m'_1,\ldots,\m_{i_t}=\m'_{n'}$, where $n'=i_t-i_{t-1}+1$, see \Pic \ref{fig-prop-gamma(0,l)-(4)} (2). In this case, $\Surf_A^{0,i_{t-1},i_t}$ can be viewed as a marked ribbon surface $\SURFred{\F_{A_{n}}}(A_{n'})$ of $A_{n'}$ through delete the marked points
  \begin{center}
  $\{\c_{n,0}\} \cup \big\{\c_{r,r+1}|{0\leq r<i_{t-1}-1 \atop i_t< r<n}\big\}
                \cup \big\{\m_s|{0<s<i_{t-1} \atop i_t<s\leq n}\big\}$.
  \end{center}
Denote by $\gtD|_{0,i_{t-1},i_t}$ the restriction of the $\gbullet$-grFFAS $\gtD$ on $\Surf_A^{0,i_{t-1},i_t}$.
Then $\gtD|_{0,i_{t-1},i_t}$ is a $\gbullet$-grFFAS and contains no graded $\gbullet$-arc $\tgamma'_{0,x}$ whose endpoints are $\m'_0$ and $\m'_x$ for any $1<x<n'$.

Let $\tgamma'_{x,y}$ be the graded $\gbullet$-arc with endpoints $\m'_x$ and $\m'_y$ for any $1\le x\ne y\le n'$. Next we claim that $\tgamma'_{1,n'-1}$ is a graded $\gbullet$-curve in $\gtD|_{0,i_{t-1},i_t}$ (see \Pic \ref{fig-prop-gamma(0,l)-(6)} (1)) such that
      \[\ii_{\gamma'_{1,n'-1}\cap a'_1}(\tgamma'_{1,n'-1}, \tarc'_1)=0
      \ \text{and}\ \ii_{\gamma'_{1,n'-1}\cap a'_{n'-1}}(\tgamma'_{1,n'-1}, \tarc'_{n'-1})=1\]
Otherwise, $\gtD|_{0,i_{t-1},i_t}$ contains at least one of $\tgamma'_{1,x}$ and $\tgamma'_{y,n'}$ for some $x$ and $y$, where $2\le x, y\le n'-1$. By Lemmas \ref{lemm-int:+1 and -1} and \ref{lemm:2-silt-ind}, $\gamma'_{y,n'}\notin \gD|_{0,i_{t-1},i_t}$. Furthermore, there are two positive integers $v$ and $w$ such that $\gamma'_{1,v}, \gamma'_{v,w}\in \gD|_{0,i_{t-1},i_t}$, where $2\le v<w\le n'-1$, (see \Pic \ref{fig-prop-gamma(0,l)-(6)} (2)).
By Lemmas \ref{lemm-int:+1 and -1} and \ref{lemm:2-silt-ind}, we have
   \[\ii_{\gamma'_{v,w}\cap a'_{w}}(\tgamma'_{v,w}, \tarc'_{w})
       = \ii_{\gamma'_{v,w}\cap a'_1}(\tgamma'_{v,w}, \tarc'_v) + 1 = \ii_{\gamma'_{1,v}\cap a'_v}(\tgamma'_{1,v}, \tarc'_v) + 1= 2. \]
This is a contradiction.

       \begin{figure}[htbp]
\definecolor{ffqqqq}{rgb}{1,0,0}
\definecolor{qqwuqq}{rgb}{0,0.5,0}
\centering
\begin{tikzpicture}[scale=0.8]
\draw[black][line width=1.2pt] (2,0) arc (0:360:2);
\draw[red][line width=1.2pt] (-1.93, 0.52) -- (-1.93,-0.52);
\draw[red][line width=1.2pt] (-1.93,-0.52) -- (-1.41,-1.41);
\draw[red][line width=1.2pt] (-1.41,-1.41) -- (-0.52,-1.93);
\draw[red][line width=1.2pt][dotted] (-0.52,-1.93) -- ( 0.52,-1.93);
\draw[red][line width=1.2pt] ( 0.52,-1.93) -- ( 1.41,-1.41);
\draw[red][line width=1.2pt] ( 1.41,-1.41) -- ( 1.93,-0.52);
\draw[red][line width=1.2pt] ( 1.93,-0.52) -- ( 1.93, 0.52);
\fill [qqwuqq] (0,2) circle (2.8pt); \draw[qqwuqq] (0,2) node[above]{$\m'_0$};
\fill [qqwuqq] (-2,0) circle (2.8pt);  \draw[qqwuqq] (-2,0) node[left]{$\m'_1$};
\fill [qqwuqq] (-1.732,-1) circle (2.8pt);
\fill [qqwuqq] (-1,-1.732) circle (2.8pt);
\fill [qqwuqq][opacity=0.25] (0,-2) circle (2.8pt);
\fill [qqwuqq] (1,-1.732) circle (2.8pt);
\fill [qqwuqq] (1.732,-1) circle (2.8pt); \draw[qqwuqq] (1.732,-1) node[right]{$\m'_{n'-1}$};
\fill [qqwuqq] (2,0) circle (2.8pt); \draw[qqwuqq] (2,0) node[right]{$\m'_{n'}$};
\fill [red] ( 1.93, 0.52) circle (2.8pt); \fill[white] ( 1.93, 0.52) circle (2.2pt);
                                          \draw[red]   ( 1.93, 0.52) node[right]{$\c_{i_t,i_t+1}$};
\fill [red] (-1.93,-0.52) circle (2.8pt); \fill[white] (-1.93,-0.52) circle (2.2pt);
\fill [red] (-1.41,-1.41) circle (2.8pt); \fill[white] (-1.41,-1.41) circle (2.2pt);
\fill [red] (-0.52,-1.93) circle (2.8pt); \fill[white] (-0.52,-1.93) circle (2.2pt);
\fill [red] ( 0.52,-1.93) circle (2.8pt); \fill[white] ( 0.52,-1.93) circle (2.2pt);
\fill [red] ( 1.41,-1.41) circle (2.8pt); \fill[white] ( 1.41,-1.41) circle (2.2pt);
\fill [red] ( 1.93,-0.52) circle (2.8pt); \fill[white] ( 1.93,-0.52) circle (2.2pt);
\fill [red] (-1.93, 0.52) circle (2.8pt); \fill[white] (-1.93, 0.52) circle (2.2pt);
                                          \draw[red]   (-1.93, 0.52) node[left]{$\c_{i_t-1,i_t}$};
\draw[blue][line width=1.2pt] (0,2) -- (-2,0);
\draw[blue][->] (-1,1) -- (-1.5,1.5) -- (-2,1.5);
\draw[blue] (-2,1.4) node[left]{$\tgamma'_{0,1}$};
\draw[blue][line width=1.2pt] (0,2) -- ( 2,0);
\draw[blue][->] (1,1) -- (1.5,1.5) -- (2,1.5);
\draw[blue] (2,1.4) node[right]{$\tgamma'_{0,n'}$};
\draw[blue][line width=1.2pt] (-2,0) -- (1.732,-1); \draw[blue] (-0.18,-0.5) node[below]{$\tgamma'_{1,n'-1}$};
\draw (0,-2.5) node{(1)};
\end{tikzpicture}
\ \ \ \ \ \
\begin{tikzpicture}[scale=0.8]
\draw[black][line width=1.2pt] (2,0) arc (0:360:2);
\draw[red][line width=1.2pt] (-1.93, 0.52) -- (-1.93,-0.52);
\draw[red][line width=1.2pt][dotted] (-1.93,-0.52) -- (-1.41,-1.41);
\draw[red][line width=1.2pt] (-1.41,-1.41) -- (-0.52,-1.93);
\draw[red][line width=1.2pt][dotted] (-0.52,-1.93) -- ( 0.52,-1.93);
\draw[red][line width=1.2pt] ( 0.52,-1.93) -- ( 1.41,-1.41);
\draw[red][line width=1.2pt][dotted] ( 1.41,-1.41) -- ( 1.93,-0.52);
\draw[red][line width=1.2pt] ( 1.93,-0.52) -- ( 1.93, 0.52);
\fill [qqwuqq] (0,2) circle (2.8pt); \draw[qqwuqq] (0,2) node[above]{$\m'_0$};
\fill [qqwuqq] (-2,0) circle (2.8pt);  \draw[qqwuqq] (-2,0) node[left]{$\m'_1$};
\fill [qqwuqq][opacity=0.25] (-1.732,-1) circle (2.8pt);
\fill [qqwuqq] (-1,-1.732) circle (2.8pt); \fill [qqwuqq] (-1,-1.732) node[below]{$\m_v$};
\fill [qqwuqq][opacity=0.25] (0,-2) circle (2.8pt);
\fill [qqwuqq] (1,-1.732) circle (2.8pt); \fill [qqwuqq] (1,-1.732) node[below]{$\m_w$};
\fill [qqwuqq][opacity=0.25] (1.732,-1) circle (2.8pt);
\fill [qqwuqq] (2,0) circle (2.8pt); \draw[qqwuqq] (2,0) node[right]{$\m'_{n'}$};
\fill [red] ( 1.93, 0.52) circle (2.8pt); \fill[white] ( 1.93, 0.52) circle (2.2pt);
                                          \draw[red]   ( 1.93, 0.52) node[right]{$\c_{i_t,i_t+1}$};
\fill [red] (-1.93,-0.52) circle (2.8pt); \fill[white] (-1.93,-0.52) circle (2.2pt);
\fill [red] (-1.41,-1.41) circle (2.8pt); \fill[white] (-1.41,-1.41) circle (2.2pt);
\fill [red] (-0.52,-1.93) circle (2.8pt); \fill[white] (-0.52,-1.93) circle (2.2pt);
\fill [red] ( 0.52,-1.93) circle (2.8pt); \fill[white] ( 0.52,-1.93) circle (2.2pt);
\fill [red] ( 1.41,-1.41) circle (2.8pt); \fill[white] ( 1.41,-1.41) circle (2.2pt);
\fill [red] ( 1.93,-0.52) circle (2.8pt); \fill[white] ( 1.93,-0.52) circle (2.2pt);
\fill [red] (-1.93, 0.52) circle (2.8pt); \fill[white] (-1.93, 0.52) circle (2.2pt);
                                          \draw[red]   (-1.93, 0.52) node[left]{$\c_{i_t-1,i_t}$};
\draw[blue][line width=1.2pt] (0,2) -- (-2,0);
\draw[blue][->] (-1,1) -- (-1.5,1.5) -- (-2,1.5);
\draw[blue] (-2,1.4) node[left]{$\tgamma'_{0,1}$};
\draw[blue][line width=1.2pt] (0,2) -- ( 2,0);
\draw[blue][->] (1,1) -- (1.5,1.5) -- (2,1.5);
\draw[blue] (2,1.4) node[right]{$\tgamma'_{0,n'}$};
\draw[blue][line width=1.2pt] (-2,0) -- (-1,-1.732); \draw[blue] (-1.5,-0.86) node[right]{$\tgamma'_{1,v}$};
\draw[blue][line width=1.2pt] (-1,-1.732) -- (1,-1.732); \draw[blue] (0,-1.732) node[above]{$\tgamma'_{v,w}$};
\draw (0,-2.5) node{(2)};
\end{tikzpicture}
\caption{\textsf{ }}
\label{fig-prop-gamma(0,l)-(6)}
\end{figure}
At last, similar to the first case, it is easy to show that $X(\gtD|_{0, i_{t-1}, i_t})$ is tilting by induction.
\end{proof}

Now we can give a classification of the silted but not tilted algebras of type $A_n$.

\begin{theorem} \label{coro-gamma(0,l)}
Let $B$ be a finite-dimensional algebra. Then $B$ is a silted algebra of type $A_n$ but not tilted of type $A_n$ if and only if the quiver of $B$ is non-connected. Furthermore, $B$ is a direct product of two tilted algebras  $B'$ and $B''$, where $B'$ and $B''$ are tilted algebras of types $A_{n'}$ and $A_{n''}$ such that $n'+n''=n$, respectively.
\end{theorem}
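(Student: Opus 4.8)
The plan is to use Corollary \ref{coro-silted} to replace $B$ by a $2$-term silting complex $C$ over $A_n=\kk\mathbb{A}_n$ with $B\cong\End_{K^b(\proj A_n)}(C)$, to invoke Proposition \ref{prop-gamma(0,l)} to detect exactly when $C$ fails to be tilting, and then to split $C$ into two pieces whose endomorphism algebras are the two tilted factors. The easy half of the equivalence is observed at the outset: since $\mathbb{A}_n$ is connected, a tilted algebra of type $A_n$ is connected, so an algebra with disconnected quiver is never tilted of type $A_n$; and since the endomorphism algebra of a $2$-term tilting complex over a hereditary algebra of type $\mathbb{A}_n$ is a \emph{connected} tilted algebra of type $A_n$, if $B$ is silted of type $A_n$ with disconnected quiver then the complex $C$ above cannot be tilting. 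Thus everything reduces to: a $2$-term silting complex $C$ over $A_n$ is not tilting if and only if $\End(C)\cong B'\times B''$ with $B'$, $B''$ tilted of types $A_{n'}$, $A_{n''}$ and $n'+n''=n$.

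\textbf{The dictionary.} Next I would record the standard description. Since $A_n$ is hereditary and $C$ is basic, $C\cong M\oplus Q[1]$ where $M=\H^0(C)$ (identified with its minimal projective presentation), $Q=\bigoplus_{i\notin\operatorname{supp}M}P(i)$, and $M$ is a $\tau$-tilting module over $\bar A:=A_n/\langle e\rangle$, $e$ being the idempotent at the vertex set $E:=\{i:P(i)\mid Q\}$. A direct computation in $K^b(\proj A_n)$ gives $\Hom(Q[1],M)=0$, $\Hom(M,Q[1])=\Ext^1_{A_n}(M,Q)$ and $\Hom(M,Q)=\Hom_{A_n}(M,Q)$, so that $\End(C)$ is the triangular matrix algebra with diagonal blocks $\End_{A_n}(Q)$, $\End_{\bar A}(M)$ and off-diagonal block $\Ext^1_{A_n}(M,Q)$, while $\Hom_{K^b}(C,C[-1])=\Hom_{A_n}(M,Q)$; by the note after Definition \ref{def:silting-objects}, $C$ is tilting iff $\Hom_{A_n}(M,Q)=0$. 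Writing $[a,b]$ for the indecomposable $A_n$-module with support $\{a,\dots,b\}$, one has $P(i)=[i,n]$, and $\Hom_{A_n}([a,b],[i,n])\neq0$ forces $b=n$ and $a\ge i$; together with $[a,b]\subseteq\operatorname{supp}M$ and $i\notin\operatorname{supp}M$ this shows that $C$ is not tilting precisely when $Q\neq0$ and $M$ has an indecomposable summand of the form $[a,n]$ --- which is exactly the configuration detected by the pair of graded $\gbullet$-arcs $\tgamma_{0,l},\tgamma_{0,k}\in\gtDI$ with distinct intersection indices in Proposition \ref{prop-gamma(0,l)}.

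\textbf{The splitting.} Assume $C$ is not tilting and set $i_0:=\max E$; then $i_0<n$ (as $n\in\operatorname{supp}M$) and $[i_0+1,n]\subseteq\operatorname{supp}M$. As $i_0\notin\operatorname{supp}M$, every indecomposable summand $[a,b]$ of $M$ satisfies $b<i_0$ or $a>i_0$, so $M=M_<\oplus M_>$ (summands supported in $[1,i_0-1]$, resp.\ in $[i_0+1,n]$), and $Q=P(i_0)\oplus Q_<$ with $Q_<=\bigoplus_{i\in E,\,i<i_0}P(i)$. Using the interval combinatorics of $\Hom$ and $\Ext^1$ over linear $\mathbb{A}$, one checks $\Ext^1_{A_n}(M_>,\,P(i_0)\oplus Q_<)=0$ and that there is no nonzero morphism in either direction between $C_>:=M_>$ and $C_<:=M_<\oplus P(i_0)[1]\oplus Q_<[1]$; hence $\End(C)\cong\End(C_>)\times\End(C_<)$, which is disconnected. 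Here $C_>=M_>$ is a genuine tilting module over $\kk[i_0+1,n]\cong A_{n-i_0}$ (its support is all of $[i_0+1,n]$), so $\End(C_>)$ is tilted of type $A_{n-i_0}$. Under the identification $\kk[1,i_0]\cong A_{i_0}$, which sends $[j,n]=P^{A_n}(j)\mapsto[j,i_0]=P^{A_{i_0}}(j)$ and preserves the relevant $\Hom$ and $\Ext^1$ groups, $C_<$ becomes a $2$-term silting complex $\widetilde C_<$ over $A_{i_0}$ whose degree-$0$ part $M_<$ avoids the sink $i_0$; since every summand of its shifted-projective part is of the form $[j,i_0]$ (ending at $i_0$), we get $\Hom_{A_{i_0}}(M_<,\bigoplus_{j\in E\cap[1,i_0]}[j,i_0])=0$, so $\widetilde C_<$ is in fact a $2$-term \emph{tilting} complex and $\End(C_<)$ is tilted of type $A_{i_0}$. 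Taking $n':=i_0$, $n'':=n-i_0$ this proves $B\cong B'\times B''$, completing the forward implication (and, by disconnectedness, the equivalence). For the converse --- given $B'$ tilted of type $A_{n'}$, $B''$ tilted of type $A_{n''}$, $n'+n''=n$ --- I would run this construction backwards: realize $B''$ as $\End_{A_{n''}}(T'')$ with $T''$ a tilting module, transported to an $A_n$-module supported on $[n'+1,n]$ via $A_{n''}\cong\kk[n'+1,n]$, and realize $B'$ by a $2$-term tilting complex $\widetilde C$ over $A_{n'}\cong\kk[1,n']$ in which the simple projective $P^{A_{n'}}(n')$ occurs shifted; then $C:=\widetilde C\oplus T''$, read in $K^b(\proj A_n)$ with $P^{A_{n'}}(n')[1]$ replaced by $P^{A_n}(n')[1]=[n',n][1]$, is a $2$-term silting complex over $A_n$ with $\End_{A_n}(C)\cong B'\times B''$ (apply the splitting above at $\max E_C=n'$), so $B'\times B''$ is silted of type $A_n$ by Corollary \ref{coro-silted}.

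\textbf{Main obstacle.} I expect the delicate point to be the converse: proving that \emph{every} tilted algebra of type $A_{n'}$ admits a realization by a $2$-term tilting complex over $A_{n'}$ whose degree-$0$ part avoids the sink $n'$ (equivalently, having $P^{A_{n'}}(n')$ as a shifted summand), so that the gluing produces a legitimate generating presilting complex over $A_n$ with intersection indices in $\{0,1\}$; this should follow from a silting-mutation argument moving any tilting-module realization towards the appropriate chamber, or from its geometric counterpart in $\SURFred{\F_{A_{n'}}}(A_{n'})$. A second, routine but careful, point is the verification that the identification $\kk[1,i_0]\cong A_{i_0}$ used in the splitting is an equivalence on the full subcategory of $K^b(\proj)$ generated by the summands of $C_<$ --- this again reduces to the interval combinatorics of $\Hom$ and $\Ext^1$.
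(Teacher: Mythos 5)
Your proposal is correct in outline, but it follows a genuinely different route from the paper's. The paper stays inside the geometric model: from Proposition \ref{prop-gamma(0,l)} it extracts two arcs $\tgamma_{0,l},\tgamma_{0,k}$ with distinct intersection indices, hence an arrow of $A^S$ of grading $1$ which dies in $\H^0(A^S)\cong B$; since the quiver of $A^S$ is a tree (the surface is a disk), removing that arrow disconnects the quiver, and the decomposition $B\cong B'\times B''$ is obtained by cutting the disk along a curve through the corresponding elementary polygon, restricting the grFFAS to the two sub-surfaces, and checking (again via Proposition \ref{prop-gamma(0,l)}) that each restricted dissection is of tilting type over $A_{n'}$, $A_{n''}$. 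You instead argue purely algebraically: the 2-term silting/support $\tau$-tilting dictionary $C\cong M\oplus Q[1]$, the triangular-matrix shape of $\End(C)$ with corner $\Ext^1(M,Q)$, the criterion ``not tilting $\Leftrightarrow\Hom_{A_n}(M,Q)\neq 0$'', and the interval combinatorics of linearly oriented $\mathbb{A}_n$, splitting at $i_0=\max E$. Your route is more elementary and self-contained for this quiver and makes the two factors explicit ($M_>$ and $M_<\oplus P(i_0)[1]\oplus Q_<[1]$); the paper's route fits the surface framework used throughout and obtains connectedness of each factor directly from Proposition \ref{prop-gamma(0,l)}. Your translation of the non-tilting condition into the arc configuration of Proposition \ref{prop-gamma(0,l)} is the correct one.

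Two asserted steps deserve a line of justification, and one worry is misplaced. First, sincerity and rigidity alone do not show that $M_>$ is tilting over $A_{n-i_0}$, nor that $\widetilde{C}_<$ is silting over $A_{i_0}$: you also need the summand counts. This is routine with your Hom/Ext transfer: each piece is presilting over the respective smaller algebra, hence has at most $n-i_0$, resp.\ $i_0$, indecomposable summands, and since the two counts sum to $n$ they are exactly these, whence tilting/silting by the 2-term Bongartz-type completion. Second, the step ``a 2-term tilting complex over a hereditary algebra has tilted endomorphism algebra'' is used without proof, but the paper's own proof relies on the same background fact, so this is a shared ingredient rather than a defect of your approach. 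Finally, the ``main obstacle'' you flag (realizing every tilted algebra of type $A_{n'}$ by a 2-term tilting complex whose degree-zero part avoids the sink) concerns the stronger surjectivity statement that every product of tilted algebras of types $A_{n'},A_{n''}$ occurs; that is needed for Corollary 4.13 and the counting in Section \ref{sect-number-silted}, but not for Theorem \ref{coro-gamma(0,l)} itself, whose backward implication is the triviality (which you recorded at the outset) that tilted algebras of type $A_n$ are connected; the paper's proof of this theorem does not establish that surjectivity either.
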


\begin{proof}
Let $S=X(\gtD)=\bigoplus_{\tgamma\in\gtD} X(\gtD)$ be a 2-term silting complex and $B = \End_{\per A_n} (S)$. Note that all tilted algebras are connected. Assume that $B$ is not tilted, we obtain two graded $\gbullet$-arc $\tgamma_{0,l}$ and $\tgamma_{0,k}$ satisfy the conditions (1) and (2) in Lemma \ref{prop-gamma(0,l)}.
Then the quiver of $A^S$ has an arrow $\alpha: \gamma_{0,l}\to\gamma_{0,k}$ with $|\alpha|=1$, see \Pic \ref{fig-lemm-silt is a prod of two tilt}.
By Corollary \ref{coro-silted}, $\H^0(A^S)\cong B$, this shows that $\alpha$ is not an arrow of the quiver of $B$.
Notice that the surface $\Surf(A_n)$ is a disk, this shows that the quiver of $A^S$ is a tree \cite[Corollary 1.23]{OpperPlamondonSchroll18}.
So it is clear that the quiver of $B$ is not connected.

Let $\PP$ be the elementary $\gbullet$-polygon with edges $\tgamma_{0,l}$ and $\tgamma_{0,k}$, and $c$ the curve with endpoints $\m_0$ and $\c$, see \Pic \ref{fig-lemm-silt is a prod of two tilt}.
 Then $c$ divides $\SURFred{\F_{A_n}}(A_n)$ to two parts which are homotopic to $\SURF'=\SURFred{\F_{A_n}}(A_{n'})$ and $\SURF''=\SURFred{\F_{A_n}}(A_{n''})$, where $n'+n''=n$.

\begin{figure}[htbp]
\definecolor{ffqqqq}{rgb}{1,0,0}
\definecolor{qqwuqq}{rgb}{0,0.5,0}
\centering
\begin{tikzpicture}[scale=0.8]
\draw[black][line width=1.2pt][opacity=0.25][dotted] (2,0) arc (0:360:2);
\draw[black][line width=1.2pt] (0,2) arc (90:345:2);
\draw[red][line width=1.2pt] (-0.52, 1.93) -- (-1.41, 1.41);
\draw[red][line width=1.2pt][dotted] (-1.41,1.41) -- (-1.93, 0.52);
\draw[red][line width=1.2pt] (-1.93, 0.52) -- (-1.93,-0.52);
\draw[red][line width=1.2pt] (-1.93,-0.52) -- (-1.41,-1.41);
\draw[red][line width=1.2pt] (-1.41,-1.41) -- (-0.52,-1.93);
\draw[red][line width=1.2pt][dotted] (-0.52,-1.93) -- ( 0.52,-1.93);
\draw[red][line width=1.2pt] ( 0.52,-1.93) -- ( 1.41,-1.41);
\draw[red][line width=1.2pt] ( 1.41,-1.41) -- ( 1.93,-0.52);
\fill [qqwuqq] (0,2) circle (2.8pt); \draw[qqwuqq] (0,2) node[above]{$\m_0$};
\fill [qqwuqq] (-1,1.732) circle (2.8pt); \draw[qqwuqq] (-1,1.732) node[above]{$\m_1$};
\fill [qqwuqq][opacity=0.25] (-1.732,1) circle (2.8pt);
\fill [qqwuqq] (-2,0) circle (2.8pt);  \draw[qqwuqq] (-2,0) node[left]{$\m_l$};
\fill [qqwuqq] (-1.732,-1) circle (2.8pt);
\fill [qqwuqq] (-1,-1.732) circle (2.8pt);
\fill [qqwuqq][opacity=0.25] (0,-2) circle (2.8pt);
\fill [qqwuqq] (1,-1.732) circle (2.8pt);
\fill [qqwuqq] (1.732,-1) circle (2.8pt);
\fill [red] (-0.52, 1.93) circle (2.8pt); \fill[white] (-0.52, 1.93) circle (2.2pt);
                                          \draw[red]   (-0.52, 1.93) node[above]{$\c_{0,1}$};
\fill [red] (-1.41, 1.41) circle (2.8pt); \fill[white] (-1.41, 1.41) circle (2.2pt);
                                          \draw[red]   (-1.41, 1.41) node[left]{$\c_{1,2}$};
\fill [red] (-1.93, 0.52) circle (2.8pt); \fill[white] (-1.93, 0.52) circle (2.2pt);
                                          \draw[red]   (-1.93, 0.52) node[left]{$\c_{l-1,l}$};
\fill [red] (-1.93,-0.52) circle (2.8pt); \fill[white] (-1.93,-0.52) circle (2.2pt);
\fill [red] (-1.41,-1.41) circle (2.8pt); \fill[white] (-1.41,-1.41) circle (2.2pt);
\fill [red] (-0.52,-1.93) circle (2.8pt); \fill[white] (-0.52,-1.93) circle (2.2pt);
\fill [red] ( 0.52,-1.93) circle (2.8pt); \fill[white] ( 0.52,-1.93) circle (2.2pt);
\fill [red] ( 1.41,-1.41) circle (2.8pt); \fill[white] ( 1.41,-1.41) circle (2.2pt);
\fill [red] ( 1.93,-0.52) circle (2.8pt); \fill[white] ( 1.93,-0.52) circle (2.2pt);
                                          \draw[red]   ( 1.93,-0.52) node[right]{$\c$};
\fill[pink][opacity=0.5] (0,2) arc (90:345:2);
\draw[blue][line width=1.2pt] (0,2) -- (-2,0); 
\draw[blue][line width=1.2pt][dotted] (-2,0) -- (1.73,-1); 
\draw[pattern color=blue, pattern=north west lines][opacity=0.5] (0,2) -- (-2,0) -- (1.73,-1) to[out=60,in=-105] (1.93,-0.52) -- (0,2);
\draw[black] [line width=1.2pt] (0,2) -- ( 1.93,-0.52);
\draw (0,-2.5) node{$\SURF'$};
\end{tikzpicture}
\ \
\begin{tikzpicture}[scale=0.8]
\draw[black][line width=1.2pt] (2,0) arc (0:360:2);
\draw[red][line width=1.2pt] (-0.52, 1.93) -- (-1.41, 1.41);
\draw[red][line width=1.2pt][dotted] (-1.41,1.41) -- (-1.93, 0.52);
\draw[red][line width=1.2pt] (-1.93, 0.52) -- (-1.93,-0.52);
\draw[red][line width=1.2pt] (-1.93,-0.52) -- (-1.41,-1.41);
\draw[red][line width=1.2pt] (-1.41,-1.41) -- (-0.52,-1.93);
\draw[red][line width=1.2pt][dotted] (-0.52,-1.93) -- ( 0.52,-1.93);
\draw[red][line width=1.2pt] ( 0.52,-1.93) -- ( 1.41,-1.41);
\draw[red][line width=1.2pt] ( 1.41,-1.41) -- ( 1.93,-0.52);
\draw[red][line width=1.2pt] ( 1.93,-0.52) -- ( 1.93, 0.52);
\draw[red][line width=1.2pt][dotted] ( 1.93, 0.52) -- ( 1.41, 1.41);
\draw[red][line width=1.2pt] ( 1.41, 1.41) -- ( 0.52, 1.93);
\fill [qqwuqq] (0,2) circle (2.8pt); \draw[qqwuqq] (0,2) node[above]{$\m_0$};
\fill [qqwuqq] (-1,1.732) circle (2.8pt); \draw[qqwuqq] (-1,1.732) node[above]{$\m_1$};
\fill [qqwuqq][opacity=0.25] (-1.732,1) circle (2.8pt);
\fill [qqwuqq] (-2,0) circle (2.8pt);  \draw[qqwuqq] (-2,0) node[left]{$\m_l$};
\fill [qqwuqq] (-1.732,-1) circle (2.8pt);
\fill [qqwuqq] (-1,-1.732) circle (2.8pt);
\fill [qqwuqq][opacity=0.25] (0,-2) circle (2.8pt);
\fill [qqwuqq] (1,-1.732) circle (2.8pt);
\fill [qqwuqq] (1.732,-1) circle (2.8pt);
\fill [qqwuqq] (2,0) circle (2.8pt); \draw[qqwuqq] (2,0) node[right]{$\m_{k}$};
\fill [qqwuqq][opacity=0.25] (1.732,1) circle (2.8pt);
\fill [qqwuqq] (1,1.732) circle (2.8pt); \draw[qqwuqq] (1,1.732) node[above]{$\m_n$};
\fill [red] (-0.52, 1.93) circle (2.8pt); \fill[white] (-0.52, 1.93) circle (2.2pt);
                                          \draw[red]   (-0.52, 1.93) node[above]{$\c_{0,1}$};
\fill [red] (-1.41, 1.41) circle (2.8pt); \fill[white] (-1.41, 1.41) circle (2.2pt);
                                          \draw[red]   (-1.41, 1.41) node[left]{$\c_{1,2}$};
\fill [red] (-1.93, 0.52) circle (2.8pt); \fill[white] (-1.93, 0.52) circle (2.2pt);
                                          \draw[red]   (-1.93, 0.52) node[left]{$\c_{l-1,l}$};
\fill [red] (-1.93,-0.52) circle (2.8pt); \fill[white] (-1.93,-0.52) circle (2.2pt);
\fill [red] (-1.41,-1.41) circle (2.8pt); \fill[white] (-1.41,-1.41) circle (2.2pt);
\fill [red] (-0.52,-1.93) circle (2.8pt); \fill[white] (-0.52,-1.93) circle (2.2pt);
\fill [red] ( 0.52,-1.93) circle (2.8pt); \fill[white] ( 0.52,-1.93) circle (2.2pt);
\fill [red] ( 1.41,-1.41) circle (2.8pt); \fill[white] ( 1.41,-1.41) circle (2.2pt);
\fill [red] ( 1.93,-0.52) circle (2.8pt); \fill[white] ( 1.93,-0.52) circle (2.2pt);
                                          \draw[red]   ( 1.93,-0.52) node[right]{$\c$};
\fill [red] ( 1.93, 0.52) circle (2.8pt); \fill[white] ( 1.93, 0.52) circle (2.2pt);
                                          \draw[red]   ( 1.93, 0.52) node[right]{$\c_{k,k+1}$};
\fill [red] ( 1.41, 1.41) circle (2.8pt); \fill[white] ( 1.41, 1.41) circle (2.2pt);
                                          \draw[red]   ( 1.41, 1.41) node[right]{$\c_{n-1,n}$};
\fill [red] ( 0.52, 1.93) circle (2.8pt); \fill[white] ( 0.52, 1.93) circle (2.2pt);
                                          \draw[red]   ( 0.52, 1.93) node[above]{$\c_{n,0}$};
\fill[pink][opacity=0.5] (0,2) arc (90:345:2);
\shade[top color=orange, bottom color = yellow][opacity=0.5] (0,2) arc (90:-15:2);
\draw[blue][line width=1.2pt] (0,2) -- (-2,0); 
\draw[blue][line width=1.2pt] (0,2) -- ( 2,0); 
\draw[blue][line width=0.8pt][->] (-1,1) -- (1,1); \draw[blue] (0,1) node[above]{$\alpha$};
\draw[blue][line width=1.2pt][dotted] (-2,0) -- (1.73,-1); 
\draw[pattern color=blue, pattern=north west lines][opacity=0.5] (0,2) -- (-2,0)  -- (1.73,-1) to[out=60,in=-90] (2,0) -- (0,2);
\draw[blue] (0,0) node{$\PP$};
\draw[black] [line width=1.2pt] (0,2) -- ( 1.93,-0.52);
\draw (0,-2.5) node{$\SURFred{\F_{A_n}}(A_n)$};
\end{tikzpicture}
\ \
\begin{tikzpicture}[scale=0.8]
\draw[black][line width=1.2pt][opacity=0.25][dotted] (1.414,-1.414) arc (-45:135:2);
\draw[black][line width=1.2pt] (0,2) arc (90:-15:2);
\draw[red][line width=1.2pt] ( 1.93,-0.52) -- ( 1.93, 0.52);
\draw[red][line width=1.2pt][dotted] ( 1.93, 0.52) -- ( 1.41, 1.41);
\draw[red][line width=1.2pt] ( 1.41, 1.41) -- ( 0.52, 1.93);
\fill [qqwuqq] (0,2) circle (2.8pt); \draw[qqwuqq] (0,2) node[above]{$\m_0$};
\fill [qqwuqq] (2,0) circle (2.8pt); \draw[qqwuqq] (2,0) node[right]{$\m_{k}$};
\fill [qqwuqq][opacity=0.25] (1.732,1) circle (2.8pt);
\fill [qqwuqq] (1,1.732) circle (2.8pt); \draw[qqwuqq] (1,1.732) node[above]{$\m_n$};
\fill [red] ( 1.93,-0.52) circle (2.8pt); \fill[white] ( 1.93,-0.52) circle (2.2pt);
                                          \draw[red]   ( 1.93,-0.52) node[right]{$\c$};
\fill [red] ( 1.93, 0.52) circle (2.8pt); \fill[white] ( 1.93, 0.52) circle (2.2pt);
                                          \draw[red]   ( 1.93, 0.52) node[right]{$\c_{k,k+1}$};
\fill [red] ( 1.41, 1.41) circle (2.8pt); \fill[white] ( 1.41, 1.41) circle (2.2pt);
                                          \draw[red]   ( 1.41, 1.41) node[right]{$\c_{n-1,n}$};
\fill [red] ( 0.52, 1.93) circle (2.8pt); \fill[white] ( 0.52, 1.93) circle (2.2pt);
                                          \draw[red]   ( 0.52, 1.93) node[above]{$\c_{n,0}$};
\shade[top color=orange, bottom color = yellow][opacity=0.5] (0,2) arc (90:-15:2);
\draw[blue][line width=1.2pt] (0,2) -- ( 2,0); 
\draw[pattern color=blue, pattern=north west lines][opacity=0.5] (0,2) -- ( 1.93,-0.52) to[out=90,in=-90] (2,0) -- (0,2);
\draw[black] [line width=1.2pt] (0,2) -- ( 1.93,-0.52);
\draw (0,-2.5) node{$\SURF''$};
\end{tikzpicture}
\caption{ }
\label{fig-lemm-silt is a prod of two tilt}
\end{figure}

Moreover, we have $\Q^B = \Q'\times \Q''$, where $\Q'$ is the quiver given by the $\gbullet$-grFFAS $\gtD|_{\SURF'}$ of $\SURF'$
and $\Q''$ is that of the $\gbullet$-grFFAS $\gtD|_{\SURF''}$ of $\SURF''$. On the graded marked ribbon surface $\SURF'$, if $\tgamma_{0,x}\in\gtD|_{\SURF'}$ for any $\tgamma_{0,x}$, where $1\le x<l$,
then we have $\ii_{p_x}(\tgamma_{0,x},\tarc_{x})\ge 1$ by (S4).
Note that $X(\gtD)$ is 2-term, so is $X(\gtD|_{\SURF'})$. Thus, by Lemma \ref{lemm:2-silt-ind}, $\ii_{p_x}(\tgamma_{0,x},\tarc_{x})=1$.
By Proposition \ref{prop-gamma(0,l)}, $\Q'$ is connected. Similarly, we have $\Q''$ is connected.
Therefore, $B \cong \kk\Q'\times \kk\Q''$, where $\kk\Q'$ and $\kk\Q''$ are tilted by Theorem \ref{thm. silt} and Proposition \ref{prop-gamma(0,l)}.
\end{proof}

To summarize, we have the following result.

\begin{corollary} The silted algebras of type $A_n$ forming two families:
\begin{itemize}
  \item[\rm(1)] tilted algebras of type $A_n$;
  \item[\rm(2)] tilted algebras of type $A_m \times A_{n-m}$, where $1\leq m\leq n-1$.
\end{itemize}
\end{corollary}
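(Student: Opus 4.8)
The plan is to deduce the corollary from Theorem~\ref{coro-gamma(0,l)} together with the elementary fact that a tilting object is in particular a $2$-term silting object. Two inclusions must be established: that every silted algebra of type $A_n$ lies in one of the two families, and that every algebra listed in (1) or (2) really is silted of type $A_n$.

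\textbf{Silted $\Rightarrow$ (1) or (2).} Let $B$ be silted of type $A_n$, say $B=\End_{K^b(\proj A_n)}(P)$ for a $2$-term silting complex $P$ over $A_n$. If $P$ is tilting, then $B$ is by definition a tilted algebra of type $A_n$, which is case~(1). If $P$ is not tilting, write $P\cong X(\gtD)$ with $\gtD\in\gbullet\textrm{-}\mathrm{grFFAS}^{\leq 0}_{\{0,1\}}(A_n)$ non-tilting via Corollary~\ref{coro-silting and FFAS}; then Theorem~\ref{coro-gamma(0,l)} gives $B\cong B'\times B''$ where $B'$ is tilted of type $A_{n'}$, $B''$ is tilted of type $A_{n''}$, and $n'+n''=n$. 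Setting $m=n'$ puts $B$ in case~(2).

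\textbf{(1) or (2) $\Rightarrow$ silted of type $A_n$.} For (1): if $B=\End_{A_n}(T)$ for a tilting $A_n$-module $T$, then, as $A_n$ is hereditary, $T$ has projective dimension at most $1$ and hence is represented in $K^b(\proj A_n)$ by a $2$-term tilting (in particular silting) complex, with $\End_{K^b(\proj A_n)}(T)=\End_{A_n}(T)=B$; so $B$ is silted of type $A_n$. For (2): let $B=B'\times B''$ with $B'$ tilted of type $A_m$ and $B''$ tilted of type $A_{n-m}$. As $B'$ and $B''$ are in particular silted of types $A_m$ and $A_{n-m}$, Corollary~\ref{coro-silting and FFAS} yields non-positive $\{0,1\}$-graded $\gbullet$-arc systems $\gtD'$ of $\SURFred{\F_{A_m}}(A_m)$ and $\gtD''$ of $\SURFred{\F_{A_{n-m}}}(A_{n-m})$ realizing them (being tilting, each arc $\gamma_{0,x}$ occurring in $\gtD'$, resp.\ $\gtD''$, has constant intersection index with $\tarc_x$ by Proposition~\ref{prop-gamma(0,l)}). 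Reversing the surface surgery of Theorem~\ref{coro-gamma(0,l)} (see \Pic~\ref{fig-lemm-silt is a prod of two tilt}), glue $\SURFred{\F_{A_m}}(A_m)$ and $\SURFred{\F_{A_{n-m}}}(A_{n-m})$ along a common $\gbullet$-marked point to recover $\SURFred{\F_{A_n}}(A_n)$; then $\gtD:=\gtD'\cup\gtD''$ is a non-positive $\{0,1\}$-graded $\gbullet$-arc system of $\SURFred{\F_{A_n}}(A_n)$ — condition (S1) holds because arcs of $\gtD'$ and $\gtD''$ meet only at the gluing point, (S3) because $n=m+(n-m)$, and (S4) because the two pieces sit on opposite sides of the two arcs bounding the gluing region. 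By construction $\Q^{X(\gtD)}$ is the disjoint union of $\Q'$ and $\Q''$ together with a single connecting arrow of degree $1$, which $\H^0$ annihilates; hence by Corollary~\ref{coro:endo-algebras}, $X(\gtD)$ is a $2$-term silting complex over $A_n$ with $\H^0(A^{X(\gtD)})\cong B'\times B''=B$, so $B$ is silted of type $A_n$.

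\textbf{Main obstacle.} The only genuinely non-formal step is the $\gbullet$-gluing in (2): one must check that the joined arc system is still a full, formal $\gbullet$-arc system of the glued surface (so that $X(\gtD)$ generates $\per(A_n)$) and that the intersection indices behave additively across the seam, keeping the complex $2$-term and making $A^{X(\gtD)}$ split as the claimed product. This, however, is exactly the splitting analysis carried out in the proof of Theorem~\ref{coro-gamma(0,l)}, read in the reverse direction, so it introduces no new ideas.
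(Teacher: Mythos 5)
Your containment direction (silted $\Rightarrow$ (1) or (2)) is exactly how the paper obtains this corollary: it is stated there as a summary of Theorem \ref{coro-gamma(0,l)}, with family (1) covered by the standard observation that a tilting module over the hereditary algebra $A_n$ is represented by a $2$-term complex with the same endomorphism algebra. One caveat: ``if $P$ is tilting then $B$ is by definition tilted of type $A_n$'' is not a definition --- tilted means the endomorphism algebra of a tilting \emph{module}, not of a $2$-term tilting \emph{complex}; the implication is true and is also used silently in the paper's proof of Theorem \ref{coro-gamma(0,l)} (``assume $B$ is not tilted, then $S$ is not tilting''), but it needs a citation or argument rather than ``by definition''.

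The genuine gap is in your realization of family (2). You take $\gtD'$ and $\gtD''$ realizing $B'$ and $B''$, record only that each piece has \emph{constant} intersection index, glue, and then assert ``by construction'' that the unique connecting arrow has degree $1$. Nothing you have arranged forces this: if the two constants coincide (for instance both dissections taken with all indices $0$, the natural choice coming from projective resolutions of tilting modules), then every arc of $\gDI$ in the glued dissection has the same index, so by (the contrapositive of) Proposition \ref{prop-gamma(0,l)} the glued complex $X(\gtD)$ is tilting, the connecting arrow at $\m_0$ has degree $0$, and $\H^0$ does not kill it --- you then get a connected tilted algebra of type $A_n$, not $B'\times B''$. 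The missing idea is the grading shift across the seam: one block must be taken with all indices equal to $1$ (equivalently, shifted by $[1]$) and the other with all indices $0$, after which one checks via Lemma \ref{lemm-int:+1 and -1} that the glued dissection is still non-positive with indices in $\{0,1\}$ and that the single arrow joining the blocks has nonzero degree. With that choice made explicit your argument does reverse the proof of Theorem \ref{coro-gamma(0,l)}; without it, the step as written can fail. (Note the paper itself never proves this converse inclusion --- it is implicit in the count of Theorem \ref{thm-slit alg} and in Example \ref{exm-A4} --- so if you intend the corollary in its ``if and only if'' form from the introduction, this is precisely the point that must be supplied, and your sketch currently does not supply it.)
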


\begin{corollary}
There are no strictly shod algebras in $A_n$.
\end{corollary}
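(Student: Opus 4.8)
The plan is to deduce the statement directly from the classification of silted algebras of type $A_n$ established above, together with the classical fact that tilted algebras have global dimension at most $2$.

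First I would recall the ingredients. By definition, a strictly shod algebra $B$ has $\mathrm{gl.dim}\,B = 3$. On the other hand, by a classical result of Happel--Ringel, every tilted algebra has global dimension at most $2$. Moreover, for a product $B = B_1 \times B_2$ one has $\mathrm{gl.dim}\,B = \max\{\mathrm{gl.dim}\,B_1,\,\mathrm{gl.dim}\,B_2\}$, and since a tilting module over a hereditary algebra of type $A_{m} \times A_{n-m}$ decomposes as a product of a tilting module over $\kk\A_{m}$ and one over $\kk\A_{n-m}$, a tilted algebra of type $A_{m} \times A_{n-m}$ is the product of a tilted algebra of type $A_{m}$ and a tilted algebra of type $A_{n-m}$; hence its global dimension is again at most $2$.

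Next I would invoke the preceding corollary (equivalently, Theorem~\ref{coro-gamma(0,l)}), according to which every silted algebra of type $A_n$ is either a tilted algebra of type $A_n$ or a tilted algebra of type $A_m \times A_{n-m}$ for some $1 \le m \le n-1$. In either case, by the previous paragraph, the algebra has global dimension at most $2$. Since a strictly shod algebra has global dimension exactly $3$, it follows that no silted algebra of type $A_n$ is strictly shod, which is the assertion.

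I do not anticipate a real obstacle here: the corollary is a formal consequence of the classification. The only point that deserves a little care is that a silted algebra of type $A_n$ need not be connected, so one cannot simply quote Theorem~\ref{thm:classification-silted} (which is stated for connected algebras) to conclude; instead one argues through the explicit list of silted algebras of type $A_n$ and the behaviour of global dimension under products, as above.
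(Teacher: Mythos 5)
Your proof is correct and takes essentially the same route the paper intends: the corollary is left as an immediate consequence of Theorem \ref{coro-gamma(0,l)}, since every silted algebra of type $A_n$ is tilted of type $A_n$ or a product of two tilted algebras and hence has global dimension at most $2$, whereas a strictly shod algebra has global dimension $3$. Your extra care about non-connected silted algebras (arguing through the explicit classification and the behaviour of global dimension under products rather than quoting Theorem \ref{thm:classification-silted} directly) is exactly the right way to make the implication precise.
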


\begin{remark} \rm
\begin{itemize}
  \item [(1)] The classification of silted algebras of type $A_n$ has also been studied by Xie, Yang and the second author of this paper \cite{XieYangZhang22} by using algebraic method. 
  \item [(2)] In \cite{ZhangLiu22}, we show that there are no strictly shod algebras in Dynkin type $\mathbb{A}_n$ with arbitrary orientation by using a curve embedding for the geometric models of gentle algebras from module categories to derived categories.
\end{itemize}
\end{remark}

\section{The number of silted algebras of type $A_n$} \label{sect-number-silted}
In this section, based on the classification of the silted algebras of type $A_n$, we obtain a formula to compute the number of the silted algebras of type $A_n$.

\subsection{The number of tilted algebras of type $A_n$}
In this subsection, based on the classification of tilted algebras of type $A_n$ in subsection \ref{subsec-classification},
we give a formula to compute the number of tilted algebras of type $A_n$.
Denote by $a_{\ta}(A_n)$ the number of tilted algebras,  $a_{\hta}(A_n)$ the number of hereditary tilted algebras and $a_{\nhta}(A_n)$ the number of
non-hereditary tilted algebras of type $A_n$, respectively. Then we have

\begin{proposition} \label{prop-cls:nhta}
The number of the non-hereditary tilted algebras of type $A_n$ is:
\begin{align}\label{formula-prop-cls:nhta 1}
 a_{\nhta}(A_n)  = 2 a_{\nhta}(A_{n-1}) + \sum_{j=2}^{n-1} C_{j-1}C_{n-j},
\end{align}
where $C_r$ is the $r$-{\rm Catalan number} {\footnote{ The $r$-Catalan number $C_r$ is the number of triangulations of convex $(r+2)$-gon. Thus $C_{r+1} = \sharp\stautilt(\kk\A_r)$. }}
which equals to $\frac{1}{r+1}\tbinom{2r}{r}$.
Furthermore, we have
\[a_{\nhta}(A_n) = C_n - 2^{n-1}.\]
\end{proposition}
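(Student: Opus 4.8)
The plan is to turn the statement into a count of triangulations, peel off a single tile to obtain the recurrence, and then read off the closed formula from the Segner (Catalan) convolution $\sum_{i=0}^{m}C_iC_{m-i}=C_{m+1}$.

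\emph{Step 1 (reduction to triangulations).} By Theorem \ref{prop-tri. contains sp. tile}, $\Gamma\mapsto A^{\Gamma}$ is a bijection from $\Tri_{\rmc}(\SURFextra(A_n))$ onto $\sfT_{\nh}(A_n)$, so $a_{\nhta}(A_n)=\sharp\Tri_{\rmc}(\SURFextra(A_n))$ is the number of triangulations of $\SURFextra(A_n)$ that contain $c_{P(1)}$ and carry at least one complete triangle. By Lemma \ref{lemm-invariant of tilt} together with Corollaries \ref{coro-triangulation} and \ref{coro-tilted}, the triangulations containing $c_{P(1)}$ are exactly the $\Gamma$ with $\MM(\Gamma)$ a basic $\tau$-tilting $A_n$-module; since $A_n$ is hereditary these are precisely the tilting $A_n$-modules, of which, for the linearly oriented $\mathbb{A}_n$, there are $C_n=\tfrac{1}{n+1}\tbinom{2n}{n}$ (equivalently: the triangulations of a convex $(n+2)$-gon). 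Let $g(n)$ denote the number of these triangulations that have \emph{no} complete triangle; by Proposition \ref{prop-non-hereditary} these are exactly the $\Gamma$ with $A^{\Gamma}$ hereditary, and we obtain
\[
a_{\nhta}(A_n)=C_n-g(n).
\]

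\emph{Step 2 (the count $g(n)=2^{n-1}$).} If $\Gamma$ contains $c_{P(1)}$ and has no complete triangle, then by Lemma \ref{lemm-triangulation} every tile of $\Gamma$ is of Type $1$ or Type $2$ in \Pic \ref{fig-tile in triangulation}; in particular the tile $\T_0$ that lies on the inner side of $c_{P(1)}$ meets $\bSurf(A_n)$ in at least one edge. I would then remove $\T_0$ and reglue exactly as in the surgery used in the proof of Theorem \ref{prop-tri. contains sp. tile} (the passage to $\widehat{{_L\SURFextra}}$): this realizes the datum $\Gamma$ as built from a triangulation of $\SURFextra(A_{n-1})$ through the corresponding $c_{P(1)}$ and again having no complete triangle, the only ambiguity being the choice of which of the two non-$c_{P(1)}$ sides of $\T_0$ is the one meeting $\bSurf(A_n)$; hence the identification is $2$-to-$1$. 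This gives $g(n)=2g(n-1)$, and as $g(1)=1$ we conclude $g(n)=2^{n-1}$.

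\emph{Step 3 (recurrence and closed form).} Feeding $g(n)=2g(n-1)=2\bigl(C_{n-1}-a_{\nhta}(A_{n-1})\bigr)$ into Step 1 gives
\[
a_{\nhta}(A_n)=C_n-g(n)=2a_{\nhta}(A_{n-1})+\bigl(C_n-2C_{n-1}\bigr),
\]
and by the Segner recursion $C_n=\sum_{i=0}^{n-1}C_iC_{n-1-i}$ (the standard recursion behind $C_{r+1}=\sharp\stautilt(\kk\A_r)$) one has $C_n-2C_{n-1}=\sum_{i=1}^{n-2}C_iC_{n-1-i}=\sum_{j=2}^{n-1}C_{j-1}C_{n-j}$, which is the asserted recurrence. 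The closed form $a_{\nhta}(A_n)=C_n-2^{n-1}$ is then immediate from $g(n)=2^{n-1}$; equivalently it follows by induction from the recurrence, with base case $a_{\nhta}(A_1)=0=C_1-2^{0}$ (or $a_{\nhta}(A_3)=1=C_3-2^{2}$, computed in the proof of Theorem \ref{prop-tri. contains sp. tile}), since $C_n-2^{n-1}=2(C_{n-1}-2^{n-2})+(C_n-2C_{n-1})$. The crux is Step 2: making ``remove the tile at $c_{P(1)}$ and reglue'' into a bona fide $2$-to-$1$ correspondence demands a careful description of the marked ribbon surface $\SURFextra(A_n)$ near $c_{P(1)}$ — which tiles can occur there and how deleting one changes the surface — precisely the bookkeeping performed in the proof of Theorem \ref{prop-tri. contains sp. tile}; one must also justify $\sharp\{\Gamma\in\Tri(\SURFextra(A_n)) : c_{P(1)}\in\Gamma\}=C_n$ in Step 1, either by invoking the classical enumeration of tilting $\mathbb{A}_n$-modules or by identifying these triangulations, via the fixed arc $c_{P(1)}$, with those of a convex $(n+2)$-gon.
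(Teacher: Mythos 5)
Your proposal is correct, and it rests on the same two pillars as the paper's proof: the bijection of Theorem \ref{prop-tri. contains sp. tile} reducing $a_{\nhta}(A_n)$ to a count of triangulations in $\Tri_{\rmc}(\SURFextra(A_n))$, and a peeling argument at the tile adjacent to $c_{P(1)}$, where the key observation (forced by the absence, or presence, of a complete triangle there) is that the third vertex of that tile is either $v_1$ or $v_n$, each choice reducing to $\SURFextra(A_{n-1})$ with its own $c_{P(1)}$. The difference is one of bookkeeping: the paper splits $\Tri_{\rmc}$ into three cases according to that third vertex — the two boundary cases each contributing $a_{\nhta}(A_{n-1})$, and the case where the tile at $c_{P(1)}$ is itself complete contributing $C_{j-1}C_{n-j}$ — which yields the recurrence \eqref{formula-prop-cls:nhta 1} directly, and then solves it via the Catalan convolution and a geometric sequence. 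You instead count the complement: triangulations through $c_{P(1)}$ with no complete triangle satisfy $g(n)=2g(n-1)$, hence $g(n)=2^{n-1}$, and subtracting from the total $C_n$ gives the closed form at once, with the recurrence recovered algebraically from Segner's identity. Your route buys a cleaner derivation of $a_{\nhta}(A_n)=C_n-2^{n-1}$ (and is consistent with the paper's remark that $2^{n-1}$ tilting modules have hereditary endomorphism algebra), at the price of needing the extra input $\sharp\{\Gamma\in\Tri(\SURFextra(A_n)):c_{P(1)}\in\Gamma\}=C_n$, which you rightly flag and which is easiest to see by identifying such $\Gamma$ with $\{c_{P(1)}\}$ together with a triangulation of the convex $(n+2)$-gon on $r_1,v_1,\ldots,v_n,r_2$; your Step 2, once you make explicit that a non-complete tile on $c_{P(1)}$ must have its boundary edge at $r_1v_1$ or $v_nr_2$ and that the cut-and-reglue preserves the no-complete-triangle condition in both directions, is exactly the same surgery the paper performs in its cases (1) and (3).
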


\begin{proof}
By Proposition \ref{prop-tri. contains sp. tile},
we only need to compute the number of triangulations lying in $\Tri_{\rmc}(\SURFextra(A_n))$. We do this by the following three cases in \Pic \ref{fig-prop-cls:nhta triangulations}. Note that the number of triangulations of cases (1) and (3) equal to $a_{\nhta}(A_{n-1})$. Now we consider case (2).
Let $\T_j$ be a triangle whose vertices are $r_1$, $r_2$ and $v_j$ for any $2\le j\le n-1$. Then we obtain that the number of tilted algebras in this case is $C_{j-1}C_{n-j}$. Thus, we obtain (\ref{formula-prop-cls:nhta 1}).

\begin{figure}[htbp]
\centering
\definecolor{ffqqqq}{rgb}{1,0,0}
\definecolor{qqwuqq}{rgb}{0,0.5,0}
\begin{tikzpicture}[scale=0.7]
\draw[line width=1.2pt] (2,0) arc (0:360:2);
\draw[line width=2pt][white] (1.414,-1.414) arc (-45:-135:2);
\draw[line width=1.2pt][dotted] (1.414,-1.414) arc (-45:-135:2);
\filldraw[qqwuqq] ( 0.00, 2.00) circle (0.1);
\draw[ffqqqq][line width=1.2pt] (-1.56, 1.25) circle (0.1); \draw[ffqqqq] (-1.56, 1.25) node[left]{$r_1$};
\filldraw[qqwuqq] (-1.95, 0.45) circle (0.1); \draw[qqwuqq] (-1.95, 0.45) node[left]{$v_1$};
\filldraw[qqwuqq] (-1.95,-0.45) circle (0.1); \draw[qqwuqq] (-1.95,-0.45) node[left]{$v_2$};
\filldraw[qqwuqq] (-1.61,-1.19) circle (0.1); \draw[qqwuqq] (-1.61,-1.19) node[left]{$v_3$};
\filldraw[qqwuqq] (-0.87,-1.80) circle (0.1);
\filldraw[qqwuqq] (-0.00,-2.00) circle (0.1);
\filldraw[qqwuqq] ( 0.87,-1.80) circle (0.1);
\filldraw[qqwuqq] ( 1.61,-1.19) circle (0.1); \draw[qqwuqq] ( 1.61,-1.19) node[right]{$v_{n-2}$};
\filldraw[qqwuqq] ( 1.94,-0.45) circle (0.1); \draw[qqwuqq] ( 1.94,-0.45) node[right]{$v_{n-1}$};
\filldraw[qqwuqq] ( 1.95, 0.45) circle (0.1); \draw[qqwuqq] ( 1.94, 0.45) node[right]{$v_n$};
\draw[ffqqqq][line width=1.2pt] ( 1.56, 1.25) circle (0.1); \draw[ffqqqq] ( 1.56, 1.25) node[right]{$r_2$};
\draw[qqwuqq!25][line width=1.2pt] ( 0.00, 2.00) -- (-1.95, 0.45);
\draw[qqwuqq!25][line width=1.2pt] ( 0.00, 2.00) -- (-1.95,-0.45);
\draw[qqwuqq!25][line width=1.2pt] ( 0.00, 2.00) -- (-1.61,-1.19);
\draw[qqwuqq!25][line width=1.2pt][dotted] ( 0.00, 2.00) -- (-0.87,-1.80);
\draw[qqwuqq!25][line width=1.2pt][dotted] ( 0.00, 2.00) -- (-0.00,-2.00);
\draw[qqwuqq!25][line width=1.2pt][dotted] ( 0.00, 2.00) -- ( 0.87,-1.80);
\draw[qqwuqq!25][line width=1.2pt] ( 0.00, 2.00) -- ( 1.61,-1.19);
\draw[qqwuqq!25][line width=1.2pt] ( 0.00, 2.00) -- ( 1.94,-0.45);
\draw[qqwuqq!25][line width=1.2pt] ( 0.00, 2.00) -- ( 1.95, 0.45);
\draw[qqwuqq] (0,2) node[above]{$p$};
\draw[orange][line width=1.2pt] (-1.56, 1.25)--(1.56, 1.25); \draw[orange] (0, 1.25) node[above]{$\tiny\pmb{c_{P(1)}}$};
\draw[blue][line width=1.2pt] ( 1.56, 1.25)--(-1.95, 0.45);
\filldraw[cyan][line width=1.2pt][opacity=0.25] (-1.95, 0.45) arc (167.0054:399.8533:2);
\draw (0,-0.5) node{\tiny $a_{\nhta}(A_{n-1})$};
\draw ( 0.00,-2.80) node{(1)};
\end{tikzpicture}
\ \ \ \
\begin{tikzpicture}[scale=0.7]
\draw[line width=1.2pt] (2,0) arc (0:360:2);
\draw[line width=2pt][white] (1.414,-1.414) arc (-45:-135:2);
\draw[line width=1.2pt][dotted] (1.414,-1.414) arc (-45:-135:2);
\filldraw[qqwuqq] ( 0.00, 2.00) circle (0.1);
\draw[ffqqqq][line width=1.2pt] (-1.56, 1.25) circle (0.1); \draw[ffqqqq] (-1.56, 1.25) node[left]{$r_1$};
\filldraw[qqwuqq] (-1.95, 0.45) circle (0.1); \draw[qqwuqq] (-1.95, 0.45) node[left]{$v_1$};
\filldraw[qqwuqq] (-1.95,-0.45) circle (0.1); \draw[qqwuqq] (-1.95,-0.45) node[left]{$v_2$};
\filldraw[qqwuqq] (-1.61,-1.19) circle (0.1); \draw[qqwuqq] (-1.61,-1.19) node[left]{$v_3$};
\filldraw[qqwuqq] (-0.87,-1.80) circle (0.1);
\filldraw[qqwuqq] (-0.00,-2.00) circle (0.1);
\filldraw[qqwuqq] ( 0.87,-1.80) circle (0.1);
\filldraw[qqwuqq] ( 1.61,-1.19) circle (0.1); \draw[qqwuqq] ( 1.61,-1.19) node[right]{$v_{n-2}$};
\filldraw[qqwuqq] ( 1.94,-0.45) circle (0.1); \draw[qqwuqq] ( 1.94,-0.45) node[right]{$v_{n-1}$};
\filldraw[qqwuqq] ( 1.95, 0.45) circle (0.1); \draw[qqwuqq] ( 1.94, 0.45) node[right]{$v_n$};
\draw[ffqqqq][line width=1.2pt] ( 1.56, 1.25) circle (0.1); \draw[ffqqqq] ( 1.56, 1.25) node[right]{$r_2$};
\draw[qqwuqq!25][line width=1.2pt] ( 0.00, 2.00) -- (-1.95, 0.45);
\draw[qqwuqq!25][line width=1.2pt] ( 0.00, 2.00) -- (-1.95,-0.45);
\draw[qqwuqq!25][line width=1.2pt] ( 0.00, 2.00) -- (-1.61,-1.19);
\draw[qqwuqq!25][line width=1.2pt][dotted] ( 0.00, 2.00) -- (-0.87,-1.80);
\draw[qqwuqq!25][line width=1.2pt][dotted] ( 0.00, 2.00) -- (-0.00,-2.00);
\draw[qqwuqq!25][line width=1.2pt][dotted] ( 0.00, 2.00) -- ( 0.87,-1.80);
\draw[qqwuqq!25][line width=1.2pt] ( 0.00, 2.00) -- ( 1.61,-1.19);
\draw[qqwuqq!25][line width=1.2pt] ( 0.00, 2.00) -- ( 1.94,-0.45);
\draw[qqwuqq!25][line width=1.2pt] ( 0.00, 2.00) -- ( 1.95, 0.45);
\draw[qqwuqq] (0,2) node[above]{$p$};
\draw[orange][line width=1.2pt] (-1.56, 1.25)--(1.56, 1.25); \draw[orange] (0, 1.25) node[above]{$\tiny\pmb{c_{P(1)}}$};
\draw[blue][line width=1.2pt] (-1.56, 1.25)--(1.56, 1.25)--(0,-2)--(-1.56, 1.25);
\draw[blue] (0,0) node{$\T_j$};
\filldraw[yellow][line width=1.2pt][opacity=0.25] (-1.56, 1.25) arc (141.295:270:2);
\draw (-1.3,-0.8) node{\tiny $C_{j-1}$};
\draw (1.2,-0.8) node{\tiny $C_{n-j}$};
\filldraw[cyan][line width=1.2pt][opacity=0.25] (1.56, 1.25) arc (38.7046:-90:2);
\draw[qqwuqq] (0,-2) node[below]{$v_j$};
\draw ( 0.00,-2.80) node{(2)};
\end{tikzpicture}
\ \ \ \
\begin{tikzpicture}[scale=0.7]
\draw[line width=1.2pt] (2,0) arc (0:360:2);
\draw[line width=2pt][white] (1.414,-1.414) arc (-45:-135:2);
\draw[line width=1.2pt][dotted] (1.414,-1.414) arc (-45:-135:2);
\filldraw[qqwuqq] ( 0.00, 2.00) circle (0.1);
\draw[ffqqqq][line width=1.2pt] (-1.56, 1.25) circle (0.1); \draw[ffqqqq] (-1.56, 1.25) node[left]{$r_1$};
\filldraw[qqwuqq] (-1.95, 0.45) circle (0.1); \draw[qqwuqq] (-1.95, 0.45) node[left]{$v_1$};
\filldraw[qqwuqq] (-1.95,-0.45) circle (0.1); \draw[qqwuqq] (-1.95,-0.45) node[left]{$v_2$};
\filldraw[qqwuqq] (-1.61,-1.19) circle (0.1); \draw[qqwuqq] (-1.61,-1.19) node[left]{$v_3$};
\filldraw[qqwuqq] (-0.87,-1.80) circle (0.1);
\filldraw[qqwuqq] (-0.00,-2.00) circle (0.1);
\filldraw[qqwuqq] ( 0.87,-1.80) circle (0.1);
\filldraw[qqwuqq] ( 1.61,-1.19) circle (0.1); \draw[qqwuqq] ( 1.61,-1.19) node[right]{$v_{n-2}$};
\filldraw[qqwuqq] ( 1.94,-0.45) circle (0.1); \draw[qqwuqq] ( 1.94,-0.45) node[right]{$v_{n-1}$};
\filldraw[qqwuqq] ( 1.95, 0.45) circle (0.1); \draw[qqwuqq] ( 1.94, 0.45) node[right]{$v_n$};
\draw[ffqqqq][line width=1.2pt] ( 1.56, 1.25) circle (0.1); \draw[ffqqqq] ( 1.56, 1.25) node[right]{$r_2$};
\draw[qqwuqq!25][line width=1.2pt] ( 0.00, 2.00) -- (-1.95, 0.45);
\draw[qqwuqq!25][line width=1.2pt] ( 0.00, 2.00) -- (-1.95,-0.45);
\draw[qqwuqq!25][line width=1.2pt] ( 0.00, 2.00) -- (-1.61,-1.19);
\draw[qqwuqq!25][line width=1.2pt][dotted] ( 0.00, 2.00) -- (-0.87,-1.80);
\draw[qqwuqq!25][line width=1.2pt][dotted] ( 0.00, 2.00) -- (-0.00,-2.00);
\draw[qqwuqq!25][line width=1.2pt][dotted] ( 0.00, 2.00) -- ( 0.87,-1.80);
\draw[qqwuqq!25][line width=1.2pt] ( 0.00, 2.00) -- ( 1.61,-1.19);
\draw[qqwuqq!25][line width=1.2pt] ( 0.00, 2.00) -- ( 1.94,-0.45);
\draw[qqwuqq!25][line width=1.2pt] ( 0.00, 2.00) -- ( 1.95, 0.45);
\draw[qqwuqq] (0,2) node[above]{$p$};
\draw[orange][line width=1.2pt] (1.56, 1.25)--(-1.56, 1.25); \draw[orange] (0, 1.25) node[above]{$\tiny\pmb{c_{P(1)}}$};
\draw[blue][line width=1.2pt] (-1.56, 1.25)--(1.95, 0.45);
\filldraw[yellow][line width=1.2pt][opacity=0.25] (1.95, 0.45) arc (12.995:-218.8533:2);
\draw (0,-0.5) node{\tiny $a_{\nhta}(A_{n-1})$};
\draw ( 0.00,-2.80) node{(3)};
\end{tikzpicture}
\caption{\textsf{In (2), the permissible curve $c_{P(1)}$ coincides with one edge of $\T$.}}
\label{fig-prop-cls:nhta triangulations}
\end{figure}
\noindent As we all know, $C_n = \sum\limits_{j=0}^{n-1} C_jC_{n-1-j}$, thus
\begin{center}
$\sum\limits_{j=2}^{n-1} C_{j-1}C_{n-j} = C_n - C_0C_{n-1}-C_{n-1}C_0=C_n-2C_{n-1}$.
\end{center}
Therefore,
\begin{align}
 a_{\nhta}(A_n)-C_n = 2 \big(a_{\nhta}(A_{n-1})-C_{n-1}\big). \nonumber
\end{align}
This shows that $\{a_{\nhta}(A_n)-C_n\}_{n\in\NN^+}$ is a geometric sequence and so
\begin{center}
$a_{\nhta}(A_n)-C_n = -2^{n-1}$.
\end{center}
Thus the formula $a_{\nhta}(A_n) = C_n - 2^{n-1}$ holds.
\end{proof}

\begin{remark} \rm
In \cite{ObaidNaumanFakiehRingel15},  Obaid, Nauman, Fakieh and Ringel showed that $C_n$ equals to the number of the tilting modules over $A_n$. In particular, the number of tilting modules with hereditary endomorphism algebras is $2^{n-1}$. By Proposition \ref{prop-tri. contains sp. tile}, we can also obtain that $a_{\nhta}(A_n) = C_n - 2^{n-1}$, see \cite{XieYangZhang22}.
\end{remark}

\begin{proposition} \label{prop-nht}
The number of the hereditary tilted algebras of type $A_n$ is:
\[a_{\hta}(A_n) = 2^{n-2} + (1+(-1)^{n-1}) 2^{\frac{n-5}{2}}.\]
\end{proposition}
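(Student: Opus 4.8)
The plan is to reduce the count to orientations of the $\mathbb{A}_n$ diagram modulo its reflection symmetry, and then apply Burnside's lemma. By Corollary \ref{coro-tilted} and Proposition \ref{prop-non-hereditary}(1), an algebra is a hereditary tilted algebra of type $A_n$ if and only if it is $A^{\Gamma}$ for some triangulation $\Gamma$ of $\SURFextra(A_n)$ with $c_{P(1)}\in\Gamma$ and with no complete triangle in $\Tri_{\Gamma}(\SURFextra(A_n))$; every such $A^{\Gamma}$ is hereditary, hence (being derived equivalent to $A_n$) isomorphic to a path algebra $\kk\Q'$ whose underlying graph is the path on $n$ vertices. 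Conversely every orientation $\Q'$ of the $\mathbb{A}_n$ diagram occurs: since any two orientations of a Dynkin diagram are linked by a sequence of APR-tilts, $\kk\Q'$ is the endomorphism algebra of a tilting $A_n$-module (equivalently, one exhibits a ``fan'' triangulation in $\Tri(\SURFextra(A_n))$ without complete triangle realising $\Q'$). Since the quiver of a path algebra is recovered from the algebra, two such $\kk\Q'$ are isomorphic exactly when the orientations differ by the unique non-trivial automorphism $\sigma$ of the $\mathbb{A}_n$ diagram, namely the reflection reversing the path. Hence $a_{\hta}(A_n)$ is the number of $\langle\sigma\rangle$-orbits on the set $X$ of the $2^{n-1}$ orientations of the $\mathbb{A}_n$ diagram. (This is the conceptual counterpart of the Obaid--Nauman--Fakieh--Ringel count recalled after Proposition \ref{prop-tri. contains sp. tile}.)

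Next I would compute the orbit count by Burnside's lemma:
\[
a_{\hta}(A_n)=\tfrac12\bigl(|X|+|X^{\sigma}|\bigr)=\tfrac12\bigl(2^{n-1}+|X^{\sigma}|\bigr),
\]
so it remains to determine $|X^{\sigma}|$. Label the $n-1$ edges of the $\mathbb{A}_n$ diagram by $1,\dots,n-1$, edge $i$ joining vertices $i$ and $i+1$, and encode an orientation by $\varepsilon=(\varepsilon_i)_i\in\{\pm1\}^{n-1}$, where $\varepsilon_i=+1$ means the arrow points $i\to i+1$. The reflection acts by $(\sigma\cdot\varepsilon)_j=-\varepsilon_{n-j}$, so $\varepsilon\in X^{\sigma}$ if and only if $\varepsilon_j=-\varepsilon_{n-j}$ for all $j$. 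If $n$ is even, the index $j=n/2$ forces $\varepsilon_{n/2}=-\varepsilon_{n/2}$, which is impossible, so $|X^{\sigma}|=0$. If $n$ is odd, the edge indices split into the $(n-1)/2$ disjoint unordered pairs $\{j,n-j\}$; on each pair $\varepsilon_j$ is free and $\varepsilon_{n-j}$ is then determined, whence $|X^{\sigma}|=2^{(n-1)/2}$.

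Substituting into the Burnside formula gives $a_{\hta}(A_n)=2^{n-2}$ when $n$ is even and $a_{\hta}(A_n)=\tfrac12\bigl(2^{n-1}+2^{(n-1)/2}\bigr)=2^{n-2}+2^{(n-3)/2}$ when $n$ is odd. Since $(-1)^{n-1}=-1$ for $n$ even and $(-1)^{n-1}=1$ for $n$ odd, and $2^{(n-3)/2}=2\cdot 2^{(n-5)/2}$, both cases are summarised uniformly by $a_{\hta}(A_n)=2^{n-2}+\bigl(1+(-1)^{n-1}\bigr)2^{(n-5)/2}$, after checking the small cases $n=1,2$ by hand. The step I expect to be the main obstacle is the reduction in the first paragraph: proving that ``hereditary tilted of type $A_n$'' coincides with ``path algebra of a quiver of type $A_n$'', i.e.\ that \emph{every} orientation of the $\mathbb{A}_n$ diagram is realised as the endomorphism algebra of a tilting $A_n$-module; once this is in place the remainder is an elementary group-action computation.
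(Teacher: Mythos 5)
Your proposal is correct, and it reaches the formula by a genuinely different counting argument than the paper. The paper never leaves its triangulation model: it observes that the quivers of the hereditary tilted algebras are orientations of the $\mathbb{A}_n$ diagram, splits them according to the orientations of the two extremal arrows into three cases, counts the first case directly as $2^{n-3}$ and the other two recursively as $a_{\hta}(A_{n-2})$ each, and then solves the linear recursion $a_{\hta}(A_n)=2^{n-3}+2a_{\hta}(A_{n-2})$. You instead identify $a_{\hta}(A_n)$ with the number of orientations of $\mathbb{A}_n$ modulo the reflection and apply Burnside's lemma, computing the fixed orientations ($0$ for $n$ even, $2^{(n-1)/2}$ for $n$ odd); this gives the closed formula in one step, makes the parity term transparent, and your orbit computation agrees with the paper's answer in all cases. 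Both arguments rest on the same identification of hereditary tilted algebras of type $A_n$ with orientations of $\mathbb{A}_n$ up to reflection; the paper leaves the realizability of every orientation implicit in its list of cases, while you address it explicitly but justify it loosely: linking two orientations by a sequence of APR-tilts only shows $\kk\Q'$ is \emph{iterated} tilted of type $A_n$, not tilted, so that sentence does not by itself deliver what you need. The standard repair is the complete-slice argument (the $2^{n-1}$ slice modules in the Auslander--Reiten quiver of $A_n$ have hereditary endomorphism algebras realizing every orientation, which is exactly the Obaid--Nauman--Fakieh--Ringel count of tilting modules with hereditary endomorphism algebra recalled in the paper), or, staying inside the paper's framework, the explicit zig-zag/fan triangulations containing $c_{P(1)}$ and no complete triangle that you mention parenthetically; with either of these in place your proof is complete.
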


\begin{proof}
By Proposition \ref{prop-non-hereditary}, any triangulation $\Gamma$ of $\SURFextra(A_n)$ which not contains a complete triangle can induce a hereditary tilted algebra $A^{\Gamma}$ of type $A_n$. Thus, we have the following three cases (up to equivalences):
\begin{align}
   & \text{Case 1}\ \xymatrix {\bullet \ar[r] & \bullet \ar@{-}[r] & \cdots \ar@{-}[r] &  \bullet \ar[r] & \bullet}, \nonumber \\
   & \text{Case 2}\ \xymatrix {\bullet \ar@{<-}[r] & \bullet \ar@{-}[r] & \cdots \ar@{-}[r] &  \bullet \ar[r] & \bullet},  \nonumber \\
   & \text{Case 3}\ \xymatrix {\bullet \ar[r] & \bullet \ar@{-}[r] & \cdots \ar@{-}[r] &  \bullet \ar@{<-}[r] & \bullet}.  \nonumber
\end{align}

\noindent The number of hereditary tilted algebra lying in Case 1 is $2^{n-3}$, and the number of hereditary tilted algebra lying in Case 2 (resp., Case 3) is $a_{\hta}(A_{n-2})$.
Thus,
\begin{align}
a_{\hta}(A_n) = 2^{n-3}+2a_{\hta}(A_{n-2}). \nonumber
\end{align}
Furthermore,
\begin{align}
a_{\hta}(A_n)-2^{n-2} = 2 \big(a_{\hta}(A_{n-2}) - 2^{n-4}\big). \nonumber
\end{align}
If $n$ is even, then $a_{\hta}(A_n) = 2^{n-2}$. If $n$ is odd, then $a_{\hta}(A_n) = 2^{n-2}+2^{(n-1)/2-1}$.
Thus we obtain that $a_{\hta}(A_n)= 2^{n-2} + (1+(-1)^{n-1}) 2^{\frac{n-5}{2}}$.
\end{proof}

\begin{theorem} \label{thm-tilt alg}
The number of tilted algebras of type $A_n$ is:
\[a_{\ta}(A_n) = C_n  + (1+(-1)^{n-1}) 2^{\frac{n-5}{2}} - 2^{n-2}. \]
\end{theorem}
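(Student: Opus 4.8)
The plan is to split the count of tilted algebras of type $A_n$ according to whether or not the algebra is hereditary, and then simply to add the two closed formulas already established in Propositions \ref{prop-nht} and \ref{prop-cls:nhta}. Concretely, by Corollary \ref{coro-tilted} every tilted algebra of type $A_n$ is of the form $A^{\Gamma}$ for some $\Gamma\in\Tri(\SURFextra(A_n))$ containing $c_{P(1)}$, and Proposition \ref{prop-non-hereditary} tells us that such an $A^{\Gamma}$ is hereditary precisely when $\Tri_{\Gamma}(\SURFextra(A_n))$ contains no complete triangle, and non-hereditary precisely when it contains at least one. Since ``hereditary'' and ``non-hereditary'' are mutually exclusive and jointly exhaustive, the set of isoclasses of tilted algebras of type $A_n$ is the disjoint union of the hereditary ones and the non-hereditary ones, so
\[a_{\ta}(A_n) = a_{\hta}(A_n) + a_{\nhta}(A_n).\]

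Next I would substitute the two formulas. By Proposition \ref{prop-nht} we have $a_{\hta}(A_n) = 2^{n-2} + (1+(-1)^{n-1})2^{\frac{n-5}{2}}$, and by Proposition \ref{prop-cls:nhta} we have $a_{\nhta}(A_n) = C_n - 2^{n-1}$, where $C_n = \frac{1}{n+1}\tbinom{2n}{n}$ is the $n$-th Catalan number. Adding these and using $2^{n-2} - 2^{n-1} = -2^{n-2}$ yields
\[a_{\ta}(A_n) = C_n + (1+(-1)^{n-1})2^{\frac{n-5}{2}} - 2^{n-2},\]
which is the asserted formula.

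There is essentially no obstacle here: the statement is a formal corollary of the two preceding propositions together with the trivial dichotomy ``hereditary or not'' for a finite-dimensional algebra. The only point worth a line of care is to check that $a_{\hta}(A_n)$ and $a_{\nhta}(A_n)$ count isoclasses in the same sense --- they do, since both are obtained from the bijections of Corollary \ref{coro-triangulation} and Theorem \ref{prop-tri. contains sp. tile} --- so that their sum genuinely equals $a_{\ta}(A_n)$. All of the substantive work has been done upstream, in the geometric classification of tilted algebras of type $A_n$ and in the Catalan-number recursions underlying Propositions \ref{prop-nht} and \ref{prop-cls:nhta}.
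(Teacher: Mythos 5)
Your proposal is correct and is exactly the paper's (implicit) argument: the theorem is obtained by summing the hereditary count of Proposition \ref{prop-nht} and the non-hereditary count of Proposition \ref{prop-cls:nhta}, using the dichotomy established in Proposition \ref{prop-non-hereditary}, and the arithmetic $2^{n-2}-2^{n-1}=-2^{n-2}$ gives the stated formula. Nothing further is needed.
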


\subsection{The number of the silted algebras of type $A_n$}
In this subsection, we give the formula to compute the number of the silted algebras of type $A_n$. Denote by $a_{\ncsa}(A_n)$ the number of all the non-connected silted algebras of type $A_n$. Then we have the following result.

\begin{theorem}\label{thm-slit alg}
The number of silted algebras of type $A_n$ is:
\begin{align}\label{formula-slit alg}
a_{\sa}(A_n) & = a_{\ta}(A_n) + a_{\ncsa}(A_n)
\end{align}
where
\begin{align}
a_{\ncsa}(A_n)=\frac{1}{2}\sum_{n'+n''=n \atop 1\le n'\ne n''\le n-1} a_{\ta}(A_{n'})a_{\ta}(A_{n''})+ \frac{1+(-1)^{n}}{2} \cdot \frac{a_{\ta}(A_{\lfloor \frac{n}{2}\rfloor})(a_{\ta}(A_{\lfloor \frac{n}{2}\rfloor})+1)}{2}. \nonumber
\end{align}
\end{theorem}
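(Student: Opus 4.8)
The plan is to read off the formula from the classification of silted algebras of type $A_n$ in Theorem \ref{coro-gamma(0,l)} together with the count of tilted algebras in Theorem \ref{thm-tilt alg}, the only nontrivial point being a careful bookkeeping of unordered pairs. By Theorem \ref{coro-gamma(0,l)}, a finite-dimensional algebra $B$ is silted of type $A_n$ if and only if either $B$ is tilted of type $A_n$, or $B\cong B'\times B''$ with $B'$ tilted of type $A_{n'}$ and $B''$ tilted of type $A_{n''}$ for some $n'+n''=n$, $1\le n',n''\le n-1$. Each tilted algebra of type $A_m$ is connected, being the endomorphism algebra of a tilting module over the connected hereditary algebra $\kk A_m$; hence the first family consists of connected algebras and the second of algebras with exactly two connected components, so the two families are disjoint. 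Writing $a_{\ncsa}(A_n)$ for the number of isoclasses in the second family, this gives (\ref{formula-slit alg}): $a_{\sa}(A_n)=a_{\ta}(A_n)+a_{\ncsa}(A_n)$.

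Next I would identify $a_{\ncsa}(A_n)$ with a count of unordered pairs. A finite-dimensional algebra decomposes uniquely, up to isomorphism and permutation of factors, as a product of connected algebras; moreover tilting preserves the number of simple modules, so the number of simples of a tilted algebra of type $A_m$ equals $m$. Consequently a non-connected silted algebra $B$ of type $A_n$ determines, and is determined by, the unordered pair $\{B',B''\}$ of its two (connected, tilted) blocks, together with the pair $\{n',n''\}$, $n'+n''=n$ and $n',n''\ge 1$, recording their numbers of simples; and conversely every such unordered pair arises, since a tilted algebra of type $A_{n'}\times A_{n''}$ is precisely a product $\End(T')\times\End(T'')$ with $T',T''$ tilting over $\kk A_{n'}$, $\kk A_{n''}$. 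Thus $a_{\ncsa}(A_n)$ equals the number of such unordered pairs.

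It then remains to perform the elementary count, splitting into two cases. For $n'\ne n''$: for each ordered pair $(n',n'')$ with $n'+n''=n$ and $1\le n'\ne n''\le n-1$ there are $a_{\ta}(A_{n'})\,a_{\ta}(A_{n''})$ ordered pairs of tilted algebras, and each unordered pair is counted twice, contributing $\tfrac12\sum_{\substack{n'+n''=n\\ 1\le n'\ne n''\le n-1}} a_{\ta}(A_{n'})\,a_{\ta}(A_{n''})$. When $n$ is even there is, in addition, the diagonal contribution from pairs $\{B',B''\}$ with $B'$ and $B''$ both tilted of type $A_{n/2}$: these are the size-$2$ multisets from a set of $a_{\ta}(A_{n/2})$ elements, of which there are $\binom{a_{\ta}(A_{n/2})+1}{2}=\tfrac12 a_{\ta}(A_{n/2})\bigl(a_{\ta}(A_{n/2})+1\bigr)$, and since $\lfloor n/2\rfloor=n/2$ in this case the diagonal term can be written uniformly with the indicator $\tfrac{1+(-1)^n}{2}$ (equal to $1$ for $n$ even and $0$ for $n$ odd). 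Adding the two contributions gives the stated expression for $a_{\ncsa}(A_n)$, hence Theorem \ref{thm-slit alg}; substituting the closed form of Theorem \ref{thm-tilt alg} for each $a_{\ta}(A_m)$ makes it fully explicit.

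The main obstacle, modest as it is, is the passage from ordered to unordered pairs — in particular treating the diagonal case $n'=n''$ by the multiset coefficient $\binom{a_{\ta}(A_{n/2})+1}{2}$ rather than by $\tfrac12 a_{\ta}(A_{n/2})^2$ — together with isolating the two isomorphism invariants (number of connected components, number of simple modules) needed to recover $\{n',n''\}$ from $B$; once these are settled the argument is purely combinatorial and invokes only the classification of Theorem \ref{coro-gamma(0,l)} and the enumeration of tilted algebras of type $A_m$.
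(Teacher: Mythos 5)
Your proposal is correct and follows essentially the same route as the paper: both deduce $a_{\sa}(A_n)=a_{\ta}(A_n)+a_{\ncsa}(A_n)$ from Theorem \ref{coro-gamma(0,l)} and then count the non-connected silted algebras as unordered pairs of tilted algebras, with the off-diagonal sum halved and the diagonal (present only for $n$ even) counted by $\tbinom{a_{\ta}(A_{n/2})+1}{2}=\tbinom{a_{\ta}(A_{n/2})}{2}+a_{\ta}(A_{n/2})$, exactly as in the paper's even/odd case split. The only difference is presentational: you make explicit the injectivity of the correspondence (uniqueness of the block decomposition and invariance of the number of simples under tilting), which the paper leaves implicit.
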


\begin{proof}
By Theorem \ref{coro-gamma(0,l)}, we have $a_{\sa}(A_n) = a_{\ta}(A_n) + a_{\ncsa}(A_n)$. Moreover, we have
\begin{itemize}
  \item if $n$ is even, then
    \begin{align}
      a_{\ncsa}(A_n) & = \sum\limits_{1\le l\le \frac{n}{2}-1} a_{\ta}(A_l)a_{\ta}(A_{n-l})
                     + \tbinom{a_{\ta}(A_{n/2})}{2} + a_{\ta}(A_{n/2}) \nonumber \\
                   & = \sum\limits_{1\le l\le \frac{n}{2}-1} a_{\ta}(A_l)a_{\ta}(A_{n-l})
                     + \frac{a_{\ta}(A_{n/2})(a_{\ta}(A_{n/2})+1)}{2}; \nonumber
    \end{align}
  \item if $n$ is odd, then
    \begin{align}
      a_{\ncsa}(A_n) & = \frac{1}{2}\sum\limits_{1\le l\le n-1} a_{\ta}(A_l)a_{\ta}(A_{n-l}).
                       \nonumber
    \end{align}
\end{itemize}
Thus, we have (\ref{formula-slit alg}).
\end{proof}

\begin{remark} \rm
We do not need the number of $2$-term silting complex over $A_n$ to compute the number of silted algebras of type $A_n$.
\end{remark}

\subsection{An example}

\begin{example} \label{exm-A4} \rm
Let $A_4$ be the algebra given by the following quiver
$$\begin{xy}
(-10,0)*+{1}="1",
(0,0)*+{2}="2",
(10,0)*+{3}="3",
(20,0)*+{4}="4",
\ar"1";"2", \ar"2";"3", \ar"3";"4",
\end{xy}.$$
Let $\Gamma\in \Tri(\SURFextra(A_4))$ and $c_{P(1)}\in\Gamma$. By Proposition \ref{prop-non-hereditary}, if $\Gamma$ not contains a complete triangle, then the induced tilted algebras are hereditary. Thus, we have the following four hereditary tilted algebras of type $A_{4}$ up to equivalences.
\begin{figure}[H]
\begin{center}
\definecolor{ffqqqq}{rgb}{1,0,0}
\definecolor{qqwuqq}{rgb}{0,0.5,0}
\begin{tikzpicture}[scale=0.6]
\draw[line width=1.2pt]  (2,0) arc (0:360:2);
\filldraw[qqwuqq] ( 0.00, 2.00) circle (0.1);
\draw[ffqqqq][line width=1.2pt] (-1.56, 1.25) circle (0.1);
\filldraw[qqwuqq] (-1.95,-0.45) circle (0.1);
\filldraw[qqwuqq] (-0.87,-1.80) circle (0.1);
\filldraw[qqwuqq] ( 0.87,-1.80) circle (0.1);
\filldraw[qqwuqq] ( 1.94,-0.45) circle (0.1);
\draw[ffqqqq][line width=1.2pt] ( 1.56, 1.25) circle (0.1);
\draw[qqwuqq!25][line width=1.2pt] ( 0.00, 2.00) -- (-1.95,-0.45);
\draw[qqwuqq!25][line width=1.2pt] ( 0.00, 2.00) -- (-0.87,-1.80);
\draw[qqwuqq!25][line width=1.2pt] ( 0.00, 2.00) -- ( 0.87,-1.80);
\draw[qqwuqq!25][line width=1.2pt] ( 0.00, 2.00) -- ( 1.94,-0.45);
\draw[blue][line width=1.2pt] (-1.56, 1.25)--( 1.56, 1.25); \draw[blue] (0,1.25) node[below]{\tiny$c_{P(1)}$};
\draw[blue][line width=1.2pt] ( 1.56, 1.25)--(-0.87,-1.80);
\draw[blue][line width=1.2pt] ( 1.56, 1.25)--( 0.87,-1.80);
\draw[blue][line width=1.2pt] ( 1.56, 1.25)--(-1.95,-0.45);
\draw (0,-2.5) node{$\Gamma_1$};
\end{tikzpicture}
\ \ \ \
\begin{tikzpicture}[scale=0.6]
\draw[line width=1.2pt]  (2,0) arc (0:360:2);
\filldraw[qqwuqq] ( 0.00, 2.00) circle (0.1);
\draw[ffqqqq][line width=1.2pt] (-1.56, 1.25) circle (0.1);
\filldraw[qqwuqq] (-1.95,-0.45) circle (0.1);
\filldraw[qqwuqq] (-0.87,-1.80) circle (0.1);
\filldraw[qqwuqq] ( 0.87,-1.80) circle (0.1);
\filldraw[qqwuqq] ( 1.94,-0.45) circle (0.1);
\draw[ffqqqq][line width=1.2pt] ( 1.56, 1.25) circle (0.1);
\draw[qqwuqq!25][line width=1.2pt] ( 0.00, 2.00) -- (-1.95,-0.45);
\draw[qqwuqq!25][line width=1.2pt] ( 0.00, 2.00) -- (-0.87,-1.80);
\draw[qqwuqq!25][line width=1.2pt] ( 0.00, 2.00) -- ( 0.87,-1.80);
\draw[qqwuqq!25][line width=1.2pt] ( 0.00, 2.00) -- ( 1.94,-0.45);
\draw[blue][line width=1.2pt] (-1.56, 1.25)--( 1.56, 1.25); \draw[blue] (0,1.25) node[below]{\tiny$c_{P(1)}$};
\draw[blue][line width=1.2pt] ( 1.56, 1.25)--(-0.87,-1.80);
\draw[blue][line width=1.2pt] ( 1.56, 1.25)--(-1.95,-0.45);
\draw[blue][line width=1.2pt] (-0.87,-1.80)--( 1.94,-0.45);
\draw (0,-2.5) node{$\Gamma_2$};
\end{tikzpicture}
\ \ \ \
\begin{tikzpicture}[scale=0.6]
\draw[line width=1.2pt]  (2,0) arc (0:360:2);
\filldraw[qqwuqq] ( 0.00, 2.00) circle (0.1);
\draw[ffqqqq][line width=1.2pt] (-1.56, 1.25) circle (0.1);
\filldraw[qqwuqq] (-1.95,-0.45) circle (0.1);
\filldraw[qqwuqq] (-0.87,-1.80) circle (0.1);
\filldraw[qqwuqq] ( 0.87,-1.80) circle (0.1);
\filldraw[qqwuqq] ( 1.94,-0.45) circle (0.1);
\draw[ffqqqq][line width=1.2pt] ( 1.56, 1.25) circle (0.1);
\draw[qqwuqq!25][line width=1.2pt] ( 0.00, 2.00) -- (-1.95,-0.45);
\draw[qqwuqq!25][line width=1.2pt] ( 0.00, 2.00) -- (-0.87,-1.80);
\draw[qqwuqq!25][line width=1.2pt] ( 0.00, 2.00) -- ( 0.87,-1.80);
\draw[qqwuqq!25][line width=1.2pt] ( 0.00, 2.00) -- ( 1.94,-0.45);
\draw[blue][line width=1.2pt] (-1.56, 1.25)--( 1.56, 1.25); \draw[blue] (0,1.25) node[below]{\tiny$c_{P(1)}$};
\draw[blue][line width=1.2pt] (-1.56, 1.25)--( 1.94,-0.45);
\draw[blue][line width=1.2pt] (-1.95,-0.45)--( 1.94,-0.45);
\draw[blue][line width=1.2pt] (-1.95,-0.45)--( 0.87,-1.80);
\draw (0,-2.5) node{$\Gamma_3$};
\end{tikzpicture}
\ \ \ \
\begin{tikzpicture}[scale=0.6]
\draw[line width=1.2pt]  (2,0) arc (0:360:2);
\filldraw[qqwuqq] ( 0.00, 2.00) circle (0.1);
\draw[ffqqqq][line width=1.2pt] (-1.56, 1.25) circle (0.1);
\filldraw[qqwuqq] (-1.95,-0.45) circle (0.1);
\filldraw[qqwuqq] (-0.87,-1.80) circle (0.1);
\filldraw[qqwuqq] ( 0.87,-1.80) circle (0.1);
\filldraw[qqwuqq] ( 1.94,-0.45) circle (0.1);
\draw[ffqqqq][line width=1.2pt] ( 1.56, 1.25) circle (0.1);
\draw[qqwuqq!25][line width=1.2pt] ( 0.00, 2.00) -- (-1.95,-0.45);
\draw[qqwuqq!25][line width=1.2pt] ( 0.00, 2.00) -- (-0.87,-1.80);
\draw[qqwuqq!25][line width=1.2pt] ( 0.00, 2.00) -- ( 0.87,-1.80);
\draw[qqwuqq!25][line width=1.2pt] ( 0.00, 2.00) -- ( 1.94,-0.45);
\draw[blue][line width=1.2pt] (-1.56, 1.25)--( 1.56, 1.25); \draw[blue] (0,1.25) node[below]{\tiny$c_{P(1)}$};
\draw[blue][line width=1.2pt] (-1.56, 1.25)--( 1.94,-0.45);
\draw[blue][line width=1.2pt] (-1.95,-0.45)--( 1.94,-0.45);
\draw[blue][line width=1.2pt] (-0.87,-1.80)--( 1.94,-0.45);
\draw (0,-2.5) node{$\Gamma_4$};
\end{tikzpicture}
\caption{\textsf{ }}
\label{fig-A4:tilt diss}
\end{center}
\end{figure}
\noindent In the non-hereditary case, $\Gamma$ contains a complete triangle $\T$. Thus, there are two cases as following:
\begin{itemize}
  \item[Case] 1. If $c_{P(1)}$ is an edge of $\T$,
    then $\Gamma$ is one of the following four triangulations $\Gamma_5$ -- $\Gamma_8$ shown in \Pic \ref{fig-A4:tilt diss};

\begin{figure}[H]
\begin{center}
\definecolor{ffqqqq}{rgb}{1,0,0}
\definecolor{qqwuqq}{rgb}{0,0.5,0}
\begin{tikzpicture}[scale=0.6]
\draw[line width=1.2pt]  (2,0) arc (0:360:2);
\filldraw[qqwuqq] ( 0.00, 2.00) circle (0.1);
\draw[ffqqqq][line width=1.2pt] (-1.56, 1.25) circle (0.1);
\filldraw[qqwuqq] (-1.95,-0.45) circle (0.1);
\filldraw[qqwuqq] (-0.87,-1.80) circle (0.1);
\filldraw[qqwuqq] ( 0.87,-1.80) circle (0.1);
\filldraw[qqwuqq] ( 1.94,-0.45) circle (0.1);
\draw[ffqqqq][line width=1.2pt] ( 1.56, 1.25) circle (0.1);
\draw[qqwuqq!25][line width=1.2pt] ( 0.00, 2.00) -- (-1.95,-0.45);
\draw[qqwuqq!25][line width=1.2pt] ( 0.00, 2.00) -- (-0.87,-1.80);
\draw[qqwuqq!25][line width=1.2pt] ( 0.00, 2.00) -- ( 0.87,-1.80);
\draw[qqwuqq!25][line width=1.2pt] ( 0.00, 2.00) -- ( 1.94,-0.45);
\draw[blue][line width=1.2pt] (-1.56, 1.25)--( 1.56, 1.25); \draw[blue] (0,1.25) node[below]{\tiny$c_{P(1)}$};
\draw[blue][line width=1.2pt] (-1.56, 1.25)--(-0.87,-1.80)--( 1.56, 1.25);
\draw[blue][line width=1.2pt] ( 1.56, 1.25)--( 0.87,-1.80);
\draw[blue] (0,0) node{$\T$};
\draw (0,-2.5) node{$\Gamma_5$};
\end{tikzpicture}
\ \ \ \
\begin{tikzpicture}[scale=0.6]
\draw[line width=1.2pt]  (2,0) arc (0:360:2);
\filldraw[qqwuqq] ( 0.00, 2.00) circle (0.1);
\draw[ffqqqq][line width=1.2pt] (-1.56, 1.25) circle (0.1);
\filldraw[qqwuqq] (-1.95,-0.45) circle (0.1);
\filldraw[qqwuqq] (-0.87,-1.80) circle (0.1);
\filldraw[qqwuqq] ( 0.87,-1.80) circle (0.1);
\filldraw[qqwuqq] ( 1.94,-0.45) circle (0.1);
\draw[ffqqqq][line width=1.2pt] ( 1.56, 1.25) circle (0.1);
\draw[qqwuqq!25][line width=1.2pt] ( 0.00, 2.00) -- (-1.95,-0.45);
\draw[qqwuqq!25][line width=1.2pt] ( 0.00, 2.00) -- (-0.87,-1.80);
\draw[qqwuqq!25][line width=1.2pt] ( 0.00, 2.00) -- ( 0.87,-1.80);
\draw[qqwuqq!25][line width=1.2pt] ( 0.00, 2.00) -- ( 1.94,-0.45);
\draw[blue][line width=1.2pt] (-1.56, 1.25)--( 1.56, 1.25); \draw[blue] (0,1.25) node[below]{\tiny$c_{P(1)}$};
\draw[blue][line width=1.2pt] (-1.56, 1.25)--( 0.87,-1.80)--( 1.56, 1.25);
\draw[blue][line width=1.2pt] (-1.56, 1.25)--(-0.87,-1.80);
\draw[blue] (0,0) node{$\T$};
\draw (0,-2.5) node{$\Gamma_6$};
\end{tikzpicture}
\ \ \ \
\begin{tikzpicture}[scale=0.6]
\draw[line width=1.2pt]  (2,0) arc (0:360:2);
\filldraw[qqwuqq] ( 0.00, 2.00) circle (0.1);
\draw[ffqqqq][line width=1.2pt] (-1.56, 1.25) circle (0.1);
\filldraw[qqwuqq] (-1.95,-0.45) circle (0.1);
\filldraw[qqwuqq] (-0.87,-1.80) circle (0.1);
\filldraw[qqwuqq] ( 0.87,-1.80) circle (0.1);
\filldraw[qqwuqq] ( 1.94,-0.45) circle (0.1);
\draw[ffqqqq][line width=1.2pt] ( 1.56, 1.25) circle (0.1);
\draw[qqwuqq!25][line width=1.2pt] ( 0.00, 2.00) -- (-1.95,-0.45);
\draw[qqwuqq!25][line width=1.2pt] ( 0.00, 2.00) -- (-0.87,-1.80);
\draw[qqwuqq!25][line width=1.2pt] ( 0.00, 2.00) -- ( 0.87,-1.80);
\draw[qqwuqq!25][line width=1.2pt] ( 0.00, 2.00) -- ( 1.94,-0.45);
\draw[blue][line width=1.2pt] (-1.56, 1.25)--( 1.56, 1.25); \draw[blue] (0,1.25) node[below]{\tiny$c_{P(1)}$};
\draw[blue][line width=1.2pt] (-1.56, 1.25)--(-0.87,-1.80)--( 1.56, 1.25);
\draw[blue][line width=1.2pt] (-0.87,-1.80)--( 1.94,-0.45);
\draw[blue] (0,0) node{$\T$};
\draw (0,-2.5) node{$\Gamma_7$};
\end{tikzpicture}
\ \ \ \
\begin{tikzpicture}[scale=0.6]
\draw[line width=1.2pt]  (2,0) arc (0:360:2);
\filldraw[qqwuqq] ( 0.00, 2.00) circle (0.1);
\draw[ffqqqq][line width=1.2pt] (-1.56, 1.25) circle (0.1);
\filldraw[qqwuqq] (-1.95,-0.45) circle (0.1);
\filldraw[qqwuqq] (-0.87,-1.80) circle (0.1);
\filldraw[qqwuqq] ( 0.87,-1.80) circle (0.1);
\filldraw[qqwuqq] ( 1.94,-0.45) circle (0.1);
\draw[ffqqqq][line width=1.2pt] ( 1.56, 1.25) circle (0.1);
\draw[qqwuqq!25][line width=1.2pt] ( 0.00, 2.00) -- (-1.95,-0.45);
\draw[qqwuqq!25][line width=1.2pt] ( 0.00, 2.00) -- (-0.87,-1.80);
\draw[qqwuqq!25][line width=1.2pt] ( 0.00, 2.00) -- ( 0.87,-1.80);
\draw[qqwuqq!25][line width=1.2pt] ( 0.00, 2.00) -- ( 1.94,-0.45);
\draw[blue][line width=1.2pt] (-1.56, 1.25)--( 1.56, 1.25); \draw[blue] (0,1.25) node[below]{\tiny$c_{P(1)}$};
\draw[blue][line width=1.2pt] (-1.56, 1.25)--( 0.87,-1.80)--( 1.56, 1.25);
\draw[blue][line width=1.2pt] (-1.94,-0.45)--( 0.87,-1.80);
\draw[blue] (0,0) node{$\T$};
\draw (0,-2.5) node{$\Gamma_8$};
\end{tikzpicture}
\caption{\textsf{ }}
\label{fig-A4:tilt diss}
\end{center}
\end{figure}

  \item[Case] 2. If $c_{P(1)}$ is not an edge of $\T$, then $\Gamma$ is one of 
  the following two triangulations $\Gamma_9$ and $\Gamma_{10}$ shown in \Pic \ref{fig-A4:tilt diss 2}.

\begin{figure}[H]
\begin{center}
\definecolor{ffqqqq}{rgb}{1,0,0}
\definecolor{qqwuqq}{rgb}{0,0.5,0}
\begin{tikzpicture}[scale=0.6]
\draw[line width=1.2pt]  (2,0) arc (0:360:2);
\filldraw[qqwuqq] ( 0.00, 2.00) circle (0.1);
\draw[ffqqqq][line width=1.2pt] (-1.56, 1.25) circle (0.1);
\filldraw[qqwuqq] (-1.95,-0.45) circle (0.1);
\filldraw[qqwuqq] (-0.87,-1.80) circle (0.1);
\filldraw[qqwuqq] ( 0.87,-1.80) circle (0.1);
\filldraw[qqwuqq] ( 1.94,-0.45) circle (0.1);
\draw[ffqqqq][line width=1.2pt] ( 1.56, 1.25) circle (0.1);
\draw[qqwuqq!25][line width=1.2pt] ( 0.00, 2.00) -- (-1.95,-0.45);
\draw[qqwuqq!25][line width=1.2pt] ( 0.00, 2.00) -- (-0.87,-1.80);
\draw[qqwuqq!25][line width=1.2pt] ( 0.00, 2.00) -- ( 0.87,-1.80);
\draw[qqwuqq!25][line width=1.2pt] ( 0.00, 2.00) -- ( 1.94,-0.45);
\draw[blue][line width=1.2pt] (-1.56, 1.25)--( 1.56, 1.25); \draw[blue] (0,1.25) node[above]{\tiny$c_{P(1)}$};
\draw[blue][line width=1.2pt] ( 1.56, 1.25)--(-1.95,-0.45)--( 0.87,-1.80)--( 1.56, 1.25);
\draw[blue] (0,0) node{$\T$};
\draw (0,-2.5) node{$\Gamma_9$};
\end{tikzpicture}
\ \ \ \
\begin{tikzpicture}[scale=0.6]
\draw[line width=1.2pt]  (2,0) arc (0:360:2);
\filldraw[qqwuqq] ( 0.00, 2.00) circle (0.1);
\draw[ffqqqq][line width=1.2pt] (-1.56, 1.25) circle (0.1);
\filldraw[qqwuqq] (-1.95,-0.45) circle (0.1);
\filldraw[qqwuqq] (-0.87,-1.80) circle (0.1);
\filldraw[qqwuqq] ( 0.87,-1.80) circle (0.1);
\filldraw[qqwuqq] ( 1.94,-0.45) circle (0.1);
\draw[ffqqqq][line width=1.2pt] ( 1.56, 1.25) circle (0.1);
\draw[qqwuqq!25][line width=1.2pt] ( 0.00, 2.00) -- (-1.95,-0.45);
\draw[qqwuqq!25][line width=1.2pt] ( 0.00, 2.00) -- (-0.87,-1.80);
\draw[qqwuqq!25][line width=1.2pt] ( 0.00, 2.00) -- ( 0.87,-1.80);
\draw[qqwuqq!25][line width=1.2pt] ( 0.00, 2.00) -- ( 1.94,-0.45);
\draw[blue][line width=1.2pt] (-1.56, 1.25)--( 1.56, 1.25); \draw[blue] (0,1.25) node[above]{\tiny$c_{P(1)}$};
\draw[blue][line width=1.2pt] (-1.56, 1.25)--( 1.95,-0.45)--(-0.87,-1.80)--(-1.56, 1.25);
\draw[blue] (0,0) node{$\T$};
\draw (0,-2.5) node{$\Gamma_{10}$};
\end{tikzpicture}
\caption{\textsf{ }}
\label{fig-A4:tilt diss 2}
\end{center}
\end{figure}

\end{itemize}
\noindent Thus, we have ten tilted algebras of type $A_{4}$ which corresponding to $\Gamma_1$ -- $\Gamma_{10}$, respectively.

Now we calculate the number of silted algebras of type $A_{4}$ which induced by 2-term silting complex but not tilting over $A_4$. 
To do this, we need to compute all $\gbullet$-grFFASs of $\SURFred{\F_{A_4}}(A_4)$ such that there is an arrow provided by $\gbullet$-grFFAS whose grading $\ne 0$. 

Recall that $\tgamma_{x,y}$ is the $\gbullet$-curve with endpoints $\m_x$ and $\m_y$ by Subsection \ref{sec:silt-cls}, where $0\le x\ne y\le 4$. Taking
\[\teta_{x,y}^{[z]}=\begin{cases}
\tgamma_{x,y} \text{ with intersection index } z, & \text{ if } x=0, 1\le y\le 4; \\
\tgamma_{x,y} \text{ crossing two graded $\rbullet$-arcs}, & \text{ if } x\ne 0, 1\le x\ne y\le 4, z=\varnothing. \\
\end{cases}\]
Then by Proposition \ref{prop-gamma(0,l)}, we can obtain 14 $\gbullet$-grFFASs as following: 
\begin{align}
   \Delta_1=\{\teta_{0,1}^{[1]}, \teta_{0,2}^{[1]}, \teta_{0,3}^{[1]}, \teta_{0,4}^{[0]}\} 
&& \Delta_2=\{\teta_{0,1}^{[1]}, \teta_{0,2}^{[0]}, \teta_{0,3}^{[0]}, \teta_{0,4}^{[0]}\} 
&& \Delta_3=\{\teta_{0,1}^{[1]}, \teta_{0,2}^{[1]}, \teta_{0,3}^{[0]}, \teta_{0,4}^{[0]}\}  \nonumber \\
   \Delta_4=\{\teta_{0,1}^{[1]}, \teta_{0,2}^{[0]}, \teta_{0,3}^{[0]}, \teta_{3,4}\}
&& \Delta_5=\{\teta_{0,1}^{[1]}, \teta_{0,2}^{[1]}, \teta_{0,3}^{[0]}, \teta_{3,4}\} 
&& \Delta_6=\{\teta_{1,2}, \teta_{0,2}^{[1]}, \teta_{0,3}^{[1]}, \teta_{0,4}^{[0]}\} \nonumber \\ 
   \Delta_7=\{\teta_{1,2}, \teta_{0,2}^{[1]}, \teta_{0,3}^{[0]}, \teta_{0,4}^{[0]}\}
&& \Delta_8=\{\teta_{0,1}^{[1]}, \teta_{0,2}^{[0]}, \teta_{0,4}^{[0]}, \teta_{2,3}\} 
&& \Delta_9=\{\teta_{0,1}^{[1]}, \teta_{0,3}^{[1]}, \teta_{0,4}^{[0]}, \teta_{2,3}\} \nonumber \\
   \Delta_{10}=\{\teta_{0,1}^{[1]}, \teta_{0,2}^{[0]}, \teta_{2,3}, \teta_{2,4}\}
&& \Delta_{11}=\{\teta_{0,1}^{[1]}, \teta_{0,2}^{[0]}, \teta_{2,3}, \teta_{3,4}\}
&& \Delta_{12}=\{\teta_{0,3}^{[1]}, \teta_{0,4}^{[0]}, \teta_{1,3}, \teta_{2,3}\} \nonumber \\
   \Delta_{13}=\{\teta_{0,3}^{[1]}, \teta_{0,4}^{[0]}, \teta_{1,2}, \teta_{1,3}\}
&& \Delta_{14}=\{\teta_{0,2}^{[1]}, \teta_{0,3}^{[0]}, \teta_{1,2}, \teta_{3,4}\} 
&& \nonumber 
\end{align}
Note that 14 $\gbullet$-grFFASs can induce 5 non-connected silted algebras of type $A_{4}$:
\begin{itemize}
  \item[(1)] $\xymatrix@C=0.35cm{\bullet & \bullet\ar[r] & \bullet\ar[r] & \bullet}$: 
    $\Delta_1$, $\Delta_2$,  $\Delta_{10}$, $\Delta_{12}$;
    
  \item[(2)] $\xymatrix@C=0.35cm{\bullet\ar[r] & \bullet & \bullet\ar[r] & \bullet}$: 
    $\Delta_3$, $\Delta_5$, $\Delta_7$, $\Delta_{14}$;
    
  \item[(3)] $\xymatrix@C=0.35cm{\bullet & \bullet\ar[r] & \bullet & \bullet\ar[l]}$: 
    $\Delta_4$, $\Delta_{13}$; 
    
  \item[(4)] $\xymatrix@C=0.35cm{\bullet & \bullet & \bullet \ar[l] \ar[r] & \bullet}$:
    $\Delta_{11}$, $\Delta_6$; 
    
  \item[(5)] $\xymatrix@C=0.35cm{\bullet & \bullet\ar[r] \ar@/^0.6pc/@{.}[rr] & \bullet \ar[r] & \bullet}$:
    $\Delta_8$, $\Delta_9$. 
\end{itemize}
All the $\gbullet$-grFFAS in (1) are shown as following:
\begin{figure}[H]
\centering
\tiny
\definecolor{qqwuqq}{rgb}{0,0.5,0}
\begin{tikzpicture}[scale=0.6]
\draw[line width=1.2pt] (2,0) arc (0:360:2);
\draw[red][line width=1pt] (-1.56, 1.25) -- (-1.56,-1.25) -- (0,-2) -- ( 1.56,-1.25) -- ( 1.56, 1.25);
\fill[white] ( 1.56, 1.25) circle (0.1); \draw[red][line width=1pt] ( 1.56, 1.25) circle (0.1);
\fill[white] (-1.56, 1.25) circle (0.1); \draw[red][line width=1pt] (-1.56, 1.25) circle (0.1);
\fill[white] (-1.56,-1.25) circle (0.1); \draw[red][line width=1pt] (-1.56,-1.25) circle (0.1);
\fill[white] ( 0.  ,-2.  ) circle (0.1); \draw[red][line width=1pt] ( 0.  ,-2.  ) circle (0.1);
\fill[white] ( 1.56,-1.25) circle (0.1); \draw[red][line width=1pt] ( 1.56,-1.25) circle (0.1);
\filldraw[qqwuqq] ( 0.00, 2.00) circle (0.1) node[above]{\tiny$p=m_0$};
\filldraw[qqwuqq] (-1.95,-0.45) circle (0.1) node[ left]{\tiny$\m_1$};
\filldraw[qqwuqq] (-0.87,-1.80) circle (0.1) node[below]{\tiny$\m_2$};
\filldraw[qqwuqq] ( 0.87,-1.80) circle (0.1) node[below]{\tiny$\m_3$};
\filldraw[qqwuqq] ( 1.94,-0.45) circle (0.1) node[right]{\tiny$\m_4$};
\draw[qqwuqq][opacity=0.25][line width=1.2pt] ( 0.00, 2.00) -- (-1.95,-0.45);
\draw[qqwuqq][opacity=0.25][line width=1.2pt] ( 0.00, 2.00) -- (-0.87,-1.80);
\draw[qqwuqq][opacity=0.25][line width=1.2pt] ( 0.00, 2.00) -- ( 0.87,-1.80);
\draw[qqwuqq][opacity=0.25][line width=1.2pt] ( 0.00, 2.00) -- ( 1.94,-0.45);
\draw[blue][line width=1.2pt] ( 0.00, 2.00) -- (-1.95,-0.45); \draw[cyan] (-1.56,-0.  ) node[right]{$1$};
\draw[blue][line width=1.2pt] ( 0.00, 2.00) -- (-0.87,-1.80); \draw[cyan] (-0.81,-1.61) node[above left]{$1$};
\draw[blue][line width=1.2pt] ( 0.00, 2.00) -- ( 0.87,-1.80); \draw[cyan] ( 0.81,-1.61) node[above right]{$1$};
\draw[blue][line width=1.2pt] ( 0.00, 2.00) -- ( 1.94,-0.45); \draw[cyan] ( 1.56, 0.  ) node[left]{$0$};
\draw[->] (-0.8 , 0.8 ) -- (-0.5 , 0.5 );
\draw[->] (-0.25, 0.25) -- ( 0.25, 0.25);
\draw[->] ( 0.5 , 0.5 ) -- ( 0.8 , 0.8 ) [dash pattern=on 2pt off 2pt];
\draw (0,-2.8) node{$\Delta_1$};
\end{tikzpicture}
\begin{tikzpicture}[scale=0.6]
\draw[line width=1.2pt] (2,0) arc (0:360:2);
\draw[red][line width=1pt] (-1.56, 1.25) -- (-1.56,-1.25) -- (0,-2) -- ( 1.56,-1.25) -- ( 1.56, 1.25);
\fill[white] ( 1.56, 1.25) circle (0.1); \draw[red][line width=1pt] ( 1.56, 1.25) circle (0.1);
\fill[white] (-1.56, 1.25) circle (0.1); \draw[red][line width=1pt] (-1.56, 1.25) circle (0.1);
\fill[white] (-1.56,-1.25) circle (0.1); \draw[red][line width=1pt] (-1.56,-1.25) circle (0.1);
\fill[white] ( 0.  ,-2.  ) circle (0.1); \draw[red][line width=1pt] ( 0.  ,-2.  ) circle (0.1);
\fill[white] ( 1.56,-1.25) circle (0.1); \draw[red][line width=1pt] ( 1.56,-1.25) circle (0.1);
\filldraw[qqwuqq] ( 0.00, 2.00) circle (0.1) node[above]{\tiny$p=m_0$};
\filldraw[qqwuqq] (-1.95,-0.45) circle (0.1) node[ left]{\tiny$\m_1$};
\filldraw[qqwuqq] (-0.87,-1.80) circle (0.1) node[below]{\tiny$\m_2$};
\filldraw[qqwuqq] ( 0.87,-1.80) circle (0.1) node[below]{\tiny$\m_3$};
\filldraw[qqwuqq] ( 1.94,-0.45) circle (0.1) node[right]{\tiny$\m_4$};
\draw[qqwuqq][opacity=0.25][line width=1.2pt] ( 0.00, 2.00) -- (-1.95,-0.45);
\draw[qqwuqq][opacity=0.25][line width=1.2pt] ( 0.00, 2.00) -- (-0.87,-1.80);
\draw[qqwuqq][opacity=0.25][line width=1.2pt] ( 0.00, 2.00) -- ( 0.87,-1.80);
\draw[qqwuqq][opacity=0.25][line width=1.2pt] ( 0.00, 2.00) -- ( 1.94,-0.45);
\draw[blue][line width=1.2pt] ( 0.00, 2.00) -- (-1.95,-0.45); \draw[cyan] (-1.56,-0.  ) node[right]{$1$};
\draw[blue][line width=1.2pt] ( 0.00, 2.00) -- (-0.87,-1.80); \draw[cyan] (-0.81,-1.61) node[above left]{$0$};
\draw[blue][line width=1.2pt] ( 0.00, 2.00) -- ( 0.87,-1.80); \draw[cyan] ( 0.81,-1.61) node[above right]{$0$};
\draw[blue][line width=1.2pt] ( 0.00, 2.00) -- ( 1.94,-0.45); \draw[cyan] ( 1.56, 0.  ) node[left]{$0$};
\draw[->] (-0.8 , 0.8 ) -- (-0.5 , 0.5 ) [dash pattern=on 2pt off 2pt];
\draw[->] (-0.25, 0.25) -- ( 0.25, 0.25);
\draw[->] ( 0.5 , 0.5 ) -- ( 0.8 , 0.8 );
\draw (0,-2.8) node{$\Delta_2$};
\end{tikzpicture}
\begin{tikzpicture}[scale=0.6]
\draw[line width=1.2pt] (2,0) arc (0:360:2);
\draw[red][line width=1pt] (-1.56, 1.25) -- (-1.56,-1.25) -- (0,-2) -- ( 1.56,-1.25) -- ( 1.56, 1.25);
\fill[white] ( 1.56, 1.25) circle (0.1); \draw[red][line width=1pt] ( 1.56, 1.25) circle (0.1);
\fill[white] (-1.56, 1.25) circle (0.1); \draw[red][line width=1pt] (-1.56, 1.25) circle (0.1);
\fill[white] (-1.56,-1.25) circle (0.1); \draw[red][line width=1pt] (-1.56,-1.25) circle (0.1);
\fill[white] ( 0.  ,-2.  ) circle (0.1); \draw[red][line width=1pt] ( 0.  ,-2.  ) circle (0.1);
\fill[white] ( 1.56,-1.25) circle (0.1); \draw[red][line width=1pt] ( 1.56,-1.25) circle (0.1);
\filldraw[qqwuqq] ( 0.00, 2.00) circle (0.1) node[above]{\tiny$p=m_0$};
\filldraw[qqwuqq] (-1.95,-0.45) circle (0.1) node[ left]{\tiny$\m_1$};
\filldraw[qqwuqq] (-0.87,-1.80) circle (0.1) node[below]{\tiny$\m_2$};
\filldraw[qqwuqq] ( 0.87,-1.80) circle (0.1) node[below]{\tiny$\m_3$};
\filldraw[qqwuqq] ( 1.94,-0.45) circle (0.1) node[right]{\tiny$\m_4$};
\draw[qqwuqq][opacity=0.25][line width=1.2pt] ( 0.00, 2.00) -- (-1.95,-0.45);
\draw[qqwuqq][opacity=0.25][line width=1.2pt] ( 0.00, 2.00) -- (-0.87,-1.80);
\draw[qqwuqq][opacity=0.25][line width=1.2pt] ( 0.00, 2.00) -- ( 0.87,-1.80);
\draw[qqwuqq][opacity=0.25][line width=1.2pt] ( 0.00, 2.00) -- ( 1.94,-0.45);
\draw[blue][line width=1.2pt] ( 0.00, 2.00) -- (-1.95,-0.45); \draw[cyan] (-1.56,-0.  ) node[right]{$1$};
\draw[blue][line width=1.2pt] ( 0.00, 2.00) -- (-0.87,-1.80); \draw[cyan] (-0.81,-1.61) node[above left]{$0$};
\draw[blue][line width=1.2pt] (-0.87,-1.80) -- ( 0.87,-1.80); 
\draw[blue][line width=1.2pt] (-0.87,-1.80) -- ( 1.95,-0.45); 
\draw[->] (-0.8 , 0.8 ) -- (-0.5 , 0.5 ) [dash pattern=on 2pt off 2pt];
\draw[->] ( 0.2 ,-1.75) -- ( 0.2 ,-1.33);
\draw[->] ( 0.2 ,-1.15) -- (-0.35, 0.1 );
\draw (0,-2.8) node{$\Delta_{10}$};
\end{tikzpicture}
\begin{tikzpicture}[scale=0.6]
\draw[line width=1.2pt] (2,0) arc (0:360:2);
\draw[red][line width=1pt] (-1.56, 1.25) -- (-1.56,-1.25) -- (0,-2) -- ( 1.56,-1.25) -- ( 1.56, 1.25);
\fill[white] ( 1.56, 1.25) circle (0.1); \draw[red][line width=1pt] ( 1.56, 1.25) circle (0.1);
\fill[white] (-1.56, 1.25) circle (0.1); \draw[red][line width=1pt] (-1.56, 1.25) circle (0.1);
\fill[white] (-1.56,-1.25) circle (0.1); \draw[red][line width=1pt] (-1.56,-1.25) circle (0.1);
\fill[white] ( 0.  ,-2.  ) circle (0.1); \draw[red][line width=1pt] ( 0.  ,-2.  ) circle (0.1);
\fill[white] ( 1.56,-1.25) circle (0.1); \draw[red][line width=1pt] ( 1.56,-1.25) circle (0.1);
\filldraw[qqwuqq] ( 0.00, 2.00) circle (0.1) node[above]{\tiny$p=m_0$};
\filldraw[qqwuqq] (-1.95,-0.45) circle (0.1) node[ left]{\tiny$\m_1$};
\filldraw[qqwuqq] (-0.87,-1.80) circle (0.1) node[below]{\tiny$\m_2$};
\filldraw[qqwuqq] ( 0.87,-1.80) circle (0.1) node[below]{\tiny$\m_3$};
\filldraw[qqwuqq] ( 1.94,-0.45) circle (0.1) node[right]{\tiny$\m_4$};
\draw[qqwuqq][opacity=0.25][line width=1.2pt] ( 0.00, 2.00) -- (-1.95,-0.45);
\draw[qqwuqq][opacity=0.25][line width=1.2pt] ( 0.00, 2.00) -- (-0.87,-1.80);
\draw[qqwuqq][opacity=0.25][line width=1.2pt] ( 0.00, 2.00) -- ( 0.87,-1.80);
\draw[qqwuqq][opacity=0.25][line width=1.2pt] ( 0.00, 2.00) -- ( 1.94,-0.45);
\draw[blue][line width=1.2pt] (-1.94,-0.45) -- ( 0.87,-1.80);
\draw[blue][line width=1.2pt] (-0.87,-1.80) -- ( 0.87,-1.80);
\draw[blue][line width=1.2pt] ( 0.00, 2.00) -- ( 0.87,-1.80); \draw[cyan] ( 0.71,-1.61) node[above right]{$1$};
\draw[blue][line width=1.2pt] ( 0.00, 2.00) -- ( 1.94,-0.45); \draw[cyan] ( 1.56, 0.  ) node[left]{$0$};
\draw[->] ( 0.5 , 0.5 ) -- ( 0.8 , 0.8 ) [dash pattern=on 2pt off 2pt];
\draw[->] (-0.2 ,-1.35) -- (-0.2 ,-1.75);
\draw[->] ( 0.45, 0.1 ) -- (-0.17,-1.22);
\draw (0,-2.8) node{$\Delta_{12}$};
\end{tikzpicture}
\caption{ ``$\dashrightarrow$'' is an arrow with non-zero grading, 
$\color{cyan}0$ and $\color{cyan}1$ are intersection indices.}
\label{FFASofsilt}
\end{figure}
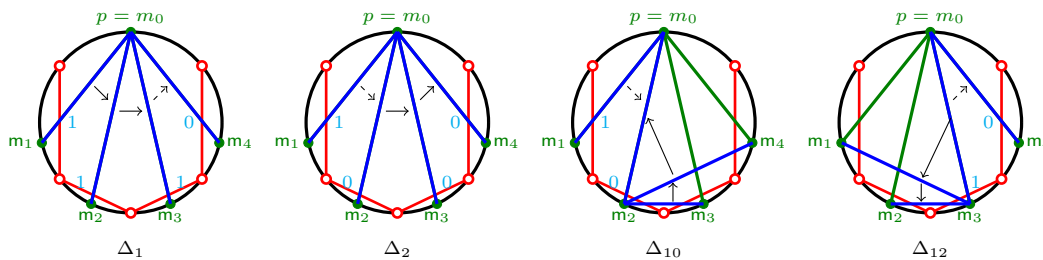
At last, we obtain that there are 15 silted algebras of type $A_{4}$, See \cite{Xing21}.

\end{example}


\def\cprime{$'$}
\providecommand{\bysame}{\leavevmode\hbox to3em{\hrulefill}\thinspace}
\providecommand{\MR}{\relax\ifhmode\unskip\space\fi MR }
\providecommand{\MRhref}[2]{%
  \href{http://www.ams.org/mathscinet-getitem?mr=#1}{#2}
}
\providecommand{\href}[2]{#2}




\end{document}